\definecolor{citegreen}{rgb}{0,0.75,0}
\global\let\figforTeXisloaded=\relax\fi
\def\ctr@ln@m#1{\ifx#1\undefined\else%
    \immediate\write16{*** Fig4TeX WARNING : \string#1 already defined.}\fi}
\def\ctr@ld@f#1#2{\ctr@ln@m#2#1#2}
\def\ctr@ln@w#1#2{\ctr@ln@m#2\csname#1\endcsname#2}
{\catcode`\/=0 \catcode`/\=12 /ctr@ld@f/gdef/BS@{\}}
\ctr@ld@f\def\ctr@lcsn@m#1{\expandafter\ifx\csname#1\endcsname\relax\else%
    \immediate\write16{*** Fig4TeX WARNING : \BS@\expandafter\string#1\space already defined.}\fi}
\ctr@ld@f\edef\colonc@tcode{\the\catcode`\:}
\ctr@ld@f\edef\semicolonc@tcode{\the\catcode`\;}
\ctr@ld@f\def\t@stc@tcodech@nge{{\let\c@tcodech@nged=\z@%
    \ifnum\colonc@tcode=\the\catcode`\:\else\let\c@tcodech@nged=\@ne\fi%
    \ifnum\semicolonc@tcode=\the\catcode`\;\else\let\c@tcodech@nged=\@ne\fi%
    \ifx\c@tcodech@nged\@ne%
    \immediate\write16{}
    \immediate\write16{!!!=============================================================!!!}
    \immediate\write16{ Fig4TeX WARNING:}
    \immediate\write16{ The category code of some characters has been changed, which will}
    \immediate\write16{ result in an error (message "Runaway argument?").}
    \immediate\write16{ This probably comes from another package that changed the category}
    \immediate\write16{ code after Fig4TeX was loaded. If that proves to be exact, the}
    \immediate\write16{ solution is to exchange the loading commands on top of your file}
    \immediate\write16{ so that Fig4TeX is loaded last. For example, in LaTeX, we should}
    \immediate\write16{ say :}
    \immediate\write16{\BS@ usepackage[french]{babel}}
    \immediate\write16{\BS@ usepackage{fig4tex}}
    \immediate\write16{!!!=============================================================!!!}
    \immediate\write16{}
    \fi}}
\ctr@ld@f\def\FigforTeX{F\kern-.05em i\kern-.05em g\kern-.1em\raise-.14em\hbox{4}\kern-.19em\TeX}
\ctr@ld@f\def\W@rnmesoldA#1{\W@rnmesold}
\ctr@ld@f\def\W@rnmesoldAB#1(#2){\W@rnmesold}
\ctr@ld@f\def\W@rnmesold{%
    \immediate\write16{}
    \immediate\write16{!!!=============================================================!!!}
    \immediate\write16{ Fig4TeX WARNING:}
    \immediate\write16{ The file to be compiled is not compatible with the current version}
    \immediate\write16{ of Fig4TeX. To fix that, upgrade the source file (mainly change \BS@ ps*}
    \immediate\write16{ macros by \BS@ fig* macros), or use fig4tex184.tex instead (\BS@ input fig4tex184}
    \immediate\write16{ or \BS@ usepackage{fig4tex184}).}
    \immediate\write16{!!!=============================================================!!!}
    \immediate\write16{}}
\ctr@ln@m\psbeginfig\let\psbeginfig\W@rnmesoldA
\ctr@ln@m\psset\let\psset\W@rnmesoldAB
\ctr@ln@m\pssetdefault\let\pssetdefault\W@rnmesoldAB
\ctr@ln@m\pssetupdate\let\pssetupdate\W@rnmesoldA
\ctr@ln@w{newdimen}\epsil@n\epsil@n=0.00005pt
\ctr@ln@w{newdimen}\Cepsil@n\Cepsil@n=0.005pt
\ctr@ln@w{newdimen}\dcq@\dcq@=254pt
\ctr@ln@w{newdimen}\PI@\PI@=3.141592pt
\ctr@ln@w{newdimen}\DemiPI@deg\DemiPI@deg=90pt
\ctr@ln@w{newdimen}\PI@deg\PI@deg=180pt
\ctr@ln@w{newdimen}\DePI@deg\DePI@deg=360pt
\ctr@ld@f\chardef\t@n=10
\ctr@ld@f\chardef\c@nt=100
\ctr@ld@f\chardef\@lxxiv=74
\ctr@ld@f\chardef\@xci=91
\ctr@ld@f\mathchardef\@nMnCQn=9949
\ctr@ld@f\chardef\@vi=6
\ctr@ld@f\chardef\@xxx=30
\ctr@ld@f\chardef\@lvi=56
\ctr@ld@f\chardef\@@lxxi=71
\ctr@ld@f\chardef\@lxxxv=85
\ctr@ld@f\mathchardef\@@mmmmlxviii=4068
\ctr@ld@f\mathchardef\@ccclx=360
\ctr@ld@f\mathchardef\@dccxx=720
\ctr@ln@w{newcount}\p@rtent \ctr@ln@w{newcount}\f@ctech \ctr@ln@w{newcount}\result@tent
\ctr@ln@w{newdimen}\v@lmin \ctr@ln@w{newdimen}\v@lmax \ctr@ln@w{newdimen}\v@leur
\ctr@ln@w{newdimen}\result@t\ctr@ln@w{newdimen}\result@@t
\ctr@ln@w{newdimen}\mili@u \ctr@ln@w{newdimen}\c@rre \ctr@ln@w{newdimen}\delt@
\ctr@ld@f\def\degT@rd{0.017453 }  
\ctr@ld@f\def\rdT@deg{57.295779 } 
\ctr@ln@m\v@leurseule
{\catcode`p=12 \catcode`t=12 \gdef\v@leurseule#1pt{#1}}
\ctr@ld@f\def\repdecn@mb#1{\expandafter\v@leurseule\the#1\space}
\ctr@ld@f\def\arct@n#1(#2,#3){{\v@lmin=#2\v@lmax=#3%
    \maxim@m{\mili@u}{-\v@lmin}{\v@lmin}\maxim@m{\c@rre}{-\v@lmax}{\v@lmax}%
    \delt@=\mili@u\m@ech\mili@u%
    \ifdim\c@rre>\@nMnCQn\mili@u\divide\v@lmax\tw@\c@lATAN\v@leur(\z@,\v@lmax)
    \else%
    \maxim@m{\mili@u}{-\v@lmin}{\v@lmin}\maxim@m{\c@rre}{-\v@lmax}{\v@lmax}%
    \m@ech\c@rre%
    \ifdim\mili@u>\@nMnCQn\c@rre\divide\v@lmin\tw@
    \maxim@m{\mili@u}{-\v@lmin}{\v@lmin}\c@lATAN\v@leur(\mili@u,\z@)%
    \else\c@lATAN\v@leur(\delt@,\v@lmax)\fi\fi%
    \ifdim\v@lmin<\z@\v@leur=-\v@leur\ifdim\v@lmax<\z@\advance\v@leur-\PI@%
    \else\advance\v@leur\PI@\fi\fi%
    \global\result@t=\v@leur}#1=\result@t}
\ctr@ld@f\def\m@ech#1{\ifdim#1>1.646pt\divide\mili@u\t@n\divide\c@rre\t@n\m@ech#1\fi}
\ctr@ld@f\def\c@lATAN#1(#2,#3){{\v@lmin=#2\v@lmax=#3\v@leur=\z@\delt@=\tw@ pt%
    \un@iter{0.785398}{\v@lmax<}%
    \un@iter{0.463648}{\v@lmax<}%
    \un@iter{0.244979}{\v@lmax<}%
    \un@iter{0.124355}{\v@lmax<}%
    \un@iter{0.062419}{\v@lmax<}%
    \un@iter{0.031240}{\v@lmax<}%
    \un@iter{0.015624}{\v@lmax<}%
    \un@iter{0.007812}{\v@lmax<}%
    \un@iter{0.003906}{\v@lmax<}%
    \un@iter{0.001953}{\v@lmax<}%
    \un@iter{0.000976}{\v@lmax<}%
    \un@iter{0.000488}{\v@lmax<}%
    \un@iter{0.000244}{\v@lmax<}%
    \un@iter{0.000122}{\v@lmax<}%
    \un@iter{0.000061}{\v@lmax<}%
    \un@iter{0.000030}{\v@lmax<}%
    \un@iter{0.000015}{\v@lmax<}%
    \global\result@t=\v@leur}#1=\result@t}
\ctr@ld@f\def\un@iter#1#2{%
    \divide\delt@\tw@\edef\dpmn@{\repdecn@mb{\delt@}}%
    \mili@u=\v@lmin%
    \ifdim#2\z@%
      \advance\v@lmin-\dpmn@\v@lmax\advance\v@lmax\dpmn@\mili@u%
      \advance\v@leur-#1pt%
    \else%
      \advance\v@lmin\dpmn@\v@lmax\advance\v@lmax-\dpmn@\mili@u%
      \advance\v@leur#1pt%
    \fi}
\ctr@ld@f\def\c@ssin#1#2#3{\expandafter\ifx\csname COS@\number#3\endcsname\relax\c@lCS{#3pt}%
    \expandafter\xdef\csname COS@\number#3\endcsname{\repdecn@mb\result@t}%
    \expandafter\xdef\csname SIN@\number#3\endcsname{\repdecn@mb\result@@t}\fi%
    \edef#1{\csname COS@\number#3\endcsname}\edef#2{\csname SIN@\number#3\endcsname}}
\ctr@ld@f\def\c@lCS#1{{\mili@u=#1\p@rtent=\@ne%
    \relax\ifdim\mili@u<\z@\red@ng<-\else\red@ng>+\fi\f@ctech=\p@rtent%
    \relax\ifdim\mili@u<\z@\mili@u=-\mili@u\f@ctech=-\f@ctech\fi\c@@lCS}}
\ctr@ld@f\def\c@@lCS{\v@lmin=\mili@u\c@rre=-\mili@u\advance\c@rre\DemiPI@deg\v@lmax=\c@rre%
    \mili@u\@@lxxi\mili@u\divide\mili@u\@@mmmmlxviii%
    \edef\v@larg{\repdecn@mb{\mili@u}}\mili@u=-\v@larg\mili@u%
    \edef\v@lmxde{\repdecn@mb{\mili@u}}%
    \c@rre\@@lxxi\c@rre\divide\c@rre\@@mmmmlxviii%
    \edef\v@largC{\repdecn@mb{\c@rre}}\c@rre=-\v@largC\c@rre%
    \edef\v@lmxdeC{\repdecn@mb{\c@rre}}%
    \fctc@s\mili@u\v@lmin\global\result@t\p@rtent\v@leur%
    \let\t@mp=\v@larg\let\v@larg=\v@largC\let\v@largC=\t@mp%
    \let\t@mp=\v@lmxde\let\v@lmxde=\v@lmxdeC\let\v@lmxdeC=\t@mp%
    \fctc@s\c@rre\v@lmax\global\result@@t\f@ctech\v@leur}
\ctr@ld@f\def\fctc@s#1#2{\v@leur=#1\relax\ifdim#2<\@lxxxv\p@\cosser@h\else\sinser@t\fi}
\ctr@ld@f\def\cosser@h{\advance\v@leur\@lvi\p@\divide\v@leur\@lvi%
    \v@leur=\v@lmxde\v@leur\advance\v@leur\@xxx\p@%
    \v@leur=\v@lmxde\v@leur\advance\v@leur\@ccclx\p@%
    \v@leur=\v@lmxde\v@leur\advance\v@leur\@dccxx\p@\divide\v@leur\@dccxx}
\ctr@ld@f\def\sinser@t{\v@leur=\v@lmxdeC\p@\advance\v@leur\@vi\p@%
    \v@leur=\v@largC\v@leur\divide\v@leur\@vi}
\ctr@ld@f\def\red@ng#1#2{\relax\ifdim\mili@u#1#2\DemiPI@deg\advance\mili@u#2-\PI@deg%
    \p@rtent=-\p@rtent\red@ng#1#2\fi}
\ctr@ld@f\def\pr@c@lCS#1#2#3{\ctr@lcsn@m{COS@\number#3 }%
    \expandafter\xdef\csname COS@\number#3\endcsname{#1}%
    \expandafter\xdef\csname SIN@\number#3\endcsname{#2}}
\pr@c@lCS{1}{0}{0}
\pr@c@lCS{0.7071}{0.7071}{45}\pr@c@lCS{0.7071}{-0.7071}{-45}
\pr@c@lCS{0}{1}{90}          \pr@c@lCS{0}{-1}{-90}
\pr@c@lCS{-1}{0}{180}        \pr@c@lCS{-1}{0}{-180}
\pr@c@lCS{0}{-1}{270}        \pr@c@lCS{0}{1}{-270}
\ctr@ld@f\def\invers@#1#2{{\v@leur=#2\maxim@m{\v@lmax}{-\v@leur}{\v@leur}%
    \f@ctech=\@ne\m@inv@rs%
    \multiply\v@leur\f@ctech\edef\v@lv@leur{\repdecn@mb{\v@leur}}%
    \p@rtentiere{\p@rtent}{\v@leur}\v@lmin=\p@\divide\v@lmin\p@rtent%
    \inv@rs@\multiply\v@lmax\f@ctech\global\result@t=\v@lmax}#1=\result@t}
\ctr@ld@f\def\m@inv@rs{\ifdim\v@lmax<\p@\multiply\v@lmax\t@n\multiply\f@ctech\t@n\m@inv@rs\fi}
\ctr@ld@f\def\inv@rs@{\v@lmax=-\v@lmin\v@lmax=\v@lv@leur\v@lmax%
    \advance\v@lmax\tw@ pt\v@lmax=\repdecn@mb{\v@lmin}\v@lmax%
    \delt@=\v@lmax\advance\delt@-\v@lmin\ifdim\delt@<\z@\delt@=-\delt@\fi%
    \ifdim\delt@>\epsil@n\v@lmin=\v@lmax\inv@rs@\fi}
\ctr@ld@f\def\minim@m#1#2#3{\relax\ifdim#2<#3#1=#2\else#1=#3\fi}
\ctr@ld@f\def\maxim@m#1#2#3{\relax\ifdim#2>#3#1=#2\else#1=#3\fi}
\ctr@ld@f\def\p@rtentiere#1#2{#1=#2\divide#1by65536 }
\ctr@ld@f\def\r@undint#1#2{{\v@leur=#2\divide\v@leur\t@n\p@rtentiere{\p@rtent}{\v@leur}%
    \v@leur=\p@rtent pt\global\result@t=\t@n\v@leur}#1=\result@t}
\ctr@ld@f\def\sqrt@#1#2{{\v@leur=#2%
    \minim@m{\v@lmin}{\p@}{\v@leur}\maxim@m{\v@lmax}{\p@}{\v@leur}%
    \f@ctech=\@ne\m@sqrt@\sqrt@@%
    \mili@u=\v@lmin\advance\mili@u\v@lmax\divide\mili@u\tw@\multiply\mili@u\f@ctech%
    \global\result@t=\mili@u}#1=\result@t}
\ctr@ld@f\def\m@sqrt@{\ifdim\v@leur>\dcq@\divide\v@leur\c@nt\v@lmax=\v@leur%
    \multiply\f@ctech\t@n\m@sqrt@\fi}
\ctr@ld@f\def\sqrt@@{\mili@u=\v@lmin\advance\mili@u\v@lmax\divide\mili@u\tw@%
    \c@rre=\repdecn@mb{\mili@u}\mili@u%
    \ifdim\c@rre<\v@leur\v@lmin=\mili@u\else\v@lmax=\mili@u\fi%
    \delt@=\v@lmax\advance\delt@-\v@lmin\ifdim\delt@>\epsil@n\sqrt@@\fi}
\ctr@ld@f\def\extrairelepremi@r#1\de#2{\expandafter\lepremi@r#2@#1#2}
\ctr@ld@f\def\lepremi@r#1,#2@#3#4{\def#3{#1}\def#4{#2}\ignorespaces}
\ctr@ld@f\def\@cfor#1:=#2\do#3{%
  \edef\@fortemp{#2}%
  \ifx\@fortemp\empty\else\@cforloop#2,\@nil,\@nil\@@#1{#3}\fi}
\ctr@ln@m\@nextwhile
\ctr@ld@f\def\@cforloop#1,#2\@@#3#4{%
  \def#3{#1}%
  \ifx#3\Fig@nnil\let\@nextwhile=\Fig@fornoop\else#4\relax\let\@nextwhile=\@cforloop\fi%
  \@nextwhile#2\@@#3{#4}}

\ctr@ld@f\def\@ecfor#1:=#2\do#3{%
  \def\@@cfor{\@cfor#1:=}%
  \edef\@@@cfor{#2}%
  \expandafter\@@cfor\@@@cfor\do{#3}}
\ctr@ld@f\def\Fig@nnil{\@nil}
\ctr@ld@f\def\Fig@fornoop#1\@@#2#3{}
\ctr@ln@m\list@@rg
\ctr@ld@f\def\trtlis@rg#1#2{\def\list@@rg{#1}%
    \@ecfor\p@rv@l:=\list@@rg\do{\expandafter#2\p@rv@l|}}
\ctr@ld@f\def\trtlis@rgtok#1{\let@xte={}\let\n@xt\addt@t@xt\addt@t@xt #1}
\ctr@ln@m\M@cro
\ctr@ln@m\n@xt
\ctr@ld@f\def\addt@t@xt#1{\if#1|\let\n@xt\relax\else%
    \if#1,\expandafter\M@cro\the\let@xte|\let@xte={}%
    \else\let@xte=\expandafter{\the\let@xte #1}\fi\fi\n@xt}
\ctr@ln@w{newbox}\b@xvisu
\ctr@ln@w{newtoks}\let@xte
\ctr@ln@w{newif}\ifitis@K
\ctr@ln@w{newcount}\s@mme
\ctr@ln@w{newcount}\l@mbd@un \ctr@ln@w{newcount}\l@mbd@de
\ctr@ln@w{newcount}\superc@ntr@l\superc@ntr@l=\@ne        
\ctr@ln@w{newcount}\typec@ntr@l\typec@ntr@l=\superc@ntr@l 
\ctr@ln@w{newdimen}\v@lX  \ctr@ln@w{newdimen}\v@lY  \ctr@ln@w{newdimen}\v@lZ
\ctr@ln@w{newdimen}\v@lXa \ctr@ln@w{newdimen}\v@lYa \ctr@ln@w{newdimen}\v@lZa
\ctr@ln@w{newdimen}\unit@\unit@=\p@ 
\ctr@ld@f\def\unit@util{pt}
\ctr@ld@f\def\ptT@ptps{0.996264 }
\ctr@ld@f\def\ptpsT@pt{1.00375 }
\ctr@ld@f\def\ptT@unit@{1} 
\ctr@ld@f\def\setunit@#1{\def\unit@util{#1}\setunit@@#1:\invers@{\result@t}{\unit@}%
    \edef\ptT@unit@{\repdecn@mb\result@t}}
\ctr@ld@f\def\setunit@@#1#2:{\ifcat#1a\unit@=\@ne#1#2\else\unit@=#1#2\fi}
\ctr@ld@f\def\d@fm@cdim#1#2{{\v@leur=#2\v@leur=\ptT@unit@\v@leur\xdef#1{\repdecn@mb\v@leur}}}
\ctr@ln@w{newif}\ifBdingB@x\BdingB@xtrue
\ctr@ln@w{newdimen}\c@@rdXmin \ctr@ln@w{newdimen}\c@@rdYmin  
\ctr@ln@w{newdimen}\c@@rdXmax \ctr@ln@w{newdimen}\c@@rdYmax
\ctr@ld@f\def\b@undb@x#1#2{\ifBdingB@x%
    \relax\ifdim#1<\c@@rdXmin\global\c@@rdXmin=#1\fi%
    \relax\ifdim#2<\c@@rdYmin\global\c@@rdYmin=#2\fi%
    \relax\ifdim#1>\c@@rdXmax\global\c@@rdXmax=#1\fi%
    \relax\ifdim#2>\c@@rdYmax\global\c@@rdYmax=#2\fi\fi}
\ctr@ld@f\def\b@undb@xP#1{{\Figg@tXY{#1}\b@undb@x{\v@lX}{\v@lY}}}
\ctr@ld@f\def\ellBB@x#1;#2,#3(#4,#5,#6){{\s@uvc@ntr@l\et@tellBB@x%
    \setc@ntr@l{2}\figptell-2::#1;#2,#3(#4,#6)\b@undb@xP{-2}%
    \figptell-2::#1;#2,#3(#5,#6)\b@undb@xP{-2}%
    \c@ssin{\C@}{\S@}{#6}\v@lmin=\C@ pt\v@lmax=\S@ pt%
    \mili@u=#3\v@lmin\delt@=#2\v@lmax\arct@n\v@leur(\delt@,\mili@u)%
    \mili@u=-#3\v@lmax\delt@=#2\v@lmin\arct@n\c@rre(\delt@,\mili@u)%
    \v@leur=\rdT@deg\v@leur\advance\v@leur-\DePI@deg%
    \c@rre=\rdT@deg\c@rre\advance\c@rre-\DePI@deg%
    \v@lmin=#4pt\v@lmax=#5pt%
    \loop\ifdim\v@leur<\v@lmax\ifdim\v@leur>\v@lmin%
    \edef\@ngle{\repdecn@mb\v@leur}\figptell-2::#1;#2,#3(\@ngle,#6)%
    \b@undb@xP{-2}\fi\advance\v@leur\PI@deg\repeat%
    \loop\ifdim\c@rre<\v@lmax\ifdim\c@rre>\v@lmin%
    \edef\@ngle{\repdecn@mb\c@rre}\figptell-2::#1;#2,#3(\@ngle,#6)%
    \b@undb@xP{-2}\fi\advance\c@rre\PI@deg\repeat%
    \resetc@ntr@l\et@tellBB@x}\ignorespaces}
\ctr@ld@f\def\initb@undb@x{\c@@rdXmin=\maxdimen\c@@rdYmin=\maxdimen%
    \c@@rdXmax=-\maxdimen\c@@rdYmax=-\maxdimen}
\ctr@ld@f\def\c@ntr@lnum#1{%
    \relax\ifnum\typec@ntr@l=\@ne%
    \ifnum#1<\z@%
    \immediate\write16{*** Forbidden point number (#1). Abort.}\end\fi\fi%
    \set@bjc@de{#1}}
\ctr@ln@m\objc@de
\ctr@ld@f\def\set@bjc@de#1{\edef\objc@de{@BJ\ifnum#1<\z@ M\romannumeral-#1\else\romannumeral#1\fi}}
\s@mme=\m@ne\loop\ifnum\s@mme>-19
  \set@bjc@de{\s@mme}\ctr@lcsn@m\objc@de\ctr@lcsn@m{\objc@de T}
\advance\s@mme\m@ne\repeat
\s@mme=\@ne\loop\ifnum\s@mme<6
  \set@bjc@de{\s@mme}\ctr@lcsn@m\objc@de\ctr@lcsn@m{\objc@de T}
\advance\s@mme\@ne\repeat
\ctr@ld@f\def\setc@ntr@l#1{\ifnum\superc@ntr@l>#1\typec@ntr@l=\superc@ntr@l%
    \else\typec@ntr@l=#1\fi}
\ctr@ld@f\def\resetc@ntr@l#1{\global\superc@ntr@l=#1\setc@ntr@l{#1}}
\ctr@ld@f\def\s@uvc@ntr@l#1{\edef#1{\the\superc@ntr@l}}
\ctr@ln@m\c@lproscal
\ctr@ld@f\def\c@lproscalDD#1[#2,#3]{{\Figg@tXY{#2}%
    \edef\Xu@{\repdecn@mb{\v@lX}}\edef\Yu@{\repdecn@mb{\v@lY}}\Figg@tXY{#3}%
    \global\result@t=\Xu@\v@lX\global\advance\result@t\Yu@\v@lY}#1=\result@t}
\ctr@ld@f\def\c@lproscalTD#1[#2,#3]{{\Figg@tXY{#2}\edef\Xu@{\repdecn@mb{\v@lX}}%
    \edef\Yu@{\repdecn@mb{\v@lY}}\edef\Zu@{\repdecn@mb{\v@lZ}}%
    \Figg@tXY{#3}\global\result@t=\Xu@\v@lX\global\advance\result@t\Yu@\v@lY%
    \global\advance\result@t\Zu@\v@lZ}#1=\result@t}
\ctr@ld@f\def\c@lprovec#1{%
    \det@rmC\v@lZa(\v@lX,\v@lY,\v@lmin,\v@lmax)%
    \det@rmC\v@lXa(\v@lY,\v@lZ,\v@lmax,\v@leur)%
    \det@rmC\v@lYa(\v@lZ,\v@lX,\v@leur,\v@lmin)%
    \Figv@ctCreg#1(\v@lXa,\v@lYa,\v@lZa)}
\ctr@ld@f\def\det@rm#1[#2,#3]{{\Figg@tXY{#2}\Figg@tXYa{#3}%
    \delt@=\repdecn@mb{\v@lX}\v@lYa\advance\delt@-\repdecn@mb{\v@lY}\v@lXa%
    \global\result@t=\delt@}#1=\result@t}
\ctr@ld@f\def\det@rmC#1(#2,#3,#4,#5){{\global\result@t=\repdecn@mb{#2}#5%
    \global\advance\result@t-\repdecn@mb{#3}#4}#1=\result@t}
\ctr@ld@f\def\getredf@ctDD#1(#2,#3){{\maxim@m{\v@lXa}{-#2}{#2}\maxim@m{\v@lYa}{-#3}{#3}%
    \maxim@m{\v@lXa}{\v@lXa}{\v@lYa}
    \ifdim\v@lXa>\@xci pt\divide\v@lXa\@xci%
    \p@rtentiere{\p@rtent}{\v@lXa}\advance\p@rtent\@ne\else\p@rtent=\@ne\fi%
    \global\result@tent=\p@rtent}#1=\result@tent\ignorespaces}
\ctr@ld@f\def\getredf@ctTD#1(#2,#3,#4){{\maxim@m{\v@lXa}{-#2}{#2}\maxim@m{\v@lYa}{-#3}{#3}%
    \maxim@m{\v@lZa}{-#4}{#4}\maxim@m{\v@lXa}{\v@lXa}{\v@lYa}%
    \maxim@m{\v@lXa}{\v@lXa}{\v@lZa}
    \ifdim\v@lXa>\@lxxiv pt\divide\v@lXa\@lxxiv%
    \p@rtentiere{\p@rtent}{\v@lXa}\advance\p@rtent\@ne\else\p@rtent=\@ne\fi%
    \global\result@tent=\p@rtent}#1=\result@tent\ignorespaces}
\ctr@ln@m\getredf@ctB
\ctr@ld@f\def\getredf@ctBDD#1{\getredf@ctDD#1(\v@lX,\v@lY)}
\ctr@ld@f\def\getredf@ctBTD#1{\getredf@ctTD#1(\v@lX,\v@lY,\v@lZ)}
\ctr@ld@f\def\FigptintercircB@zDD#1:#2:#3,#4[#5,#6,#7,#8]{{\s@uvc@ntr@l\et@tfigptintercircB@zDD%
    \setc@ntr@l{2}\figvectPDD-1[#5,#8]\Figg@tXY{-1}\getredf@ctDD\f@ctech(\v@lX,\v@lY)%
    \mili@u=#4\unit@\divide\mili@u\f@ctech\c@rre=\repdecn@mb{\mili@u}\mili@u%
    \figptBezierDD-5::#3[#5,#6,#7,#8]%
    \v@lmin=#3\p@\v@lmax=\v@lmin\advance\v@lmax0.1\p@%
    \loop\edef\T@{\repdecn@mb{\v@lmax}}\figptBezierDD-2::\T@[#5,#6,#7,#8]%
    \figvectPDD-1[-5,-2]\n@rmeucCDD{\delt@}{-1}\ifdim\delt@<\c@rre\v@lmin=\v@lmax%
    \advance\v@lmax0.1\p@\repeat%
    \loop\mili@u=\v@lmin\advance\mili@u\v@lmax%
    \divide\mili@u\tw@\edef\T@{\repdecn@mb{\mili@u}}\figptBezierDD-2::\T@[#5,#6,#7,#8]%
    \figvectPDD-1[-5,-2]\n@rmeucCDD{\delt@}{-1}\ifdim\delt@>\c@rre\v@lmax=\mili@u%
    \else\v@lmin=\mili@u\fi\v@leur=\v@lmax\advance\v@leur-\v@lmin%
    \ifdim\v@leur>\epsil@n\repeat\figptcopyDD#1:#2/-2/%
    \resetc@ntr@l\et@tfigptintercircB@zDD}\ignorespaces}
\ctr@ln@m\figptinterlines
\ctr@ld@f\def\inters@cDD#1:#2[#3,#4;#5,#6]{{\s@uvc@ntr@l\et@tinters@cDD%
    \setc@ntr@l{2}\vecunit@{-1}{#4}\vecunit@{-2}{#6}%
    \Figg@tXY{-1}\setc@ntr@l{1}\Figg@tXYa{#3}%
    \edef\A@{\repdecn@mb{\v@lX}}\edef\B@{\repdecn@mb{\v@lY}}%
    \v@lmin=\B@\v@lXa\advance\v@lmin-\A@\v@lYa%
    \Figg@tXYa{#5}\setc@ntr@l{2}\Figg@tXY{-2}%
    \edef\C@{\repdecn@mb{\v@lX}}\edef\D@{\repdecn@mb{\v@lY}}%
    \v@lmax=\D@\v@lXa\advance\v@lmax-\C@\v@lYa%
    \delt@=\A@\v@lY\advance\delt@-\B@\v@lX%
    \invers@{\v@leur}{\delt@}\edef\v@ldelta{\repdecn@mb{\v@leur}}%
    \v@lXa=\A@\v@lmax\advance\v@lXa-\C@\v@lmin%
    \v@lYa=\B@\v@lmax\advance\v@lYa-\D@\v@lmin%
    \v@lXa=\v@ldelta\v@lXa\v@lYa=\v@ldelta\v@lYa%
    \setc@ntr@l{1}\Figp@intregDD#1:{#2}(\v@lXa,\v@lYa)%
    \resetc@ntr@l\et@tinters@cDD}\ignorespaces}
\ctr@ld@f\def\inters@cTD#1:#2[#3,#4;#5,#6]{{\s@uvc@ntr@l\et@tinters@cTD%
    \setc@ntr@l{2}\figvectNVTD-1[#4,#6]\figvectNVTD-2[#6,-1]\figvectPTD-1[#3,#5]%
    \r@pPSTD\v@leur[-2,-1,#4]\edef\v@lcoef{\repdecn@mb{\v@leur}}%
    \figpttraTD#1:{#2}=#3/\v@lcoef,#4/\resetc@ntr@l\et@tinters@cTD}\ignorespaces}
\ctr@ld@f\def\r@pPSTD#1[#2,#3,#4]{{\Figg@tXY{#2}\edef\Xu@{\repdecn@mb{\v@lX}}%
    \edef\Yu@{\repdecn@mb{\v@lY}}\edef\Zu@{\repdecn@mb{\v@lZ}}%
    \Figg@tXY{#3}\v@lmin=\Xu@\v@lX\advance\v@lmin\Yu@\v@lY\advance\v@lmin\Zu@\v@lZ%
    \Figg@tXY{#4}\v@lmax=\Xu@\v@lX\advance\v@lmax\Yu@\v@lY\advance\v@lmax\Zu@\v@lZ%
    \invers@{\v@leur}{\v@lmax}\global\result@t=\repdecn@mb{\v@leur}\v@lmin}%
    #1=\result@t}
\ctr@ln@m\n@rminf
\ctr@ld@f\def\n@rminfDD#1#2{{\Figg@tXY{#2}\maxim@m{\v@lX}{\v@lX}{-\v@lX}%
    \maxim@m{\v@lY}{\v@lY}{-\v@lY}\maxim@m{\global\result@t}{\v@lX}{\v@lY}}%
    #1=\result@t}
\ctr@ld@f\def\n@rminfTD#1#2{{\Figg@tXY{#2}\maxim@m{\v@lX}{\v@lX}{-\v@lX}%
    \maxim@m{\v@lY}{\v@lY}{-\v@lY}\maxim@m{\v@lZ}{\v@lZ}{-\v@lZ}%
    \maxim@m{\v@lX}{\v@lX}{\v@lY}\maxim@m{\global\result@t}{\v@lX}{\v@lZ}}%
    #1=\result@t}
\ctr@ln@m\n@rmeucC
\ctr@ld@f\def\n@rmeucCDD#1#2{\Figg@tXY{#2}\divide\v@lX\f@ctech\divide\v@lY\f@ctech%
    #1=\repdecn@mb{\v@lX}\v@lX\v@lX=\repdecn@mb{\v@lY}\v@lY\advance#1\v@lX}
\ctr@ld@f\def\n@rmeucCTD#1#2{\Figg@tXY{#2}%
    \divide\v@lX\f@ctech\divide\v@lY\f@ctech\divide\v@lZ\f@ctech%
    #1=\repdecn@mb{\v@lX}\v@lX\v@lX=\repdecn@mb{\v@lY}\v@lY\advance#1\v@lX%
    \v@lX=\repdecn@mb{\v@lZ}\v@lZ\advance#1\v@lX}
\ctr@ln@m\n@rmeucSV
\ctr@ld@f\def\n@rmeucSVDD#1#2{{\Figg@tXY{#2}%
    \v@lXa=\repdecn@mb{\v@lX}\v@lX\v@lYa=\repdecn@mb{\v@lY}\v@lY%
    \advance\v@lXa\v@lYa\sqrt@{\global\result@t}{\v@lXa}}#1=\result@t}
\ctr@ld@f\def\n@rmeucSVTD#1#2{{\Figg@tXY{#2}\v@lXa=\repdecn@mb{\v@lX}\v@lX%
    \v@lYa=\repdecn@mb{\v@lY}\v@lY\v@lZa=\repdecn@mb{\v@lZ}\v@lZ%
    \advance\v@lXa\v@lYa\advance\v@lXa\v@lZa\sqrt@{\global\result@t}{\v@lXa}}#1=\result@t}
\ctr@ln@m\n@rmeuc
\ctr@ld@f\def\n@rmeucDD#1#2{{\Figg@tXY{#2}\getredf@ctDD\f@ctech(\v@lX,\v@lY)%
    \divide\v@lX\f@ctech\divide\v@lY\f@ctech%
    \v@lXa=\repdecn@mb{\v@lX}\v@lX\v@lYa=\repdecn@mb{\v@lY}\v@lY%
    \advance\v@lXa\v@lYa\sqrt@{\global\result@t}{\v@lXa}%
    \global\multiply\result@t\f@ctech}#1=\result@t}
\ctr@ld@f\def\n@rmeucTD#1#2{{\Figg@tXY{#2}\getredf@ctTD\f@ctech(\v@lX,\v@lY,\v@lZ)%
    \divide\v@lX\f@ctech\divide\v@lY\f@ctech\divide\v@lZ\f@ctech%
    \v@lXa=\repdecn@mb{\v@lX}\v@lX%
    \v@lYa=\repdecn@mb{\v@lY}\v@lY\v@lZa=\repdecn@mb{\v@lZ}\v@lZ%
    \advance\v@lXa\v@lYa\advance\v@lXa\v@lZa\sqrt@{\global\result@t}{\v@lXa}%
    \global\multiply\result@t\f@ctech}#1=\result@t}
\ctr@ln@m\vecunit@
\ctr@ld@f\def\vecunit@DD#1#2{{\Figg@tXY{#2}\getredf@ctDD\f@ctech(\v@lX,\v@lY)%
    \divide\v@lX\f@ctech\divide\v@lY\f@ctech%
    \Figv@ctCreg#1(\v@lX,\v@lY)\n@rmeucSV{\v@lYa}{#1}%
    \invers@{\v@lXa}{\v@lYa}\edef\v@lv@lXa{\repdecn@mb{\v@lXa}}%
    \v@lX=\v@lv@lXa\v@lX\v@lY=\v@lv@lXa\v@lY%
    \Figv@ctCreg#1(\v@lX,\v@lY)\multiply\v@lYa\f@ctech\global\result@t=\v@lYa}}
\ctr@ld@f\def\vecunit@TD#1#2{{\Figg@tXY{#2}\getredf@ctTD\f@ctech(\v@lX,\v@lY,\v@lZ)%
    \divide\v@lX\f@ctech\divide\v@lY\f@ctech\divide\v@lZ\f@ctech%
    \Figv@ctCreg#1(\v@lX,\v@lY,\v@lZ)\n@rmeucSV{\v@lYa}{#1}%
    \invers@{\v@lXa}{\v@lYa}\edef\v@lv@lXa{\repdecn@mb{\v@lXa}}%
    \v@lX=\v@lv@lXa\v@lX\v@lY=\v@lv@lXa\v@lY\v@lZ=\v@lv@lXa\v@lZ%
    \Figv@ctCreg#1(\v@lX,\v@lY,\v@lZ)\multiply\v@lYa\f@ctech\global\result@t=\v@lYa}}
\ctr@ld@f\def\vecunitC@TD[#1,#2]{\Figg@tXYa{#1}\Figg@tXY{#2}%
    \advance\v@lX-\v@lXa\advance\v@lY-\v@lYa\advance\v@lZ-\v@lZa\c@lvecunitTD}
\ctr@ld@f\def\vecunitCV@TD#1{\Figg@tXY{#1}\c@lvecunitTD}
\ctr@ld@f\def\c@lvecunitTD{\getredf@ctTD\f@ctech(\v@lX,\v@lY,\v@lZ)%
    \divide\v@lX\f@ctech\divide\v@lY\f@ctech\divide\v@lZ\f@ctech%
    \v@lXa=\repdecn@mb{\v@lX}\v@lX%
    \v@lYa=\repdecn@mb{\v@lY}\v@lY\v@lZa=\repdecn@mb{\v@lZ}\v@lZ%
    \advance\v@lXa\v@lYa\advance\v@lXa\v@lZa\sqrt@{\v@lYa}{\v@lXa}%
    \invers@{\v@lXa}{\v@lYa}\edef\v@lv@lXa{\repdecn@mb{\v@lXa}}%
    \v@lX=\v@lv@lXa\v@lX\v@lY=\v@lv@lXa\v@lY\v@lZ=\v@lv@lXa\v@lZ}
\ctr@ln@m\figgetangle
\ctr@ld@f\def\figgetangleDD#1[#2,#3,#4]{\ifGR@cri{\s@uvc@ntr@l\et@tfiggetangleDD\setc@ntr@l{2}%
    \figvectPDD-1[#2,#3]\figvectPDD-2[#2,#4]\vecunit@{-1}{-1}%
    \c@lproscalDD\delt@[-2,-1]\figvectNVDD-1[-1]\c@lproscalDD\v@leur[-2,-1]%
    \arct@n\v@lmax(\delt@,\v@leur)\v@lmax=\rdT@deg\v@lmax%
    \ifdim\v@lmax<\z@\advance\v@lmax\DePI@deg\fi\xdef#1{\repdecn@mb{\v@lmax}}%
    \resetc@ntr@l\et@tfiggetangleDD}\ignorespaces\fi}
\ctr@ld@f\def\figgetangleTD#1[#2,#3,#4,#5]{\ifGR@cri{\s@uvc@ntr@l\et@tfiggetangleTD\setc@ntr@l{2}%
    \figvectPTD-1[#2,#3]\figvectPTD-2[#2,#5]\figvectNVTD-3[-1,-2]%
    \figvectPTD-2[#2,#4]\figvectNVTD-4[-3,-1]%
    \vecunit@{-1}{-1}\c@lproscalTD\delt@[-2,-1]\c@lproscalTD\v@leur[-2,-4]%
    \arct@n\v@lmax(\delt@,\v@leur)\v@lmax=\rdT@deg\v@lmax%
    \ifdim\v@lmax<\z@\advance\v@lmax\DePI@deg\fi\xdef#1{\repdecn@mb{\v@lmax}}%
    \resetc@ntr@l\et@tfiggetangleTD}\ignorespaces\fi}    
\ctr@ld@f\def\figgetdist#1[#2,#3]{\ifGR@cri{\s@uvc@ntr@l\et@tfiggetdist\setc@ntr@l{2}%
    \figvectP-1[#2,#3]\n@rmeuc{\v@lX}{-1}\v@lX=\ptT@unit@\v@lX\xdef#1{\repdecn@mb{\v@lX}}%
    \resetc@ntr@l\et@tfiggetdist}\ignorespaces\fi}
\ctr@ld@f\def\figget#1=#2[#3]{\keln@mun#1|%
    \def\n@mref{a}\ifx\l@debut\n@mref\figgetangle#2[#3]\else
    \def\n@mref{d}\ifx\l@debut\n@mref\figgetdist#2[#3]\else
    \W@rnmeskwd{figget}{#1}\fi\fi\ignorespaces}
\ctr@ld@f\def\Figg@tT#1{\c@ntr@lnum{#1}%
    {\expandafter\expandafter\expandafter\extr@ctT\csname\objc@de\endcsname:%
     \ifnum\B@@ltxt=\z@\ptn@me{#1}\else\csname\objc@de T\endcsname\fi}}
\ctr@ld@f\def\extr@ctT#1,#2,#3/#4:{\def\B@@ltxt{#3}}
\ctr@ld@f\def\Figg@tXY#1{\c@ntr@lnum{#1}%
    \expandafter\expandafter\expandafter\extr@ctC\csname\objc@de\endcsname:}
\ctr@ln@m\extr@ctC
\ctr@ld@f\def\extr@ctCDD#1/#2,#3,#4:{\v@lX=#2\v@lY=#3}
\ctr@ld@f\def\extr@ctCTD#1/#2,#3,#4:{\v@lX=#2\v@lY=#3\v@lZ=#4}
\ctr@ld@f\def\Figg@tXYa#1{\c@ntr@lnum{#1}%
    \expandafter\expandafter\expandafter\extr@ctCa\csname\objc@de\endcsname:}
\ctr@ln@m\extr@ctCa
\ctr@ld@f\def\extr@ctCaDD#1/#2,#3,#4:{\v@lXa=#2\v@lYa=#3}
\ctr@ld@f\def\extr@ctCaTD#1/#2,#3,#4:{\v@lXa=#2\v@lYa=#3\v@lZa=#4}
\ctr@ln@m\t@xt@
\ctr@ld@f\def\figinit#1{\t@stc@tcodech@nge\initpr@lim\Figinit@#1,:\initpss@ttings\ignorespaces}
\ctr@ld@f\def\Figinit@#1,#2:{\setunit@{#1}\def\t@xt@{#2}\ifx\t@xt@\empty\else\Figinit@@#2:\fi}
\ctr@ld@f\def\Figinit@@#1#2:{\if#12 \else\Figs@tproj{#1}\initTD@\fi}
\ctr@ln@w{newif}\ifTr@isDim
\ctr@ld@f\def\UnD@fined{UNDEFINED}
\ctr@ln@m\@utoFN
\ctr@ln@m\@utoFInDone
\ctr@ln@m\disob@unit
\ctr@ld@f\def\initpr@lim{\initb@undb@x\figsetmark{}\figsetptname{$A_{##1}$}\def\Sc@leFact{1}%
    \initDD@\figsetroundcoord{yes}\GR@critrue\expandafter\setupd@te\D@FTupdate:%
    \edef\disob@unit{\UnD@fined}\edef\t@rgetpt{\UnD@fined}\gdef\@utoFInDone{1}\gdef\@utoFN{0}}
\ctr@ld@f\def\initDD@{\Tr@isDimfalse%
    \ifPDFm@ke%
     \let\Ps@rcerc=\Ps@rcercBz%
     \let\Ps@rell=\Ps@rellBz%
    \fi
    \let\c@lDCUn=\c@lDCUnDD%
    \let\c@lDCDeux=\c@lDCDeuxDD%
    \let\c@ldefproj=\relax%
    \let\c@lproscal=\c@lproscalDD%
    \let\c@lprojSP=\relax%
    \let\extr@ctC=\extr@ctCDD%
    \let\extr@ctCa=\extr@ctCaDD%
    \let\extr@ctCF=\extr@ctCFDD%
    \let\Figp@intreg=\Figp@intregDD%
    \let\Figpts@xes=\Figpts@xesDD%
    \let\getredf@ctB=\getredf@ctBDD%
    \let\n@rmeucSV=\n@rmeucSVDD\let\n@rmeuc=\n@rmeucDD\let\n@rmeucC\n@rmeucCDD\let\n@rminf=\n@rminfDD%
    \let\pr@dMatV=\pr@dMatVDD%
    \let\Q@@xes=\Q@@xesDD%
    \let\vecunit@=\vecunit@DD%
    \let\figcoord=\figcoordDD%
    \let\figgetangle=\figgetangleDD%
    \let\figpt=\figptDD%
    \let\figptBezier=\figptBezierDD%
    \let\figptbary=\figptbaryDD%
    \let\figptcirc=\figptcircDD%
    \let\figptcircumcenter=\figptcircumcenterDD%
    \let\figptcopy=\figptcopyDD%
    \let\figptcurvcenter=\figptcurvcenterDD%
    \let\figptell=\figptellDD%
    \let\figptendnormal=\figptendnormalDD%
    \let\figptinterlineplane=\figptinterlineplaneDD%
    \let\figptinterlines=\inters@cDD%
    \let\figptorthocenter=\figptorthocenterDD%
    \let\figptorthoprojline=\figptorthoprojlineDD%
    \let\figptorthoprojplane=\figptorthoprojplaneDD%
    \let\figptrot=\figptrotDD%
    \let\figptscontrol=\figptscontrolDD%
    \let\figptsintercirc=\figptsintercircDD%
    \let\figptsinterlinell=\figptsinterlinellDD%
    \let\figptsorthoprojline=\figptsorthoprojlineDD%
    \let\figptorthoprojplane=\figptorthoprojplaneDD%
    \let\figptsrot=\figptsrotDD%
    \let\figptssym=\figptssymDD%
    \let\figptstra=\figptstraDD%
    \let\figptsym=\figptsymDD%
    \let\figpttraC=\figpttraCDD%
    \let\figpttra=\figpttraDD%
    \let\figptvisilimSL=\figptvisilimSLDD%
    \let\figsetobdist=\figsetobdistDD%
    \let\figsettarget=\figsettargetDD%
    \let\figsetview=\figsetviewDD%
    \let\figvectDBezier=\figvectDBezierDD%
    \let\figvectN=\figvectNDD%
    \let\figvectNV=\figvectNVDD%
    \let\figvectP=\figvectPDD%
    \let\figvectU=\figvectUDD%
    \let\figdrawarccircP=\Q@arccircPDD%
    \let\figdrawarccirc=\Q@arccircDD%
    \let\figdrawarcell=\Q@arcellDD%
    \let\figdrawarcellPA=\Q@arcellPADD%
    \let\figdrawarrowBezier=\Q@arrowBezierDD%
    \let\figdrawarrowcircP=\Q@arrowcircPDD%
    \let\figdrawarrowcirc=\Q@arrowcircDD%
    \let\figdrawarrowhead=\Q@arrowheadDD%
    \let\figdrawarrow=\Q@arrowDD%
    \let\figdrawBezier=\Q@BezierDD%
    \let\figdrawcirc=\Q@circDD%
    \let\figdrawcurve=\Q@curveDD%
    \let\figdrawnormal=\Q@normalDD%
    }
\ctr@ld@f\def\initTD@{\Tr@isDimtrue\initb@undb@xTD\newt@rgetptfalse\newdis@bfalse%
    \let\c@lDCUn=\c@lDCUnTD%
    \let\c@lDCDeux=\c@lDCDeuxTD%
    \let\c@ldefproj=\c@ldefprojTD%
    \let\c@lproscal=\c@lproscalTD%
    \let\extr@ctC=\extr@ctCTD%
    \let\extr@ctCa=\extr@ctCaTD%
    \let\extr@ctCF=\extr@ctCFTD%
    \let\Figp@intreg=\Figp@intregTD%
    \let\Figpts@xes=\Figpts@xesTD%
    \let\getredf@ctB=\getredf@ctBTD%
    \let\n@rmeucSV=\n@rmeucSVTD\let\n@rmeuc=\n@rmeucTD\let\n@rmeucC\n@rmeucCTD\let\n@rminf=\n@rminfTD%
    \let\pr@dMatV=\pr@dMatVTD%
    \let\Q@@xes=\Q@@xesTD%
    \let\vecunit@=\vecunit@TD%
    \let\figcoord=\figcoordTD%
    \let\figgetangle=\figgetangleTD%
    \let\figpt=\figptTD%
    \let\figptBezier=\figptBezierTD%
    \let\figptbary=\figptbaryTD%
    \let\figptcirc=\figptcircTD%
    \let\figptcircumcenter=\figptcircumcenterTD%
    \let\figptcopy=\figptcopyTD%
    \let\figptcurvcenter=\figptcurvcenterTD%
    \let\figptinterlineplane=\figptinterlineplaneTD%
    \let\figptinterlines=\inters@cTD%
    \let\figptorthocenter=\figptorthocenterTD%
    \let\figptorthoprojline=\figptorthoprojlineTD%
    \let\figptorthoprojplane=\figptorthoprojplaneTD%
    \let\figptrot=\figptrotTD%
    \let\figptscontrol=\figptscontrolTD%
    \let\figptsintercirc=\figptsintercircTD%
    \let\figptsorthoprojline=\figptsorthoprojlineTD%
    \let\figptsorthoprojplane=\figptsorthoprojplaneTD%
    \let\figptsrot=\figptsrotTD%
    \let\figptssym=\figptssymTD%
    \let\figptstra=\figptstraTD%
    \let\figptsym=\figptsymTD%
    \let\figpttraC=\figpttraCTD%
    \let\figpttra=\figpttraTD%
    \let\figptvisilimSL=\figptvisilimSLTD%
    \let\figsetobdist=\figsetobdistTD%
    \let\figsettarget=\figsettargetTD%
    \let\figsetview=\figsetviewTD%
    \let\figvectDBezier=\figvectDBezierTD%
    \let\figvectN=\figvectNTD%
    \let\figvectNV=\figvectNVTD%
    \let\figvectP=\figvectPTD%
    \let\figvectU=\figvectUTD%
    \let\figdrawarccircP=\Q@arccircPTD%
    \let\figdrawarccirc=\Q@arccircTD%
    \let\figdrawarcell=\Q@arcellTD%
    \let\figdrawarcellPA=\Q@arcellPATD%
    \let\figdrawarrowBezier=\Q@arrowBezierTD%
    \let\figdrawarrowcircP=\Q@arrowcircPTD%
    \let\figdrawarrowcirc=\Q@arrowcircTD%
    \let\figdrawarrowhead=\Q@arrowheadTD%
    \let\figdrawarrow=\Q@arrowTD%
    \let\figdrawBezier=\Q@BezierTD%
    \let\figdrawcirc=\Q@circTD%
    \let\figdrawcurve=\Q@curveTD%
    }
\ctr@ld@f\def\un@v@ilable#1{\immediate\write16{*** The macro #1 is not available in the current context.}}
\ctr@ld@f\def\figinsert#1{{\def\t@xt@{#1}\relax%
    \ifx\t@xt@\empty\ifnum\@utoFInDone>\z@\Figinsert@\DefGIfilen@me,:\fi%
    \else\expandafter\FiginsertNu@#1 :\fi}\ignorespaces}
\ctr@ld@f\def\FiginsertNu@#1 #2:{\def\t@xt@{#1}\relax\ifx\t@xt@\empty\def\t@xt@{#2}%
    \ifx\t@xt@\empty\ifnum\@utoFInDone>\z@\Figinsert@\DefGIfilen@me,:\fi%
    \else\FiginsertNu@#2:\fi\else\expandafter\FiginsertNd@#1 #2:\fi}
\ctr@ld@f\def\FiginsertNd@#1#2:{\ifcat#1a\Figinsert@#1#2,:\else%
    \ifnum\@utoFInDone>\z@\Figinsert@\DefGIfilen@me,#1#2,:\fi\fi}
\ctr@ln@m\Sc@leFact
\ctr@ld@f\def\Figinsert@#1,#2:{\def\t@xt@{#2}\ifx\t@xt@\empty\xdef\Sc@leFact{1}\else%
    \X@rgdeux@#2\xdef\Sc@leFact{\@rgdeux}\fi%
    \Figdisc@rdLTS{#1}{\t@xt@}\@psfgetbb{\t@xt@}%
    \v@lX=\@psfllx\p@\v@lX=\ptpsT@pt\v@lX\v@lX=\Sc@leFact\v@lX%
    \v@lY=\@psflly\p@\v@lY=\ptpsT@pt\v@lY\v@lY=\Sc@leFact\v@lY%
    \b@undb@x{\v@lX}{\v@lY}%
    \v@lX=\@psfurx\p@\v@lX=\ptpsT@pt\v@lX\v@lX=\Sc@leFact\v@lX%
    \v@lY=\@psfury\p@\v@lY=\ptpsT@pt\v@lY\v@lY=\Sc@leFact\v@lY%
    \b@undb@x{\v@lX}{\v@lY}%
    \ifPDFm@ke\Figinclud@PDF{\t@xt@}{\Sc@leFact}\else%
    \v@lX=\c@nt pt\v@lX=\Sc@leFact\v@lX\edef\F@ct{\repdecn@mb{\v@lX}}%
    \ifx\TeXturesonMacOSltX\special{postscriptfile #1 vscale=\F@ct\space hscale=\F@ct}%
    \else\includegraphics{#1}\fi\fi%
    \message{[\t@xt@]}\ignorespaces}
\ctr@ld@f\def\Figdisc@rdLTS#1#2{\expandafter\Figdisc@rdLTS@#1 :#2}
\ctr@ld@f\def\Figdisc@rdLTS@#1 #2:#3{\def#3{#1}\relax\ifx#3\empty\expandafter\Figdisc@rdLTS@#2:#3\fi}
\ctr@ld@f\def\figinsertE#1{\FiginsertE@#1,:\ignorespaces}
\ctr@ld@f\def\FiginsertE@#1,#2:{{\def\t@xt@{#2}\ifx\t@xt@\empty\xdef\Sc@leFact{1}\else%
    \X@rgdeux@#2\xdef\Sc@leFact{\@rgdeux}\fi%
    \Figdisc@rdLTS{#1}{\t@xt@}\pdfximage{\t@xt@}%
    \setbox\Gb@x=\hbox{\pdfrefximage\pdflastximage}%
    \v@lX=\z@\v@lY=-\Sc@leFact\dp\Gb@x\b@undb@x{\v@lX}{\v@lY}%
    \advance\v@lX\Sc@leFact\wd\Gb@x\advance\v@lY\Sc@leFact\dp\Gb@x%
    \advance\v@lY\Sc@leFact\ht\Gb@x\b@undb@x{\v@lX}{\v@lY}%
    \v@lX=\Sc@leFact\wd\Gb@x\pdfximage width \v@lX {\t@xt@}%
    \rlap{\pdfrefximage\pdflastximage}\message{[\t@xt@]}}\ignorespaces}
\ctr@ld@f\def\X@rgdeux@#1,{\edef\@rgdeux{#1}}
\ctr@ln@m\figpt
\ctr@ld@f\def\figptDD#1:#2(#3,#4){\ifGR@cri\c@ntr@lnum{#1}%
    {\v@lX=#3\unit@\v@lY=#4\unit@\Fig@dmpt{#2}{\z@}}\ignorespaces\fi}
\ctr@ld@f\def\Fig@dmpt#1#2{\def\t@xt@{#1}\ifx\t@xt@\empty\def\B@@ltxt{\z@}%
    \else\expandafter\gdef\csname\objc@de T\endcsname{#1}\def\B@@ltxt{\@ne}\fi%
    \expandafter\xdef\csname\objc@de\endcsname{\ifitis@vect@r\C@dCl@svect%
    \else\C@dCl@spt\fi,\z@,\B@@ltxt/\the\v@lX,\the\v@lY,#2}}
\ctr@ld@f\def\C@dCl@spt{P}
\ctr@ld@f\def\C@dCl@svect{V}
\ctr@ln@m\c@@rdYZ
\ctr@ln@m\c@@rdY
\ctr@ld@f\def\figptTD#1:#2(#3,#4){\ifGR@cri\c@ntr@lnum{#1}%
    \def\c@@rdYZ{#4,0,0}\extrairelepremi@r\c@@rdY\de\c@@rdYZ%
    \extrairelepremi@r\c@@rdZ\de\c@@rdYZ%
    {\v@lX=#3\unit@\v@lY=\c@@rdY\unit@\v@lZ=\c@@rdZ\unit@\Fig@dmpt{#2}{\the\v@lZ}%
    \b@undb@xTD{\v@lX}{\v@lY}{\v@lZ}}\ignorespaces\fi}
\ctr@ln@m\Figp@intreg
\ctr@ld@f\def\Figp@intregDD#1:#2(#3,#4){\c@ntr@lnum{#1}%
    {\result@t=#4\v@lX=#3\v@lY=\result@t\Fig@dmpt{#2}{\z@}}\ignorespaces}
\ctr@ld@f\def\Figp@intregTD#1:#2(#3,#4){\c@ntr@lnum{#1}%
    \def\c@@rdYZ{#4,\z@,\z@}\extrairelepremi@r\c@@rdY\de\c@@rdYZ%
    \extrairelepremi@r\c@@rdZ\de\c@@rdYZ%
    {\v@lX=#3\v@lY=\c@@rdY\v@lZ=\c@@rdZ\Fig@dmpt{#2}{\the\v@lZ}%
    \b@undb@xTD{\v@lX}{\v@lY}{\v@lZ}}\ignorespaces}
\ctr@ln@m\figptBezier
\ctr@ld@f\def\figptBezierDD#1:#2:#3[#4,#5,#6,#7]{\ifGR@cri{\s@uvc@ntr@l\et@tfigptBezierDD%
    \FigptBezier@#3[#4,#5,#6,#7]\Figp@intregDD#1:{#2}(\v@lX,\v@lY)%
    \resetc@ntr@l\et@tfigptBezierDD}\ignorespaces\fi}
\ctr@ld@f\def\figptBezierTD#1:#2:#3[#4,#5,#6,#7]{\ifGR@cri{\s@uvc@ntr@l\et@tfigptBezierTD%
    \FigptBezier@#3[#4,#5,#6,#7]\Figp@intregTD#1:{#2}(\v@lX,\v@lY,\v@lZ)%
    \resetc@ntr@l\et@tfigptBezierTD}\ignorespaces\fi}
\ctr@ld@f\def\FigptBezier@#1[#2,#3,#4,#5]{\setc@ntr@l{2}%
    \edef\T@{#1}\v@leur=\p@\advance\v@leur-#1pt\edef\UNmT@{\repdecn@mb{\v@leur}}%
    \figptcopy-4:/#2/\figptcopy-3:/#3/\figptcopy-2:/#4/\figptcopy-1:/#5/%
    \l@mbd@un=-4 \l@mbd@de=-\thr@@\p@rtent=\m@ne\c@lDecast%
    \l@mbd@un=-4 \l@mbd@de=-\thr@@\p@rtent=-\tw@\c@lDecast%
    \l@mbd@un=-4 \l@mbd@de=-\thr@@\p@rtent=-\thr@@\c@lDecast\Figg@tXY{-4}}
\ctr@ln@m\c@lDCUn
\ctr@ld@f\def\c@lDCUnDD#1#2{\Figg@tXY{#1}\v@lX=\UNmT@\v@lX\v@lY=\UNmT@\v@lY%
    \Figg@tXYa{#2}\advance\v@lX\T@\v@lXa\advance\v@lY\T@\v@lYa%
    \Figp@intregDD#1:(\v@lX,\v@lY)}
\ctr@ld@f\def\c@lDCUnTD#1#2{\Figg@tXY{#1}\v@lX=\UNmT@\v@lX\v@lY=\UNmT@\v@lY\v@lZ=\UNmT@\v@lZ%
    \Figg@tXYa{#2}\advance\v@lX\T@\v@lXa\advance\v@lY\T@\v@lYa\advance\v@lZ\T@\v@lZa%
    \Figp@intregTD#1:(\v@lX,\v@lY,\v@lZ)}
\ctr@ld@f\def\c@lDecast{\relax\ifnum\l@mbd@un<\p@rtent\c@lDCUn{\l@mbd@un}{\l@mbd@de}%
    \advance\l@mbd@un\@ne\advance\l@mbd@de\@ne\c@lDecast\fi}
\ctr@ld@f\def\figptmap#1:#2=#3/#4/#5/{\ifGR@cri{\s@uvc@ntr@l\et@tfigptmap%
    \setc@ntr@l{2}\figvectP-1[#4,#3]\Figg@tXY{-1}%
    \pr@dMatV/#5/\figpttra#1:{#2}=#4/1,-1/%
    \resetc@ntr@l\et@tfigptmap}\ignorespaces\fi}
\ctr@ln@m\pr@dMatV
\ctr@ld@f\def\pr@dMatVDD/#1,#2;#3,#4/{\v@lXa=#1\v@lX\advance\v@lXa#2\v@lY%
    \v@lYa=#3\v@lX\advance\v@lYa#4\v@lY\Figv@ctCreg-1(\v@lXa,\v@lYa)}
\ctr@ld@f\def\pr@dMatVTD/#1,#2,#3;#4,#5,#6;#7,#8,#9/{%
    \v@lXa=#1\v@lX\advance\v@lXa#2\v@lY\advance\v@lXa#3\v@lZ%
    \v@lYa=#4\v@lX\advance\v@lYa#5\v@lY\advance\v@lYa#6\v@lZ%
    \v@lZa=#7\v@lX\advance\v@lZa#8\v@lY\advance\v@lZa#9\v@lZ%
    \Figv@ctCreg-1(\v@lXa,\v@lYa,\v@lZa)}
\ctr@ln@m\figptbary
\ctr@ld@f\def\figptbaryDD#1:#2[#3;#4]{\ifGR@cri{\edef\list@num{#3}\extrairelepremi@r\p@int\de\list@num%
    \s@mme=\z@\@ecfor\c@ef:=#4\do{\advance\s@mme\c@ef}%
    \edef\listec@ef{#4,0}\extrairelepremi@r\c@ef\de\listec@ef%
    \Figg@tXY{\p@int}\divide\v@lX\s@mme\divide\v@lY\s@mme%
    \multiply\v@lX\c@ef\multiply\v@lY\c@ef%
    \@ecfor\p@int:=\list@num\do{\extrairelepremi@r\c@ef\de\listec@ef%
           \Figg@tXYa{\p@int}\divide\v@lXa\s@mme\divide\v@lYa\s@mme%
           \multiply\v@lXa\c@ef\multiply\v@lYa\c@ef%
           \advance\v@lX\v@lXa\advance\v@lY\v@lYa}%
    \Figp@intregDD#1:{#2}(\v@lX,\v@lY)}\ignorespaces\fi}
\ctr@ld@f\def\figptbaryTD#1:#2[#3;#4]{\ifGR@cri{\edef\list@num{#3}\extrairelepremi@r\p@int\de\list@num%
    \s@mme=\z@\@ecfor\c@ef:=#4\do{\advance\s@mme\c@ef}%
    \edef\listec@ef{#4,0}\extrairelepremi@r\c@ef\de\listec@ef%
    \Figg@tXY{\p@int}\divide\v@lX\s@mme\divide\v@lY\s@mme\divide\v@lZ\s@mme%
    \multiply\v@lX\c@ef\multiply\v@lY\c@ef\multiply\v@lZ\c@ef%
    \@ecfor\p@int:=\list@num\do{\extrairelepremi@r\c@ef\de\listec@ef%
           \Figg@tXYa{\p@int}\divide\v@lXa\s@mme\divide\v@lYa\s@mme\divide\v@lZa\s@mme%
           \multiply\v@lXa\c@ef\multiply\v@lYa\c@ef\multiply\v@lZa\c@ef%
           \advance\v@lX\v@lXa\advance\v@lY\v@lYa\advance\v@lZ\v@lZa}%
    \Figp@intregTD#1:{#2}(\v@lX,\v@lY,\v@lZ)}\ignorespaces\fi}
\ctr@ld@f\def\figptbaryR#1:#2[#3;#4]{\ifGR@cri{%
    \v@leur=\z@\@ecfor\c@ef:=#4\do{\maxim@m{\v@lmax}{\c@ef pt}{-\c@ef pt}%
    \ifdim\v@lmax>\v@leur\v@leur=\v@lmax\fi}%
    \ifdim\v@leur<\p@\f@ctech=\@M\else\ifdim\v@leur<\t@n\p@\f@ctech=\@m\else%
    \ifdim\v@leur<\c@nt\p@\f@ctech=\c@nt\else\ifdim\v@leur<\@m\p@\f@ctech=\t@n\else%
    \f@ctech=\@ne\fi\fi\fi\fi%
    \def\listec@ef{0}%
    \@ecfor\c@ef:=#4\do{\sc@lec@nvRI{\c@ef pt}\edef\listec@ef{\listec@ef,\the\s@mme}}%
    \extrairelepremi@r\c@ef\de\listec@ef\figptbary#1:#2[#3;\listec@ef]}\ignorespaces\fi}
\ctr@ld@f\def\sc@lec@nvRI#1{\v@leur=#1\p@rtentiere{\s@mme}{\v@leur}\advance\v@leur-\s@mme\p@%
    \multiply\v@leur\f@ctech\p@rtentiere{\p@rtent}{\v@leur}%
    \multiply\s@mme\f@ctech\advance\s@mme\p@rtent}
\ctr@ln@m\figptcirc
\ctr@ld@f\def\figptcircDD#1:#2:#3;#4(#5){\ifGR@cri{\s@uvc@ntr@l\et@tfigptcircDD%
    \c@lptellDD#1:{#2}:#3;#4,#4(#5)\resetc@ntr@l\et@tfigptcircDD}\ignorespaces\fi}
\ctr@ld@f\def\figptcircTD#1:#2:#3,#4,#5;#6(#7){\ifGR@cri{\s@uvc@ntr@l\et@tfigptcircTD%
    \setc@ntr@l{2}\c@lExtAxes#3,#4,#5(#6)\figptellP#1:{#2}:#3,-4,-5(#7)%
    \resetc@ntr@l\et@tfigptcircTD}\ignorespaces\fi}
\ctr@ln@m\figptcircumcenter
\ctr@ld@f\def\figptcircumcenterDD#1:#2[#3,#4,#5]{\ifGR@cri{\s@uvc@ntr@l\et@tfigptcircumcenterDD%
    \setc@ntr@l{2}\figvectNDD-5[#3,#4]\figptbaryDD-3:[#3,#4;1,1]%
                  \figvectNDD-6[#4,#5]\figptbaryDD-4:[#4,#5;1,1]%
    \resetc@ntr@l{2}\inters@cDD#1:{#2}[-3,-5;-4,-6]%
    \resetc@ntr@l\et@tfigptcircumcenterDD}\ignorespaces\fi}
\ctr@ld@f\def\figptcircumcenterTD#1:#2[#3,#4,#5]{\ifGR@cri{\s@uvc@ntr@l\et@tfigptcircumcenterTD%
    \setc@ntr@l{2}\figvectNTD-1[#3,#4,#5]%
    \figvectPTD-3[#3,#4]\figvectNVTD-5[-1,-3]\figptbaryTD-3:[#3,#4;1,1]%
    \figvectPTD-4[#4,#5]\figvectNVTD-6[-1,-4]\figptbaryTD-4:[#4,#5;1,1]%
    \resetc@ntr@l{2}\inters@cTD#1:{#2}[-3,-5;-4,-6]%
    \resetc@ntr@l\et@tfigptcircumcenterTD}\ignorespaces\fi}
\ctr@ln@m\figptcopy
\ctr@ld@f\def\figptcopyDD#1:#2/#3/{\ifGR@cri{\Figg@tXY{#3}%
    \Figp@intregDD#1:{#2}(\v@lX,\v@lY)}\ignorespaces\fi}
\ctr@ld@f\def\figptcopyTD#1:#2/#3/{\ifGR@cri{\Figg@tXY{#3}%
    \Figp@intregTD#1:{#2}(\v@lX,\v@lY,\v@lZ)}\ignorespaces\fi}
\ctr@ln@m\figptcurvcenter
\ctr@ld@f\def\figptcurvcenterDD#1:#2:#3[#4,#5,#6,#7]{\ifGR@cri{\s@uvc@ntr@l\et@tfigptcurvcenterDD%
    \setc@ntr@l{2}\c@lcurvradDD#3[#4,#5,#6,#7]\edef\Sprim@{\repdecn@mb{\result@t}}%
    \figptBezierDD-1::#3[#4,#5,#6,#7]\figpttraDD#1:{#2}=-1/\Sprim@,-5/%
    \resetc@ntr@l\et@tfigptcurvcenterDD}\ignorespaces\fi}
\ctr@ld@f\def\figptcurvcenterTD#1:#2:#3[#4,#5,#6,#7]{\ifGR@cri{\s@uvc@ntr@l\et@tfigptcurvcenterTD%
    \setc@ntr@l{2}\figvectDBezierTD -5:1,#3[#4,#5,#6,#7]%
    \figvectDBezierTD -6:2,#3[#4,#5,#6,#7]\vecunit@TD{-5}{-5}%
    \edef\Sprim@{\repdecn@mb{\result@t}}\figvectNVTD-1[-6,-5]%
    \figvectNVTD-5[-5,-1]\c@lproscalTD\v@leur[-6,-5]%
    \invers@{\v@leur}{\v@leur}\v@leur=\Sprim@\v@leur\v@leur=\Sprim@\v@leur%
    \figptBezierTD-1::#3[#4,#5,#6,#7]\edef\Sprim@{\repdecn@mb{\v@leur}}%
    \figpttraTD#1:{#2}=-1/\Sprim@,-5/\resetc@ntr@l\et@tfigptcurvcenterTD}\ignorespaces\fi}
\ctr@ld@f\def\c@lcurvradDD#1[#2,#3,#4,#5]{{\figvectDBezierDD -5:1,#1[#2,#3,#4,#5]%
    \figvectDBezierDD -6:2,#1[#2,#3,#4,#5]\vecunit@DD{-5}{-5}%
    \edef\Sprim@{\repdecn@mb{\result@t}}\figvectNVDD-5[-5]\c@lproscalDD\v@leur[-6,-5]%
    \invers@{\v@leur}{\v@leur}\v@leur=\Sprim@\v@leur\v@leur=\Sprim@\v@leur%
    \global\result@t=\v@leur}}
\ctr@ln@m\figptell
\ctr@ld@f\def\figptellDD#1:#2:#3;#4,#5(#6,#7){\ifGR@cri{\s@uvc@ntr@l\et@tfigptell%
    \c@lptellDD#1::#3;#4,#5(#6)\figptrotDD#1:{#2}=#1/#3,#7/%
    \resetc@ntr@l\et@tfigptell}\ignorespaces\fi}
\ctr@ld@f\def\c@lptellDD#1:#2:#3;#4,#5(#6){\c@ssin{\C@}{\S@}{#6}\v@lmin=\C@ pt\v@lmax=\S@ pt%
    \v@lmin=#4\v@lmin\v@lmax=#5\v@lmax%
    \edef\Xc@mp{\repdecn@mb{\v@lmin}}\edef\Yc@mp{\repdecn@mb{\v@lmax}}%
    \setc@ntr@l{2}\figvectC-1(\Xc@mp,\Yc@mp)\figpttraDD#1:{#2}=#3/1,-1/}
\ctr@ld@f\def\figptellP#1:#2:#3,#4,#5(#6){\ifGR@cri{\s@uvc@ntr@l\et@tfigptellP%
    \setc@ntr@l{2}\figvectP-1[#3,#4]\figvectP-2[#3,#5]%
    \v@leur=#6pt\c@lptellP{#3}{-1}{-2}\figptcopy#1:{#2}/-3/%
    \resetc@ntr@l\et@tfigptellP}\ignorespaces\fi}
\ctr@ln@m\@ngle
\ctr@ld@f\def\c@lptellP#1#2#3{\edef\@ngle{\repdecn@mb\v@leur}\c@ssin{\C@}{\S@}{\@ngle}%
    \figpttra-3:=#1/\C@,#2/\figpttra-3:=-3/\S@,#3/}
\ctr@ln@m\figptendnormal
\ctr@ld@f\def\figptendnormalDD#1:#2:#3,#4[#5,#6]{\ifGR@cri{\s@uvc@ntr@l\et@tfigptendnormal%
    \Figg@tXYa{#5}\Figg@tXY{#6}%
    \advance\v@lX-\v@lXa\advance\v@lY-\v@lYa%
    \setc@ntr@l{2}\Figv@ctCreg-1(\v@lX,\v@lY)\vecunit@{-1}{-1}\Figg@tXY{-1}%
    \delt@=#3\unit@\maxim@m{\delt@}{\delt@}{-\delt@}\edef\l@ngueur{\repdecn@mb{\delt@}}%
    \v@lX=\l@ngueur\v@lX\v@lY=\l@ngueur\v@lY%
    \delt@=\p@\advance\delt@-#4pt\edef\l@ngueur{\repdecn@mb{\delt@}}%
    \figptbaryR-1:[#5,#6;#4,\l@ngueur]\Figg@tXYa{-1}%
    \advance\v@lXa\v@lY\advance\v@lYa-\v@lX%
    \setc@ntr@l{1}\Figp@intregDD#1:{#2}(\v@lXa,\v@lYa)\resetc@ntr@l\et@tfigptendnormal}%
    \ignorespaces\fi}
\ctr@ld@f\def\figptexcenter#1:#2[#3,#4,#5]{\ifGR@cri{\let@xte={-}
    \Figptexinsc@nter#1:#2[#3,#4,#5]}\ignorespaces\fi}
\ctr@ld@f\def\figptincenter#1:#2[#3,#4,#5]{\ifGR@cri{\let@xte={}
    \Figptexinsc@nter#1:#2[#3,#4,#5]}\ignorespaces\fi}
\ctr@ld@f
\ctr@ld@f\def\Figptexinsc@nter#1:#2[#3,#4,#5]{%
    \figgetdist\LA@[#4,#5]\figgetdist\LB@[#3,#5]\figgetdist\LC@[#3,#4]%
    \figptbaryR#1:{#2}[#3,#4,#5;\the\let@xte\LA@,\LB@,\LC@]}
\ctr@ln@m\figptinterlineplane
\ctr@ld@f\def\figptinterlineplaneDD{\un@v@ilable{figptinterlineplane}}
\ctr@ld@f\def\figptinterlineplaneTD#1:#2[#3,#4;#5,#6]{\ifGR@cri{\s@uvc@ntr@l\et@tfigptinterlineplane%
    \setc@ntr@l{2}\figvectPTD-1[#3,#5]\vecunit@TD{-2}{#6}%
    \r@pPSTD\v@leur[-2,-1,#4]\edef\v@lcoef{\repdecn@mb{\v@leur}}%
    \figpttraTD#1:{#2}=#3/\v@lcoef,#4/\resetc@ntr@l\et@tfigptinterlineplane}\ignorespaces\fi}
\ctr@ln@m\figptorthocenter
\ctr@ld@f\def\figptorthocenterDD#1:#2[#3,#4,#5]{\ifGR@cri{\s@uvc@ntr@l\et@tfigptorthocenterDD%
    \setc@ntr@l{2}\figvectNDD-3[#3,#4]\figvectNDD-4[#4,#5]%
    \resetc@ntr@l{2}\inters@cDD#1:{#2}[#5,-3;#3,-4]%
    \resetc@ntr@l\et@tfigptorthocenterDD}\ignorespaces\fi}
\ctr@ld@f\def\figptorthocenterTD#1:#2[#3,#4,#5]{\ifGR@cri{\s@uvc@ntr@l\et@tfigptorthocenterTD%
    \setc@ntr@l{2}\figvectNTD-1[#3,#4,#5]%
    \figvectPTD-2[#3,#4]\figvectNVTD-3[-1,-2]%
    \figvectPTD-2[#4,#5]\figvectNVTD-4[-1,-2]%
    \resetc@ntr@l{2}\inters@cTD#1:{#2}[#5,-3;#3,-4]%
    \resetc@ntr@l\et@tfigptorthocenterTD}\ignorespaces\fi}
\ctr@ln@m\figptorthoprojline
\ctr@ld@f\def\figptorthoprojlineDD#1:#2=#3/#4,#5/{\ifGR@cri{\s@uvc@ntr@l\et@tfigptorthoprojlineDD%
    \setc@ntr@l{2}\figvectPDD-3[#4,#5]\figvectNVDD-4[-3]\resetc@ntr@l{2}%
    \inters@cDD#1:{#2}[#3,-4;#4,-3]\resetc@ntr@l\et@tfigptorthoprojlineDD}\ignorespaces\fi}
\ctr@ld@f\def\figptorthoprojlineTD#1:#2=#3/#4,#5/{\ifGR@cri{\s@uvc@ntr@l\et@tfigptorthoprojlineTD%
    \setc@ntr@l{2}\figvectPTD-1[#4,#3]\figvectPTD-2[#4,#5]\vecunit@TD{-2}{-2}%
    \c@lproscalTD\v@leur[-1,-2]\edef\v@lcoef{\repdecn@mb{\v@leur}}%
    \figpttraTD#1:{#2}=#4/\v@lcoef,-2/\resetc@ntr@l\et@tfigptorthoprojlineTD}\ignorespaces\fi}
\ctr@ln@m\figptorthoprojplane
\ctr@ld@f\def\figptorthoprojplaneDD{\un@v@ilable{figptorthoprojplane}}
\ctr@ld@f\def\figptorthoprojplaneTD#1:#2=#3/#4,#5/{\ifGR@cri{\s@uvc@ntr@l\et@tfigptorthoprojplane%
    \setc@ntr@l{2}\figvectPTD-1[#3,#4]\vecunit@TD{-2}{#5}%
    \c@lproscalTD\v@leur[-1,-2]\edef\v@lcoef{\repdecn@mb{\v@leur}}%
    \figpttraTD#1:{#2}=#3/\v@lcoef,-2/\resetc@ntr@l\et@tfigptorthoprojplane}\ignorespaces\fi}
\ctr@ld@f\def\figpthom#1:#2=#3/#4,#5/{\ifGR@cri{\s@uvc@ntr@l\et@tfigpthom%
    \setc@ntr@l{2}\figvectP-1[#4,#3]\figpttra#1:{#2}=#4/#5,-1/%
    \resetc@ntr@l\et@tfigpthom}\ignorespaces\fi}
\ctr@ld@f\def\figptinv#1:#2=#3/#4,#5/{\ifGR@cri{\s@uvc@ntr@l\et@tfigptinv%
    \setc@ntr@l{2}\figvectP-1[#4,#3]\Figg@tXY{-1}%
    \getredf@ctB\f@ctech\n@rmeucC{\delt@}{-1}%
    \delt@=\ptT@unit@\delt@\delt@=\ptT@unit@\delt@%
    \invers@{\delt@}{\delt@}\multiply\f@ctech\f@ctech\divide\delt@\f@ctech%
    \delt@=#5\delt@\edef\v@lcoef{\repdecn@mb{\delt@}}\figpttra#1:{#2}=#4/\v@lcoef,-1/%
    \resetc@ntr@l\et@tfigptinv}\ignorespaces\fi}
\ctr@ln@m\figptrot
\ctr@ld@f\def\figptrotDD#1:#2=#3/#4,#5/{\ifGR@cri{\s@uvc@ntr@l\et@tfigptrotDD%
    \c@ssin{\C@}{\S@}{#5}\setc@ntr@l{2}\figvectPDD-1[#4,#3]\Figg@tXY{-1}%
    \v@lXa=\C@\v@lX\advance\v@lXa-\S@\v@lY%
    \v@lYa=\S@\v@lX\advance\v@lYa\C@\v@lY%
    \Figv@ctCreg-1(\v@lXa,\v@lYa)\figpttraDD#1:{#2}=#4/1,-1/%
    \resetc@ntr@l\et@tfigptrotDD}\ignorespaces\fi}
\ctr@ld@f\def\figptrotTD#1:#2=#3/#4,#5,#6/{\ifGR@cri{\s@uvc@ntr@l\et@tfigptrotTD%
    \c@ssin{\C@}{\S@}{#5}%
    \setc@ntr@l{2}\figptorthoprojplaneTD-3:=#4/#3,#6/\figvectPTD-2[-3,#3]%
    \n@rmeucTD\v@leur{-2}\ifdim\v@leur<\Cepsil@n\Figg@tXYa{#3}\else%
    \edef\v@lcoef{\repdecn@mb{\v@leur}}\figvectNVTD-1[#6,-2]%
    \Figg@tXYa{-1}\v@lXa=\v@lcoef\v@lXa\v@lYa=\v@lcoef\v@lYa\v@lZa=\v@lcoef\v@lZa%
    \v@lXa=\S@\v@lXa\v@lYa=\S@\v@lYa\v@lZa=\S@\v@lZa\Figg@tXY{-2}%
    \advance\v@lXa\C@\v@lX\advance\v@lYa\C@\v@lY\advance\v@lZa\C@\v@lZ%
    \Figg@tXY{-3}\advance\v@lXa\v@lX\advance\v@lYa\v@lY\advance\v@lZa\v@lZ\fi%
    \Figp@intregTD#1:{#2}(\v@lXa,\v@lYa,\v@lZa)\resetc@ntr@l\et@tfigptrotTD}\ignorespaces\fi}
\ctr@ln@m\figptsym
\ctr@ld@f\def\figptsymDD#1:#2=#3/#4,#5/{\ifGR@cri{\s@uvc@ntr@l\et@tfigptsymDD%
    \resetc@ntr@l{2}\figptorthoprojlineDD-5:=#3/#4,#5/\figvectPDD-2[#3,-5]%
    \figpttraDD#1:{#2}=#3/2,-2/\resetc@ntr@l\et@tfigptsymDD}\ignorespaces\fi}
\ctr@ld@f\def\figptsymTD#1:#2=#3/#4,#5/{\ifGR@cri{\s@uvc@ntr@l\et@tfigptsymTD%
    \resetc@ntr@l{2}\figptorthoprojplaneTD-3:=#3/#4,#5/\figvectPTD-2[#3,-3]%
    \figpttraTD#1:{#2}=#3/2,-2/\resetc@ntr@l\et@tfigptsymTD}\ignorespaces\fi}
\ctr@ln@m\figpttra
\ctr@ld@f\def\figpttraDD#1:#2=#3/#4,#5/{\ifGR@cri{\Figg@tXYa{#5}\v@lXa=#4\v@lXa\v@lYa=#4\v@lYa%
    \Figg@tXY{#3}\advance\v@lX\v@lXa\advance\v@lY\v@lYa%
    \Figp@intregDD#1:{#2}(\v@lX,\v@lY)}\ignorespaces\fi}
\ctr@ld@f\def\figpttraTD#1:#2=#3/#4,#5/{\ifGR@cri{\Figg@tXYa{#5}\v@lXa=#4\v@lXa\v@lYa=#4\v@lYa%
    \v@lZa=#4\v@lZa\Figg@tXY{#3}\advance\v@lX\v@lXa\advance\v@lY\v@lYa%
    \advance\v@lZ\v@lZa\Figp@intregTD#1:{#2}(\v@lX,\v@lY,\v@lZ)}\ignorespaces\fi}
\ctr@ln@m\figpttraC
\ctr@ld@f\def\figpttraCDD#1:#2=#3/#4,#5/{\ifGR@cri{\v@lXa=#4\unit@\v@lYa=#5\unit@%
    \Figg@tXY{#3}\advance\v@lX\v@lXa\advance\v@lY\v@lYa%
    \Figp@intregDD#1:{#2}(\v@lX,\v@lY)}\ignorespaces\fi}
\ctr@ld@f\def\figpttraCTD#1:#2=#3/#4,#5,#6/{\ifGR@cri{\v@lXa=#4\unit@\v@lYa=#5\unit@\v@lZa=#6\unit@%
    \Figg@tXY{#3}\advance\v@lX\v@lXa\advance\v@lY\v@lYa\advance\v@lZ\v@lZa%
    \Figp@intregTD#1:{#2}(\v@lX,\v@lY,\v@lZ)}\ignorespaces\fi}
\ctr@ld@f\def\figptsaxes#1:#2(#3){\ifGR@cri{\an@lys@xes#3,:\ifx\t@xt@\empty%
    \ifTr@isDim\Figpts@xes#1:#2(0,#3,0,#3,0,#3)\else\Figpts@xes#1:#2(0,#3,0,#3)\fi%
    \else\Figpts@xes#1:#2(#3)\fi}\ignorespaces\fi}
\ctr@ln@m\Figpts@xes
\ctr@ld@f\def\Figpts@xesDD#1:#2(#3,#4,#5,#6){%
    \s@mme=#1\figpttraC\the\s@mme:$x$=#2/#4,0/%
    \advance\s@mme\@ne\figpttraC\the\s@mme:$y$=#2/0,#6/}
\ctr@ld@f\def\Figpts@xesTD#1:#2(#3,#4,#5,#6,#7,#8){%
    \s@mme=#1\figpttraC\the\s@mme:$x$=#2/#4,0,0/%
    \advance\s@mme\@ne\figpttraC\the\s@mme:$y$=#2/0,#6,0/%
    \advance\s@mme\@ne\figpttraC\the\s@mme:$z$=#2/0,0,#8/}
\ctr@ld@f\def\figptsmap#1=#2/#3/#4/{\ifGR@cri{\s@uvc@ntr@l\et@tfigptsmap%
    \setc@ntr@l{2}\def\list@num{#2}\s@mme=#1%
    \@ecfor\p@int:=\list@num\do{\figvectP-1[#3,\p@int]\Figg@tXY{-1}%
    \pr@dMatV/#4/\figpttra\the\s@mme:=#3/1,-1/\advance\s@mme\@ne}%
    \resetc@ntr@l\et@tfigptsmap}\ignorespaces\fi}
\ctr@ln@m\figptscontrol
\ctr@ld@f\def\figptscontrolDD#1[#2,#3,#4,#5]{\ifGR@cri{\s@uvc@ntr@l\et@tfigptscontrolDD\setc@ntr@l{2}%
    \v@lX=\z@\v@lY=\z@\Figtr@nptDD{-5}{#2}\Figtr@nptDD{2}{#5}%
    \divide\v@lX\@vi\divide\v@lY\@vi%
    \Figtr@nptDD{3}{#3}\Figtr@nptDD{-1.5}{#4}\Figp@intregDD-1:(\v@lX,\v@lY)%
    \v@lX=\z@\v@lY=\z@\Figtr@nptDD{2}{#2}\Figtr@nptDD{-5}{#5}%
    \divide\v@lX\@vi\divide\v@lY\@vi\Figtr@nptDD{-1.5}{#3}\Figtr@nptDD{3}{#4}%
    \s@mme=#1\advance\s@mme\@ne\Figp@intregDD\the\s@mme:(\v@lX,\v@lY)%
    \figptcopyDD#1:/-1/\resetc@ntr@l\et@tfigptscontrolDD}\ignorespaces\fi}
\ctr@ld@f\def\figptscontrolTD#1[#2,#3,#4,#5]{\ifGR@cri{\s@uvc@ntr@l\et@tfigptscontrolTD\setc@ntr@l{2}%
    \v@lX=\z@\v@lY=\z@\v@lZ=\z@\Figtr@nptTD{-5}{#2}\Figtr@nptTD{2}{#5}%
    \divide\v@lX\@vi\divide\v@lY\@vi\divide\v@lZ\@vi%
    \Figtr@nptTD{3}{#3}\Figtr@nptTD{-1.5}{#4}\Figp@intregTD-1:(\v@lX,\v@lY,\v@lZ)%
    \v@lX=\z@\v@lY=\z@\v@lZ=\z@\Figtr@nptTD{2}{#2}\Figtr@nptTD{-5}{#5}%
    \divide\v@lX\@vi\divide\v@lY\@vi\divide\v@lZ\@vi\Figtr@nptTD{-1.5}{#3}\Figtr@nptTD{3}{#4}%
    \s@mme=#1\advance\s@mme\@ne\Figp@intregTD\the\s@mme:(\v@lX,\v@lY,\v@lZ)%
    \figptcopyTD#1:/-1/\resetc@ntr@l\et@tfigptscontrolTD}\ignorespaces\fi}
\ctr@ld@f\def\Figtr@nptDD#1#2{\Figg@tXYa{#2}\v@lXa=#1\v@lXa\v@lYa=#1\v@lYa%
    \advance\v@lX\v@lXa\advance\v@lY\v@lYa}
\ctr@ld@f\def\Figtr@nptTD#1#2{\Figg@tXYa{#2}\v@lXa=#1\v@lXa\v@lYa=#1\v@lYa\v@lZa=#1\v@lZa%
    \advance\v@lX\v@lXa\advance\v@lY\v@lYa\advance\v@lZ\v@lZa}
\ctr@ld@f\def\figptscontrolcurve#1,#2[#3]{\ifGR@cri{\s@uvc@ntr@l\et@tfigptscontrolcurve%
    \def\list@num{#3}\extrairelepremi@r\Ak@\de\list@num%
    \extrairelepremi@r\Ai@\de\list@num\extrairelepremi@r\Aj@\de\list@num%
    \s@mme=#1\figptcopy\the\s@mme:/\Ai@/%
    \setc@ntr@l{2}\figvectP -1[\Ak@,\Aj@]%
    \@ecfor\Ak@:=\list@num\do{\advance\s@mme\@ne\figpttra\the\s@mme:=\Ai@/\curv@roundness,-1/%
       \figvectP -1[\Ai@,\Ak@]\advance\s@mme\@ne\figpttra\the\s@mme:=\Aj@/-\curv@roundness,-1/%
       \advance\s@mme\@ne\figptcopy\the\s@mme:/\Aj@/%
       \edef\Ai@{\Aj@}\edef\Aj@{\Ak@}}\advance\s@mme-#1\divide\s@mme\thr@@%
       \xdef#2{\the\s@mme}%
    \resetc@ntr@l\et@tfigptscontrolcurve}\ignorespaces\fi}
\ctr@ln@m\figptsintercirc
\ctr@ld@f\def\figptsintercircDD#1[#2,#3;#4,#5]{\ifGR@cri{\s@uvc@ntr@l\et@tfigptsintercircDD%
    \setc@ntr@l{2}\let\c@lNVintc=\c@lNVintcDD\Figptsintercirc@#1[#2,#3;#4,#5]%
    \resetc@ntr@l\et@tfigptsintercircDD}\ignorespaces\fi}
\ctr@ld@f\def\figptsintercircTD#1[#2,#3;#4,#5;#6]{\ifGR@cri{\s@uvc@ntr@l\et@tfigptsintercircTD%
    \setc@ntr@l{2}\let\c@lNVintc=\c@lNVintcTD\vecunitC@TD[#2,#6]%
    \Figv@ctCreg-3(\v@lX,\v@lY,\v@lZ)\Figptsintercirc@#1[#2,#3;#4,#5]%
    \resetc@ntr@l\et@tfigptsintercircTD}\ignorespaces\fi}
\ctr@ld@f\def\Figptsintercirc@#1[#2,#3;#4,#5]{\figvectP-1[#2,#4]%
    \vecunit@{-1}{-1}\delt@=\result@t\f@ctech=\result@tent%
    \s@mme=#1\advance\s@mme\@ne\figptcopy#1:/#2/\figptcopy\the\s@mme:/#4/%
    \ifdim\delt@=\z@\else%
    \v@lmin=#3\unit@\v@lmax=#5\unit@\v@leur=\v@lmin\advance\v@leur\v@lmax%
    \ifdim\v@leur>\delt@%
    \v@leur=\v@lmin\advance\v@leur-\v@lmax\maxim@m{\v@leur}{\v@leur}{-\v@leur}%
    \ifdim\v@leur<\delt@%
    \divide\v@lmin\f@ctech\divide\v@lmax\f@ctech\divide\delt@\f@ctech%
    \v@lmin=\repdecn@mb{\v@lmin}\v@lmin\v@lmax=\repdecn@mb{\v@lmax}\v@lmax%
    \invers@{\v@leur}{\delt@}\advance\v@lmax-\v@lmin%
    \v@lmax=-\repdecn@mb{\v@leur}\v@lmax\advance\delt@\v@lmax\delt@=.5\delt@%
    \v@lmax=\delt@\multiply\v@lmax\f@ctech%
    \edef\t@ille{\repdecn@mb{\v@lmax}}\figpttra-2:=#2/\t@ille,-1/%
    \delt@=\repdecn@mb{\delt@}\delt@\advance\v@lmin-\delt@%
    \sqrt@{\v@leur}{\v@lmin}\multiply\v@leur\f@ctech\edef\t@ille{\repdecn@mb{\v@leur}}%
    \c@lNVintc\figpttra#1:=-2/-\t@ille,-1/\figpttra\the\s@mme:=-2/\t@ille,-1/\fi\fi\fi}
\ctr@ld@f\def\c@lNVintcDD{\Figg@tXY{-1}\Figv@ctCreg-1(-\v@lY,\v@lX)} 
\ctr@ld@f\def\c@lNVintcTD{{\Figg@tXY{-3}\v@lmin=\v@lX\v@lmax=\v@lY\v@leur=\v@lZ%
    \Figg@tXY{-1}\c@lprovec{-3}\vecunit@{-3}{-3}
    \Figg@tXY{-1}\v@lmin=\v@lX\v@lmax=\v@lY%
    \v@leur=\v@lZ\Figg@tXY{-3}\c@lprovec{-1}}} 
\ctr@ln@m\figptsinterlinell
\ctr@ld@f\def\figptsinterlinellDD#1[#2,#3,#4,#5;#6,#7]{\ifGR@cri{\s@uvc@ntr@l\et@tfigptsinterlinellDD%
    \figptcopy#1:/#6/\s@mme=#1\advance\s@mme\@ne\figptcopy\the\s@mme:/#7/%
    \v@lmin=#3\unit@\v@lmax=#4\unit@
    \setc@ntr@l{2}\figptbaryDD-4:[#6,#7;1,1]\figptsrotDD-3=-4,#7/#2,-#5/
    \Figg@tXY{-3}\Figg@tXYa{#2}\advance\v@lX-\v@lXa\advance\v@lY-\v@lYa
    \figvectP-1[-3,-2]\Figg@tXYa{-1}\figvectP-3[-4,#7]\Figptsint@rLE{#1}
    \resetc@ntr@l\et@tfigptsinterlinellDD}\ignorespaces\fi}
\ctr@ld@f\def\figptsinterlinellP#1[#2,#3,#4;#5,#6]{\ifGR@cri{\s@uvc@ntr@l\et@tfigptsinterlinellP%
    \figptcopy#1:/#5/\s@mme=#1\advance\s@mme\@ne\figptcopy\the\s@mme:/#6/\setc@ntr@l{2}%
    \figvectP-1[#2,#3]\vecunit@{-1}{-1}\v@lmin=\result@t
    \figvectP-2[#2,#4]\vecunit@{-2}{-2}\v@lmax=\result@t
    \figptbary-4:[#5,#6;1,1]
    \figvectP-3[#2,-4]\c@lproscal\v@lX[-3,-1]\c@lproscal\v@lY[-3,-2]
    \figvectP-3[-4,#6]\c@lproscal\v@lXa[-3,-1]\c@lproscal\v@lYa[-3,-2]
    \Figptsint@rLE{#1}\resetc@ntr@l\et@tfigptsinterlinellP}\ignorespaces\fi}
\ctr@ld@f\def\Figptsint@rLE#1{%
    \getredf@ctDD\f@ctech(\v@lmin,\v@lmax)%
    \getredf@ctDD\p@rtent(\v@lX,\v@lY)\ifnum\p@rtent>\f@ctech\f@ctech=\p@rtent\fi%
    \getredf@ctDD\p@rtent(\v@lXa,\v@lYa)\ifnum\p@rtent>\f@ctech\f@ctech=\p@rtent\fi%
    \divide\v@lmin\f@ctech\divide\v@lmax\f@ctech\divide\v@lX\f@ctech\divide\v@lY\f@ctech%
    \divide\v@lXa\f@ctech\divide\v@lYa\f@ctech%
    \c@rre=\repdecn@mb\v@lXa\v@lmax\mili@u=\repdecn@mb\v@lYa\v@lmin%
    \getredf@ctDD\f@ctech(\c@rre,\mili@u)%
    \c@rre=\repdecn@mb\v@lX\v@lmax\mili@u=\repdecn@mb\v@lY\v@lmin%
    \getredf@ctDD\p@rtent(\c@rre,\mili@u)\ifnum\p@rtent>\f@ctech\f@ctech=\p@rtent\fi%
    \divide\v@lmin\f@ctech\divide\v@lmax\f@ctech\divide\v@lX\f@ctech\divide\v@lY\f@ctech%
    \divide\v@lXa\f@ctech\divide\v@lYa\f@ctech%
    \v@lmin=\repdecn@mb{\v@lmin}\v@lmin\v@lmax=\repdecn@mb{\v@lmax}\v@lmax%
    \edef\G@xde{\repdecn@mb\v@lmin}\edef\P@xde{\repdecn@mb\v@lmax}%
    \c@rre=-\v@lmax\v@leur=\repdecn@mb\v@lY\v@lY\advance\c@rre\v@leur\c@rre=\G@xde\c@rre%
    \v@leur=\repdecn@mb\v@lX\v@lX\v@leur=\P@xde\v@leur\advance\c@rre\v@leur
    \v@lmin=\repdecn@mb\v@lYa\v@lmin\v@lmax=\repdecn@mb\v@lXa\v@lmax%
    \mili@u=\repdecn@mb\v@lX\v@lmax\advance\mili@u\repdecn@mb\v@lY\v@lmin
    \v@lmax=\repdecn@mb\v@lXa\v@lmax\advance\v@lmax\repdecn@mb\v@lYa\v@lmin
    \ifdim\v@lmax>\epsil@n%
    \maxim@m{\v@leur}{\c@rre}{-\c@rre}\maxim@m{\v@lmin}{\mili@u}{-\mili@u}%
    \maxim@m{\v@leur}{\v@leur}{\v@lmin}\maxim@m{\v@lmin}{\v@lmax}{-\v@lmax}%
    \maxim@m{\v@leur}{\v@leur}{\v@lmin}\p@rtentiere{\p@rtent}{\v@leur}\advance\p@rtent\@ne%
    \divide\c@rre\p@rtent\divide\mili@u\p@rtent\divide\v@lmax\p@rtent%
    \delt@=\repdecn@mb{\mili@u}\mili@u\v@leur=\repdecn@mb{\v@lmax}\c@rre%
    \advance\delt@-\v@leur\ifdim\delt@<\z@\else\sqrt@\delt@\delt@%
    \invers@\v@lmax\v@lmax\edef\Uns@rAp{\repdecn@mb\v@lmax}%
    \v@leur=-\mili@u\advance\v@leur-\delt@\v@leur=\Uns@rAp\v@leur%
    \edef\t@ille{\repdecn@mb\v@leur}\figpttra#1:=-4/\t@ille,-3/\s@mme=#1\advance\s@mme\@ne%
    \v@leur=-\mili@u\advance\v@leur\delt@\v@leur=\Uns@rAp\v@leur%
    \edef\t@ille{\repdecn@mb\v@leur}\figpttra\the\s@mme:=-4/\t@ille,-3/\fi\fi}
\ctr@ln@m\figptsorthoprojline
\ctr@ld@f\def\figptsorthoprojlineDD#1=#2/#3,#4/{\ifGR@cri{\s@uvc@ntr@l\et@tfigptsorthoprojlineDD%
    \setc@ntr@l{2}\figvectPDD-3[#3,#4]\figvectNVDD-4[-3]\resetc@ntr@l{2}%
    \def\list@num{#2}\s@mme=#1\@ecfor\p@int:=\list@num\do{%
    \inters@cDD\the\s@mme:[\p@int,-4;#3,-3]\advance\s@mme\@ne}%
    \resetc@ntr@l\et@tfigptsorthoprojlineDD}\ignorespaces\fi}
\ctr@ld@f\def\figptsorthoprojlineTD#1=#2/#3,#4/{\ifGR@cri{\s@uvc@ntr@l\et@tfigptsorthoprojlineTD%
    \setc@ntr@l{2}\figvectPTD-2[#3,#4]\vecunit@TD{-2}{-2}%
    \def\list@num{#2}\s@mme=#1\@ecfor\p@int:=\list@num\do{%
    \figvectPTD-1[#3,\p@int]\c@lproscalTD\v@leur[-1,-2]%
    \edef\v@lcoef{\repdecn@mb{\v@leur}}\figpttraTD\the\s@mme:=#3/\v@lcoef,-2/%
    \advance\s@mme\@ne}\resetc@ntr@l\et@tfigptsorthoprojlineTD}\ignorespaces\fi}
\ctr@ln@m\figptsorthoprojplane
\ctr@ld@f\def\figptsorthoprojplaneDD{\un@v@ilable{figptsorthoprojplane}}
\ctr@ld@f\def\figptsorthoprojplaneTD#1=#2/#3,#4/{\ifGR@cri{\s@uvc@ntr@l\et@tfigptsorthoprojplane%
    \setc@ntr@l{2}\vecunit@TD{-2}{#4}%
    \def\list@num{#2}\s@mme=#1\@ecfor\p@int:=\list@num\do{\figvectPTD-1[\p@int,#3]%
    \c@lproscalTD\v@leur[-1,-2]\edef\v@lcoef{\repdecn@mb{\v@leur}}%
    \figpttraTD\the\s@mme:=\p@int/\v@lcoef,-2/\advance\s@mme\@ne}%
    \resetc@ntr@l\et@tfigptsorthoprojplane}\ignorespaces\fi}
\ctr@ld@f\def\figptshom#1=#2/#3,#4/{\ifGR@cri{\s@uvc@ntr@l\et@tfigptshom%
    \setc@ntr@l{2}\def\list@num{#2}\s@mme=#1%
    \@ecfor\p@int:=\list@num\do{\figvectP-1[#3,\p@int]%
    \figpttra\the\s@mme:=#3/#4,-1/\advance\s@mme\@ne}%
    \resetc@ntr@l\et@tfigptshom}\ignorespaces\fi}
\ctr@ld@f\def\figptsinv#1=#2/#3,#4/{\ifGR@cri{\s@uvc@ntr@l\et@tfigptsinv%
    \setc@ntr@l{2}\def\list@num{#2}\s@mme=#1%
    \@ecfor\p@int:=\list@num\do{\figvectP-1[#3,\p@int]\Figg@tXY{-1}%
    \getredf@ctB\f@ctech\n@rmeucC{\delt@}{-1}%
    \delt@=\ptT@unit@\delt@\delt@=\ptT@unit@\delt@%
    \invers@{\delt@}{\delt@}\multiply\f@ctech\f@ctech\divide\delt@\f@ctech%
    \delt@=#4\delt@\edef\v@lcoef{\repdecn@mb{\delt@}}\figpttra\the\s@mme:=#3/\v@lcoef,-1/%
    \advance\s@mme\@ne}\resetc@ntr@l\et@tfigptsinv}\ignorespaces\fi}
\ctr@ln@m\figptsrot
\ctr@ld@f\def\figptsrotDD#1=#2/#3,#4/{\ifGR@cri{\s@uvc@ntr@l\et@tfigptsrotDD%
    \c@ssin{\C@}{\S@}{#4}\setc@ntr@l{2}\def\list@num{#2}\s@mme=#1%
    \@ecfor\p@int:=\list@num\do{\figvectPDD-1[#3,\p@int]\Figg@tXY{-1}%
    \v@lXa=\C@\v@lX\advance\v@lXa-\S@\v@lY%
    \v@lYa=\S@\v@lX\advance\v@lYa\C@\v@lY%
    \Figv@ctCreg-1(\v@lXa,\v@lYa)\figpttraDD\the\s@mme:=#3/1,-1/\advance\s@mme\@ne}%
    \resetc@ntr@l\et@tfigptsrotDD}\ignorespaces\fi}
\ctr@ld@f\def\figptsrotTD#1=#2/#3,#4,#5/{\ifGR@cri{\s@uvc@ntr@l\et@tfigptsrotTD%
    \c@ssin{\C@}{\S@}{#4}%
    \setc@ntr@l{2}\def\list@num{#2}\s@mme=#1%
    \@ecfor\p@int:=\list@num\do{\figptorthoprojplaneTD-3:=#3/\p@int,#5/%
    \figvectPTD-2[-3,\p@int]%
    \figvectNVTD-1[#5,-2]\n@rmeucTD\v@leur{-2}\edef\v@lcoef{\repdecn@mb{\v@leur}}%
    \Figg@tXYa{-1}\v@lXa=\v@lcoef\v@lXa\v@lYa=\v@lcoef\v@lYa\v@lZa=\v@lcoef\v@lZa%
    \v@lXa=\S@\v@lXa\v@lYa=\S@\v@lYa\v@lZa=\S@\v@lZa\Figg@tXY{-2}%
    \advance\v@lXa\C@\v@lX\advance\v@lYa\C@\v@lY\advance\v@lZa\C@\v@lZ%
    \Figg@tXY{-3}\advance\v@lXa\v@lX\advance\v@lYa\v@lY\advance\v@lZa\v@lZ%
    \Figp@intregTD\the\s@mme:(\v@lXa,\v@lYa,\v@lZa)\advance\s@mme\@ne}%
    \resetc@ntr@l\et@tfigptsrotTD}\ignorespaces\fi}
\ctr@ln@m\figptssym
\ctr@ld@f\def\figptssymDD#1=#2/#3,#4/{\ifGR@cri{\s@uvc@ntr@l\et@tfigptssymDD%
    \setc@ntr@l{2}\figvectPDD-3[#3,#4]\Figg@tXY{-3}\Figv@ctCreg-4(-\v@lY,\v@lX)%
    \resetc@ntr@l{2}\def\list@num{#2}\s@mme=#1%
    \@ecfor\p@int:=\list@num\do{\inters@cDD-5:[#3,-3;\p@int,-4]\figvectPDD-2[\p@int,-5]%
    \figpttraDD\the\s@mme:=\p@int/2,-2/\advance\s@mme\@ne}%
    \resetc@ntr@l\et@tfigptssymDD}\ignorespaces\fi}
\ctr@ld@f\def\figptssymTD#1=#2/#3,#4/{\ifGR@cri{\s@uvc@ntr@l\et@tfigptssymTD%
    \setc@ntr@l{2}\vecunit@TD{-2}{#4}\def\list@num{#2}\s@mme=#1%
    \@ecfor\p@int:=\list@num\do{\figvectPTD-1[\p@int,#3]%
    \c@lproscalTD\v@leur[-1,-2]\v@leur=2\v@leur\edef\v@lcoef{\repdecn@mb{\v@leur}}%
    \figpttraTD\the\s@mme:=\p@int/\v@lcoef,-2/\advance\s@mme\@ne}%
    \resetc@ntr@l\et@tfigptssymTD}\ignorespaces\fi}
\ctr@ln@m\figptstra
\ctr@ld@f\def\figptstraDD#1=#2/#3,#4/{\ifGR@cri{\Figg@tXYa{#4}\v@lXa=#3\v@lXa\v@lYa=#3\v@lYa%
    \def\list@num{#2}\s@mme=#1\@ecfor\p@int:=\list@num\do{\Figg@tXY{\p@int}%
    \advance\v@lX\v@lXa\advance\v@lY\v@lYa%
    \Figp@intregDD\the\s@mme:(\v@lX,\v@lY)\advance\s@mme\@ne}}\ignorespaces\fi}
\ctr@ld@f\def\figptstraTD#1=#2/#3,#4/{\ifGR@cri{\Figg@tXYa{#4}\v@lXa=#3\v@lXa\v@lYa=#3\v@lYa%
    \v@lZa=#3\v@lZa\def\list@num{#2}\s@mme=#1\@ecfor\p@int:=\list@num\do{\Figg@tXY{\p@int}%
    \advance\v@lX\v@lXa\advance\v@lY\v@lYa\advance\v@lZ\v@lZa%
    \Figp@intregTD\the\s@mme:(\v@lX,\v@lY,\v@lZ)\advance\s@mme\@ne}}\ignorespaces\fi}
\ctr@ln@m\figptvisilimSL
\ctr@ld@f\def\figptvisilimSLDD{\un@v@ilable{figptvisilimSL}}
\ctr@ld@f\def\figptvisilimSLTD#1:#2[#3,#4;#5,#6]{\ifGR@cri{\s@uvc@ntr@l\et@tfigptvisilimSLTD%
    \setc@ntr@l{2}\figvectP-1[#3,#4]\n@rminf{\delt@}{-1}%
    \ifcase\CUR@proj\v@lX=\cxa@\p@\v@lY=-\p@\v@lZ=\cxb@\p@
    \Figv@ctCreg-2(\v@lX,\v@lY,\v@lZ)\figvectP-3[#5,#6]\figvectNV-1[-2,-3]%
    \or\figvectP-1[#5,#6]\vecunitCV@TD{-1}\v@lmin=\v@lX\v@lmax=\v@lY
    \v@leur=\v@lZ\v@lX=\cza@\p@\v@lY=\czb@\p@\v@lZ=\czc@\p@\c@lprovec{-1}%
    \or\c@ley@pt{-2}\figvectN-1[#5,#6,-2]\fi
    \edef\Ai@{#3}\edef\Aj@{#4}\figvectP-2[#5,\Ai@]\c@lproscal\v@leur[-1,-2]%
    \ifdim\v@leur>\z@\p@rtent=\@ne\else\p@rtent=\m@ne\fi%
    \figvectP-2[#5,\Aj@]\c@lproscal\v@leur[-1,-2]%
    \ifdim\p@rtent\v@leur>\z@\figptcopy#1:#2/#3/%
    \message{*** \BS@ figptvisilimSL: points are on the same side.}\else%
    \figptcopy-3:/#3/\figptcopy-4:/#4/%
    \loop\figptbary-5:[-3,-4;1,1]\figvectP-2[#5,-5]\c@lproscal\v@leur[-1,-2]%
    \ifdim\p@rtent\v@leur>\z@\figptcopy-3:/-5/\else\figptcopy-4:/-5/\fi%
    \divide\delt@\tw@\ifdim\delt@>\epsil@n\repeat%
    \figptbary#1:#2[-3,-4;1,1]\fi\resetc@ntr@l\et@tfigptvisilimSLTD}\ignorespaces\fi}
\ctr@ld@f\def\c@ley@pt#1{\t@stp@r\ifitis@K\v@lX=\cza@\p@\v@lY=\czb@\p@\v@lZ=\czc@\p@%
    \Figv@ctCreg-1(\v@lX,\v@lY,\v@lZ)\Figp@intreg-2:(\wd\Bt@rget,\ht\Bt@rget,\dp\Bt@rget)%
    \figpttra#1:=-2/-\disob@intern,-1/\else\end\fi}
\ctr@ld@f\def\t@stp@r{\itis@Ktrue\ifnewt@rgetpt\else\itis@Kfalse%
    \message{*** \BS@ figptvisilimXX: target point undefined.}\fi\ifnewdis@b\else%
    \itis@Kfalse\message{*** \BS@ figptvisilimXX: observation distance undefined.}\fi%
    \ifitis@K\else\message{*** This macro must be called after \BS@ figdrawbegin or after
    having set the missing parameter(s) with \BS@ figset proj()}\fi}
\ctr@ld@f\def\figscan#1(#2,#3){{\s@uvc@ntr@l\et@tfigscan\@psfgetbb{#1}\if@psfbbfound\else%
    \def\@psfllx{0}\def\@psflly{20}\def\@psfurx{540}\def\@psfury{640}\fi\figscan@{#2}{#3}%
    \resetc@ntr@l\et@tfigscan}\ignorespaces}
\ctr@ld@f\def\figscan@#1#2{%
    \unit@=\@ne bp\setc@ntr@l{2}\figsetmark{}%
    \def\minst@p{20pt}%
    \v@lX=\@psfllx\p@\v@lX=\Sc@leFact\v@lX\r@undint\v@lX\v@lX%
    \v@lY=\@psflly\p@\v@lY=\Sc@leFact\v@lY\ifdim\v@lY>\z@\r@undint\v@lY\v@lY\fi%
    \delt@=\@psfury\p@\delt@=\Sc@leFact\delt@%
    \advance\delt@-\v@lY\v@lXa=\@psfurx\p@\v@lXa=\Sc@leFact\v@lXa\v@leur=\minst@p%
    \edef\valv@lY{\repdecn@mb{\v@lY}}\edef\LgTr@it{\the\delt@}%
    \loop\ifdim\v@lX<\v@lXa\edef\valv@lX{\repdecn@mb{\v@lX}}%
    \figptDD -1:(\valv@lX,\valv@lY)\figwriten -1:\hbox{\vrule height\LgTr@it}(0)%
    \ifdim\v@leur<\minst@p\else\figsetmark{\raise-8bp\hbox{$\scriptscriptstyle\triangle$}}%
    \figwrites -1:\@ffichnb{0}{\valv@lX}(6)\v@leur=\z@\figsetmark{}\fi%
    \advance\v@leur#1pt\advance\v@lX#1pt\repeat%
    \def\minst@p{10pt}%
    \v@lX=\@psfllx\p@\v@lX=\Sc@leFact\v@lX\ifdim\v@lX>\z@\r@undint\v@lX\v@lX\fi%
    \v@lY=\@psflly\p@\v@lY=\Sc@leFact\v@lY\r@undint\v@lY\v@lY%
    \delt@=\@psfurx\p@\delt@=\Sc@leFact\delt@%
    \advance\delt@-\v@lX\v@lYa=\@psfury\p@\v@lYa=\Sc@leFact\v@lYa\v@leur=\minst@p%
    \edef\valv@lX{\repdecn@mb{\v@lX}}\edef\LgTr@it{\the\delt@}%
    \loop\ifdim\v@lY<\v@lYa\edef\valv@lY{\repdecn@mb{\v@lY}}%
    \figptDD -1:(\valv@lX,\valv@lY)\figwritee -1:\vbox{\hrule width\LgTr@it}(0)%
    \ifdim\v@leur<\minst@p\else\figsetmark{$\triangleright$\kern4bp}%
    \figwritew -1:\@ffichnb{0}{\valv@lY}(6)\v@leur=\z@\figsetmark{}\fi%
    \advance\v@leur#2pt\advance\v@lY#2pt\repeat}
\ctr@ld@f
\ctr@ld@f\def\figscan@E#1(#2,#3){{\s@uvc@ntr@l\et@tfigscan@E%
    \Figdisc@rdLTS{#1}{\t@xt@}\pdfximage{\t@xt@}%
    \setbox\Gb@x=\hbox{\pdfrefximage\pdflastximage}%
    \edef\@psfllx{0}\v@lY=-\dp\Gb@x\edef\@psflly{\repdecn@mb{\v@lY}}%
    \edef\@psfurx{\repdecn@mb{\wd\Gb@x}}%
    \v@lY=\dp\Gb@x\advance\v@lY\ht\Gb@x\edef\@psfury{\repdecn@mb{\v@lY}}%
    \figscan@{#2}{#3}\resetc@ntr@l\et@tfigscan@E}\ignorespaces}
\ctr@ld@f\def\figshowpts[#1,#2]{{\figsetmark{$\bullet$}\figsetptname{\bf ##1}%
    \p@rtent=#2\relax\ifnum\p@rtent<\z@\p@rtent=\z@\fi%
    \s@mme=#1\relax\ifnum\s@mme<\z@\s@mme=\z@\fi%
    \loop\ifnum\s@mme<\p@rtent\pt@rvect{\s@mme}%
    \ifitis@K\figwriten{\the\s@mme}:(4pt)\fi\advance\s@mme\@ne\repeat%
    \pt@rvect{\s@mme}\ifitis@K\figwriten{\the\s@mme}:(4pt)\fi}\ignorespaces}
\ctr@ld@f\def\pt@rvect#1{\set@bjc@de{#1}%
    \expandafter\expandafter\expandafter\inqpt@rvec\csname\objc@de\endcsname:}
\ctr@ld@f\def\inqpt@rvec#1#2:{\if#1\C@dCl@spt\itis@Ktrue\else\itis@Kfalse\fi}
\ctr@ld@f\def\figshowsettings{{%
    \immediate\write16{====================================================================}%
    \immediate\write16{ Current settings are (DDV means "with dynamic default value"):}%
    \immediate\write16{ --- GENERAL ---}%
    \immediate\write16{Scale factor and Unit = \unit@util\space (\the\unit@)
     \space -> \BS@ figinit{ScaleFactorUnit}}%
    \immediate\write16{Update mode = \ifGRupdatem@de yes\else no\fi
     \space-> \BS@ figset(update=yes/no) or \BS@ figsetdefault(update=yes/no)}%
    \immediate\write16{ --- WRITING ---}%
    \immediate\write16{Implicit point name = \ptn@me{i} \space-> \BS@ figset write(ptname={Name})}%
    \immediate\write16{Point marker = \the\c@nsymb \space -> \BS@ figset write(mark=Mark)}%
    \immediate\write16{Print rounded coordinates = \ifr@undcoord yes\else no\fi
     \space-> \BS@ figset write(roundcoord=yes/no)}%
    \immediate\write16{ --- GRAPHICAL (general) ---}%
    \immediate\write16{Color = \CUR@color \space-> \BS@ figset(color=ColorDefinition)}%
    \immediate\write16{Filling mode = \iffillm@de yes\else no\fi
     \space-> \BS@ figset(fillmode=yes/no)}%
    \immediate\write16{Line join = \CUR@join \space-> \BS@ figset(join=miter/round/bevel)}%
    \immediate\write16{Line style = \CUR@dash \space-> \BS@ figset(dash=Index/Pattern)}%
    \immediate\write16{Line width = \CUR@width
     \space-> \BS@ figset(width=real in PostScript units)}%
    \immediate\write16{ --- GRAPHICAL (specific) ---}%
    \immediate\write16{Altitude (all the following attributes are DDV):}%
    \immediate\write16{ Base line color =
     \ifx\DDV@blcolor\D@FTref general color\else\DDV@blcolor\fi
     \space-> \BS@ figset altitude(blcolor=ColorDefinition)}%
    \immediate\write16{ Base line style =
     \ifx\DDV@bldash\D@FTref general style\else\DDV@bldash\fi
     \space-> \BS@ figset altitude(bldash=Index/Pattern)}%
    \immediate\write16{ Base line width =
     \ifx\DDV@blwidth\D@FTref general width\else\DDV@blwidth\fi
     \space-> \BS@ figset altitude(blwidth=real in PostScript units)}%
    \immediate\write16{ Square line color =
     \ifx\DDV@sqcolor\D@FTref general color\else\DDV@sqcolor\fi
     \space-> \BS@ figset altitude(sqcolor=ColorDefinition)}%
    \immediate\write16{ Square line style =
     \ifx\DDV@sqdash\D@FTref general style\else\DDV@sqdash\fi
     \space-> \BS@ figset altitude(sqdash=Index/Pattern)}%
    \immediate\write16{ Square line width =
     \ifx\DDV@sqwidth\D@FTref general width\else\DDV@sqwidth\fi
     \space-> \BS@ figset altitude(sqwidth=real in PostScript units)}%
    \immediate\write16{Arrowhead:}%
    \immediate\write16{ (half-)Angle = \@rrowheadangle
     \space-> \BS@ figset arrowhead(angle=real in degrees)}%
    \immediate\write16{ Filling mode = \if@rrowhfill yes\else no\fi
     \space-> \BS@ figset arrowhead(fillmode=yes/no)}%
    \immediate\write16{ "Outside" = \if@rrowhout yes\else no\fi
     \space-> \BS@ figset arrowhead(out=yes/no)}%
    \immediate\write16{ Length = \@rrowheadlength
     \if@rrowratio\space(not active)\else\space(active)\fi
     \space-> \BS@ figset arrowhead(length=real in user coord.)}%
    \immediate\write16{ Ratio = \@rrowheadratio
     \if@rrowratio\space(active)\else\space(not active)\fi
     \space-> \BS@ figset arrowhead(ratio=real in [0,1])}%
    \immediate\write16{Curve:}%
    \immediate\write16{ Roundness = \curv@roundness
     \space-> \BS@ figset curve(roundness=real in [0,0.5])}%
    \immediate\write16{Flow chart:}%
    \immediate\write16{ Arrow position = \@rrowp@s
     \space-> \BS@ figset flowchart(arrowposition=real in [0,1])}%
    \immediate\write16{ Arrow reference point = \ifcase\@rrowr@fpt start\else end\fi
     \space-> \BS@ figset flowchart(arrowrefpt = start/end)}%
    \immediate\write16{ Background color = \fcbgc@lor
     \space-> \BS@ figset flowchart(bgcolor=ColorDefinition)}%
    \immediate\write16{ Line type = \ifcase\fclin@typ@ curve\else polygon\fi
     \space-> \BS@ figset flowchart(line=polygon/curve)}%
    \immediate\write16{ Padding = (\Xp@dd, \Yp@dd)
     \space-> \BS@ figset flowchart(padding = real in user coord.)}%
    \immediate\write16{\space\space\space\space(or
     \BS@ figset flowchart(xpadding=real, ypadding=real) )}%
    \immediate\write16{ Radius = \fclin@r@d
     \space-> \BS@ figset flowchart(radius=positive real in user coord.)}%
    \immediate\write16{ Shape = \fcsh@pe
     \space-> \BS@ figset flowchart(shape = rectangle, ellipse or lozenge)}%
    \immediate\write16{ Thickness color (DDV) = 
     \ifx\DDV@thickcolor\D@FTref general color\else\DDV@thickcolor\fi
     \space-> \BS@ figset flowchart(thickcolor=ColorDefinition)}%
    \immediate\write16{ Thickness = \thickn@ss
     \space-> \BS@ figset flowchart(thickness = real in user coord.)}%
    \immediate\write16{Mesh:}%
    \immediate\write16{ Diagonal = \c@ntrolmesh
     \space-> \BS@ figset mesh(diag=integer in {-1,0,1})}%
    \immediate\write16{ Lines color (DDV) =
     \ifx\DDV@meshcolor\D@FTref general color\else\DDV@meshcolor\fi
     \space-> \BS@ figset mesh(color=ColorDefinition)}%
    \immediate\write16{ Lines style (DDV) =
     \ifx\DDV@meshdash\D@FTref general style\else\DDV@meshdash\fi
     \space-> \BS@ figset mesh(dash=Index/Pattern)}%
    \immediate\write16{ Lines width (DDV) =
     \ifx\DDV@meshwidth\D@FTref general width\else\DDV@meshwidth\fi
     \space-> \BS@ figset mesh(width=real in PostScript units)}%
    \immediate\write16{Trimesh:}%
    \immediate\write16{ Lines color (DDV) =
     \ifx\DDV@tmeshcolor\D@FTref general color\else\DDV@tmeshcolor\fi
     \space-> \BS@ figset trimesh(color=ColorDefinition)}%
    \immediate\write16{ Lines style (DDV) =
     \ifx\DDV@tmeshdash\D@FTref general style\else\DDV@tmeshdash\fi
     \space-> \BS@ figset trimesh(dash=Index/Pattern)}%
    \immediate\write16{ Lines width (DDV) =
     \ifx\DDV@tmeshwidth\D@FTref general width\else\DDV@tmeshwidth\fi
     \space-> \BS@ figset trimesh(width=real in PostScript units)}%
    \ifTr@isDim%
    \immediate\write16{ --- 3D to 2D PROJECTION ---}%
    \immediate\write16{Projection : \typ@proj \space-> \BS@ figinit{ScaleFactorUnit, ProjType}}%
    \immediate\write16{Longitude (psi) = \v@lPsi \space-> \BS@ figset proj(psi=real in degrees)}%
    \ifcase\CUR@proj\immediate\write16{Depth coeff. (Lambda)
     \space = \v@lTheta \space-> \BS@ figset proj(lambda=real in [0,1])}%
    \else\immediate\write16{Latitude (theta)
     \space = \v@lTheta \space-> \BS@ figset proj(theta=real in degrees)}%
    \fi%
    \ifnum\CUR@proj=\tw@%
    \immediate\write16{Observation distance = \disob@unit
     \space-> \BS@ figset proj(dist=real in user coord.)}%
    \immediate\write16{Target point = \t@rgetpt \space-> \BS@ figset proj(targetpt=pt number)}%
     \v@lX=\ptT@unit@\wd\Bt@rget\v@lY=\ptT@unit@\ht\Bt@rget\v@lZ=\ptT@unit@\dp\Bt@rget%
    \immediate\write16{ Its coordinates are
     (\repdecn@mb{\v@lX}, \repdecn@mb{\v@lY}, \repdecn@mb{\v@lZ})}%
    \fi%
    \fi%
    \immediate\write16{====================================================================}%
    \ignorespaces}}
\ctr@ln@w{newif}\ifitis@vect@r
\ctr@ld@f\def\figvectC#1(#2,#3){{\itis@vect@rtrue\figpt#1:(#2,#3)}\ignorespaces}
\ctr@ld@f\def\Figv@ctCreg#1(#2,#3){{\itis@vect@rtrue\Figp@intreg#1:(#2,#3)}\ignorespaces}
\ctr@ln@m\figvectDBezier
\ctr@ld@f\def\figvectDBezierDD#1:#2,#3[#4,#5,#6,#7]{\ifGR@cri{\s@uvc@ntr@l\et@tfigvectDBezierDD%
    \FigvectDBezier@#2,#3[#4,#5,#6,#7]\v@lX=\c@ef\v@lX\v@lY=\c@ef\v@lY%
    \Figv@ctCreg#1(\v@lX,\v@lY)\resetc@ntr@l\et@tfigvectDBezierDD}\ignorespaces\fi}
\ctr@ld@f\def\figvectDBezierTD#1:#2,#3[#4,#5,#6,#7]{\ifGR@cri{\s@uvc@ntr@l\et@tfigvectDBezierTD%
    \FigvectDBezier@#2,#3[#4,#5,#6,#7]\v@lX=\c@ef\v@lX\v@lY=\c@ef\v@lY\v@lZ=\c@ef\v@lZ%
    \Figv@ctCreg#1(\v@lX,\v@lY,\v@lZ)\resetc@ntr@l\et@tfigvectDBezierTD}\ignorespaces\fi}
\ctr@ld@f\def\FigvectDBezier@#1,#2[#3,#4,#5,#6]{\setc@ntr@l{2}%
    \edef\T@{#2}\v@leur=\p@\advance\v@leur-#2pt\edef\UNmT@{\repdecn@mb{\v@leur}}%
    \ifnum#1=\tw@\def\c@ef{6}\else\def\c@ef{3}\fi%
    \figptcopy-4:/#3/\figptcopy-3:/#4/\figptcopy-2:/#5/\figptcopy-1:/#6/%
    \l@mbd@un=-4 \l@mbd@de=-\thr@@\p@rtent=\m@ne\c@lDecast%
    \ifnum#1=\tw@\c@lDCDeux{-4}{-3}\c@lDCDeux{-3}{-2}\c@lDCDeux{-4}{-3}\else%
    \l@mbd@un=-4 \l@mbd@de=-\thr@@\p@rtent=-\tw@\c@lDecast%
    \c@lDCDeux{-4}{-3}\fi\Figg@tXY{-4}}
\ctr@ln@m\c@lDCDeux
\ctr@ld@f\def\c@lDCDeuxDD#1#2{\Figg@tXY{#2}\Figg@tXYa{#1}%
    \advance\v@lX-\v@lXa\advance\v@lY-\v@lYa\Figp@intregDD#1:(\v@lX,\v@lY)}
\ctr@ld@f\def\c@lDCDeuxTD#1#2{\Figg@tXY{#2}\Figg@tXYa{#1}\advance\v@lX-\v@lXa%
    \advance\v@lY-\v@lYa\advance\v@lZ-\v@lZa\Figp@intregTD#1:(\v@lX,\v@lY,\v@lZ)}
\ctr@ln@m\figvectN
\ctr@ld@f\def\figvectNDD#1[#2,#3]{\ifGR@cri{\Figg@tXYa{#2}\Figg@tXY{#3}%
    \advance\v@lX-\v@lXa\advance\v@lY-\v@lYa%
    \Figv@ctCreg#1(-\v@lY,\v@lX)}\ignorespaces\fi}
\ctr@ld@f\def\figvectNTD#1[#2,#3,#4]{\ifGR@cri{\vecunitC@TD[#2,#4]\v@lmin=\v@lX\v@lmax=\v@lY%
    \v@leur=\v@lZ\vecunitC@TD[#2,#3]\c@lprovec{#1}}\ignorespaces\fi}
\ctr@ln@m\figvectNV
\ctr@ld@f\def\figvectNVDD#1[#2]{\ifGR@cri{\Figg@tXY{#2}\Figv@ctCreg#1(-\v@lY,\v@lX)}\ignorespaces\fi}
\ctr@ld@f\def\figvectNVTD#1[#2,#3]{\ifGR@cri{\vecunitCV@TD{#3}\v@lmin=\v@lX\v@lmax=\v@lY%
    \v@leur=\v@lZ\vecunitCV@TD{#2}\c@lprovec{#1}}\ignorespaces\fi}
\ctr@ln@m\figvectP
\ctr@ld@f\def\figvectPDD#1[#2,#3]{\ifGR@cri{\Figg@tXYa{#2}\Figg@tXY{#3}%
    \advance\v@lX-\v@lXa\advance\v@lY-\v@lYa%
    \Figv@ctCreg#1(\v@lX,\v@lY)}\ignorespaces\fi}
\ctr@ld@f\def\figvectPTD#1[#2,#3]{\ifGR@cri{\Figg@tXYa{#2}\Figg@tXY{#3}%
    \advance\v@lX-\v@lXa\advance\v@lY-\v@lYa\advance\v@lZ-\v@lZa%
    \Figv@ctCreg#1(\v@lX,\v@lY,\v@lZ)}\ignorespaces\fi}
\ctr@ln@m\figvectU
\ctr@ld@f\def\figvectUDD#1[#2]{\ifGR@cri{\n@rmeuc\v@leur{#2}\invers@\v@leur\v@leur%
    \delt@=\repdecn@mb{\v@leur}\unit@\edef\v@ldelt@{\repdecn@mb{\delt@}}%
    \Figg@tXY{#2}\v@lX=\v@ldelt@\v@lX\v@lY=\v@ldelt@\v@lY%
    \Figv@ctCreg#1(\v@lX,\v@lY)}\ignorespaces\fi}
\ctr@ld@f\def\figvectUTD#1[#2]{\ifGR@cri{\n@rmeuc\v@leur{#2}\invers@\v@leur\v@leur%
    \delt@=\repdecn@mb{\v@leur}\unit@\edef\v@ldelt@{\repdecn@mb{\delt@}}%
    \Figg@tXY{#2}\v@lX=\v@ldelt@\v@lX\v@lY=\v@ldelt@\v@lY\v@lZ=\v@ldelt@\v@lZ%
    \Figv@ctCreg#1(\v@lX,\v@lY,\v@lZ)}\ignorespaces\fi}
\ctr@ld@f\def\figvisu#1#2#3{\c@ldefproj\initb@undb@x\xdef\figforTeXFigno{\figforTeXnextFigno}%
    \s@mme=\figforTeXnextFigno\advance\s@mme\@ne\xdef\figforTeXnextFigno{\number\s@mme}%
    \setbox\b@xvisu=\hbox{\ifnum\@utoFN>\z@\figinsert{}\gdef\@utoFInDone{0}\fi\ignorespaces#3}%
    \gdef\@utoFInDone{1}\gdef\@utoFN{0}%
    \v@lXa=-\c@@rdYmin\v@lYa=\c@@rdYmax\advance\v@lYa-\c@@rdYmin%
    \v@lX=\c@@rdXmax\advance\v@lX-\c@@rdXmin%
    \setbox#1=\hbox{#2}\v@lY=-\v@lX\maxim@m{\v@lX}{\v@lX}{\wd#1}%
    \advance\v@lY\v@lX\divide\v@lY\tw@\advance\v@lY-\c@@rdXmin%
    \setbox#1=\vbox{\parindent\z@\hsize=\v@lX\vskip\v@lYa%
    \rlap{\hskip\v@lY\smash{\raise\v@lXa\box\b@xvisu}}%
    \def\t@xt@{#2}\ifx\t@xt@\empty\else\medskip\centerline{#2}\fi}\wd#1=\v@lX}
\ctr@ld@f\def\figDecrementFigno{{\xdef\figforTeXnextFigno{\figforTeXFigno}%
    \s@mme=\figforTeXFigno\advance\s@mme\m@ne\xdef\figforTeXFigno{\number\s@mme}}}
\ctr@ln@w{newbox}\Bt@rget\setbox\Bt@rget=\null
\ctr@ln@w{newbox}\BminTD@\setbox\BminTD@=\null
\ctr@ln@w{newbox}\BmaxTD@\setbox\BmaxTD@=\null
\ctr@ln@w{newif}\ifnewt@rgetpt\ctr@ln@w{newif}\ifnewdis@b
\ctr@ld@f\def\b@undb@xTD#1#2#3{%
    \relax\ifdim#1<\wd\BminTD@\global\wd\BminTD@=#1\fi%
    \relax\ifdim#2<\ht\BminTD@\global\ht\BminTD@=#2\fi%
    \relax\ifdim#3<\dp\BminTD@\global\dp\BminTD@=#3\fi%
    \relax\ifdim#1>\wd\BmaxTD@\global\wd\BmaxTD@=#1\fi%
    \relax\ifdim#2>\ht\BmaxTD@\global\ht\BmaxTD@=#2\fi%
    \relax\ifdim#3>\dp\BmaxTD@\global\dp\BmaxTD@=#3\fi}
\ctr@ld@f\def\c@ldefdisob{{\ifdim\wd\BminTD@<\maxdimen\v@leur=\wd\BmaxTD@\advance\v@leur-\wd\BminTD@%
    \delt@=\ht\BmaxTD@\advance\delt@-\ht\BminTD@\maxim@m{\v@leur}{\v@leur}{\delt@}%
    \delt@=\dp\BmaxTD@\advance\delt@-\dp\BminTD@\maxim@m{\v@leur}{\v@leur}{\delt@}%
    \v@leur=5\v@leur\else\v@leur=800pt\fi\c@ldefdisob@{\v@leur}}}
\ctr@ln@m\disob@intern
\ctr@ln@m\disob@
\ctr@ln@m\divf@ctproj
\ctr@ld@f\def\c@ldefdisob@#1{{\v@leur=#1\ifdim\v@leur<\p@\v@leur=800pt\fi%
    \xdef\disob@intern{\repdecn@mb{\v@leur}}%
    \delt@=\ptT@unit@\v@leur\xdef\disob@unit{\repdecn@mb{\delt@}}%
    \f@ctech=\@ne\loop\ifdim\v@leur>\t@n pt\divide\v@leur\t@n\multiply\f@ctech\t@n\repeat%
    \xdef\disob@{\repdecn@mb{\v@leur}}\xdef\divf@ctproj{\the\f@ctech}}%
    \global\newdis@btrue}
\ctr@ln@m\t@rgetpt
\ctr@ld@f\def\c@ldeft@rgetpt{\newt@rgetpttrue\def\t@rgetpt{CenterBoundBox}{%
    \delt@=\wd\BmaxTD@\advance\delt@-\wd\BminTD@\divide\delt@\tw@%
    \v@leur=\wd\BminTD@\advance\v@leur\delt@\global\wd\Bt@rget=\v@leur%
    \delt@=\ht\BmaxTD@\advance\delt@-\ht\BminTD@\divide\delt@\tw@%
    \v@leur=\ht\BminTD@\advance\v@leur\delt@\global\ht\Bt@rget=\v@leur%
    \delt@=\dp\BmaxTD@\advance\delt@-\dp\BminTD@\divide\delt@\tw@%
    \v@leur=\dp\BminTD@\advance\v@leur\delt@\global\dp\Bt@rget=\v@leur}}
\ctr@ln@m\c@ldefproj
\ctr@ld@f\def\c@ldefprojTD{\ifnewt@rgetpt\else\c@ldeft@rgetpt\fi\ifnewdis@b\else\c@ldefdisob\fi}
\ctr@ld@f\def\c@lprojcav{
    \v@lZa=\cxa@\v@lY\advance\v@lX\v@lZa%
    \v@lZa=\cxb@\v@lY\v@lY=\v@lZ\advance\v@lY\v@lZa\ignorespaces}
\ctr@ln@m\v@lcoef
\ctr@ld@f\def\c@lprojrea{
    \advance\v@lX-\wd\Bt@rget\advance\v@lY-\ht\Bt@rget\advance\v@lZ-\dp\Bt@rget%
    \v@lZa=\cza@\v@lX\advance\v@lZa\czb@\v@lY\advance\v@lZa\czc@\v@lZ%
    \divide\v@lZa\divf@ctproj\advance\v@lZa\disob@ pt\invers@{\v@lZa}{\v@lZa}%
    \v@lZa=\disob@\v@lZa\edef\v@lcoef{\repdecn@mb{\v@lZa}}%
    \v@lXa=\cxa@\v@lX\advance\v@lXa\cxb@\v@lY\v@lXa=\v@lcoef\v@lXa%
    \v@lY=\cyb@\v@lY\advance\v@lY\cya@\v@lX\advance\v@lY\cyc@\v@lZ%
    \v@lY=\v@lcoef\v@lY\v@lX=\v@lXa\ignorespaces}
\ctr@ld@f\def\c@lprojort{
    \v@lXa=\cxa@\v@lX\advance\v@lXa\cxb@\v@lY%
    \v@lY=\cyb@\v@lY\advance\v@lY\cya@\v@lX\advance\v@lY\cyc@\v@lZ%
    \v@lX=\v@lXa\ignorespaces}
\ctr@ld@f\def\Figptpr@j#1:#2/#3/{{\Figg@tXY{#3}\superc@lprojSP%
    \Figp@intregDD#1:{#2}(\v@lX,\v@lY)}\ignorespaces}
\ctr@ln@m\figsetobdist
\ctr@ld@f\def\figsetobdistDD{\un@v@ilable{figsetobdist}}
\ctr@ld@f\def\figsetobdistTD(#1){{\ifCUR@PS\W@rnmesIgn{figset proj(dist=...)}%
    \else\v@leur=#1\unit@\c@ldefdisob@{\v@leur}\fi}\ignorespaces}
\ctr@ln@m\c@lprojSP
\ctr@ln@m\CUR@proj
\ctr@ln@m\typ@proj
\ctr@ln@m\superc@lprojSP
\ctr@ld@f\def\Figs@tproj#1{%
    \if#13 \def@ultproj\else\if#1c\def@ultproj%
    \else\if#1o\xdef\CUR@proj{1}\xdef\typ@proj{orthogonal}%
         \figsetviewTD(\def@ultpsi,\def@ulttheta)%
         \global\let\c@lprojSP=\c@lprojort\global\let\superc@lprojSP=\c@lprojort%
    \else\if#1r\xdef\CUR@proj{2}\xdef\typ@proj{realistic}%
         \figsetviewTD(\def@ultpsi,\def@ulttheta)%
         \global\let\c@lprojSP=\c@lprojrea\global\let\superc@lprojSP=\c@lprojrea%
    \else\def@ultproj\message{*** Unknown projection. Cavalier projection assumed.}%
    \fi\fi\fi\fi}
\ctr@ld@f\def\def@ultproj{\xdef\CUR@proj{0}\xdef\typ@proj{cavalier}\figsetviewTD(\def@ultpsi,0.5)%
         \global\let\c@lprojSP=\c@lprojcav\global\let\superc@lprojSP=\c@lprojcav}
\ctr@ln@m\figsettarget
\ctr@ld@f\def\figsettargetDD{\un@v@ilable{figsettarget}}
\ctr@ld@f\def\figsettargetTD[#1]{{\ifCUR@PS\W@rnmesIgn{figset proj(targetpt=...)}%
    \else\global\newt@rgetpttrue\xdef\t@rgetpt{#1}\Figg@tXY{#1}\global\wd\Bt@rget=\v@lX%
    \global\ht\Bt@rget=\v@lY\global\dp\Bt@rget=\v@lZ\fi}\ignorespaces}
\ctr@ln@m\figsetview
\ctr@ld@f\def\figsetviewDD{\un@v@ilable{figsetview}}
\ctr@ld@f\def\figsetviewTD(#1){\ifCUR@PS\W@rnmesIgn{figset proj(Psi|Theta|Lambda=...)}%
     \else\Figsetview@#1,:\fi\ignorespaces}
\ctr@ld@f\def\Figsetview@#1,#2:{{\xdef\v@lPsi{#1}\def\t@xt@{#2}%
    \ifx\t@xt@\empty\def\@rgdeux{\v@lTheta}\else\X@rgdeux@#2\fi%
    \c@ssin{\costhet@}{\sinthet@}{#1}\v@lmin=\costhet@ pt\v@lmax=\sinthet@ pt%
    \ifcase\CUR@proj%
    \v@leur=\@rgdeux\v@lmin\xdef\cxa@{\repdecn@mb{\v@leur}}%
    \v@leur=\@rgdeux\v@lmax\xdef\cxb@{\repdecn@mb{\v@leur}}\v@leur=\@rgdeux pt%
    \relax\ifdim\v@leur>\p@\message{*** Lambda too large ! See \BS@ figset proj() !}\fi%
    \else%
    \v@lmax=-\v@lmax\xdef\cxa@{\repdecn@mb{\v@lmax}}\xdef\cxb@{\costhet@}%
    \ifx\t@xt@\empty\edef\@rgdeux{\def@ulttheta}\fi\c@ssin{\C@}{\S@}{\@rgdeux}%
    \v@lmax=-\S@ pt%
    \v@leur=\v@lmax\v@leur=\costhet@\v@leur\xdef\cya@{\repdecn@mb{\v@leur}}%
    \v@leur=\v@lmax\v@leur=\sinthet@\v@leur\xdef\cyb@{\repdecn@mb{\v@leur}}%
    \xdef\cyc@{\C@}\v@lmin=-\C@ pt%
    \v@leur=\v@lmin\v@leur=\costhet@\v@leur\xdef\cza@{\repdecn@mb{\v@leur}}%
    \v@leur=\v@lmin\v@leur=\sinthet@\v@leur\xdef\czb@{\repdecn@mb{\v@leur}}%
    \xdef\czc@{\repdecn@mb{\v@lmax}}\fi%
    \xdef\v@lTheta{\@rgdeux}}}
\ctr@ld@f\def\def@ultpsi{40}
\ctr@ld@f\def\def@ulttheta{25}
\ctr@ln@m\l@debut
\ctr@ln@m\n@mref
\ctr@ld@f\def\Figsetpr@j#1=#2|{\keln@mtr#1|%
    \def\n@mref{dep}\ifx\l@debut\n@mref\Figsetd@p{#2}\else
    \def\n@mref{dis}\ifx\l@debut\n@mref%
     \ifnum\CUR@proj=\tw@\figsetobdist(#2)\else\Figset@rr\fi\else
    \def\n@mref{lam}\ifx\l@debut\n@mref\Figsetd@p{#2}\else
    \def\n@mref{lat}\ifx\l@debut\n@mref\Figsetth@{#2}\else
    \def\n@mref{lon}\ifx\l@debut\n@mref\figsetview(#2)\else
    \def\n@mref{psi}\ifx\l@debut\n@mref\figsetview(#2)\else
    \def\n@mref{tar}\ifx\l@debut\n@mref%
     \ifnum\CUR@proj=\tw@\figsettarget[#2]\else\Figset@rr\fi\else
    \def\n@mref{the}\ifx\l@debut\n@mref\Figsetth@{#2}\else
    \W@rnmesAttr{figset proj}{#1}\fi\fi\fi\fi\fi\fi\fi\fi}
\ctr@ld@f\def\Figsetd@p#1{\ifnum\CUR@proj=\z@\figsetview(\v@lPsi,#1)\else\Figset@rr\fi}
\ctr@ld@f\def\Figsetth@#1{\ifnum\CUR@proj=\z@\Figset@rr\else\figsetview(\v@lPsi,#1)\fi}
\ctr@ld@f\def\Figset@rr{\message{*** \BS@ figset proj(): Attribute "\n@mref" ignored, incompatible
    with current projection}}
\ctr@ld@f\def\initb@undb@xTD{\wd\BminTD@=\maxdimen\ht\BminTD@=\maxdimen\dp\BminTD@=\maxdimen%
    \wd\BmaxTD@=-\maxdimen\ht\BmaxTD@=-\maxdimen\dp\BmaxTD@=-\maxdimen}
\ctr@ln@w{newbox}\Gb@x      
\ctr@ln@w{newbox}\Gb@xSC    
\ctr@ln@w{newtoks}\c@nsymb  
\ctr@ln@w{newif}\ifr@undcoord\ctr@ln@w{newif}\ifunitpr@sent
\ctr@ld@f\def\unssqrttw@{0.707106 }
\ctr@ld@f\def\figAst{\raise-1.15ex\hbox{$\ast$}}
\ctr@ld@f\def\figBullet{\raise-1.15ex\hbox{$\bullet$}}
\ctr@ld@f\def\figCirc{\raise-1.15ex\hbox{$\circ$}}
\ctr@ld@f\def\figDiamond{\raise-1.15ex\hbox{$\diamond$}}%
\ctr@ld@f\def\boxit#1#2{\leavevmode\hbox{\vrule\vbox{\hrule\vglue#1%
    \vtop{\hbox{\kern#1{#2}\kern#1}\vglue#1\hrule}}\vrule}}
\ctr@ld@f
\ctr@ld@f
\ctr@ld@f\def\c@nterpt{\ignorespaces%
    \kern-.5\wd\Gb@xSC%
    \raise-.5\ht\Gb@xSC\rlap{\hbox{\raise.5\dp\Gb@xSC\hbox{\copy\Gb@xSC}}}%
    \kern .5\wd\Gb@xSC\ignorespaces}
\ctr@ld@f\def\b@undb@xSC#1#2{{\v@lXa=#1\v@lYa=#2%
    \v@leur=\ht\Gb@xSC\advance\v@leur\dp\Gb@xSC%
    \advance\v@lXa-.5\wd\Gb@xSC\advance\v@lYa-.5\v@leur\b@undb@x{\v@lXa}{\v@lYa}%
    \advance\v@lXa\wd\Gb@xSC\advance\v@lYa\v@leur\b@undb@x{\v@lXa}{\v@lYa}}}
\ctr@ln@m\Dist@n
\ctr@ln@m\l@suite
\ctr@ld@f\def\@keldist#1#2{\edef\Dist@n{#2}\y@tiunit{\Dist@n}%
    \ifunitpr@sent#1=\Dist@n\else#1=\Dist@n\unit@\fi}
\ctr@ld@f\def\y@tiunit#1{\unitpr@sentfalse\expandafter\y@tiunit@#1:}
\ctr@ld@f\def\y@tiunit@#1#2:{\ifcat#1a\unitpr@senttrue\else\def\l@suite{#2}%
    \ifx\l@suite\empty\else\y@tiunit@#2:\fi\fi}
\ctr@ln@m\figcoord
\ctr@ld@f\def\figcoordDD#1{{\v@lX=\ptT@unit@\v@lX\v@lY=\ptT@unit@\v@lY%
    \ifr@undcoord\ifcase#1\v@leur=0.5pt\or\v@leur=0.05pt\or\v@leur=0.005pt%
    \or\v@leur=0.0005pt\else\v@leur=\z@\fi%
    \ifdim\v@lX<\z@\advance\v@lX-\v@leur\else\advance\v@lX\v@leur\fi%
    \ifdim\v@lY<\z@\advance\v@lY-\v@leur\else\advance\v@lY\v@leur\fi\fi%
    (\@ffichnb{#1}{\repdecn@mb{\v@lX}},\ifmmode\else\thinspace\fi%
    \@ffichnb{#1}{\repdecn@mb{\v@lY}})}}
\ctr@ld@f\def\@ffichnb#1#2{{\def\@@ffich{\@ffich#1(}\edef\n@mbre{#2}%
    \expandafter\@@ffich\n@mbre)}}
\ctr@ld@f\def\@ffich#1(#2.#3){{#2\ifnum#1>\z@.\fi\def\dig@ts{#3}\s@mme=\z@%
    \loop\ifnum\s@mme<#1\expandafter\@ffichdec\dig@ts:\advance\s@mme\@ne\repeat}}
\ctr@ld@f\def\@ffichdec#1#2:{\relax#1\def\dig@ts{#20}}
\ctr@ld@f\def\figcoordTD#1{{\v@lX=\ptT@unit@\v@lX\v@lY=\ptT@unit@\v@lY\v@lZ=\ptT@unit@\v@lZ%
    \ifr@undcoord\ifcase#1\v@leur=0.5pt\or\v@leur=0.05pt\or\v@leur=0.005pt%
    \or\v@leur=0.0005pt\else\v@leur=\z@\fi%
    \ifdim\v@lX<\z@\advance\v@lX-\v@leur\else\advance\v@lX\v@leur\fi%
    \ifdim\v@lY<\z@\advance\v@lY-\v@leur\else\advance\v@lY\v@leur\fi%
    \ifdim\v@lZ<\z@\advance\v@lZ-\v@leur\else\advance\v@lZ\v@leur\fi\fi%
    (\@ffichnb{#1}{\repdecn@mb{\v@lX}},\ifmmode\else\thinspace\fi%
     \@ffichnb{#1}{\repdecn@mb{\v@lY}},\ifmmode\else\thinspace\fi%
     \@ffichnb{#1}{\repdecn@mb{\v@lZ}})}}
\ctr@ld@f\def\figsetroundcoord#1{\expandafter\Figsetr@undcoord#1:\ignorespaces}
\ctr@ld@f\def\Figsetr@undcoord#1#2:{\if#1n\r@undcoordfalse\else\r@undcoordtrue\fi}
\ctr@ld@f\def\Figsetwr@te#1=#2|{\keln@mun#1|%
    \def\n@mref{m}\ifx\l@debut\n@mref\figsetmark{#2}\else
    \def\n@mref{p}\ifx\l@debut\n@mref\figsetptname{#2}\else
    \def\n@mref{r}\ifx\l@debut\n@mref\figsetroundcoord{#2}\else
    \W@rnmesAttr{figset write}{#1}\fi\fi\fi}
\ctr@ld@f\def\figsetmark#1{\c@nsymb={#1}\setbox\Gb@xSC=\hbox{\the\c@nsymb}\ignorespaces}
\ctr@ln@m\ptn@me
\ctr@ld@f\def\figsetptname#1{\def\ptn@me##1{#1}\ignorespaces}
\ctr@ld@f\def\FigWrit@L#1:#2(#3,#4){\ignorespaces\@keldist\v@leur{#3}\@keldist\delt@{#4}%
    \C@rp@r@m\def\list@num{#1}\@ecfor\p@int:=\list@num\do{\FigWrit@pt{\p@int}{#2}}}
\ctr@ld@f\def\FigWrit@pt#1#2{\FigWp@r@m{#1}{#2}\Vc@rrect\figWp@si%
    \ifdim\wd\Gb@xSC>\z@\b@undb@xSC{\v@lX}{\v@lY}\fi\figWBB@x}
\ctr@ld@f\def\FigWp@r@m#1#2{\Figg@tXY{#1}%
    \setbox\Gb@x=\hbox{\def\t@xt@{#2}\ifx\t@xt@\empty\Figg@tT{#1}\else#2\fi}\c@lprojSP}
\ctr@ld@f\let\Vc@rrect=\relax
\ctr@ld@f\let\C@rp@r@m=\relax
\ctr@ld@f\def\figwrite[#1]#2{{\ignorespaces\def\list@num{#1}\@ecfor\p@int:=\list@num\do{%
    \setbox\Gb@x=\hbox{\def\t@xt@{#2}\ifx\t@xt@\empty\Figg@tT{\p@int}\else#2\fi}%
    \Figwrit@{\p@int}}}\ignorespaces}
\ctr@ld@f\def\Figwrit@#1{\Figg@tXY{#1}\c@lprojSP%
    \rlap{\kern\v@lX\raise\v@lY\hbox{\unhcopy\Gb@x}}\v@leur=\v@lY%
    \advance\v@lY\ht\Gb@x\b@undb@x{\v@lX}{\v@lY}\advance\v@lX\wd\Gb@x%
    \v@lY=\v@leur\advance\v@lY-\dp\Gb@x\b@undb@x{\v@lX}{\v@lY}}
\ctr@ld@f\def\figwritec[#1]#2{{\ignorespaces\def\list@num{#1}%
    \@ecfor\p@int:=\list@num\do{\Figwrit@c{\p@int}{#2}}}\ignorespaces}
\ctr@ld@f\def\Figwrit@c#1#2{\FigWp@r@m{#1}{#2}%
    \rlap{\kern\v@lX\raise\v@lY\hbox{\rlap{\kern-.5\wd\Gb@x%
    \raise-.5\ht\Gb@x\hbox{\raise.5\dp\Gb@x\hbox{\unhcopy\Gb@x}}}}}%
    \v@leur=\ht\Gb@x\advance\v@leur\dp\Gb@x%
    \advance\v@lX-.5\wd\Gb@x\advance\v@lY-.5\v@leur\b@undb@x{\v@lX}{\v@lY}%
    \advance\v@lX\wd\Gb@x\advance\v@lY\v@leur\b@undb@x{\v@lX}{\v@lY}}
\ctr@ld@f\def\figwritep[#1]{{\ignorespaces\def\list@num{#1}\setbox\Gb@x=\hbox{\c@nterpt}%
    \@ecfor\p@int:=\list@num\do{\Figwrit@{\p@int}}}\ignorespaces}
\ctr@ld@f\def\figwritew#1:#2(#3){\figwritegcw#1:{#2}(#3,0pt)}
\ctr@ld@f\def\figwritee#1:#2(#3){\figwritegce#1:{#2}(#3,0pt)}
\ctr@ld@f\def\figwriten#1:#2(#3){{\def\Vc@rrect{\v@lZ=\v@leur\advance\v@lZ\dp\Gb@x}%
    \Figwrit@NS#1:{#2}(#3)}\ignorespaces}
\ctr@ld@f\def\figwrites#1:#2(#3){{\def\Vc@rrect{\v@lZ=-\v@leur\advance\v@lZ-\ht\Gb@x}%
    \Figwrit@NS#1:{#2}(#3)}\ignorespaces}
\ctr@ld@f\def\Figwrit@NS#1:#2(#3){\let\figWp@si=\FigWp@siNS\let\figWBB@x=\FigWBB@xNS%
    \FigWrit@L#1:{#2}(#3,0pt)}
\ctr@ld@f\def\FigWp@siNS{\rlap{\kern\v@lX\raise\v@lY\hbox{\rlap{\kern-.5\wd\Gb@x%
    \raise\v@lZ\hbox{\unhcopy\Gb@x}}\c@nterpt}}}
\ctr@ld@f\def\FigWBB@xNS{\advance\v@lY\v@lZ%
    \advance\v@lY-\dp\Gb@x\advance\v@lX-.5\wd\Gb@x\b@undb@x{\v@lX}{\v@lY}%
    \advance\v@lY\ht\Gb@x\advance\v@lY\dp\Gb@x%
    \advance\v@lX\wd\Gb@x\b@undb@x{\v@lX}{\v@lY}}
\ctr@ld@f\def\figwritenw#1:#2(#3){{\let\figWp@si=\FigWp@sigW\let\figWBB@x=\FigWBB@xgWE%
    \def\C@rp@r@m{\v@leur=\unssqrttw@\v@leur\delt@=\v@leur%
    \ifdim\delt@=\z@\delt@=\epsil@n\fi}\let@xte={-}\FigWrit@L#1:{#2}(#3,0pt)}\ignorespaces}
\ctr@ld@f\def\figwritesw#1:#2(#3){{\let\figWp@si=\FigWp@sigW\let\figWBB@x=\FigWBB@xgWE%
    \def\C@rp@r@m{\v@leur=\unssqrttw@\v@leur\delt@=-\v@leur%
    \ifdim\delt@=\z@\delt@=-\epsil@n\fi}\let@xte={-}\FigWrit@L#1:{#2}(#3,0pt)}\ignorespaces}
\ctr@ld@f\def\figwritene#1:#2(#3){{\let\figWp@si=\FigWp@sigE\let\figWBB@x=\FigWBB@xgWE%
    \def\C@rp@r@m{\v@leur=\unssqrttw@\v@leur\delt@=\v@leur%
    \ifdim\delt@=\z@\delt@=\epsil@n\fi}\let@xte={}\FigWrit@L#1:{#2}(#3,0pt)}\ignorespaces}
\ctr@ld@f\def\figwritese#1:#2(#3){{\let\figWp@si=\FigWp@sigE\let\figWBB@x=\FigWBB@xgWE%
    \def\C@rp@r@m{\v@leur=\unssqrttw@\v@leur\delt@=-\v@leur%
    \ifdim\delt@=\z@\delt@=-\epsil@n\fi}\let@xte={}\FigWrit@L#1:{#2}(#3,0pt)}\ignorespaces}
\ctr@ld@f\def\figwritegw#1:#2(#3,#4){{\let\figWp@si=\FigWp@sigW\let\figWBB@x=\FigWBB@xgWE%
    \let@xte={-}\FigWrit@L#1:{#2}(#3,#4)}\ignorespaces}
\ctr@ld@f\def\figwritege#1:#2(#3,#4){{\let\figWp@si=\FigWp@sigE\let\figWBB@x=\FigWBB@xgWE%
    \let@xte={}\FigWrit@L#1:{#2}(#3,#4)}\ignorespaces}
\ctr@ld@f\def\FigWp@sigW{\v@lXa=\z@\v@lYa=\ht\Gb@x\advance\v@lYa\dp\Gb@x%
    \ifdim\delt@>\z@\relax%
    \rlap{\kern\v@lX\raise\v@lY\hbox{\rlap{\kern-\wd\Gb@x\kern-\v@leur%
          \raise\delt@\hbox{\raise\dp\Gb@x\hbox{\unhcopy\Gb@x}}}\c@nterpt}}%
    \else\ifdim\delt@<\z@\relax\v@lYa=-\v@lYa%
    \rlap{\kern\v@lX\raise\v@lY\hbox{\rlap{\kern-\wd\Gb@x\kern-\v@leur%
          \raise\delt@\hbox{\raise-\ht\Gb@x\hbox{\unhcopy\Gb@x}}}\c@nterpt}}%
    \else\v@lXa=-.5\v@lYa%
    \rlap{\kern\v@lX\raise\v@lY\hbox{\rlap{\kern-\wd\Gb@x\kern-\v@leur%
          \raise-.5\ht\Gb@x\hbox{\raise.5\dp\Gb@x\hbox{\unhcopy\Gb@x}}}\c@nterpt}}%
    \fi\fi}
\ctr@ld@f\def\FigWp@sigE{\v@lXa=\z@\v@lYa=\ht\Gb@x\advance\v@lYa\dp\Gb@x%
    \ifdim\delt@>\z@\relax%
    \rlap{\kern\v@lX\raise\v@lY\hbox{\c@nterpt\kern\v@leur%
          \raise\delt@\hbox{\raise\dp\Gb@x\hbox{\unhcopy\Gb@x}}}}%
    \else\ifdim\delt@<\z@\relax\v@lYa=-\v@lYa%
    \rlap{\kern\v@lX\raise\v@lY\hbox{\c@nterpt\kern\v@leur%
          \raise\delt@\hbox{\raise-\ht\Gb@x\hbox{\unhcopy\Gb@x}}}}%
    \else\v@lXa=-.5\v@lYa%
    \rlap{\kern\v@lX\raise\v@lY\hbox{\c@nterpt\kern\v@leur%
          \raise-.5\ht\Gb@x\hbox{\raise.5\dp\Gb@x\hbox{\unhcopy\Gb@x}}}}%
    \fi\fi}
\ctr@ld@f\def\FigWBB@xgWE{\advance\v@lY\delt@%
    \advance\v@lX\the\let@xte\v@leur\advance\v@lY\v@lXa\b@undb@x{\v@lX}{\v@lY}%
    \advance\v@lX\the\let@xte\wd\Gb@x\advance\v@lY\v@lYa\b@undb@x{\v@lX}{\v@lY}}
\ctr@ld@f\def\figwritegcw#1:#2(#3,#4){{\let\figWp@si=\FigWp@sigcW\let\figWBB@x=\FigWBB@xgcWE%
    \let@xte={-}\FigWrit@L#1:{#2}(#3,#4)}\ignorespaces}
\ctr@ld@f\def\figwritegce#1:#2(#3,#4){{\let\figWp@si=\FigWp@sigcE\let\figWBB@x=\FigWBB@xgcWE%
    \let@xte={}\FigWrit@L#1:{#2}(#3,#4)}\ignorespaces}
\ctr@ld@f\def\FigWp@sigcW{\rlap{\kern\v@lX\raise\v@lY\hbox{\rlap{\kern-\wd\Gb@x\kern-\v@leur%
     \raise-.5\ht\Gb@x\hbox{\raise\delt@\hbox{\raise.5\dp\Gb@x\hbox{\unhcopy\Gb@x}}}}%
     \c@nterpt}}}
\ctr@ld@f\def\FigWp@sigcE{\rlap{\kern\v@lX\raise\v@lY\hbox{\c@nterpt\kern\v@leur%
    \raise-.5\ht\Gb@x\hbox{\raise\delt@\hbox{\raise.5\dp\Gb@x\hbox{\unhcopy\Gb@x}}}}}}
\ctr@ld@f\def\FigWBB@xgcWE{\v@lZ=\ht\Gb@x\advance\v@lZ\dp\Gb@x%
    \advance\v@lX\the\let@xte\v@leur\advance\v@lY\delt@\advance\v@lY.5\v@lZ%
    \b@undb@x{\v@lX}{\v@lY}%
    \advance\v@lX\the\let@xte\wd\Gb@x\advance\v@lY-\v@lZ\b@undb@x{\v@lX}{\v@lY}}
\ctr@ld@f\def\figwritebn#1:#2(#3){{\def\Vc@rrect{\v@lZ=\v@leur}\Figwrit@NS#1:{#2}(#3)}\ignorespaces}
\ctr@ld@f\def\figwritebs#1:#2(#3){{\def\Vc@rrect{\v@lZ=-\v@leur}\Figwrit@NS#1:{#2}(#3)}\ignorespaces}
\ctr@ld@f\def\figwritebw#1:#2(#3){{\let\figWp@si=\FigWp@sibW\let\figWBB@x=\FigWBB@xbWE%
    \let@xte={-}\FigWrit@L#1:{#2}(#3,0pt)}\ignorespaces}
\ctr@ld@f\def\figwritebe#1:#2(#3){{\let\figWp@si=\FigWp@sibE\let\figWBB@x=\FigWBB@xbWE%
    \let@xte={}\FigWrit@L#1:{#2}(#3,0pt)}\ignorespaces}
\ctr@ld@f\def\FigWp@sibW{\rlap{\kern\v@lX\raise\v@lY\hbox{\rlap{\kern-\wd\Gb@x\kern-\v@leur%
          \hbox{\unhcopy\Gb@x}}\c@nterpt}}}
\ctr@ld@f\def\FigWp@sibE{\rlap{\kern\v@lX\raise\v@lY\hbox{\c@nterpt\kern\v@leur%
          \hbox{\unhcopy\Gb@x}}}}
\ctr@ld@f\def\FigWBB@xbWE{\v@lZ=\ht\Gb@x\advance\v@lZ\dp\Gb@x%
    \advance\v@lX\the\let@xte\v@leur\advance\v@lY\ht\Gb@x\b@undb@x{\v@lX}{\v@lY}%
    \advance\v@lX\the\let@xte\wd\Gb@x\advance\v@lY-\v@lZ\b@undb@x{\v@lX}{\v@lY}}
\ctr@ln@w{newread}\frf@g  \ctr@ln@w{newwrite}\fwf@g
\ctr@ln@w{newif}\ifCUR@PS
\ctr@ln@w{newif}\ifGR@cri
\ctr@ln@w{newif}\ifUse@llipse
\ctr@ln@w{newif}\ifGRdebugm@de \GRdebugm@defalse 
\ctr@ln@w{newif}\ifPDFm@ke
\ifx\pdfliteral\undefined\else\ifnum\pdfoutput>\z@\PDFm@ketrue\fi\fi
\ctr@ld@f\def\initPDF@rDVI{%
\ifPDFm@ke
 \let\figscan=\figscan@E
 \let\newGr@FN=\newGr@FNPDF
 \ctr@ld@f\def\c@mcurveto{c}
 \ctr@ld@f\def\c@mfill{f}
 \ctr@ld@f\def\c@mgsave{q}
 \ctr@ld@f\def\c@mgrestore{Q}
 \ctr@ld@f\def\c@mlineto{l}
 \ctr@ld@f\def\c@mmoveto{m}
 \ctr@ld@f\def\c@msetgray{g}     \ctr@ld@f\def\c@msetgrayStroke{G}
 \ctr@ld@f\def\c@msetcmykcolor{k}\ctr@ld@f\def\c@msetcmykcolorStroke{K}
 \ctr@ld@f\def\c@msetrgbcolor{rg}\ctr@ld@f\def\c@msetrgbcolorStroke{RG}
 \ctr@ld@f\def\d@fprimarC@lor{\CUR@color\space\CUR@colorc@md%
               \space\CUR@color\space\CUR@colorc@mdStroke}
 \ctr@ld@f\def\c@msetdash{d}
 \ctr@ld@f\def\c@msetlinejoin{j}
 \ctr@ld@f\def\c@msetlinewidth{w}
 \ctr@ld@f\def\f@gclosestroke{\immediate\write\fwf@g{s}}
 \ctr@ld@f\def\f@gfill{\immediate\write\fwf@g{\fillc@md}}
 \ctr@ld@f\def\f@gnewpath{}
 \ctr@ld@f\def\f@gstroke{\immediate\write\fwf@g{S}}
\else
 \let\figinsertE=\figinsert
 \let\newGr@FN=\newGr@FNDVI
 \ctr@ld@f\def\c@mcurveto{curveto}
 \ctr@ld@f\def\c@mfill{fill}
 \ctr@ld@f\def\c@mgsave{gsave}
 \ctr@ld@f\def\c@mgrestore{grestore}
 \ctr@ld@f\def\c@mlineto{lineto}
 \ctr@ld@f\def\c@mmoveto{moveto}
 \ctr@ld@f\def\c@msetgray{setgray}          \ctr@ld@f\def\c@msetgrayStroke{}
 \ctr@ld@f\def\c@msetcmykcolor{setcmykcolor}\ctr@ld@f\def\c@msetcmykcolorStroke{}
 \ctr@ld@f\def\c@msetrgbcolor{setrgbcolor}  \ctr@ld@f\def\c@msetrgbcolorStroke{}
 \ctr@ld@f\def\d@fprimarC@lor{\CUR@color\space\CUR@colorc@md}
 \ctr@ld@f\def\c@msetdash{setdash}
 \ctr@ld@f\def\c@msetlinejoin{setlinejoin}
 \ctr@ld@f\def\c@msetlinewidth{setlinewidth}
 \ctr@ld@f\def\f@gclosestroke{\immediate\write\fwf@g{closepath\space stroke}}
 \ctr@ld@f\def\f@gfill{\immediate\write\fwf@g{\fillc@md}}
 \ctr@ld@f\def\f@gnewpath{\immediate\write\fwf@g{newpath}}
 \ctr@ld@f\def\f@gstroke{\immediate\write\fwf@g{stroke}}
\fi}
\ctr@ld@f\def\c@pypsfile#1#2{\c@pyfil@{\immediate\write#1}{#2}}
\ctr@ld@f\def\Figinclud@PDF#1#2{\openin\frf@g=#1\pdfliteral{q #2 0 0 #2 0 0 cm}%
    \c@pyfil@{\pdfliteral}{\frf@g}\pdfliteral{Q}\closein\frf@g}
\ctr@ln@w{newif}\ifmored@ta
\ctr@ln@m\bl@nkline
\ctr@ld@f\def\c@pyfil@#1#2{\def\bl@nkline{\par}{\catcode`\%=12
    \loop\ifeof#2\mored@tafalse\else\mored@tatrue\immediate\read#2 to\tr@c
    \ifx\tr@c\bl@nkline\else#1{\tr@c}\fi\fi\ifmored@ta\repeat}}
\ctr@ld@f\def\keln@mun#1#2|{\def\l@debut{#1}\def\l@suite{#2}}
\ctr@ld@f\def\keln@mde#1#2#3|{\def\l@debut{#1#2}\def\l@suite{#3}}
\ctr@ld@f\def\keln@mtr#1#2#3#4|{\def\l@debut{#1#2#3}\def\l@suite{#4}}
\ctr@ld@f\def\keln@mqu#1#2#3#4#5|{\def\l@debut{#1#2#3#4}\def\l@suite{#5}}
\ctr@ld@f\let\@psffilein=\frf@g 
\ctr@ln@w{newif}\if@psffileok    
\ctr@ln@w{newif}\if@psfbbfound   
\ctr@ln@w{newif}\if@psfverbose   
\@psfverbosetrue
\ctr@ln@m\@psfllx \ctr@ln@m\@psflly
\ctr@ln@m\@psfurx \ctr@ln@m\@psfury
\ctr@ln@m\resetcolonc@tcode
\ctr@ld@f\def\@psfgetbb#1{\global\@psfbbfoundfalse%
\global\def\@psfllx{0}\global\def\@psflly{0}%
\global\def\@psfurx{30}\global\def\@psfury{30}%
\openin\@psffilein=#1\relax
\ifeof\@psffilein\errmessage{I couldn't open #1, will ignore it}\else
   \edef\resetcolonc@tcode{\catcode`\noexpand\:\the\catcode`\:\relax}%
   {\@psffileoktrue \chardef\other=12
    \def\do##1{\catcode`##1=\other}\dospecials \catcode`\ =10 \resetcolonc@tcode
    \loop
       \read\@psffilein to \@psffileline
       \ifeof\@psffilein\@psffileokfalse\else
          \expandafter\@psfaux\@psffileline:. \\%
       \fi
   \if@psffileok\repeat
   \if@psfbbfound\else
    \if@psfverbose\message{No bounding box comment in #1; using defaults}\fi\fi
   }\closein\@psffilein\fi}%
\ctr@ln@m\@psfbblit
\ctr@ln@m\@psfpercent
{\catcode`\%=12 \global\let\@psfpercent=
\ctr@ln@m\@psfaux
\long\def\@psfaux#1#2:#3\\{\ifx#1\@psfpercent
   \def\testit{#2}\ifx\testit\@psfbblit
      \@psfgrab #3 . . . \\%
      \@psffileokfalse
      \global\@psfbbfoundtrue
   \fi\else\ifx#1\par\else\@psffileokfalse\fi\fi}%
\ctr@ld@f\def\@psfempty{}%
\ctr@ld@f\def\@psfgrab #1 #2 #3 #4 #5\\{%
\global\def\@psfllx{#1}\ifx\@psfllx\@psfempty
      \@psfgrab #2 #3 #4 #5 .\\\else
   \global\def\@psflly{#2}%
   \global\def\@psfurx{#3}\global\def\@psfury{#4}\fi}%
\ctr@ld@f\def\PSwrit@cmd#1#2#3{{\Figg@tXY{#1}\c@lprojSP\b@undb@x{\v@lX}{\v@lY}%
    \v@lX=\ptT@ptps\v@lX\v@lY=\ptT@ptps\v@lY%
    \immediate\write#3{\repdecn@mb{\v@lX}\space\repdecn@mb{\v@lY}\space#2}}}
\ctr@ld@f\def\PSwrit@cmdS#1#2#3#4#5{{\Figg@tXY{#1}\c@lprojSP\b@undb@x{\v@lX}{\v@lY}%
    \global\result@t=\v@lX\global\result@@t=\v@lY%
    \v@lX=\ptT@ptps\v@lX\v@lY=\ptT@ptps\v@lY%
    \immediate\write#3{\repdecn@mb{\v@lX}\space\repdecn@mb{\v@lY}\space#2}}%
    \edef#4{\the\result@t}\edef#5{\the\result@@t}}
\ctr@ld@f\def\update@ttr#1#2#3{\Figdisc@rdLTS{#3}{\n@mref}%
    \ifx\n@mref\D@FTref#2{#1}\else#2{#3}\fi}
\ctr@ld@f\def\D@FTref{default}
\ctr@ld@f\def\W@rnmesAttr#1#2{%
    \immediate\write16{*** Unknown attribute: \BS@ #1(..., #2=...)}}
\ctr@ld@f\def\W@rnmeskwd#1#2{%
    \immediate\write16{*** Unknown keyword #2 in \BS@ #1}}
\ctr@ld@f\def\W@rnmesIgn#1{\immediate\write16{*** \BS@ #1 is ignored inside a
     \BS@ figdrawbegin-\BS@ figdrawend block.}}
\ctr@ld@f\def\Psset@lti#1=#2|{\keln@mtr#1|%
    \def\n@mref{blc}\ifx\l@debut\n@mref\update@ttr\D@FTref\P@setblcolor{#2}\else
    \def\n@mref{bld}\ifx\l@debut\n@mref\update@ttr\D@FTref\P@setbldash{#2}\else
    \def\n@mref{blw}\ifx\l@debut\n@mref\update@ttr\D@FTref\P@setblwidth{#2}\else
    \def\n@mref{sqc}\ifx\l@debut\n@mref\update@ttr\D@FTref\P@setsqcolor{#2}\else
    \def\n@mref{sqd}\ifx\l@debut\n@mref\update@ttr\D@FTref\P@setsqdash{#2}\else
    \def\n@mref{sqw}\ifx\l@debut\n@mref\update@ttr\D@FTref\P@setsqwidth{#2}\else
    \W@rnmesAttr{figset altitude}{#1}\fi\fi\fi\fi\fi\fi}
\ctr@ln@m\DDV@blcolor
\ctr@ld@f\def\P@setblcolor#1{\edef\DDV@blcolor{#1}}
\ctr@ln@m\DDV@bldash
\ctr@ld@f\def\P@setbldash#1{\edef\DDV@bldash{#1}}
\ctr@ln@m\DDV@blwidth
\ctr@ld@f\def\P@setblwidth#1{\edef\DDV@blwidth{#1}}
\ctr@ln@m\DDV@sqcolor
\ctr@ld@f\def\P@setsqcolor#1{\edef\DDV@sqcolor{#1}}
\ctr@ln@m\DDV@sqdash
\ctr@ld@f\def\P@setsqdash#1{\edef\DDV@sqdash{#1}}
\ctr@ln@m\DDV@sqwidth
\ctr@ld@f\def\P@setsqwidth#1{\edef\DDV@sqwidth{#1}}
\ctr@ld@f\def\figdrawaltitude#1[#2,#3,#4]{{\ifCUR@PS\ifGR@cri%
    \PSc@mment{altitude Square Dim=#1, Triangle=[#2 / #3,#4]}%
    \s@uvc@ntr@l\et@tpsaltitude\resetc@ntr@l{2}\figptorthoprojline-5:=#2/#3,#4/%
    \figvectP -1[#3,#4]\n@rminf{\v@leur}{-1}\vecunit@{-3}{-1}%
    \figvectP -1[-5,#3]\n@rminf{\v@lmin}{-1}\figvectP -2[-5,#4]\n@rminf{\v@lmax}{-2}%
    \ifdim\v@lmin<\v@lmax\s@mme=#3\else\v@lmax=\v@lmin\s@mme=#4\fi%
    \figvectP -4[-5,#2]\vecunit@{-4}{-4}\delt@=#1\unit@%
    \edef\t@ille{\repdecn@mb{\delt@}}\figpttra-1:=-5/\t@ille,-3/%
    \figptstra-3=-5,-1/\t@ille,-4/\figdrawline[#2,-5]%
    \Pss@tspecifSt{color=\DDV@sqcolor,dash=\DDV@sqdash,width=\DDV@sqwidth}%
    \figdrawline[-1,-2,-3]%
    \Psrest@reSt{color=\DDV@sqcolor,dash=\DDV@sqdash,width=\DDV@sqwidth}%
    \ifdim\v@leur<\v@lmax%
    \Pss@tspecifSt{color=\DDV@blcolor,dash=\DDV@bldash,width=\DDV@blwidth}%
    \figdrawline[-5,\the\s@mme]%
    \Psrest@reSt{color=\DDV@blcolor,dash=\DDV@bldash,width=\DDV@blwidth}%
    \fi\PSc@mment{End altitude}\resetc@ntr@l\et@tpsaltitude\fi\fi}}
\ctr@ld@f\def\Ps@rcerc#1;#2(#3,#4){\ellBB@x#1;#2,#2(#3,#4,0)%
    \f@gnewpath{\delt@=#2\unit@\delt@=\ptT@ptps\delt@%
    \BdingB@xfalse%
    \PSwrit@cmd{#1}{\repdecn@mb{\delt@}\space #3\space #4\space arc}{\fwf@g}}}
\ctr@ln@m\figdrawarccirc
\ctr@ld@f\def\Q@arccircDD#1;#2(#3,#4){\ifCUR@PS\ifGR@cri%
    \PSc@mment{arccircDD Center=#1 ; Radius=#2 (Ang1=#3, Ang2=#4)}%
    \iffillm@de\Ps@rcerc#1;#2(#3,#4)%
    \f@gfill%
    \else\Ps@rcerc#1;#2(#3,#4)\f@gstroke\fi%
    \PSc@mment{End arccircDD}\fi\fi}
\ctr@ld@f\def\Q@arccircTD#1,#2,#3;#4(#5,#6){{\ifCUR@PS\ifGR@cri\s@uvc@ntr@l\et@tpsarccircTD%
    \PSc@mment{arccircTD Center=#1,P1=#2,P2=#3 ; Radius=#4 (Ang1=#5, Ang2=#6)}%
    \setc@ntr@l{2}\c@lExtAxes#1,#2,#3(#4)\Q@arcellPATD#1,-4,-5(#5,#6)%
    \PSc@mment{End arccircTD}\resetc@ntr@l\et@tpsarccircTD\fi\fi}}
\ctr@ld@f\def\c@lExtAxes#1,#2,#3(#4){%
    \figvectPTD-5[#1,#2]\vecunit@{-5}{-5}\figvectNTD-4[#1,#2,#3]\vecunit@{-4}{-4}%
    \figvectNVTD-3[-4,-5]\delt@=#4\unit@\edef\r@yon{\repdecn@mb{\delt@}}%
    \figpttra-4:=#1/\r@yon,-5/\figpttra-5:=#1/\r@yon,-3/}
\ctr@ln@m\figdrawarccircP
\ctr@ld@f\def\Q@arccircPDD#1;#2[#3,#4]{{\ifCUR@PS\ifGR@cri\s@uvc@ntr@l\et@tpsarccircPDD%
    \PSc@mment{arccircPDD Center=#1; Radius=#2, [P1=#3, P2=#4]}%
    \Ps@ngleparam#1;#2[#3,#4]\ifdim\v@lmin>\v@lmax\advance\v@lmax\DePI@deg\fi%
    \edef\@ngdeb{\repdecn@mb{\v@lmin}}\edef\@ngfin{\repdecn@mb{\v@lmax}}%
    \figdrawarccirc#1;\r@dius(\@ngdeb,\@ngfin)%
    \PSc@mment{End arccircPDD}\resetc@ntr@l\et@tpsarccircPDD\fi\fi}}
\ctr@ld@f\def\Q@arccircPTD#1;#2[#3,#4,#5]{{\ifCUR@PS\ifGR@cri\s@uvc@ntr@l\et@tpsarccircPTD%
    \PSc@mment{arccircPTD Center=#1; Radius=#2, [P1=#3, P2=#4, P3=#5]}%
    \setc@ntr@l{2}\c@lExtAxes#1,#3,#5(#2)\figdrawarcellPP#1,-4,-5[#3,#4]%
    \PSc@mment{End arccircPTD}\resetc@ntr@l\et@tpsarccircPTD\fi\fi}}
\ctr@ld@f\def\Ps@ngleparam#1;#2[#3,#4]{\setc@ntr@l{2}%
    \figvectPDD-1[#1,#3]\vecunit@{-1}{-1}\Figg@tXY{-1}\arct@n\v@lmin(\v@lX,\v@lY)%
    \figvectPDD-2[#1,#4]\vecunit@{-2}{-2}\Figg@tXY{-2}\arct@n\v@lmax(\v@lX,\v@lY)%
    \v@lmin=\rdT@deg\v@lmin\v@lmax=\rdT@deg\v@lmax%
    \v@leur=#2pt\maxim@m{\mili@u}{-\v@leur}{\v@leur}%
    \edef\r@dius{\repdecn@mb{\mili@u}}}
\ctr@ld@f\def\Ps@rcercBz#1;#2(#3,#4){\Ps@rellBz#1;#2,#2(#3,#4,0)}
\ctr@ld@f\def\Ps@rellBz#1;#2,#3(#4,#5,#6){%
    \ellBB@x#1;#2,#3(#4,#5,#6)\BdingB@xfalse%
    \c@lNbarcs{#4}{#5}\v@leur=#4pt\setc@ntr@l{2}\figptell-13::#1;#2,#3(#4,#6)%
    \f@gnewpath\PSwrit@cmd{-13}{\c@mmoveto}{\fwf@g}%
    \s@mme=\z@\bcl@rellBz#1;#2,#3(#6)\BdingB@xtrue}
\ctr@ld@f\def\bcl@rellBz#1;#2,#3(#4){\relax%
    \ifnum\s@mme<\p@rtent\advance\s@mme\@ne%
    \advance\v@leur\delt@\edef\@ngle{\repdecn@mb\v@leur}\figptell-14::#1;#2,#3(\@ngle,#4)%
    \advance\v@leur\delt@\edef\@ngle{\repdecn@mb\v@leur}\figptell-15::#1;#2,#3(\@ngle,#4)%
    \advance\v@leur\delt@\edef\@ngle{\repdecn@mb\v@leur}\figptell-16::#1;#2,#3(\@ngle,#4)%
    \figptscontrolDD-18[-13,-14,-15,-16]%
    \PSwrit@cmd{-18}{}{\fwf@g}\PSwrit@cmd{-17}{}{\fwf@g}%
    \PSwrit@cmd{-16}{\c@mcurveto}{\fwf@g}%
    \figptcopyDD-13:/-16/\bcl@rellBz#1;#2,#3(#4)\fi}
\ctr@ld@f\def\Ps@rell#1;#2,#3(#4,#5,#6){\ellBB@x#1;#2,#3(#4,#5,#6)%
    \f@gnewpath{\v@lmin=#2\unit@\v@lmin=\ptT@ptps\v@lmin%
    \v@lmax=#3\unit@\v@lmax=\ptT@ptps\v@lmax\BdingB@xfalse%
    \PSwrit@cmd{#1}%
    {#6\space\repdecn@mb{\v@lmin}\space\repdecn@mb{\v@lmax}\space #4\space #5\space ellipse}{\fwf@g}}%
    \global\Use@llipsetrue}
\ctr@ln@m\figdrawarcell
\ctr@ld@f\def\Q@arcellDD#1;#2,#3(#4,#5,#6){{\ifCUR@PS\ifGR@cri%
    \PSc@mment{arcellDD Center=#1 ; XRad=#2, YRad=#3 (Ang1=#4, Ang2=#5, Inclination=#6)}%
    \iffillm@de\Ps@rell#1;#2,#3(#4,#5,#6)%
    \f@gfill%
    \else\Ps@rell#1;#2,#3(#4,#5,#6)\f@gstroke\fi%
    \PSc@mment{End arcellDD}\fi\fi}}
\ctr@ld@f\def\Q@arcellTD#1;#2,#3(#4,#5,#6){{\ifCUR@PS\ifGR@cri\s@uvc@ntr@l\et@tpsarcellTD%
    \PSc@mment{arcellTD Center=#1 ; XRad=#2, YRad=#3 (Ang1=#4, Ang2=#5, Inclination=#6)}%
    \setc@ntr@l{2}\figpttraC -8:=#1/#2,0,0/\figpttraC -7:=#1/0,#3,0/%
    \figvectC -4(0,0,1)\figptsrot -8=-8,-7/#1,#6,-4/\Q@arcellPATD#1,-8,-7(#4,#5)%
    \PSc@mment{End arcellTD}\resetc@ntr@l\et@tpsarcellTD\fi\fi}}
\ctr@ln@m\figdrawarcellPA
\ctr@ld@f\def\Q@arcellPADD#1,#2,#3(#4,#5){{\ifCUR@PS\ifGR@cri\s@uvc@ntr@l\et@tpsarcellPADD%
    \PSc@mment{arcellPADD Center=#1,PtAxis1=#2,PtAxis2=#3 (Ang1=#4, Ang2=#5)}%
    \setc@ntr@l{2}\figvectPDD-1[#1,#2]\vecunit@DD{-1}{-1}\v@lX=\ptT@unit@\result@t%
    \edef\XR@d{\repdecn@mb{\v@lX}}\Figg@tXY{-1}\arct@n\v@lmin(\v@lX,\v@lY)%
    \v@lmin=\rdT@deg\v@lmin\edef\Inclin@{\repdecn@mb{\v@lmin}}%
    \figgetdist\YR@d[#1,#3]\Q@arcellDD#1;\XR@d,\YR@d(#4,#5,\Inclin@)%
    \PSc@mment{End arcellPADD}\resetc@ntr@l\et@tpsarcellPADD\fi\fi}}
\ctr@ld@f\def\Q@arcellPATD#1,#2,#3(#4,#5){{\ifCUR@PS\ifGR@cri\s@uvc@ntr@l\et@tpsarcellPATD%
    \PSc@mment{arcellPATD Center=#1,PtAxis1=#2,PtAxis2=#3 (Ang1=#4, Ang2=#5)}%
    \iffillm@de\Ps@rellPATD#1,#2,#3(#4,#5)%
    \f@gfill%
    \else\Ps@rellPATD#1,#2,#3(#4,#5)\f@gstroke\fi%
    \PSc@mment{End arcellPATD}\resetc@ntr@l\et@tpsarcellPATD\fi\fi}}
\ctr@ld@f\def\Ps@rellPATD#1,#2,#3(#4,#5){\let\c@lprojSP=\relax%
    \setc@ntr@l{2}\figvectPTD-1[#1,#2]\figvectPTD-2[#1,#3]\c@lNbarcs{#4}{#5}%
    \v@leur=#4pt\c@lptellP{#1}{-1}{-2}\Figptpr@j-5:/-3/%
    \f@gnewpath\PSwrit@cmdS{-5}{\c@mmoveto}{\fwf@g}{\X@un}{\Y@un}%
    \edef\C@nt@r{#1}\s@mme=\z@\bcl@rellPATD}
\ctr@ld@f\def\bcl@rellPATD{\relax%
    \ifnum\s@mme<\p@rtent\advance\s@mme\@ne%
    \advance\v@leur\delt@\c@lptellP{\C@nt@r}{-1}{-2}\Figptpr@j-4:/-3/%
    \advance\v@leur\delt@\c@lptellP{\C@nt@r}{-1}{-2}\Figptpr@j-6:/-3/%
    \advance\v@leur\delt@\c@lptellP{\C@nt@r}{-1}{-2}\Figptpr@j-3:/-3/%
    \v@lX=\z@\v@lY=\z@\Figtr@nptDD{-5}{-5}\Figtr@nptDD{2}{-3}%
    \divide\v@lX\@vi\divide\v@lY\@vi%
    \Figtr@nptDD{3}{-4}\Figtr@nptDD{-1.5}{-6}\v@lmin=\v@lX\v@lmax=\v@lY%
    \v@lX=\z@\v@lY=\z@\Figtr@nptDD{2}{-5}\Figtr@nptDD{-5}{-3}%
    \divide\v@lX\@vi\divide\v@lY\@vi\Figtr@nptDD{-1.5}{-4}\Figtr@nptDD{3}{-6}%
    \BdingB@xfalse%
    \Figp@intregDD-4:(\v@lmin,\v@lmax)\PSwrit@cmdS{-4}{}{\fwf@g}{\X@de}{\Y@de}%
    \Figp@intregDD-4:(\v@lX,\v@lY)\PSwrit@cmdS{-4}{}{\fwf@g}{\X@tr}{\Y@tr}%
    \BdingB@xtrue\PSwrit@cmdS{-3}{\c@mcurveto}{\fwf@g}{\X@qu}{\Y@qu}%
    \B@zierBB@x{1}{\Y@un}(\X@un,\X@de,\X@tr,\X@qu)%
    \B@zierBB@x{2}{\X@un}(\Y@un,\Y@de,\Y@tr,\Y@qu)%
    \edef\X@un{\X@qu}\edef\Y@un{\Y@qu}\figptcopyDD-5:/-3/\bcl@rellPATD\fi}
\ctr@ld@f\def\c@lNbarcs#1#2{%
    \delt@=#2pt\advance\delt@-#1pt\maxim@m{\v@lmax}{\delt@}{-\delt@}%
    \v@leur=\v@lmax\divide\v@leur45 \p@rtentiere{\p@rtent}{\v@leur}\advance\p@rtent\@ne%
    \s@mme=\p@rtent\multiply\s@mme\thr@@\divide\delt@\s@mme}
\ctr@ld@f\def\figdrawarcellPP#1,#2,#3[#4,#5]{{\ifCUR@PS\ifGR@cri\s@uvc@ntr@l\et@tpsarcellPP%
    \PSc@mment{arcellPP Center=#1,PtAxis1=#2,PtAxis2=#3 [Point1=#4, Point2=#5]}%
    \setc@ntr@l{2}\figvectP-2[#1,#3]\vecunit@{-2}{-2}\v@lmin=\result@t%
    \invers@{\v@lmax}{\v@lmin}%
    \figvectP-1[#1,#2]\vecunit@{-1}{-1}\v@leur=\result@t%
    \v@leur=\repdecn@mb{\v@lmax}\v@leur\edef\AsB@{\repdecn@mb{\v@leur}}
    \c@lAngle{#1}{#4}{\v@lmin}\edef\@ngdeb{\repdecn@mb{\v@lmin}}%
    \c@lAngle{#1}{#5}{\v@lmax}\ifdim\v@lmin>\v@lmax\advance\v@lmax\DePI@deg\fi%
    \edef\@ngfin{\repdecn@mb{\v@lmax}}\figdrawarcellPA#1,#2,#3(\@ngdeb,\@ngfin)%
    \PSc@mment{End arcellPP}\resetc@ntr@l\et@tpsarcellPP\fi\fi}}
\ctr@ld@f\def\c@lAngle#1#2#3{\figvectP-3[#1,#2]%
    \c@lproscal\delt@[-3,-1]\c@lproscal\v@leur[-3,-2]%
    \v@leur=\AsB@\v@leur\arct@n#3(\delt@,\v@leur)#3=\rdT@deg#3}
\ctr@ln@w{newif}\if@rrowratio\@rrowratiotrue
\ctr@ln@w{newif}\if@rrowhfill
\ctr@ln@w{newif}\if@rrowhout
\ctr@ld@f\def\Psset@rrowhe@d#1=#2|{\keln@mun#1|%
    \def\n@mref{a}\ifx\l@debut\n@mref\update@ttr\D@FTarrowheadangle\Q@s@tarrowheadangle{#2}\else
    \def\n@mref{f}\ifx\l@debut\n@mref\update@ttr\D@FTarrowheadfill\Q@s@tarrowheadfill{#2}\else
    \def\n@mref{l}\ifx\l@debut\n@mref\update@ttr\D@FTarrowheadlength\Q@s@tarrowheadlength{#2}\else
    \def\n@mref{o}\ifx\l@debut\n@mref\update@ttr\D@FTarrowheadout\Q@s@tarrowheadout{#2}\else
    \def\n@mref{r}\ifx\l@debut\n@mref\update@ttr\D@FTarrowheadratio\Q@s@tarrowheadratio{#2}\else
    \W@rnmesAttr{figset arrowhead}{#1}\fi\fi\fi\fi\fi}
\ctr@ln@m\@rrowheadangle
\ctr@ln@m\C@AHANG \ctr@ln@m\S@AHANG \ctr@ln@m\UNSS@N
\ctr@ld@f\def\Q@s@tarrowheadangle#1{\edef\@rrowheadangle{#1}{\c@ssin{\C@}{\S@}{#1}%
    \xdef\C@AHANG{\C@}\xdef\S@AHANG{\S@}\v@lmax=\S@ pt%
    \invers@{\v@leur}{\v@lmax}\maxim@m{\v@leur}{\v@leur}{-\v@leur}%
    \xdef\UNSS@N{\the\v@leur}}}
\ctr@ld@f\def\Q@s@tarrowheadfill#1{\expandafter\set@rrowhfill#1:}
\ctr@ld@f\def\set@rrowhfill#1#2:{\if#1n\@rrowhfillfalse\else\@rrowhfilltrue\fi}
\ctr@ld@f\def\Q@s@tarrowheadout#1{\expandafter\set@rrowhout#1:}
\ctr@ld@f\def\set@rrowhout#1#2:{\if#1n\@rrowhoutfalse\else\@rrowhouttrue\fi}
\ctr@ln@m\@rrowheadlength
\ctr@ld@f\def\Q@s@tarrowheadlength#1{\edef\@rrowheadlength{#1}\@rrowratiofalse}
\ctr@ln@m\@rrowheadratio
\ctr@ld@f\def\Q@s@tarrowheadratio#1{\edef\@rrowheadratio{#1}\@rrowratiotrue}
\ctr@ln@m\D@FTarrowheadlength
\ctr@ld@f\def\figresetarrowhead{%
    \Q@s@tarrowheadangle{\D@FTarrowheadangle}%
    \Q@s@tarrowheadfill{\D@FTarrowheadfill}%
    \Q@s@tarrowheadout{\D@FTarrowheadout}%
    \Q@s@tarrowheadratio{\D@FTarrowheadratio}%
    \d@fm@cdim\D@FTarrowheadlength{\D@FTh@rdahlength}
    \Q@s@tarrowheadlength{\D@FTarrowheadlength}}
\ctr@ld@f\def\D@FTarrowheadratio{0.1}
\ctr@ld@f\def\D@FTarrowheadangle{20}
\ctr@ld@f\def\D@FTarrowheadfill{no}
\ctr@ld@f\def\D@FTarrowheadout{no}
\ctr@ld@f\def\D@FTh@rdahlength{8pt}
\ctr@ln@m\figdrawarrow
\ctr@ld@f\def\Q@arrowDD[#1,#2]{{\ifCUR@PS\ifGR@cri\s@uvc@ntr@l\et@tpsarrow%
    \PSc@mment{arrowDD [Pt1,Pt2]=[#1,#2]}\Q@s@tfillmode{no}%
    \Q@arrowheadDD[#1,#2]\setc@ntr@l{2}\figdrawline[#1,-3]%
    \PSc@mment{End arrowDD}\resetc@ntr@l\et@tpsarrow\fi\fi}}
\ctr@ld@f\def\Q@arrowTD[#1,#2]{{\ifCUR@PS\ifGR@cri\s@uvc@ntr@l\et@tpsarrowTD%
    \PSc@mment{arrowTD [Pt1,Pt2]=[#1,#2]}\resetc@ntr@l{2}%
    \Figptpr@j-5:/#1/\Figptpr@j-6:/#2/\let\c@lprojSP=\relax\Q@arrowDD[-5,-6]%
    \PSc@mment{End arrowTD}\resetc@ntr@l\et@tpsarrowTD\fi\fi}}
\ctr@ln@m\figdrawarrowhead
\ctr@ld@f\def\Q@arrowheadDD[#1,#2]{{\ifCUR@PS\ifGR@cri\s@uvc@ntr@l\et@tpsarrowheadDD%
    \if@rrowhfill\def\@hangle{-\@rrowheadangle}\else\def\@hangle{\@rrowheadangle}\fi%
    \if@rrowratio%
    \if@rrowhout\def\@hratio{-\@rrowheadratio}\else\def\@hratio{\@rrowheadratio}\fi%
    \PSc@mment{arrowheadDD Ratio=\@hratio, Angle=\@hangle, [Pt1,Pt2]=[#1,#2]}%
    \Ps@rrowhead\@hratio,\@hangle[#1,#2]%
    \else%
    \if@rrowhout\def\@hlength{-\@rrowheadlength}\else\def\@hlength{\@rrowheadlength}\fi%
    \PSc@mment{arrowheadDD Length=\@hlength, Angle=\@hangle, [Pt1,Pt2]=[#1,#2]}%
    \Ps@rrowheadfd\@hlength,\@hangle[#1,#2]%
    \fi%
    \PSc@mment{End arrowheadDD}\resetc@ntr@l\et@tpsarrowheadDD\fi\fi}}
\ctr@ld@f\def\Q@arrowheadTD[#1,#2]{{\ifCUR@PS\ifGR@cri\s@uvc@ntr@l\et@tpsarrowheadTD%
    \PSc@mment{arrowheadTD [Pt1,Pt2]=[#1,#2]}\resetc@ntr@l{2}%
    \Figptpr@j-5:/#1/\Figptpr@j-6:/#2/\let\c@lprojSP=\relax\Q@arrowheadDD[-5,-6]%
    \PSc@mment{End arrowheadTD}\resetc@ntr@l\et@tpsarrowheadTD\fi\fi}}
\ctr@ld@f\def\Ps@rrowhead#1,#2[#3,#4]{\v@leur=#1\p@\maxim@m{\v@leur}{\v@leur}{-\v@leur}%
    \ifdim\v@leur>\Cepsil@n{
    \PSc@mment{@rrowhead Ratio=#1, Angle=#2, [Pt1,Pt2]=[#3,#4]}\v@leur=\UNSS@N%
    \v@leur=\CUR@width\v@leur\v@leur=\ptpsT@pt\v@leur\delt@=.5\v@leur
    \setc@ntr@l{2}\figvectPDD-3[#4,#3]%
    \Figg@tXY{-3}\v@lX=#1\v@lX\v@lY=#1\v@lY\Figv@ctCreg-3(\v@lX,\v@lY)%
    \vecunit@{-4}{-3}\mili@u=\result@t%
    \ifdim#2pt>\z@\v@lXa=-\C@AHANG\delt@%
     \edef\c@ef{\repdecn@mb{\v@lXa}}\figpttraDD-3:=-3/\c@ef,-4/\fi%
    \edef\c@ef{\repdecn@mb{\delt@}}%
    \v@lXa=\mili@u\v@lXa=\C@AHANG\v@lXa%
    \v@lYa=\ptpsT@pt\p@\v@lYa=\CUR@width\v@lYa\v@lYa=\sDcc@ngle\v@lYa%
    \advance\v@lXa-\v@lYa\gdef\sDcc@ngle{0}%
    \ifdim\v@lXa>\v@leur\edef\c@efendpt{\repdecn@mb{\v@leur}}%
    \else\edef\c@efendpt{\repdecn@mb{\v@lXa}}\fi%
    \Figg@tXY{-3}\v@lmin=\v@lX\v@lmax=\v@lY%
    \v@lXa=\C@AHANG\v@lmin\v@lYa=\S@AHANG\v@lmax\advance\v@lXa\v@lYa%
    \v@lYa=-\S@AHANG\v@lmin\v@lX=\C@AHANG\v@lmax\advance\v@lYa\v@lX%
    \setc@ntr@l{1}\Figg@tXY{#4}\advance\v@lX\v@lXa\advance\v@lY\v@lYa%
    \setc@ntr@l{2}\Figp@intregDD-2:(\v@lX,\v@lY)%
    \v@lXa=\C@AHANG\v@lmin\v@lYa=-\S@AHANG\v@lmax\advance\v@lXa\v@lYa%
    \v@lYa=\S@AHANG\v@lmin\v@lX=\C@AHANG\v@lmax\advance\v@lYa\v@lX%
    \setc@ntr@l{1}\Figg@tXY{#4}\advance\v@lX\v@lXa\advance\v@lY\v@lYa%
    \setc@ntr@l{2}\Figp@intregDD-1:(\v@lX,\v@lY)%
    \ifdim#2pt<\z@\fillm@detrue\figdrawline[-2,#4,-1]
    \else\figptstraDD-3=#4,-2,-1/\c@ef,-4/\s@uvdash{\typ@dash}\Q@s@tdash{\D@FTdash}%
    \figdrawline[-2,-3,-1]\Q@s@tdash{\typ@dash}\fi
    \ifdim#1pt>\z@\figpttraDD-3:=#4/\c@efendpt,-4/\else\figptcopyDD-3:/#4/\fi%
    \PSc@mment{End @rrowhead}}\fi}
\ctr@ld@f\def\sDcc@ngle{0}
\ctr@ld@f\def\Ps@rrowheadfd#1,#2[#3,#4]{{%
    \PSc@mment{@rrowheadfd Length=#1, Angle=#2, [Pt1,Pt2]=[#3,#4]}%
    \setc@ntr@l{2}\figvectPDD-1[#3,#4]\n@rmeucDD{\v@leur}{-1}\v@leur=\ptT@unit@\v@leur%
    \invers@{\v@leur}{\v@leur}\v@leur=#1\v@leur\edef\R@tio{\repdecn@mb{\v@leur}}%
    \Ps@rrowhead\R@tio,#2[#3,#4]\PSc@mment{End @rrowheadfd}}}
\ctr@ln@m\figdrawarrowBezier
\ctr@ld@f\def\Q@arrowBezierDD[#1,#2,#3,#4]{{\ifCUR@PS\ifGR@cri\s@uvc@ntr@l\et@tpsarrowBezierDD%
    \PSc@mment{arrowBezierDD Control points=#1,#2,#3,#4}\setc@ntr@l{2}%
    \if@rrowratio\c@larclengthDD\v@leur,10[#1,#2,#3,#4]\else\v@leur=\z@\fi%
    \Ps@rrowB@zDD\v@leur[#1,#2,#3,#4]%
    \PSc@mment{End arrowBezierDD}\resetc@ntr@l\et@tpsarrowBezierDD\fi\fi}}
\ctr@ld@f\def\Q@arrowBezierTD[#1,#2,#3,#4]{{\ifCUR@PS\ifGR@cri\s@uvc@ntr@l\et@tpsarrowBezierTD%
    \PSc@mment{arrowBezierTD Control points=#1,#2,#3,#4}\resetc@ntr@l{2}%
    \Figptpr@j-7:/#1/\Figptpr@j-8:/#2/\Figptpr@j-9:/#3/\Figptpr@j-10:/#4/%
    \let\c@lprojSP=\relax\ifnum\CUR@proj<\tw@\Q@arrowBezierDD[-7,-8,-9,-10]%
    \else\f@gnewpath\PSwrit@cmd{-7}{\c@mmoveto}{\fwf@g}%
    \if@rrowratio\c@larclengthDD\mili@u,10[-7,-8,-9,-10]\else\mili@u=\z@\fi%
    \p@rtent=\NBz@rcs\advance\p@rtent\m@ne\subB@zierTD\p@rtent[#1,#2,#3,#4]%
    \f@gstroke%
    \advance\v@lmin\p@rtent\delt@
    \v@leur=\v@lmin\advance\v@leur0.33333 \delt@\edef\unti@rs{\repdecn@mb{\v@leur}}%
    \v@leur=\v@lmin\advance\v@leur0.66666 \delt@\edef\deti@rs{\repdecn@mb{\v@leur}}%
    \figptcopyDD-8:/-10/\c@lsubBzarc\unti@rs,\deti@rs[#1,#2,#3,#4]%
    \figptcopyDD-8:/-4/\figptcopyDD-9:/-3/\Ps@rrowB@zDD\mili@u[-7,-8,-9,-10]\fi%
    \PSc@mment{End arrowBezierTD}\resetc@ntr@l\et@tpsarrowBezierTD\fi\fi}}
\ctr@ld@f\def\c@larclengthDD#1,#2[#3,#4,#5,#6]{{\p@rtent=#2\figptcopyDD-5:/#3/%
    \delt@=\p@\divide\delt@\p@rtent\c@rre=\z@\v@leur=\z@\s@mme=\z@%
    \loop\ifnum\s@mme<\p@rtent\advance\s@mme\@ne\advance\v@leur\delt@%
    \edef\T@{\repdecn@mb{\v@leur}}\figptBezierDD-6::\T@[#3,#4,#5,#6]%
    \figvectPDD-1[-5,-6]\n@rmeucDD{\mili@u}{-1}\advance\c@rre\mili@u%
    \figptcopyDD-5:/-6/\repeat\global\result@t=\ptT@unit@\c@rre}#1=\result@t}
\ctr@ld@f\def\Ps@rrowB@zDD#1[#2,#3,#4,#5]{{\Q@s@tfillmode{no}%
    \if@rrowratio\delt@=\@rrowheadratio#1\else\delt@=\@rrowheadlength pt\fi%
    \v@leur=\C@AHANG\delt@\edef\R@dius{\repdecn@mb{\v@leur}}%
    \FigptintercircB@zDD-5::0,\R@dius[#5,#4,#3,#2]%
    \Q@s@tarrowheadlength{\repdecn@mb{\delt@}}\Q@arrowheadDD[-5,#5]%
    \let\n@rmeuc=\n@rmeucDD\figgetdist\R@dius[#5,-3]%
    \FigptintercircB@zDD-6::0,\R@dius[#5,#4,#3,#2]%
    \figptBezierDD-5::0.33333[#5,#4,#3,#2]\figptBezierDD-3::0.66666[#5,#4,#3,#2]%
    \figptscontrolDD-5[-6,-5,-3,#2]\Q@BezierDD1[-6,-5,-4,#2]}}
\ctr@ln@m\figdrawarrowcirc
\ctr@ld@f\def\Q@arrowcircDD#1;#2(#3,#4){{\ifCUR@PS\ifGR@cri\s@uvc@ntr@l\et@tpsarrowcircDD%
    \PSc@mment{arrowcircDD Center=#1 ; Radius=#2 (Ang1=#3,Ang2=#4)}%
    \Q@s@tfillmode{no}\Pscirc@rrowhead#1;#2(#3,#4)%
    \setc@ntr@l{2}\figvectPDD -4[#1,-3]\vecunit@{-4}{-4}%
    \Figg@tXY{-4}\arct@n\v@lmin(\v@lX,\v@lY)%
    \v@lmin=\rdT@deg\v@lmin\v@leur=#4pt\advance\v@leur-\v@lmin%
    \maxim@m{\v@leur}{\v@leur}{-\v@leur}%
    \ifdim\v@leur>\DemiPI@deg\relax\ifdim\v@lmin<#4pt\advance\v@lmin\DePI@deg%
    \else\advance\v@lmin-\DePI@deg\fi\fi\edef\ar@ngle{\repdecn@mb{\v@lmin}}%
    \ifdim#3pt<#4pt\figdrawarccirc#1;#2(#3,\ar@ngle)\else\figdrawarccirc#1;#2(\ar@ngle,#3)\fi%
    \PSc@mment{End arrowcircDD}\resetc@ntr@l\et@tpsarrowcircDD\fi\fi}}
\ctr@ld@f\def\Q@arrowcircTD#1,#2,#3;#4(#5,#6){{\ifCUR@PS\ifGR@cri\s@uvc@ntr@l\et@tpsarrowcircTD%
    \PSc@mment{arrowcircTD Center=#1,P1=#2,P2=#3 ; Radius=#4 (Ang1=#5, Ang2=#6)}%
    \resetc@ntr@l{2}\c@lExtAxes#1,#2,#3(#4)\let\c@lprojSP=\relax%
    \figvectPTD-11[#1,-4]\figvectPTD-12[#1,-5]\c@lNbarcs{#5}{#6}%
    \if@rrowratio\v@lmax=\degT@rd\v@lmax\edef\D@lpha{\repdecn@mb{\v@lmax}}\fi%
    \advance\p@rtent\m@ne\mili@u=\z@%
    \v@leur=#5pt\c@lptellP{#1}{-11}{-12}\Figptpr@j-9:/-3/%
    \f@gnewpath\PSwrit@cmdS{-9}{\c@mmoveto}{\fwf@g}{\X@un}{\Y@un}%
    \edef\C@nt@r{#1}\s@mme=\z@\bcl@rcircTD\f@gstroke%
    \advance\v@leur\delt@\c@lptellP{#1}{-11}{-12}\Figptpr@j-5:/-3/%
    \advance\v@leur\delt@\c@lptellP{#1}{-11}{-12}\Figptpr@j-6:/-3/%
    \advance\v@leur\delt@\c@lptellP{#1}{-11}{-12}\Figptpr@j-10:/-3/%
    \figptscontrolDD-8[-9,-5,-6,-10]%
    \if@rrowratio\c@lcurvradDD0.5[-9,-8,-7,-10]\advance\mili@u\result@t%
    \maxim@m{\mili@u}{\mili@u}{-\mili@u}\mili@u=\ptT@unit@\mili@u%
    \mili@u=\D@lpha\mili@u\advance\p@rtent\@ne\divide\mili@u\p@rtent\fi%
    \Ps@rrowB@zDD\mili@u[-9,-8,-7,-10]%
    \PSc@mment{End arrowcircTD}\resetc@ntr@l\et@tpsarrowcircTD\fi\fi}}
\ctr@ld@f\def\bcl@rcircTD{\relax%
    \ifnum\s@mme<\p@rtent\advance\s@mme\@ne%
    \advance\v@leur\delt@\c@lptellP{\C@nt@r}{-11}{-12}\Figptpr@j-5:/-3/%
    \advance\v@leur\delt@\c@lptellP{\C@nt@r}{-11}{-12}\Figptpr@j-6:/-3/%
    \advance\v@leur\delt@\c@lptellP{\C@nt@r}{-11}{-12}\Figptpr@j-10:/-3/%
    \figptscontrolDD-8[-9,-5,-6,-10]\BdingB@xfalse%
    \PSwrit@cmdS{-8}{}{\fwf@g}{\X@de}{\Y@de}\PSwrit@cmdS{-7}{}{\fwf@g}{\X@tr}{\Y@tr}%
    \BdingB@xtrue\PSwrit@cmdS{-10}{\c@mcurveto}{\fwf@g}{\X@qu}{\Y@qu}%
    \if@rrowratio\c@lcurvradDD0.5[-9,-8,-7,-10]\advance\mili@u\result@t\fi%
    \B@zierBB@x{1}{\Y@un}(\X@un,\X@de,\X@tr,\X@qu)%
    \B@zierBB@x{2}{\X@un}(\Y@un,\Y@de,\Y@tr,\Y@qu)%
    \edef\X@un{\X@qu}\edef\Y@un{\Y@qu}\figptcopyDD-9:/-10/\bcl@rcircTD\fi}
\ctr@ld@f\def\Pscirc@rrowhead#1;#2(#3,#4){{%
    \PSc@mment{circ@rrowhead Center=#1 ; Radius=#2 (Ang1=#3,Ang2=#4)}%
    \v@leur=#2\unit@\edef\s@glen{\repdecn@mb{\v@leur}}\v@lY=\z@\v@lX=\v@leur%
    \resetc@ntr@l{2}\Figv@ctCreg-3(\v@lX,\v@lY)\figpttraDD-5:=#1/1,-3/%
    \figptrotDD-5:=-5/#1,#4/%
    \figvectPDD-3[#1,-5]\Figg@tXY{-3}\v@leur=\v@lX%
    \ifdim#3pt<#4pt\v@lX=\v@lY\v@lY=-\v@leur\else\v@lX=-\v@lY\v@lY=\v@leur\fi%
    \Figv@ctCreg-3(\v@lX,\v@lY)\vecunit@{-3}{-3}%
    \if@rrowratio\v@leur=#4pt\advance\v@leur-#3pt\maxim@m{\mili@u}{-\v@leur}{\v@leur}%
    \mili@u=\degT@rd\mili@u\v@leur=\s@glen\mili@u\edef\s@glen{\repdecn@mb{\v@leur}}%
    \mili@u=#2\mili@u\mili@u=\@rrowheadratio\mili@u\else\mili@u=\@rrowheadlength pt\fi%
    \figpttraDD-6:=-5/\s@glen,-3/\v@leur=#2pt\v@leur=2\v@leur%
    \invers@{\v@leur}{\v@leur}\c@rre=\repdecn@mb{\v@leur}\mili@u
    \mili@u=\c@rre\mili@u=\repdecn@mb{\c@rre}\mili@u%
    \v@leur=\p@\advance\v@leur-\mili@u
    \invers@{\mili@u}{2\v@leur}\delt@=\c@rre\delt@=\repdecn@mb{\mili@u}\delt@%
    \xdef\sDcc@ngle{\repdecn@mb{\delt@}}
    \sqrt@{\mili@u}{\v@leur}\arct@n\v@leur(\mili@u,\c@rre)%
    \v@leur=\rdT@deg\v@leur
    \ifdim#3pt<#4pt\v@leur=-\v@leur\fi%
    \if@rrowhout\v@leur=-\v@leur\fi\edef\cor@ngle{\repdecn@mb{\v@leur}}%
    \figptrotDD-6:=-6/-5,\cor@ngle/\Q@arrowheadDD[-6,-5]%
    \PSc@mment{End circ@rrowhead}}}
\ctr@ln@m\figdrawarrowcircP
\ctr@ld@f\def\Q@arrowcircPDD#1;#2[#3,#4]{{\ifCUR@PS\ifGR@cri%
    \PSc@mment{arrowcircPDD Center=#1; Radius=#2, [P1=#3,P2=#4]}%
    \s@uvc@ntr@l\et@tpsarrowcircPDD\Ps@ngleparam#1;#2[#3,#4]%
    \ifdim\v@leur>\z@\ifdim\v@lmin>\v@lmax\advance\v@lmax\DePI@deg\fi%
    \else\ifdim\v@lmin<\v@lmax\advance\v@lmin\DePI@deg\fi\fi%
    \edef\@ngdeb{\repdecn@mb{\v@lmin}}\edef\@ngfin{\repdecn@mb{\v@lmax}}%
    \figdrawarrowcirc#1;\r@dius(\@ngdeb,\@ngfin)%
    \PSc@mment{End arrowcircPDD}\resetc@ntr@l\et@tpsarrowcircPDD\fi\fi}}
\ctr@ld@f\def\Q@arrowcircPTD#1;#2[#3,#4,#5]{{\ifCUR@PS\ifGR@cri\s@uvc@ntr@l\et@tpsarrowcircPTD%
    \PSc@mment{arrowcircPTD Center=#1; Radius=#2, [P1=#3,P2=#4,P3=#5]}%
    \figgetangleTD\@ngfin[#1,#3,#4,#5]\v@leur=#2pt%
    \maxim@m{\mili@u}{-\v@leur}{\v@leur}\edef\r@dius{\repdecn@mb{\mili@u}}%
    \ifdim\v@leur<\z@\v@lmax=\@ngfin pt\advance\v@lmax-\DePI@deg%
    \edef\@ngfin{\repdecn@mb{\v@lmax}}\fi\Q@arrowcircTD#1,#3,#5;\r@dius(0,\@ngfin)%
    \PSc@mment{End arrowcircPTD}\resetc@ntr@l\et@tpsarrowcircPTD\fi\fi}}
\ctr@ld@f\def\figdrawaxes#1(#2){{\ifCUR@PS\ifGR@cri\s@uvc@ntr@l\et@tpsaxes%
    \PSc@mment{axes Origin=#1 Range=(#2)}\an@lys@xes#2,:\resetc@ntr@l{2}%
    \ifx\t@xt@\empty\ifTr@isDim\Q@@xes#1(0,#2,0,#2,0,#2)\else\Q@@xes#1(0,#2,0,#2)\fi%
    \else\Q@@xes#1(#2)\fi\PSc@mment{End axes}\resetc@ntr@l\et@tpsaxes\fi\fi}}
\ctr@ld@f\def\an@lys@xes#1,#2:{\def\t@xt@{#2}}
\ctr@ln@m\Q@@xes
\ctr@ld@f\def\Q@@xesDD#1(#2,#3,#4,#5){%
    \figpttraC-5:=#1/#2,0/\figpttraC-6:=#1/#3,0/\Q@arrowDD[-5,-6]%
    \figpttraC-5:=#1/0,#4/\figpttraC-6:=#1/0,#5/\Q@arrowDD[-5,-6]}
\ctr@ld@f\def\Q@@xesTD#1(#2,#3,#4,#5,#6,#7){%
    \figpttraC-7:=#1/#2,0,0/\figpttraC-8:=#1/#3,0,0/\Q@arrowTD[-7,-8]%
    \figpttraC-7:=#1/0,#4,0/\figpttraC-8:=#1/0,#5,0/\Q@arrowTD[-7,-8]%
    \figpttraC-7:=#1/0,0,#6/\figpttraC-8:=#1/0,0,#7/\Q@arrowTD[-7,-8]}
\ctr@ln@m\newGr@FN
\ctr@ld@f\def\newGr@FNPDF#1{\s@mme=\Gr@FNb\advance\s@mme\@ne\xdef\Gr@FNb{\number\s@mme}}
\ctr@ld@f\def\newGr@FNDVI#1{\newGr@FNPDF{}\xdef#1{\jobname GI\Gr@FNb.anx}}
\ctr@ld@f\def\figdrawbegin#1{\newGr@FN\DefGIfilen@me\gdef\@utoFN{0}%
    \def\t@xt@{#1}\relax\ifx\t@xt@\empty\GRupdatem@detrue%
    \gdef\@utoFN{1}\Psb@ginfig\DefGIfilen@me\else\expandafter\Psb@ginfigNu@#1 :\fi}
\ctr@ld@f\def\Psb@ginfigNu@#1 #2:{\def\t@xt@{#1}\relax\ifx\t@xt@\empty\def\t@xt@{#2}%
    \ifx\t@xt@\empty\GRupdatem@detrue\gdef\@utoFN{1}\Psb@ginfig\DefGIfilen@me%
    \else\Psb@ginfigNu@#2:\fi\else\Psb@ginfig{#1}\fi}
\ctr@ln@m\PSfilen@me \ctr@ln@m\auxfilen@me
\ctr@ld@f\def\Psb@ginfig#1{\ifCUR@PS\else%
    \edef\PSfilen@me{#1}\edef\auxfilen@me{\jobname.anx}%
    \ifGRupdatem@de\GR@critrue\else\openin\frf@g=\PSfilen@me\relax%
    \ifeof\frf@g\GR@critrue\else\GR@crifalse\fi\closein\frf@g\fi%
    \CUR@PStrue\c@ldefproj\expandafter\setupd@te\D@FTupdate:%
    \ifGR@cri\initb@undb@x%
    \immediate\openout\fwf@g=\auxfilen@me\initpss@ttings\fi%
    \fi}
\ctr@ld@f\def\Gr@FNb{0}
\ctr@ld@f\def\figforTeXFileno{\Gr@FNb}
\ctr@ld@f\def\figforTeXFigno{0 }
\ctr@ld@f\def\figforTeXnextFigno{1 }
\ctr@ld@f\edef\DefGIfilen@me{\jobname GI.anx}
\ctr@ld@f\def\initpss@ttings{\figreset{altitude,arrowhead,curve,general,flowchart,mesh,trimesh}%
    \Use@llipsefalse}
\ctr@ld@f\def\B@zierBB@x#1#2(#3,#4,#5,#6){{\c@rre=\t@n\epsil@n
    \v@lmax=#4\advance\v@lmax-#5\v@lmax=\thr@@\v@lmax\advance\v@lmax#6\advance\v@lmax-#3%
    \mili@u=#4\mili@u=-\tw@\mili@u\advance\mili@u#3\advance\mili@u#5%
    \v@lmin=#4\advance\v@lmin-#3\maxim@m{\v@leur}{-\v@lmax}{\v@lmax}%
    \maxim@m{\delt@}{-\mili@u}{\mili@u}\maxim@m{\v@leur}{\v@leur}{\delt@}%
    \maxim@m{\delt@}{-\v@lmin}{\v@lmin}\maxim@m{\v@leur}{\v@leur}{\delt@}%
    \ifdim\v@leur>\c@rre\invers@{\v@leur}{\v@leur}\edef\Uns@rM@x{\repdecn@mb{\v@leur}}%
    \v@lmax=\Uns@rM@x\v@lmax\mili@u=\Uns@rM@x\mili@u\v@lmin=\Uns@rM@x\v@lmin%
    \maxim@m{\v@leur}{-\v@lmax}{\v@lmax}\ifdim\v@leur<\c@rre%
    \maxim@m{\v@leur}{-\mili@u}{\mili@u}\ifdim\v@leur<\c@rre\else%
    \invers@{\mili@u}{\mili@u}\v@leur=-0.5\v@lmin%
    \v@leur=\repdecn@mb{\mili@u}\v@leur\m@jBBB@x{\v@leur}{#1}{#2}(#3,#4,#5,#6)\fi%
    \else\delt@=\repdecn@mb{\mili@u}\mili@u\v@leur=\repdecn@mb{\v@lmax}\v@lmin%
    \advance\delt@-\v@leur\ifdim\delt@<\z@\else\invers@{\v@lmax}{\v@lmax}%
    \edef\Uns@rAp{\repdecn@mb{\v@lmax}}\sqrt@{\delt@}{\delt@}%
    \v@leur=-\mili@u\advance\v@leur\delt@\v@leur=\Uns@rAp\v@leur%
    \m@jBBB@x{\v@leur}{#1}{#2}(#3,#4,#5,#6)%
    \v@leur=-\mili@u\advance\v@leur-\delt@\v@leur=\Uns@rAp\v@leur%
    \m@jBBB@x{\v@leur}{#1}{#2}(#3,#4,#5,#6)\fi\fi\fi}}
\ctr@ld@f\def\m@jBBB@x#1#2#3(#4,#5,#6,#7){{\relax\ifdim#1>\z@\ifdim#1<\p@%
    \edef\T@{\repdecn@mb{#1}}\v@lX=\p@\advance\v@lX-#1\edef\UNmT@{\repdecn@mb{\v@lX}}%
    \v@lX=#4\v@lY=#5\v@lZ=#6\v@lXa=#7\v@lX=\UNmT@\v@lX\advance\v@lX\T@\v@lY%
    \v@lY=\UNmT@\v@lY\advance\v@lY\T@\v@lZ\v@lZ=\UNmT@\v@lZ\advance\v@lZ\T@\v@lXa%
    \v@lX=\UNmT@\v@lX\advance\v@lX\T@\v@lY\v@lY=\UNmT@\v@lY\advance\v@lY\T@\v@lZ%
    \v@lX=\UNmT@\v@lX\advance\v@lX\T@\v@lY%
    \ifcase#2\or\v@lY=#3\or\v@lY=\v@lX\v@lX=#3\fi\b@undb@x{\v@lX}{\v@lY}\fi\fi}}
\ctr@ld@f\def\PsB@zier#1[#2]{{\f@gnewpath%
    \s@mme=\z@\def\list@num{#2,0}\extrairelepremi@r\p@int\de\list@num%
    \PSwrit@cmdS{\p@int}{\c@mmoveto}{\fwf@g}{\X@un}{\Y@un}\p@rtent=#1\bclB@zier}}
\ctr@ld@f\def\bclB@zier{\relax%
    \ifnum\s@mme<\p@rtent\advance\s@mme\@ne\BdingB@xfalse%
    \extrairelepremi@r\p@int\de\list@num\PSwrit@cmdS{\p@int}{}{\fwf@g}{\X@de}{\Y@de}%
    \extrairelepremi@r\p@int\de\list@num\PSwrit@cmdS{\p@int}{}{\fwf@g}{\X@tr}{\Y@tr}%
    \BdingB@xtrue%
    \extrairelepremi@r\p@int\de\list@num\PSwrit@cmdS{\p@int}{\c@mcurveto}{\fwf@g}{\X@qu}{\Y@qu}%
    \B@zierBB@x{1}{\Y@un}(\X@un,\X@de,\X@tr,\X@qu)%
    \B@zierBB@x{2}{\X@un}(\Y@un,\Y@de,\Y@tr,\Y@qu)%
    \edef\X@un{\X@qu}\edef\Y@un{\Y@qu}\bclB@zier\fi}
\ctr@ln@m\figdrawBezier
\ctr@ld@f\def\Q@BezierDD#1[#2]{\ifCUR@PS\ifGR@cri%
    \PSc@mment{BezierDD N arcs=#1, Control points=#2}%
    \iffillm@de\PsB@zier#1[#2]%
    \f@gfill%
    \else\PsB@zier#1[#2]\f@gstroke\fi%
    \PSc@mment{End BezierDD}\fi\fi}
\ctr@ln@m\et@tpsBezierTD
\ctr@ld@f\def\Q@BezierTD#1[#2]{\ifCUR@PS\ifGR@cri\s@uvc@ntr@l\et@tpsBezierTD%
    \PSc@mment{BezierTD N arcs=#1, Control points=#2}%
    \iffillm@de\PsB@zierTD#1[#2]%
    \f@gfill%
    \else\PsB@zierTD#1[#2]\f@gstroke\fi%
    \PSc@mment{End BezierTD}\resetc@ntr@l\et@tpsBezierTD\fi\fi}
\ctr@ld@f\def\PsB@zierTD#1[#2]{\ifnum\CUR@proj<\tw@\PsB@zier#1[#2]\else\PsB@zier@TD#1[#2]\fi}
\ctr@ld@f\def\PsB@zier@TD#1[#2]{{\f@gnewpath%
    \s@mme=\z@\def\list@num{#2,0}\extrairelepremi@r\p@int\de\list@num%
    \let\c@lprojSP=\relax\setc@ntr@l{2}\Figptpr@j-7:/\p@int/%
    \PSwrit@cmd{-7}{\c@mmoveto}{\fwf@g}%
    \loop\ifnum\s@mme<#1\advance\s@mme\@ne\extrairelepremi@r\p@intun\de\list@num%
    \extrairelepremi@r\p@intde\de\list@num\extrairelepremi@r\p@inttr\de\list@num%
    \subB@zierTD\NBz@rcs[\p@int,\p@intun,\p@intde,\p@inttr]\edef\p@int{\p@inttr}\repeat}}
\ctr@ld@f\def\subB@zierTD#1[#2,#3,#4,#5]{\delt@=\p@\divide\delt@\NBz@rcs\v@lmin=\z@%
    {\Figg@tXY{-7}\edef\X@un{\the\v@lX}\edef\Y@un{\the\v@lY}%
    \s@mme=\z@\loop\ifnum\s@mme<#1\advance\s@mme\@ne%
    \v@leur=\v@lmin\advance\v@leur0.33333 \delt@\edef\unti@rs{\repdecn@mb{\v@leur}}%
    \v@leur=\v@lmin\advance\v@leur0.66666 \delt@\edef\deti@rs{\repdecn@mb{\v@leur}}%
    \advance\v@lmin\delt@\edef\trti@rs{\repdecn@mb{\v@lmin}}%
    \figptBezierTD-8::\trti@rs[#2,#3,#4,#5]\Figptpr@j-8:/-8/%
    \c@lsubBzarc\unti@rs,\deti@rs[#2,#3,#4,#5]\BdingB@xfalse%
    \PSwrit@cmdS{-4}{}{\fwf@g}{\X@de}{\Y@de}\PSwrit@cmdS{-3}{}{\fwf@g}{\X@tr}{\Y@tr}%
    \BdingB@xtrue\PSwrit@cmdS{-8}{\c@mcurveto}{\fwf@g}{\X@qu}{\Y@qu}%
    \B@zierBB@x{1}{\Y@un}(\X@un,\X@de,\X@tr,\X@qu)%
    \B@zierBB@x{2}{\X@un}(\Y@un,\Y@de,\Y@tr,\Y@qu)%
    \edef\X@un{\X@qu}\edef\Y@un{\Y@qu}\figptcopyDD-7:/-8/\repeat}}
\ctr@ld@f\def\NBz@rcs{2}
\ctr@ld@f\def\c@lsubBzarc#1,#2[#3,#4,#5,#6]{\figptBezierTD-5::#1[#3,#4,#5,#6]%
    \figptBezierTD-6::#2[#3,#4,#5,#6]\Figptpr@j-4:/-5/\Figptpr@j-5:/-6/%
    \figptscontrolDD-4[-7,-4,-5,-8]}
\ctr@ln@m\figdrawcirc
\ctr@ld@f\def\Q@circDD#1(#2){\ifCUR@PS\ifGR@cri\PSc@mment{circDD Center=#1 (Radius=#2)}%
    \Q@arccircDD#1;#2(0,360)\PSc@mment{End circDD}\fi\fi}
\ctr@ld@f\def\Q@circTD#1,#2,#3(#4){\ifCUR@PS\ifGR@cri%
    \PSc@mment{circTD Center=#1,P1=#2,P2=#3 (Radius=#4)}%
    \Q@arccircTD#1,#2,#3;#4(0,360)\PSc@mment{End circTD}\fi\fi}
\ctr@ln@m\p@urcent
{\catcode`\%=12\gdef\p@urcent{
\ctr@ld@f\def\PSc@mment#1{\ifGRdebugm@de\immediate\write\fwf@g{\p@urcent\space#1}\fi}
\ctr@ln@m\acc@louv \ctr@ln@m\acc@lfer
{\catcode`\[=1\catcode`\{=12\gdef\acc@louv[{}}
{\catcode`\]=2\catcode`\}=12\gdef\acc@lfer{}]]
\ctr@ld@f\def\PSdict@{\ifUse@llipse%
    \immediate\write\fwf@g{/ellipsedict 9 dict def ellipsedict /mtrx matrix put}%
    \immediate\write\fwf@g{/ellipse \acc@louv ellipsedict begin}%
    \immediate\write\fwf@g{ /endangle exch def /startangle exch def}%
    \immediate\write\fwf@g{ /yrad exch def /xrad exch def}%
    \immediate\write\fwf@g{ /rotangle exch def /y exch def /x exch def}%
    \immediate\write\fwf@g{ /savematrix mtrx currentmatrix def}%
    \immediate\write\fwf@g{ x y translate rotangle rotate xrad yrad scale}%
    \immediate\write\fwf@g{ 0 0 1 startangle endangle arc}%
    \immediate\write\fwf@g{ savematrix setmatrix end\acc@lfer def}%
    \fi\PShe@der{EndProlog}}
\ctr@ld@f\def\Pssetc@rve#1=#2|{\keln@mun#1|%
    \def\n@mref{r}\ifx\l@debut\n@mref\update@ttr\D@FTroundness\Q@s@troundness{#2}\else
    \W@rnmesAttr{figset curve}{#1}\fi}
\ctr@ln@m\curv@roundness
\ctr@ld@f\def\Q@s@troundness#1{\edef\curv@roundness{#1}}
\ctr@ld@f\def\D@FTroundness{0.2} 
\ctr@ln@m\figdrawcurve
\ctr@ld@f\def\Q@curveDD[#1]{{\ifCUR@PS\ifGR@cri\PSc@mment{curveDD Points=#1}%
    \s@uvc@ntr@l\et@tpscurveDD%
    \iffillm@de\Psc@rveDD\curv@roundness[#1]%
    \f@gfill%
    \else\Psc@rveDD\curv@roundness[#1]\f@gstroke\fi%
    \PSc@mment{End curveDD}\resetc@ntr@l\et@tpscurveDD\fi\fi}}
\ctr@ld@f\def\Q@curveTD[#1]{{\ifCUR@PS\ifGR@cri%
    \PSc@mment{curveTD Points=#1}\s@uvc@ntr@l\et@tpscurveTD\let\c@lprojSP=\relax%
    \iffillm@de\Psc@rveTD\curv@roundness[#1]%
    \f@gfill%
    \else\Psc@rveTD\curv@roundness[#1]\f@gstroke\fi%
    \PSc@mment{End curveTD}\resetc@ntr@l\et@tpscurveTD\fi\fi}}
\ctr@ld@f\def\Psc@rveDD#1[#2]{%
    \def\list@num{#2}\extrairelepremi@r\Ak@\de\list@num%
    \extrairelepremi@r\Ai@\de\list@num\extrairelepremi@r\Aj@\de\list@num%
    \f@gnewpath\PSwrit@cmdS{\Ai@}{\c@mmoveto}{\fwf@g}{\X@un}{\Y@un}%
    \setc@ntr@l{2}\figvectPDD -1[\Ak@,\Aj@]%
    \@ecfor\Ak@:=\list@num\do{\figpttraDD-2:=\Ai@/#1,-1/\BdingB@xfalse%
       \PSwrit@cmdS{-2}{}{\fwf@g}{\X@de}{\Y@de}%
       \figvectPDD -1[\Ai@,\Ak@]\figpttraDD-2:=\Aj@/-#1,-1/%
       \PSwrit@cmdS{-2}{}{\fwf@g}{\X@tr}{\Y@tr}\BdingB@xtrue%
       \PSwrit@cmdS{\Aj@}{\c@mcurveto}{\fwf@g}{\X@qu}{\Y@qu}%
       \B@zierBB@x{1}{\Y@un}(\X@un,\X@de,\X@tr,\X@qu)%
       \B@zierBB@x{2}{\X@un}(\Y@un,\Y@de,\Y@tr,\Y@qu)%
       \edef\X@un{\X@qu}\edef\Y@un{\Y@qu}\edef\Ai@{\Aj@}\edef\Aj@{\Ak@}}}
\ctr@ld@f\def\Psc@rveTD#1[#2]{\ifnum\CUR@proj<\tw@\Psc@rvePPTD#1[#2]\else\Psc@rveCPTD#1[#2]\fi}
\ctr@ld@f\def\Psc@rvePPTD#1[#2]{\setc@ntr@l{2}%
    \def\list@num{#2}\extrairelepremi@r\Ak@\de\list@num\Figptpr@j-5:/\Ak@/%
    \extrairelepremi@r\Ai@\de\list@num\Figptpr@j-3:/\Ai@/%
    \extrairelepremi@r\Aj@\de\list@num\Figptpr@j-4:/\Aj@/%
    \f@gnewpath\PSwrit@cmdS{-3}{\c@mmoveto}{\fwf@g}{\X@un}{\Y@un}%
    \figvectPDD -1[-5,-4]%
    \@ecfor\Ak@:=\list@num\do{\Figptpr@j-5:/\Ak@/\figpttraDD-2:=-3/#1,-1/%
       \BdingB@xfalse\PSwrit@cmdS{-2}{}{\fwf@g}{\X@de}{\Y@de}%
       \figvectPDD -1[-3,-5]\figpttraDD-2:=-4/-#1,-1/%
       \PSwrit@cmdS{-2}{}{\fwf@g}{\X@tr}{\Y@tr}\BdingB@xtrue%
       \PSwrit@cmdS{-4}{\c@mcurveto}{\fwf@g}{\X@qu}{\Y@qu}%
       \B@zierBB@x{1}{\Y@un}(\X@un,\X@de,\X@tr,\X@qu)%
       \B@zierBB@x{2}{\X@un}(\Y@un,\Y@de,\Y@tr,\Y@qu)%
       \edef\X@un{\X@qu}\edef\Y@un{\Y@qu}\figptcopyDD-3:/-4/\figptcopyDD-4:/-5/}}
\ctr@ld@f\def\Psc@rveCPTD#1[#2]{\setc@ntr@l{2}%
    \def\list@num{#2}\extrairelepremi@r\Ak@\de\list@num%
    \extrairelepremi@r\Ai@\de\list@num\extrairelepremi@r\Aj@\de\list@num%
    \Figptpr@j-7:/\Ai@/%
    \f@gnewpath\PSwrit@cmd{-7}{\c@mmoveto}{\fwf@g}%
    \figvectPTD -9[\Ak@,\Aj@]%
    \@ecfor\Ak@:=\list@num\do{\figpttraTD-10:=\Ai@/#1,-9/%
       \figvectPTD -9[\Ai@,\Ak@]\figpttraTD-11:=\Aj@/-#1,-9/%
       \subB@zierTD\NBz@rcs[\Ai@,-10,-11,\Aj@]\edef\Ai@{\Aj@}\edef\Aj@{\Ak@}}}
\ctr@ld@f\def\figdrawend{\ifCUR@PS\ifGR@cri\immediate\closeout\fwf@g%
    \immediate\openout\fwf@g=\PSfilen@me\relax%
    \ifPDFm@ke\PSBdingB@x\else%
    \immediate\write\fwf@g{\p@urcent\string!PS-Adobe-2.0 EPSF-2.0}%
    \PShe@der{Creator\string: TeX (fig4tex.tex)}%
    \PShe@der{Title\string: \PSfilen@me}%
    \PShe@der{CreationDate\string: \the\day/\the\month/\the\year}%
    \PSBdingB@x%
    \PShe@der{EndComments}\PSdict@\fi%
    \immediate\write\fwf@g{\c@mgsave}%
    \openin\frf@g=\auxfilen@me\c@pypsfile\fwf@g\frf@g\closein\frf@g%
    \immediate\write\fwf@g{\c@mgrestore}%
    \PSc@mment{End of file.}\immediate\closeout\fwf@g%
    \immediate\openout\fwf@g=\auxfilen@me\immediate\closeout\fwf@g%
    \immediate\write16{File \PSfilen@me\space created.}\fi\fi\CUR@PSfalse\GR@critrue}
\ctr@ld@f\def\PShe@der#1{\immediate\write\fwf@g{\p@urcent\p@urcent#1}}
\ctr@ld@f\def\PSBdingB@x{{\v@lX=\ptT@ptps\c@@rdXmin\v@lY=\ptT@ptps\c@@rdYmin%
     \v@lXa=\ptT@ptps\c@@rdXmax\v@lYa=\ptT@ptps\c@@rdYmax%
     \PShe@der{BoundingBox\string: \repdecn@mb{\v@lX}\space\repdecn@mb{\v@lY}%
     \space\repdecn@mb{\v@lXa}\space\repdecn@mb{\v@lYa}}}}
\ctr@ld@f\def\figdrawfcconnect[#1]{{\ifCUR@PS\ifGR@cri\PSc@mment{fcconnect Points=#1}%
    \Q@s@tfillmode{no}\s@uvc@ntr@l\et@tpsfcconnect\resetc@ntr@l{2}%
    \fcc@nnect@[#1]\resetc@ntr@l\et@tpsfcconnect\PSc@mment{End fcconnect}\fi\fi}}
\ctr@ld@f\def\fcc@nnect@[#1]{\let\N@rm=\n@rmeucDD\def\list@num{#1}%
    \extrairelepremi@r\Ai@\de\list@num\edef\pr@m{\Ai@}\v@leur=\z@\p@rtent=\@ne\c@llgtot%
    \ifcase\fclin@typ@\edef\list@num{[\pr@m,#1,\Ai@}\expandafter\figdrawcurve\list@num]%
    \else\ifdim\fclin@r@d\p@>\z@\Pslin@conge[#1]\else\figdrawline[#1]\fi\fi%
    \v@leur=\@rrowp@s\v@leur\edef\list@num{#1,\Ai@,0}%
    \extrairelepremi@r\Ai@\de\list@num\mili@u=\epsil@n\c@llgpart%
    \advance\mili@u-\epsil@n\advance\mili@u-\delt@\advance\v@leur-\mili@u%
    \ifcase\fclin@typ@\invers@\mili@u\delt@%
    \ifnum\@rrowr@fpt>\z@\advance\delt@-\v@leur\v@leur=\delt@\fi%
    \v@leur=\repdecn@mb\v@leur\mili@u\edef\v@lt{\repdecn@mb\v@leur}%
    \extrairelepremi@r\Ak@\de\list@num%
    \figvectPDD-1[\pr@m,\Aj@]\figpttraDD-6:=\Ai@/\curv@roundness,-1/%
    \figvectPDD-1[\Ak@,\Ai@]\figpttraDD-7:=\Aj@/\curv@roundness,-1/%
    \delt@=\@rrowheadlength\p@\delt@=\C@AHANG\delt@\edef\R@dius{\repdecn@mb{\delt@}}%
    \ifcase\@rrowr@fpt%
    \FigptintercircB@zDD-8::\v@lt,\R@dius[\Ai@,-6,-7,\Aj@]\Q@arrowheadDD[-5,-8]\else%
    \FigptintercircB@zDD-8::\v@lt,\R@dius[\Aj@,-7,-6,\Ai@]\Q@arrowheadDD[-8,-5]\fi%
    \else\advance\delt@-\v@leur%
    \p@rtentiere{\p@rtent}{\delt@}\edef\C@efun{\the\p@rtent}%
    \p@rtentiere{\p@rtent}{\v@leur}\edef\C@efde{\the\p@rtent}%
    \figptbaryDD-5:[\Ai@,\Aj@;\C@efun,\C@efde]\ifcase\@rrowr@fpt%
    \delt@=\@rrowheadlength\unit@\delt@=\C@AHANG\delt@\edef\t@ille{\repdecn@mb{\delt@}}%
    \figvectPDD-2[\Ai@,\Aj@]\vecunit@{-2}{-2}\figpttraDD-5:=-5/\t@ille,-2/\fi%
    \Q@arrowheadDD[\Ai@,-5]\fi}
\ctr@ld@f\def\c@llgtot{\@ecfor\Aj@:=\list@num\do{\figvectP-1[\Ai@,\Aj@]\N@rm\delt@{-1}%
    \advance\v@leur\delt@\advance\p@rtent\@ne\edef\Ai@{\Aj@}}}
\ctr@ld@f\def\c@llgpart{\extrairelepremi@r\Aj@\de\list@num\figvectP-1[\Ai@,\Aj@]\N@rm\delt@{-1}%
    \advance\mili@u\delt@\ifdim\mili@u<\v@leur\edef\pr@m{\Ai@}\edef\Ai@{\Aj@}\c@llgpart\fi}
\ctr@ld@f\def\Pslin@conge[#1]{\ifnum\p@rtent>\tw@{\def\list@num{#1}%
    \extrairelepremi@r\Ai@\de\list@num\extrairelepremi@r\Aj@\de\list@num%
    \figptcopy-6:/\Ai@/\figvectP-3[\Ai@,\Aj@]\vecunit@{-3}{-3}\v@lmax=\result@t%
    \@ecfor\Ak@:=\list@num\do{\figvectP-4[\Aj@,\Ak@]\vecunit@{-4}{-4}%
    \minim@m\v@lmin\v@lmax\result@t\v@lmax=\result@t%
    \det@rm\delt@[-3,-4]\maxim@m\mili@u{\delt@}{-\delt@}\ifdim\mili@u>\Cepsil@n%
    \ifdim\delt@>\z@\figgetangleDD\Angl@[\Aj@,\Ak@,\Ai@]\else%
    \figgetangleDD\Angl@[\Aj@,\Ai@,\Ak@]\fi%
    \v@leur=\PI@deg\advance\v@leur-\Angl@\p@\divide\v@leur\tw@%
    \edef\Angl@{\repdecn@mb\v@leur}\c@ssin{\C@}{\S@}{\Angl@}\v@leur=\fclin@r@d\unit@%
    \v@leur=\S@\v@leur\mili@u=\C@\p@\invers@\mili@u\mili@u%
    \v@leur=\repdecn@mb{\mili@u}\v@leur%
    \minim@m\v@leur\v@leur\v@lmin\edef\t@ille{\repdecn@mb{\v@leur}}%
    \figpttra-5:=\Aj@/-\t@ille,-3/\figdrawline[-6,-5]\figpttra-6:=\Aj@/\t@ille,-4/%
    \figvectNVDD-3[-3]\figvectNVDD-8[-4]\inters@cDD-7:[-5,-3;-6,-8]%
    \ifdim\delt@>\z@\figdrawarccircP-7;\fclin@r@d[-5,-6]\else\figdrawarccircP-7;\fclin@r@d[-6,-5]\fi%
    \else\figdrawline[-6,\Aj@]\figptcopy-6:/\Aj@/\fi
    \edef\Ai@{\Aj@}\edef\Aj@{\Ak@}\figptcopy-3:/-4/}\figdrawline[-6,\Aj@]}\else\figdrawline[#1]\fi}
\ctr@ld@f\def\figdrawfcnode[#1]#2{{\ifCUR@PS\ifGR@cri\PSc@mment{fcnode Points=#1}%
    \s@uvc@ntr@l\et@tpsfcnode\resetc@ntr@l{2}%
    \def\t@xt@{#2}\ifx\t@xt@\empty\def\g@tt@xt{\setbox\Gb@x=\hbox{\Figg@tT{\p@int}}}%
    \else\def\g@tt@xt{\setbox\Gb@x=\hbox{#2}}\fi%
    \v@lmin=\h@rdfcXp@dd\advance\v@lmin\Xp@dd\unit@\multiply\v@lmin\tw@%
    \v@lmax=\h@rdfcYp@dd\advance\v@lmax\Yp@dd\unit@\multiply\v@lmax\tw@%
    \Figv@ctCreg-8(\unit@,-\unit@)\def\list@num{#1}%
    \delt@=\CUR@width bp\divide\delt@\tw@%
    \fcn@de\PSc@mment{End fcnode}\resetc@ntr@l\et@tpsfcnode\fi\fi}}
\ctr@ld@f\def\d@butn@de{\g@tt@xt\v@lX=\wd\Gb@x%
    \v@lY=\ht\Gb@x\advance\v@lY\dp\Gb@x\advance\v@lX\v@lmin\advance\v@lY\v@lmax}
\ctr@ld@f\def\fcn@deE{%
    \@ecfor\p@int:=\list@num\do{\d@butn@de\v@lX=\unssqrttw@\v@lX\v@lY=\unssqrttw@\v@lY%
    \ifdim\thickn@ss\p@>\z@
    \v@lXa=\v@lX\advance\v@lXa\delt@\v@lXa=\ptT@unit@\v@lXa\edef\XR@d{\repdecn@mb\v@lXa}%
    \v@lYa=\v@lY\advance\v@lYa\delt@\v@lYa=\ptT@unit@\v@lYa\edef\YR@d{\repdecn@mb\v@lYa}%
    \arct@n\v@leur(\v@lXa,\v@lYa)\v@leur=\rdT@deg\v@leur\edef\@nglde{\repdecn@mb\v@leur}%
    {\c@lptellDD-2::\p@int;\XR@d,\YR@d(\@nglde)}
    \advance\v@leur-\PI@deg\edef\@nglun{\repdecn@mb\v@leur}%
    {\c@lptellDD-3::\p@int;\XR@d,\YR@d(\@nglun)}%
    \figptstra-6=-3,-2,\p@int/\thickn@ss,-8/\Q@s@tfillmode{yes}%
    \Pss@tspecifSt{color=\DDV@thickcolor}%
    \figdrawline[-2,-3,-6,-5]\figdrawarcell-4;\XR@d,\YR@d(\@nglun,\@nglde,0)%
    \Psrest@reSt{color=\DDV@thickcolor}\fi
    \v@lX=\ptT@unit@\v@lX\v@lY=\ptT@unit@\v@lY%
    \edef\XR@d{\repdecn@mb\v@lX}\edef\YR@d{\repdecn@mb\v@lY}%
    \Q@s@tfillmode{yes}\Pss@tspecifSt{color=\fcbgc@lor}%
    \figdrawarcell\p@int;\XR@d,\YR@d(0,360,0)%
    \Q@s@tfillmode{no}\Psrest@reSt{color=\fcbgc@lor}\figdrawarcell\p@int;\XR@d,\YR@d(0,360,0)}}
\ctr@ld@f\def\fcn@deL{\delt@=\ptT@unit@\delt@\edef\t@ille{\repdecn@mb\delt@}%
    \@ecfor\p@int:=\list@num\do{\Figg@tXYa{\p@int}\d@butn@de%
    \ifdim\v@lX>\v@lY\itis@Ktrue\else\itis@Kfalse\fi%
    \advance\v@lXa-\v@lX\Figp@intreg-1:(\v@lXa,\v@lYa)%
    \advance\v@lXa\v@lX\advance\v@lYa-\v@lY\Figp@intreg-2:(\v@lXa,\v@lYa)%
    \advance\v@lXa\v@lX\advance\v@lYa\v@lY\Figp@intreg-3:(\v@lXa,\v@lYa)%
    \advance\v@lXa-\v@lX\advance\v@lYa\v@lY\Figp@intreg-4:(\v@lXa,\v@lYa)%
    \ifdim\thickn@ss\p@>\z@
    \Figg@tXYa{\p@int}\Q@s@tfillmode{yes}\Pss@tspecifSt{color=\DDV@thickcolor}%
    \c@lpt@xt{-1}{-4}\c@lpt@xt@\v@lXa\v@lYa\v@lX\v@lY\c@rre\delt@%
    \Figp@intregDD-9:(\v@lZ,\v@lYa)\Figp@intregDD-11:(\v@lZa,\v@lYa)%
    \c@lpt@xt{-4}{-3}\c@lpt@xt@\v@lYa\v@lXa\v@lY\v@lX\delt@\c@rre%
    \Figp@intregDD-12:(\v@lXa,\v@lZ)\Figp@intregDD-10:(\v@lXa,\v@lZa)%
    \ifitis@K\figptstra-7=-9,-10,-11/\thickn@ss,-8/\figdrawline[-9,-11,-5,-6,-7]\else%
    \figptstra-7=-10,-11,-12/\thickn@ss,-8/\figdrawline[-10,-12,-5,-6,-7]\fi%
    \Psrest@reSt{color=\DDV@thickcolor}\fi
    \Q@s@tfillmode{yes}\Pss@tspecifSt{color=\fcbgc@lor}\figdrawline[-1,-2,-3,-4]%
    \Q@s@tfillmode{no}\Psrest@reSt{color=\fcbgc@lor}\figdrawline[-1,-2,-3,-4,-1]}}
\ctr@ld@f\def\c@lpt@xt#1#2{\figvectN-7[#1,#2]\vecunit@{-7}{-7}\figpttra-5:=#1/\t@ille,-7/%
    \figvectP-7[#1,#2]\Figg@tXY{-7}\c@rre=\v@lX\delt@=\v@lY\Figg@tXY{-5}}
\ctr@ld@f\def\c@lpt@xt@#1#2#3#4#5#6{\v@lZ=#6\invers@{\v@lZ}{\v@lZ}\v@leur=\repdecn@mb{#5}\v@lZ%
    \v@lZ=#2\advance\v@lZ-#4\mili@u=\repdecn@mb{\v@leur}\v@lZ%
    \v@lZ=#3\advance\v@lZ\mili@u\v@lZa=-\v@lZ\advance\v@lZa\tw@#1}
\ctr@ld@f\def\fcn@deR{\@ecfor\p@int:=\list@num\do{\Figg@tXYa{\p@int}\d@butn@de%
    \advance\v@lXa-0.5\v@lX\advance\v@lYa-0.5\v@lY\Figp@intreg-1:(\v@lXa,\v@lYa)%
    \advance\v@lXa\v@lX\Figp@intreg-2:(\v@lXa,\v@lYa)%
    \advance\v@lYa\v@lY\Figp@intreg-3:(\v@lXa,\v@lYa)%
    \advance\v@lXa-\v@lX\Figp@intreg-4:(\v@lXa,\v@lYa)%
    \ifdim\thickn@ss\p@>\z@
    \Q@s@tfillmode{yes}\Pss@tspecifSt{color=\DDV@thickcolor}%
    \Figv@ctCreg-5(-\delt@,-\delt@)\figpttra-9:=-1/1,-5/%
    \Figv@ctCreg-5(\delt@,-\delt@)\figpttra-10:=-2/1,-5/%
    \Figv@ctCreg-5(\delt@,\delt@)\figpttra-11:=-3/1,-5/%
    \figptstra-7=-9,-10,-11/\thickn@ss,-8/\figdrawline[-9,-11,-5,-6,-7]%
    \Psrest@reSt{color=\DDV@thickcolor}\fi
    \Q@s@tfillmode{yes}\Pss@tspecifSt{color=\fcbgc@lor}\figdrawline[-1,-2,-3,-4]%
    \Q@s@tfillmode{no}\Psrest@reSt{color=\fcbgc@lor}\figdrawline[-1,-2,-3,-4,-1]}}
\ctr@ld@f\def\Pssetfl@wchart#1=#2|{\keln@mtr#1|%
    \def\n@mref{arr}\ifx\l@debut\n@mref\expandafter\keln@mtr\l@suite|%
     \def\n@mref{owp}\ifx\l@debut\n@mref\update@ttr\D@FTfcarrowposition\P@setfcarrowposition{#2}\else
     \def\n@mref{owr}\ifx\l@debut\n@mref\update@ttr\D@FTfcarrowrefpt\P@setfcarrowrefpt{#2}\else
     \W@rnmesAttr{figset flowchart}{#1}\fi\fi\else%
    \def\n@mref{bgc}\ifx\l@debut\n@mref\update@ttr\D@FTfcbgcolor\P@setfcbgcolor{#2}\else
    \def\n@mref{lin}\ifx\l@debut\n@mref\update@ttr\D@FTfcline\P@setfcline{#2}\else
    \def\n@mref{pad}\ifx\l@debut\n@mref\update@ttr\D@FTfcxpadding\P@setfcxpadding{#2}%
                                       \update@ttr\D@FTfcypadding\P@setfcypadding{#2}\else
    \def\n@mref{rad}\ifx\l@debut\n@mref\update@ttr\D@FTfcradius\P@setfcradius{#2}\else
    \def\n@mref{sha}\ifx\l@debut\n@mref\update@ttr\D@FTfcshape\P@setfcshape{#2}\else
    \def\n@mref{thi}\ifx\l@debut\n@mref\expandafter\keln@mtr\l@suite|%
     \def\n@mref{ckc}\ifx\l@debut\n@mref\update@ttr\D@FTref\P@setfcthickcolor{#2}\else
     \def\n@mref{ckn}\ifx\l@debut\n@mref\update@ttr\D@FTfcthickness\P@setfcthickness{#2}\else
     \W@rnmesAttr{figset flowchart}{#1}\fi\fi\else%
    \def\n@mref{xpa}\ifx\l@debut\n@mref\update@ttr\D@FTfcxpadding\P@setfcxpadding{#2}\else
    \def\n@mref{ypa}\ifx\l@debut\n@mref\update@ttr\D@FTfcypadding\P@setfcypadding{#2}\else
    \W@rnmesAttr{figset flowchart}{#1}\fi\fi\fi\fi\fi\fi\fi\fi\fi}
\ctr@ln@m\@rrowp@s
\ctr@ld@f\def\P@setfcarrowposition#1{\edef\@rrowp@s{#1}}
\ctr@ln@m\@rrowr@fpt
\ctr@ld@f\def\P@setfcarrowrefpt#1{\setfcr@fpt#1|}
\ctr@ld@f\def\setfcr@fpt#1#2|{\if#1e\def\@rrowr@fpt{1}\else\def\@rrowr@fpt{0}\fi}
\ctr@ln@m\fcbgc@lor
\ctr@ld@f\def\P@setfcbgcolor#1{\edef\fcbgc@lor{#1}}
\ctr@ln@m\fclin@typ@
\ctr@ld@f\def\P@setfcline#1{\setfccurv@#1|}
\ctr@ld@f\def\setfccurv@#1#2|{\if#1c\def\fclin@typ@{0}\else\def\fclin@typ@{1}\fi}
\ctr@ln@m\fclin@r@d
\ctr@ld@f\def\P@setfcradius#1{\edef\fclin@r@d{#1}}
\ctr@ln@m\fcn@de \ctr@ln@m\fcsh@pe
\ctr@ln@m\h@rdfcXp@dd \ctr@ln@m\h@rdfcYp@dd
\ctr@ld@f\def\P@setfcshape#1{\setfcshap@#1|}
\ctr@ld@f\def\setfcshap@#1#2|{%
    \if#1e\let\fcn@de=\fcn@deE\def\h@rdfcXp@dd{4pt}\def\h@rdfcYp@dd{4pt}%
     \edef\fcsh@pe{ellipse}\else%
    \if#1l\let\fcn@de=\fcn@deL\def\h@rdfcXp@dd{4pt}\def\h@rdfcYp@dd{4pt}%
     \edef\fcsh@pe{lozenge}\else%
          \let\fcn@de=\fcn@deR\def\h@rdfcXp@dd{6pt}\def\h@rdfcYp@dd{6pt}%
     \edef\fcsh@pe{rectangle}\fi\fi}
\ctr@ln@m\DDV@thickcolor
\ctr@ld@f\def\P@setfcthickcolor#1{\edef\DDV@thickcolor{#1}}
\ctr@ln@m\thickn@ss
\ctr@ld@f\def\P@setfcthickness#1{\edef\thickn@ss{#1}}
\ctr@ln@m\Xp@dd
\ctr@ld@f\def\P@setfcxpadding#1{\edef\Xp@dd{#1}}
\ctr@ln@m\Yp@dd
\ctr@ld@f\def\P@setfcypadding#1{\edef\Yp@dd{#1}}
\ctr@ld@f\def\figdrawline[#1]{{\ifCUR@PS\ifGR@cri\PSc@mment{line Points=#1}%
    \let\figdrawlign@=\Pslign@P\Pslin@{#1}\PSc@mment{End line}\fi\fi}}
\ctr@ld@f\def\figdrawlineF#1{{\ifCUR@PS\ifGR@cri\PSc@mment{lineF Filename=#1}%
    \let\figdrawlign@=\Pslign@F\Pslin@{#1}\PSc@mment{End lineF}\fi\fi}}
\ctr@ld@f\def\figdrawlineC(#1){{\ifCUR@PS\ifGR@cri\PSc@mment{lineC}%
    \let\figdrawlign@=\Pslign@C\Pslin@{#1}\PSc@mment{End lineC}\fi\fi}}
\ctr@ld@f\def\Pslin@#1{\iffillm@de\figdrawlign@{#1}%
    \f@gfill%
    \else\figdrawlign@{#1}\ifx\derp@int\premp@int%
    \f@gclosestroke%
    \else\f@gstroke\fi\fi}
\ctr@ld@f\def\Pslign@P#1{\def\list@num{#1}\extrairelepremi@r\p@int\de\list@num%
    \edef\premp@int{\p@int}\f@gnewpath%
    \PSwrit@cmd{\p@int}{\c@mmoveto}{\fwf@g}%
    \@ecfor\p@int:=\list@num\do{\PSwrit@cmd{\p@int}{\c@mlineto}{\fwf@g}%
    \edef\derp@int{\p@int}}}
\ctr@ld@f\def\Pslign@F#1{\s@uvc@ntr@l\et@tPslign@F\setc@ntr@l{2}\openin\frf@g=#1\relax%
    \ifeof\frf@g\message{*** File #1 not found !}\end\else%
    \read\frf@g to\tr@c\edef\premp@int{\tr@c}\expandafter\extr@ctCF\tr@c:%
    \f@gnewpath\PSwrit@cmd{-1}{\c@mmoveto}{\fwf@g}%
    \loop\read\frf@g to\tr@c\ifeof\frf@g\mored@tafalse\else\mored@tatrue\fi%
    \ifmored@ta\expandafter\extr@ctCF\tr@c:\PSwrit@cmd{-1}{\c@mlineto}{\fwf@g}%
    \edef\derp@int{\tr@c}\repeat\fi\closein\frf@g\resetc@ntr@l\et@tPslign@F}
\ctr@ln@m\extr@ctCF
\ctr@ld@f\def\extr@ctCFDD#1 #2:{\v@lX=#1\unit@\v@lY=#2\unit@\Figp@intregDD-1:(\v@lX,\v@lY)}
\ctr@ld@f\def\extr@ctCFTD#1 #2 #3:{\v@lX=#1\unit@\v@lY=#2\unit@\v@lZ=#3\unit@%
    \Figp@intregTD-1:(\v@lX,\v@lY,\v@lZ)}
\ctr@ld@f\def\Pslign@C#1{\s@uvc@ntr@l\et@tPslign@C\setc@ntr@l{2}%
    \def\list@num{#1}\extrairelepremi@r\p@int\de\list@num%
    \edef\premp@int{\p@int}\f@gnewpath%
    \expandafter\Pslign@C@\p@int:\PSwrit@cmd{-1}{\c@mmoveto}{\fwf@g}%
    \@ecfor\p@int:=\list@num\do{\expandafter\Pslign@C@\p@int:%
    \PSwrit@cmd{-1}{\c@mlineto}{\fwf@g}\edef\derp@int{\p@int}}%
    \resetc@ntr@l\et@tPslign@C}
\ctr@ld@f\def\Pslign@C@#1 #2:{{\def\t@xt@{#1}\ifx\t@xt@\empty\Pslign@C@#2:
    \else\extr@ctCF#1 #2:\fi}}
\ctr@ld@f\def\Pssetm@sh#1=#2|{\keln@mde#1|%
    \def\n@mref{co}\ifx\l@debut\n@mref\update@ttr\D@FTref\P@setmeshcolor{#2}\else
    \def\n@mref{da}\ifx\l@debut\n@mref\update@ttr\D@FTref\P@setmeshdash{#2}\else
    \def\n@mref{di}\ifx\l@debut\n@mref\update@ttr\D@FTmeshdiag\Q@s@tmeshdiag{#2}\else
    \def\n@mref{wi}\ifx\l@debut\n@mref\update@ttr\D@FTref\P@setmeshwidth{#2}\else
    \W@rnmesAttr{figset mesh}{#1}\fi\fi\fi\fi}
\ctr@ln@m\c@ntrolmesh
\ctr@ld@f\def\Q@s@tmeshdiag#1{\edef\c@ntrolmesh{#1}}
\ctr@ld@f\def\D@FTmeshdiag{0}    
\ctr@ln@m\DDV@meshcolor
\ctr@ld@f\def\P@setmeshcolor#1{\edef\DDV@meshcolor{#1}}
\ctr@ln@m\DDV@meshdash
\ctr@ld@f\def\P@setmeshdash#1{\edef\DDV@meshdash{#1}}
\ctr@ln@m\DDV@meshwidth
\ctr@ld@f\def\P@setmeshwidth#1{\edef\DDV@meshwidth{#1}}
\ctr@ld@f\def\figdrawmesh#1,#2[#3,#4,#5,#6]{{\ifCUR@PS\ifGR@cri%
    \PSc@mment{mesh N1=#1, N2=#2, Quadrangle=[#3,#4,#5,#6]}\s@uvc@ntr@l\et@tpsmesh%
    \Pss@tspecifSt{color=\DDV@meshcolor,dash=\DDV@meshdash,width=\DDV@meshwidth}%
    \setc@ntr@l{2}%
    \ifnum#1>\@ne\Psmeshp@rt#1[#3,#4,#5,#6]\fi%
    \ifnum#2>\@ne\Psmeshp@rt#2[#4,#5,#6,#3]\fi%
    \ifnum\c@ntrolmesh>\z@\Psmeshdi@g#1,#2[#3,#4,#5,#6]\fi%
    \ifnum\c@ntrolmesh<\z@\Psmeshdi@g#2,#1[#4,#5,#6,#3]\fi%
    \Psrest@reSt{color=\DDV@meshcolor,dash=\DDV@meshdash,width=\DDV@meshwidth}%
    \figdrawline[#3,#4,#5,#6,#3]\PSc@mment{End mesh}\resetc@ntr@l\et@tpsmesh\fi\fi}}
\ctr@ld@f\def\Psmeshp@rt#1[#2,#3,#4,#5]{{\l@mbd@un=\@ne\l@mbd@de=#1\loop%
    \ifnum\l@mbd@un<#1\advance\l@mbd@de\m@ne\figptbary-1:[#2,#3;\l@mbd@de,\l@mbd@un]%
    \figptbary-2:[#5,#4;\l@mbd@de,\l@mbd@un]\figdrawline[-1,-2]\advance\l@mbd@un\@ne\repeat}}
\ctr@ld@f\def\Psmeshdi@g#1,#2[#3,#4,#5,#6]{\figptcopy-2:/#3/\figptcopy-3:/#6/%
    \l@mbd@un=\z@\l@mbd@de=#1\loop\ifnum\l@mbd@un<#1%
    \advance\l@mbd@un\@ne\advance\l@mbd@de\m@ne\figptcopy-1:/-2/\figptcopy-4:/-3/%
    \figptbary-2:[#3,#4;\l@mbd@de,\l@mbd@un]%
    \figptbary-3:[#6,#5;\l@mbd@de,\l@mbd@un]\Psmeshdi@gp@rt#2[-1,-2,-3,-4]\repeat}
\ctr@ld@f\def\Psmeshdi@gp@rt#1[#2,#3,#4,#5]{{\l@mbd@un=\z@\l@mbd@de=#1\loop%
    \ifnum\l@mbd@un<#1\figptbary-5:[#2,#5;\l@mbd@de,\l@mbd@un]%
    \advance\l@mbd@de\m@ne\advance\l@mbd@un\@ne%
    \figptbary-6:[#3,#4;\l@mbd@de,\l@mbd@un]\figdrawline[-5,-6]\repeat}}
\ctr@ln@m\figdrawnormal
\ctr@ld@f\def\Q@normalDD#1,#2[#3,#4]{{\ifCUR@PS\ifGR@cri%
    \PSc@mment{normal Length=#1, Lambda=#2 [Pt1,Pt2]=[#3,#4]}%
    \s@uvc@ntr@l\et@tpsnormal\resetc@ntr@l{2}\figptendnormal-6::#1,#2[#3,#4]%
    \figptcopyDD-5:/-1/\figdrawarrow[-5,-6]%
    \PSc@mment{End normal}\resetc@ntr@l\et@tpsnormal\fi\fi}}
\ctr@ld@f\def\figreset#1{\trtlis@rg{#1}{\Psreset@}}
\ctr@ld@f\def\Psreset@#1|{\def\t@xt@{#1}\ifx\t@xt@\empty\P@resetg@n
    \else\keln@mde#1|%
    \def\n@mref{al}\ifx\l@debut\n@mref%
        \figset altitude(blcolor=default,bldash=default,blwidth=default,%
        sqcolor=default,sqdash=default,sqwidth=default)\else
    \def\n@mref{ar}\ifx\l@debut\n@mref\figresetarrowhead\else
    \def\n@mref{cu}\ifx\l@debut\n@mref\figset curve(roundness=\D@FTroundness)\else
    \def\n@mref{ge}\ifx\l@debut\n@mref\P@resetg@n\else
    \def\n@mref{fl}\ifx\l@debut\n@mref%
        \figset flowchart(arrowp=\D@FTfcarrowposition,arrowr=\D@FTfcarrowrefpt,%
	bgcolor=\D@FTfcbgcolor,line=\D@FTfcline,radius=\D@FTfcradius,%
	shape=\D@FTfcshape,thickcolor=default,thickness=\D@FTfcthickness,%
	xpadd=\D@FTfcxpadding,ypadd=\D@FTfcypadding)\else
    \def\n@mref{me}\ifx\l@debut\n@mref\figset mesh(diag=\D@FTmeshdiag,%
        color=default,dash=default,width=default)\else
    \def\n@mref{tr}\ifx\l@debut\n@mref%
        \figset trimesh(color=default,dash=default,width=default)\else
    \W@rnmeskwd{figreset}{#1}\fi\fi\fi\fi\fi\fi\fi\fi}
\ctr@ld@f\def\P@resetg@n{\figset (color=\D@FTcolor,dash=\D@FTdash,fill=\D@FTfill,%
    join=\D@FTjoin,width=\D@FTwidth)}
\ctr@ld@f\def\figset#1(#2){\def\t@xt@{#1}\ifx\t@xt@\empty\trtlis@rg{#2}{\Pssetg@n}
    \else\keln@mde#1|%
    \def\n@mref{al}\ifx\l@debut\n@mref\trtlis@rg{#2}{\Psset@lti}\else
    \def\n@mref{ar}\ifx\l@debut\n@mref\trtlis@rg{#2}{\Psset@rrowhe@d}\else
    \def\n@mref{cu}\ifx\l@debut\n@mref\trtlis@rg{#2}{\Pssetc@rve}\else
    \def\n@mref{fl}\ifx\l@debut\n@mref\trtlis@rg{#2}{\Pssetfl@wchart}\else
    \def\n@mref{ge}\ifx\l@debut\n@mref\trtlis@rg{#2}{\Pssetg@n}\else
    \def\n@mref{me}\ifx\l@debut\n@mref\trtlis@rg{#2}{\Pssetm@sh}\else
    \def\n@mref{pr}\ifx\l@debut\n@mref\ifCUR@PS\W@rnmesIgn{figset proj(...)}%
     \else\trtlis@rg{#2}{\Figsetpr@j}\fi\else
    \def\n@mref{tr}\ifx\l@debut\n@mref\trtlis@rg{#2}{\Pssettrim@sh}\else
    \def\n@mref{wr}\ifx\l@debut\n@mref\let\M@cro=\Figsetwr@te\trtlis@rgtok{#2,|}\else
    \W@rnmeskwd{figset}{#1}\fi\fi\fi\fi\fi\fi\fi\fi\fi\fi\ignorespaces}
\ctr@ld@f\def\figsetdefault#1(#2){\ifCUR@PS\W@rnmesIgn{figsetdefault}\else%
    \def\t@xt@{#1}\ifx\t@xt@\empty\trtlis@rg{#2}{\Pssd@g@n}\else\keln@mun#1|
    \def\n@mref{a}\ifx\l@debut\n@mref\trtlis@rg{#2}{\Pssd@@rrowhe@d}\else
    \def\n@mref{c}\ifx\l@debut\n@mref\trtlis@rg{#2}{\Pssd@c@rve}\else
    \def\n@mref{g}\ifx\l@debut\n@mref\trtlis@rg{#2}{\Pssd@g@n}\else
    \def\n@mref{f}\ifx\l@debut\n@mref\trtlis@rg{#2}{\Pssd@fl@wchart}\else
    \def\n@mref{m}\ifx\l@debut\n@mref\trtlis@rg{#2}{\Pssd@m@sh}\else
    \W@rnmeskwd{figsetdefault}{#1}\fi\fi\fi\fi\fi\fi\initpss@ttings\fi}
\ctr@ld@f\def\Pssd@g@n#1=#2|{\keln@mun#1|%
    \def\n@mref{c}\ifx\l@debut\n@mref\edef\D@FTcolor{#2}\else
    \def\n@mref{d}\ifx\l@debut\n@mref\edef\D@FTdash{#2}\else
    \def\n@mref{f}\ifx\l@debut\n@mref\edef\D@FTfill{#2}\else
    \def\n@mref{j}\ifx\l@debut\n@mref\edef\D@FTjoin{#2}\else
    \def\n@mref{u}\ifx\l@debut\n@mref\edef\D@FTupdate{#2}\Q@s@tupdate{#2}\else
    \def\n@mref{w}\ifx\l@debut\n@mref\edef\D@FTwidth{#2}\else
    \W@rnmesAttr{figsetdefault}{#1}\fi\fi\fi\fi\fi\fi}
\ctr@ld@f\def\Pssd@@rrowhe@d#1=#2|{\keln@mun#1|%
    \def\n@mref{a}\ifx\l@debut\n@mref\edef\D@FTarrowheadangle{#2}\else
    \def\n@mref{f}\ifx\l@debut\n@mref\edef\D@FTarrowheadfill{#2}\else
    \def\n@mref{l}\ifx\l@debut\n@mref\y@tiunit{#2}\ifunitpr@sent%
     \edef\D@FTh@rdahlength{#2}\else\edef\D@FTh@rdahlength{#2pt}%
     \message{*** \BS@ figsetdefault (..., #1=#2, ...) : unit is missing, pt is assumed.}%
     \fi\else
    \def\n@mref{o}\ifx\l@debut\n@mref\edef\D@FTarrowheadout{#2}\else
    \def\n@mref{r}\ifx\l@debut\n@mref\edef\D@FTarrowheadratio{#2}\else
    \W@rnmesAttr{figsetdefault arrowhead}{#1}\fi\fi\fi\fi\fi}
\ctr@ld@f\def\Pssd@c@rve#1=#2|{\keln@mun#1|%
    \def\n@mref{r}\ifx\l@debut\n@mref\edef\D@FTroundness{#2}\else%
    \W@rnmesAttr{figsetdefault curve}{#1}\fi}
\ctr@ld@f\def\Pssd@fl@wchart#1=#2|{\keln@mtr#1|%
    \def\n@mref{arr}\ifx\l@debut\n@mref\expandafter\keln@mtr\l@suite|%
     \def\n@mref{owp}\ifx\l@debut\n@mref\edef\D@FTfcarrowposition{#2}\else
     \def\n@mref{owr}\ifx\l@debut\n@mref\edef\D@FTfcarrowrefpt{#2}\else
                     \W@rnmesAttr{figsetdefault flowchart}{#1}\fi\fi\else%
    \def\n@mref{bgc}\ifx\l@debut\n@mref\edef\D@FTfcbgcolor{#2}\else
    \def\n@mref{lin}\ifx\l@debut\n@mref\edef\D@FTfcline{#2}\else
    \def\n@mref{pad}\ifx\l@debut\n@mref\edef\D@FTfcxpadding{#2}%
                    \edef\D@FTfcypadding{#2}\else
    \def\n@mref{rad}\ifx\l@debut\n@mref\edef\D@FTfcradius{#2}\else
    \def\n@mref{sha}\ifx\l@debut\n@mref\edef\D@FTfcshape{#2}\else
    \def\n@mref{thi}\ifx\l@debut\n@mref\expandafter\keln@mtr\l@suite|%
     \def\n@mref{ckn}\ifx\l@debut\n@mref\edef\D@FTfcthickness{#2}\else
                     \W@rnmesAttr{figsetdefault flowchart}{#1}\fi\else%
    \def\n@mref{xpa}\ifx\l@debut\n@mref\edef\D@FTfcxpadding{#2}\else
    \def\n@mref{ypa}\ifx\l@debut\n@mref\edef\D@FTfcypadding{#2}\else
    \W@rnmesAttr{figsetdefault flowchart}{#1}\fi\fi\fi\fi\fi\fi\fi\fi\fi}
\ctr@ld@f\def\D@FTfcarrowposition{0.5}
\ctr@ld@f\def\D@FTfcarrowrefpt{start}
\ctr@ld@f\def\D@FTfcbgcolor{1}
\ctr@ld@f\def\D@FTfcline{polygon}
\ctr@ld@f\def\D@FTfcradius{0}
\ctr@ld@f\def\D@FTfcshape{rectangle}
\ctr@ld@f\def\D@FTfcthickness{0}
\ctr@ld@f\def\D@FTfcxpadding{0}
\ctr@ld@f\def\D@FTfcypadding{0}
\ctr@ld@f\def\Pssd@m@sh#1=#2|{\keln@mun#1|%
    \def\n@mref{d}\ifx\l@debut\n@mref\edef\D@FTmeshdiag{#2}\else%
    \W@rnmesAttr{figsetdefault mesh}{#1}\fi}
\ctr@ln@w{newif}\iffillm@de
\ctr@ld@f\def\Q@s@tfillmode#1{\expandafter\setfillm@de#1:}
\ctr@ld@f\def\setfillm@de#1#2:{\if#1n\fillm@defalse\else\fillm@detrue\fi}
\ctr@ld@f\def\D@FTfill{no}     
\ctr@ln@w{newif}\ifGRupdatem@de
\ctr@ld@f\def\Q@s@tupdate#1{\ifCUR@PS\W@rnmesIgn{figset (update=...)}%
    \else\expandafter\setupd@te#1:\fi}
\ctr@ld@f\def\setupd@te#1#2:{\if#1n\GRupdatem@defalse\else\GRupdatem@detrue\fi}
\ctr@ld@f\def\D@FTupdate{no}     
\ctr@ln@m\CUR@color \ctr@ln@m\CUR@colorc@md
\ctr@ld@f\def\s@uvcolor#1{\edef#1{\CUR@color}}
\ctr@ld@f\def\D@FTcolor{0}       
\ctr@ld@f\def\Pssetc@lor#1{\ifGR@cri\result@tent=\@ne\expandafter\c@lnbV@l#1 :%
    \def\CUR@color{}\def\CUR@colorc@md{}%
    \ifcase\result@tent\or\Q@s@tgray{#1}\or\or\Q@s@trgb{#1}\or\Q@s@tcmyk{#1}\fi\fi}
\ctr@ln@m\CUR@colorc@mdStroke
\ctr@ld@f\def\Q@s@tcmyk#1{\ifGR@cri\def\CUR@color{#1}\def\CUR@colorc@md{\c@msetcmykcolor}%
    \def\CUR@colorc@mdStroke{\c@msetcmykcolorStroke}%
    \ifCUR@PS\PSc@mment{setcmyk Color=#1}\us@primarC@lor\fi\fi}
\ctr@ld@f\def\Q@s@trgb#1{\ifGR@cri\def\CUR@color{#1}\def\CUR@colorc@md{\c@msetrgbcolor}%
    \def\CUR@colorc@mdStroke{\c@msetrgbcolorStroke}%
    \ifCUR@PS\PSc@mment{setrgb Color=#1}\us@primarC@lor\fi\fi}
\ctr@ld@f\def\Q@s@tgray#1{\ifGR@cri\def\CUR@color{#1}\def\CUR@colorc@md{\c@msetgray}%
    \def\CUR@colorc@mdStroke{\c@msetgrayStroke}%
    \ifCUR@PS\PSc@mment{setgray Gray level=#1}\us@primarC@lor\fi\fi}
\ctr@ln@m\fillc@md
\ctr@ld@f\def\us@primarC@lor{\immediate\write\fwf@g{\d@fprimarC@lor}%
    \let\fillc@md=\prfillc@md}
\ctr@ld@f\def\prfillc@md{\d@fprimarC@lor\space\c@mfill}
\ctr@ld@f\def\c@lnbV@l#1 #2:{\def\t@xt@{#1}\relax\ifx\t@xt@\empty\c@lnbV@l#2:
    \else\c@lnbV@l@#1 #2:\fi}
\ctr@ld@f\def\c@lnbV@l@#1 #2:{\def\t@xt@{#2}\ifx\t@xt@\empty%
    \def\t@xt@{#1}\ifx\t@xt@\empty\advance\result@tent\m@ne\fi
    \else\advance\result@tent\@ne\c@lnbV@l@#2:\fi}
\ctr@ld@f\def\Blackcmyk{0 0 0 1}
\ctr@ld@f\def\Whitecmyk{0 0 0 0}
\ctr@ld@f\def\Cyancmyk{1 0 0 0}
\ctr@ld@f\def\Magentacmyk{0 1 0 0}
\ctr@ld@f\def\Yellowcmyk{0 0 1 0}
\ctr@ld@f\def\Redcmyk{0 1 1 0}
\ctr@ld@f\def\Greencmyk{1 0 1 0}
\ctr@ld@f\def\Bluecmyk{1 1 0 0}
\ctr@ld@f\def\Graycmyk{0 0 0 0.50}
\ctr@ld@f\def\BrickRedcmyk{0 0.89 0.94 0.28} 
\ctr@ld@f\def\Browncmyk{0 0.81 1 0.60} 
\ctr@ld@f\def\ForestGreencmyk{0.91 0 0.88 0.12} 
\ctr@ld@f\def\Goldenrodcmyk{ 0 0.10 0.84 0} 
\ctr@ld@f\def\Marooncmyk{0 0.87 0.68 0.32} 
\ctr@ld@f\def\Orangecmyk{0 0.61 0.87 0} 
\ctr@ld@f\def\Purplecmyk{0.45 0.86 0 0} 
\ctr@ld@f\def\RoyalBluecmyk{1. 0.50 0 0} 
\ctr@ld@f\def\Violetcmyk{0.79 0.88 0 0} 
\ctr@ld@f\def\Blackrgb{0 0 0}
\ctr@ld@f\def\Whitergb{1 1 1}
\ctr@ld@f\def\Redrgb{1 0 0}
\ctr@ld@f\def\Greenrgb{0 1 0}
\ctr@ld@f\def\Bluergb{0 0 1}
\ctr@ld@f\def\Cyanrgb{0 1 1}
\ctr@ld@f\def\Magentargb{1 0 1}
\ctr@ld@f\def\Yellowrgb{1 1 0}
\ctr@ld@f\def\Grayrgb{0.5 0.5 0.5}
\ctr@ld@f\def\Chocolatergb{0.824 0.412 0.118}
\ctr@ld@f\def\DarkGoldenrodrgb{0.722 0.525 0.043}
\ctr@ld@f\def\DarkOrangergb{1 0.549 0}
\ctr@ld@f\def\Firebrickrgb{0.698 0.133 0.133}
\ctr@ld@f\def\ForestGreenrgb{0.133 0.545 0.133}
\ctr@ld@f\def\Goldrgb{1 0.843 0}
\ctr@ld@f\def\HotPinkrgb{1 0.412 0.706}
\ctr@ld@f\def\Maroonrgb{0.690 0.188 0.376}
\ctr@ld@f\def\Pinkrgb{1 0.753 0.796}
\ctr@ld@f\def\RoyalBluergb{0.255 0.412 0.882}
\ctr@ld@f\def\Pssetg@n#1=#2|{\keln@mun#1|%
    \def\n@mref{c}\ifx\l@debut\n@mref\update@ttr\D@FTcolor\Pssetc@lor{#2}\else
    \def\n@mref{d}\ifx\l@debut\n@mref\update@ttr\D@FTdash\Q@s@tdash{#2}\else
    \def\n@mref{f}\ifx\l@debut\n@mref\update@ttr\D@FTfill\Q@s@tfillmode{#2}\else
    \def\n@mref{j}\ifx\l@debut\n@mref\update@ttr\D@FTjoin\Q@s@tjoin{#2}\else
    \def\n@mref{u}\ifx\l@debut\n@mref\update@ttr\D@FTupdate\Q@s@tupdate{#2}\else
    \def\n@mref{w}\ifx\l@debut\n@mref\update@ttr\D@FTwidth\Q@s@twidth{#2}\else
    \W@rnmesAttr{figset}{#1}\fi\fi\fi\fi\fi\fi}
\ctr@ln@m\CUR@dash
\ctr@ld@f\def\s@uvdash#1{\edef#1{\CUR@dash}}
\ctr@ld@f\def\D@FTdash{1}        
\ctr@ld@f\def\Q@s@tdash#1{\ifGR@cri\edef\CUR@dash{#1}\ifCUR@PS\expandafter\Pssetd@sh#1 :\fi\fi}
\ctr@ld@f\def\Pssetd@shI#1{\PSc@mment{setdash Index=#1}\ifcase#1%
    \or\immediate\write\fwf@g{[] 0 \c@msetdash}
    \or\immediate\write\fwf@g{[6 2] 0 \c@msetdash}
    \or\immediate\write\fwf@g{[4 2] 0 \c@msetdash}
    \or\immediate\write\fwf@g{[2 2] 0 \c@msetdash}
    \or\immediate\write\fwf@g{[1 2] 0 \c@msetdash}
    \or\immediate\write\fwf@g{[2 4] 0 \c@msetdash}
    \or\immediate\write\fwf@g{[3 5] 0 \c@msetdash}
    \or\immediate\write\fwf@g{[3 3] 0 \c@msetdash}
    \or\immediate\write\fwf@g{[3 5 1 5] 0 \c@msetdash}
    \or\immediate\write\fwf@g{[6 4 2 4] 0 \c@msetdash}
    \fi}
\ctr@ld@f\def\Pssetd@sh#1 #2:{{\def\t@xt@{#1}\ifx\t@xt@\empty\Pssetd@sh#2:
    \else\def\t@xt@{#2}\ifx\t@xt@\empty\Pssetd@shI{#1}\else\s@mme=\@ne\def\debutp@t{#1}%
    \an@lysd@sh#2:\ifodd\s@mme\edef\debutp@t{\debutp@t\space\finp@t}\def\finp@t{0}\fi%
    \PSc@mment{setdash Pattern=#1 #2}%
    \immediate\write\fwf@g{[\debutp@t] \finp@t\space\c@msetdash}\fi\fi}}
\ctr@ld@f\def\an@lysd@sh#1 #2:{\def\t@xt@{#2}\ifx\t@xt@\empty\def\finp@t{#1}\else%
    \edef\debutp@t{\debutp@t\space#1}\advance\s@mme\@ne\an@lysd@sh#2:\fi}
\ctr@ln@m\CUR@width
\ctr@ld@f\def\s@uvwidth#1{\edef#1{\CUR@width}}
\ctr@ld@f\def\D@FTwidth{0.4}     
\ctr@ld@f\def\Q@s@twidth#1{\ifGR@cri\edef\CUR@width{#1}\ifCUR@PS%
    \PSc@mment{setwidth Width=#1}\immediate\write\fwf@g{#1 \c@msetlinewidth}\fi\fi}
\ctr@ln@m\CUR@join
\ctr@ld@f\def\s@uvjoin#1{\edef#1{\CUR@join}}
\ctr@ld@f\def\D@FTjoin{miter}   
\ctr@ld@f\def\Q@s@tjoin#1{\ifGR@cri\edef\CUR@join{#1}\ifCUR@PS\expandafter\Pssetj@in#1:\fi\fi}
\ctr@ld@f\def\Pssetj@in#1#2:{\PSc@mment{setjoin join=#1}%
    \if#1r\def\t@xt@{1}\else\if#1b\def\t@xt@{2}\else\def\t@xt@{0}\fi\fi%
    \immediate\write\fwf@g{\t@xt@\space\c@msetlinejoin}}
\ctr@ld@f\def\Pss@tspecifSt#1{\trtlis@rg{#1}{\Pss@tspecifSt@}}
\ctr@ld@f\def\Pss@tspecifSt@#1=#2|{\keln@mun#1|%
    \def\n@mref{c}\ifx\l@debut\n@mref\def\n@mref{#2}\ifx\n@mref\D@FTref\else%
     \s@uvcolor{\typ@color}\Pssetc@lor{#2}\fi\else
    \def\n@mref{d}\ifx\l@debut\n@mref\def\n@mref{#2}\ifx\n@mref\D@FTref\else%
     \s@uvdash{\typ@dash}\Q@s@tdash{#2}\fi\else
    \def\n@mref{j}\ifx\l@debut\n@mref\def\n@mref{#2}\ifx\n@mref\D@FTref\else%
     \s@uvjoin{\typ@join}\Q@s@tjoin{#2}\fi\else
    \def\n@mref{w}\ifx\l@debut\n@mref\def\n@mref{#2}\ifx\n@mref\D@FTref\else%
     \s@uvwidth{\typ@width}\Q@s@twidth{#2}\fi\else
    \W@rnmeskwd{Pss@tspecifSt}{#1}\fi\fi\fi\fi}
\ctr@ld@f\def\Psrest@reSt#1{\trtlis@rg{#1}{\Psrest@reSt@}}
\ctr@ld@f\def\Psrest@reSt@#1=#2|{\keln@mun#1|%
    \def\n@mref{c}\ifx\l@debut\n@mref\def\n@mref{#2}\ifx\n@mref\D@FTref\else%
     \Pssetc@lor{\typ@color}\fi\else
    \def\n@mref{d}\ifx\l@debut\n@mref\def\n@mref{#2}\ifx\n@mref\D@FTref\else%
     \Q@s@tdash{\typ@dash}\fi\else
    \def\n@mref{j}\ifx\l@debut\n@mref\def\n@mref{#2}\ifx\n@mref\D@FTref\else%
     \Q@s@tjoin{\typ@join}\fi\else
    \def\n@mref{w}\ifx\l@debut\n@mref\def\n@mref{#2}\ifx\n@mref\D@FTref\else%
     \Q@s@twidth{\typ@width}\fi\else
    \W@rnmeskwd{Psrest@reSt}{#1}\fi\fi\fi\fi}
\ctr@ld@f\def\Pssettrim@sh#1=#2|{\keln@mde#1|%
    \def\n@mref{co}\ifx\l@debut\n@mref\update@ttr\D@FTref\P@settmeshcolor{#2}\else
    \def\n@mref{da}\ifx\l@debut\n@mref\update@ttr\D@FTref\P@settmeshdash{#2}\else
    \def\n@mref{wi}\ifx\l@debut\n@mref\update@ttr\D@FTref\P@settmeshwidth{#2}\else
    \W@rnmesAttr{figset trimesh}{#1}\fi\fi\fi}
\ctr@ln@m\DDV@tmeshcolor
\ctr@ld@f\def\P@settmeshcolor#1{\edef\DDV@tmeshcolor{#1}}
\ctr@ln@m\DDV@tmeshdash
\ctr@ld@f\def\P@settmeshdash#1{\edef\DDV@tmeshdash{#1}}
\ctr@ln@m\DDV@tmeshwidth
\ctr@ld@f\def\P@settmeshwidth#1{\edef\DDV@tmeshwidth{#1}}
\ctr@ld@f\def\figdrawtrimesh#1[#2,#3,#4]{{\ifCUR@PS\ifGR@cri%
    \PSc@mment{trimesh Type=#1, Triangle=[#2,#3,#4]}%
    \s@uvc@ntr@l\et@tpstrimesh\ifnum#1>\@ne%
    \Pss@tspecifSt{color=\DDV@tmeshcolor,dash=\DDV@tmeshdash,width=\DDV@tmeshwidth}%
    \setc@ntr@l{2}%
    \Pstrimeshp@rt#1[#2,#3,#4]\Pstrimeshp@rt#1[#3,#4,#2]\Pstrimeshp@rt#1[#4,#2,#3]%
    \Psrest@reSt{color=\DDV@tmeshcolor,dash=\DDV@tmeshdash,width=\DDV@tmeshwidth}%
    \fi\figdrawline[#2,#3,#4,#2]%
    \PSc@mment{End trimesh}\resetc@ntr@l\et@tpstrimesh\fi\fi}}
\ctr@ld@f\def\Pstrimeshp@rt#1[#2,#3,#4]{{\l@mbd@un=\@ne\l@mbd@de=#1\loop\ifnum\l@mbd@de>\@ne%
    \advance\l@mbd@de\m@ne\figptbary-1:[#2,#3;\l@mbd@de,\l@mbd@un]%
    \figptbary-2:[#2,#4;\l@mbd@de,\l@mbd@un]\figdrawline[-1,-2]%
    \advance\l@mbd@un\@ne\repeat}}
\initpr@lim\initpss@ttings\initPDF@rDVI
\ctr@ln@w{newbox}\figBoxA
\ctr@ln@w{newbox}\figBoxB
\ctr@ln@w{newbox}\figBoxC
\catcode`\@=12

\makeatletter
\renewcommand\theequation{\thesection.\arabic{equation}}
\@addtoreset{equation}{section}

\def\@listI{
    \leftmargin\leftmargini
    \parsep 1.5pt plus 1pt minus 1pt
    \topsep -1.5pt plus 1pt minus 1pt
    \itemsep \parsep}
\let\@listi\@listI
\makeatother

\bibliographystyle{abbrv}
\usepackage{color}
\newcommand{\Bk}{\color{black}}
\newcommand{\Rd}{\color{red}}
\newcommand{\Gn}{\color{green}}
\newcommand{\Bl}{\color{blue}}

\newcommand{\N}{\mathbb{N}}
\newcommand{\R}{\mathbb{R}}
\newcommand{\Z}{\mathbb{Z}}

\newcommand{\cB}{\mathcal{B}}
\newcommand{\cH}{\mathcal{H}}
\newcommand{\cL}{\mathcal{L}}
\newcommand{\cO}{\mathcal{O}}
\newcommand{\cQ}{\mathcal{Q}}

\newcommand{\fq}{\mathfrak{q}}

\newcommand{\sC}{\mathscr{C}}

\newcommand{\dd}{\mathsf{d}}
\newcommand{\G}{\mathsf{G}}
\newcommand{\Gui}{\mathsf{Gui}}
\newcommand{\Lay}{\mathsf{Lay}}
\newcommand{\Dom}{\mathsf{Dom}}
\newcommand{\Tri}{\mathsf{Tri}}

\newcommand{\far}{\mathsf{far}}
\newcommand{\near}{\mathsf{near}}
\newcommand{\trans}{\mathsf{trans}}

\newcommand{\ps}[2]{\left\langle#1,#2\right\rangle}
\newcommand{\vabs}[1]{\left|#1\right|}

\newtheorem{theorem}{Theorem}[section]
\newtheorem{prop}[theorem]{Proposition}

\theoremstyle{definition}
\newtheorem{defin}[theorem]{Definition}
\newtheorem{nota}[theorem]{Notation}

\theoremstyle{remark}

\title[Spectral asymptotics in a conical layer]{Spectral asymptotics of the Dirichlet Laplacian\\ in a conical layer}
\author{Monique Dauge} 
\address{IRMAR, Univ. Rennes 1, CNRS, Campus de Beaulieu, F-35042 Rennes cedex, France}
\email{monique.dauge@univ-rennes1.fr}
\author{Thomas Ourmi\`eres-Bonafos}
\address{BCAM - Basque Center for Applied Mathematics, Alameda de Mazarredo, 14 E48009 Bilbao, Basque Country -  Spain}
\email{tourmieres@bcamath.org}
\author{Nicolas Raymond}
\address{IRMAR, Univ. Rennes 1, CNRS, Campus de Beaulieu, F-35042 Rennes cedex, France}
\email{nicolas.raymond@univ-rennes1.fr}
\date{}

\subjclass[2010]{35J05, 35P15, 47A75}

\begin{document}
\maketitle
\let\thefootnote\relax\footnote{\textit{2010 Mathematics Subject Classification.} 35J05, 35P15, 47A75}

\begin{abstract}
The spectrum of the Dirichlet Laplacian on conical layers is analysed through two aspects: the infiniteness of the discrete eigenvalues and their expansions in the small aperture limit.

On the one hand, we prove that, for any aperture, the eigenvalues accumulate below the threshold of the essential spectrum: For a small distance from the essential spectrum, the number of eigenvalues farther from the threshold than this distance behaves like the logarithm of the distance.

On the other hand, in the small aperture regime, we provide a two-term asymptotics of the first eigenvalues thanks to \textit{a priori} localization estimates for the associated eigenfunctions. We prove that these eigenfunctions are localized in the conical cap at a scale of order the cubic root of the aperture angle and that they get into the other part of the layer at a scale involving the logarithm of the aperture angle.
\end{abstract}

\section{Introduction}

\subsection{Motivations}
In mesoscopic physics semiconductors can be modelled by Schr\"odinger operators on tubes (also called waveguides) or layers carrying the Dirichlet condition on their boundaries.
The existence of bound states for the Dirichlet Laplacian in such structures is an important issue since the discrete spectrum can be seen as a defect in the wave propagation.

First, the question of the discrete spectrum was studied for waveguides \textit{i.e.} infinite tubes (in dimension two or three) which are asymptotically straight. For these tubes, one can prove that the essential spectrum of the Dirichlet Laplacian has the form $[a,+\infty)$ (with $a\in\R_+$). For smooth waveguides, the effect of bending is studied by Exner and \v{S}eba in \cite{ES89} and Goldstone and Jaffe in \cite{GJ92}. In \cite{DE95}, Duclos and Exner prove that bending induces bound states. The same question was addressed in dimension two when there is a corner (which can be seen as an infinite curvature). In \cite{ESS89}, Exner, \v{S}eba and {\v{S}}{\v{t}}ov{\'{\i}}{\v{c}}ek prove that for the $L$-shaped waveguide there is a unique bound state below the threshold of the essential spectrum. Two of the authors, with Lafranche, prove in \cite{DLR12} that for a two dimensional waveguide with corner of arbitrary angle, the bound states are in finite number below the threshold of the essential spectrum. In fact, in \cite{NS14}, Nazarov and Shanin prove that for a large enough angle there is a unique bound state.

Second, the question was addressed for layers \textit{i.e.} infinite regions in $\R^3$ limited by two identical surfaces. For smooth enough layers, Duclos, Exner and Krej{\v{c}}i{\v{r}}{\'{\i}}k in \cite{DEK01} and then Carron, Exner and Krej{\v{c}}i{\v{r}}{\'{\i}}k in \cite{CEK04} prove that bending could induce bound states. Now, as for two dimensional waveguides, one can ask what happens when there is an infinite curvature. In \cite{ET11}, Exner and Tater deal with this question in the particular case of a conical layer, i.e., a layer limited by two infinite coaxial conical surfaces with same openings. 
Again, the essential spectrum of the Dirichlet Laplacian has the form $[a,+\infty)$ (with $a\in\R_+$) and one of the main results of their paper is the infiniteness of the discrete spectrum below the threshold $a$.


The first question we tackle in this paper is to quantify this infinity: As the two dimensional waveguide with corner in \cite{DLR12} is the meridian domain of the conical layer in \cite{ET11}, we wanted to understand how one can pass from a finite number of bound states to an infinite number adding one dimension. In the same spirit it is worth mentioning the paper \cite{BEL14} of Behrndt, Exner and Lotoreichik where the spectrum of the Dirichlet Laplacian with a $\delta$-interaction on a cone is investigated. Roughly, one can claim that this operator is another modelling of the physical phenomenon we are interested in. In this problem, the essential spectrum has the form $[-a,+\infty)$ (with $a\in\R_+$) and there is an infinite number of bound states. For $n\geq1$, they also provide an upper bound for the $n$-th eigenvalue of the problem.

The second question dealt with in the present paper is the study of the eigenvalues and associated eigenfunctions of conical layers in the small aperture regime. This question is reminiscent of the article \cite{DR12} where the small angle regime for waveguides with corner is studied. In the latter paper asymptotic expansions at any order for the eigenvalues and eigenfunctions are provided and we would like to determine if similar expansions can be obtained for the conical layer. In fact, in the spirit of the Born-Oppenheimer approximation, one can at least formally reduce the two dimensional waveguide and the conical layer to electric Schr\"odinger operators in one dimension. The minima of the effective electric potentials determine the behavior of the eigenfunctions. This is a well known fact, for a smooth minimum, that it leads to the study of the so called harmonic approximation (see \cite{CFKS87, DJ99, S83}). However, in \cite{DR12} and in the present paper, the minima of the effective potentials are not smooth. For the conical layer it involves a logarithmic singularity. That is why we will only be able to provide a finite term in the expansions of the eigenvalues. To do so, we will prove \textit{a priori} localization estimates for the eigenfunctions, the so called Agmon estimates (see Agmon \cite{A82,A85} and, in the semiclassical context, Helffer \cite{H88} and Helffer and Sj{\"o}strand \cite{HJ84,HJ85}). They will imply that the eigenfunctions of the conical layers are localized in the conical cap and, unlike the eigenfunctions of the two dimensional waveguides with corner, get into the rest of the layer at a scale involving the logarithm of the aperture.

\subsection{The Dirichlet Laplacian on conical layers}
\label{subsec:dirlapl}
Let us denote by $(x_{1},x_{2},x_{3})$ the Cartesian coordinates of the space $\R^3$ and by $\mathbf{0} = (0,0,0)$ the origin. The positive Laplace operator is given by $-\Delta=-\partial_{1}^2 - \partial_{2}^2 -\partial_{3}^2$. We denote by $(r,\phi,z)\in\R_+\times[0,2\pi)\times\R$ the cylindrical coordinates with axis $x_3$, i.e., such that:
\begin{equation}
   x_1 = r\cos\phi\,,\quad
   x_2=r\sin\phi,\quad\mbox{and}\quad 
   z = x_{3}\in\R.
\label{eqn:cylcoor}
\end{equation}
A conical layer being an axisymmetric domain, it is easier to define it through its {\em meridian domain}. We denote our conical layer by $\Lay(\theta)$ from its half-opening angle $\theta\in(0,\frac\pi2)$. The meridian domain of $\Lay(\theta)$ is denoted by $\Gui(\theta)$ and defined as, see also Figure \ref{F:1},
\begin{equation}
   \Gui(\theta) = \Big\{(r,z)\in\R_+\times\R : \quad
   -\frac\pi{\sin\theta}< z,\quad 
   \max(0,z\tan\theta) < r < z\tan\theta + \frac{\pi}{\cos\theta}\Big\}.
\label{eqn:defmertheta}
\end{equation}
The domains are normalized so that the distance between the two connected components of the boundary of $\Lay(\theta)$ is $\pi$ for any value of $\theta$.

The aim of this paper is to investigate some spectral properties of the Dirichlet Laplacian $-\Delta_{\Lay(\theta)}$ whose domain is denoted by $\Dom(-\Delta_{\Lay(\theta)})$. Note that, according to the value of $\theta$, this domain is a subspace of $H^2(\Lay(\theta))$ (if $\theta>\theta_0$) or not (if $\theta\le\theta_0$), where $\theta_0$ is such that $P_{1/2}(\cos(\pi-\theta_0))=0$, see  \cite[\S\,II.4.c]{BDM99} -- here $P_\nu$ is the Legendre function with degree $\nu$, and $\theta_0\simeq0.2738\pi$.

\begin{figure}[h!]
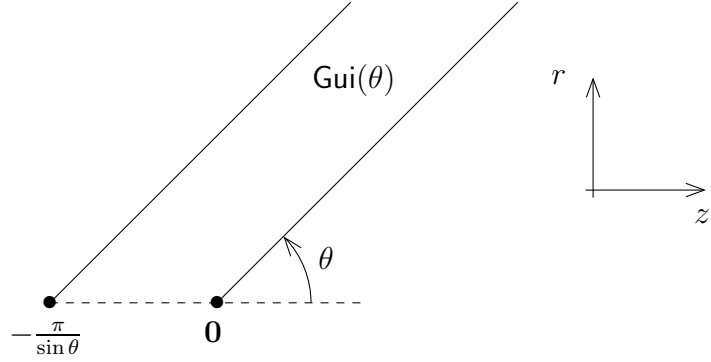

\figinit{0.5cm}
\figpt 0:(0,0)
\figpt 1:(-4.44288294,0)

\figpt 2:(-5,0)
\figpt 3:(4,0) 

\figpt 4:(8,8)
\figpt 5:(3.55711706,8)

\figpt 6:(2,0)
\figpt 7:(2,2)

\figpt 8:(2.2,1.2)
\figpt 9:(3.75,5)

\figpt 10:(10,2.8)
\figpt 11:(10,6)

\figpt 12:(9.8,3)
\figpt 13:(13,3)
\figdrawbegin{}
\figdrawline [0,4]
\figdrawline [1,5]
\figdrawarrow[10,11]
\figdrawarrow[12,13]
\figdrawarrowcircP 0;2.5[6,7]
\figset (dash=8)
\figdrawline [1,3]
\figdrawend

\figvisu{\figBoxA}{}{
\figwrites 13:{$z$} (0.5)
\figwritew 11:{$r$} (0.5)
\figwritee 8:{$\theta$} (0.5)
\figwriten 9:{$\Gui(\theta)$} (0.5)
\figset write(mark=$\bullet$)
\figwrites 1:{$-\frac{\pi}{\sin\theta}$} (0.5)
\figwrites 0:{$\mathbf{0}$} (0.5)
}

\centerline{\box\figBoxA}
\caption{The meridian guide $\Gui(\theta)$.}
\label{F:1}
\end{figure}

Through the change of variables \eqref{eqn:cylcoor} the Cartesian domain $\Lay(\theta)$ becomes $\Gui(\theta)\times\mathbb{S}^1$ and the Dirichlet Laplacian becomes the unbounded selfadjoint operator on $L^2(\Gui(\theta)\times\mathbb{S}^1,r\dd r\dd\phi\dd z)$, denoted by $\cH_{\Gui(\theta)\times\mathbb{S}^1}$:
\begin{equation*}
\cH_{\Gui(\theta)\times\mathbb{S}^1} = 
-\frac1r(r\partial_{r})^2 - \frac1{r^2}\partial_{\phi}^2 - \partial_{z}^2.
\end{equation*}
By Fourier series, according to the terminology of \cite[\S\,XIII.16]{RS78}, we have the constant fiber sum:
\begin{equation*}
L^2(\Gui(\theta)\times\mathbb{S}^1,r \dd r \dd \phi \dd z) = L^2((\Gui(\theta),r\dd r \dd z) \overline{\otimes} L^2(\mathbb{S}^1) = \bigoplus_{m\in\Z}L^2(\Gui(\theta),r \dd r \dd z),
\end{equation*}
where $L^2(\mathbb{S}^1)$ refers to functions on the unit circle with orthonormal basis $\{e^{2i\pi m\phi} : m \in\Z\}$. The operator $\cH_{\Gui(\theta)\times\mathbb{S}^1}$ decomposes as:
\begin{equation}
   \cH_{\Gui(\theta)\times\mathbb{S}^1} = \bigoplus_{m\in\Z}\cH_{\Gui(\theta)}^{[m]},
   \quad \text{with}\quad
   \cH_{\Gui(\theta)}^{[m]} = -\frac1r\partial_{r}(r\partial_{r}) - \partial_{z}^2 + \frac{m^2}{r^2},
\label{eqn:decope}
\end{equation}
where the operators $\cH_{\Gui(\theta)}^{[m]}$ are the fibers of $\cH_{\Gui(\theta)\times\mathbb{S}^1}$. The associated quadratic forms are
\[
   \cQ_{\Gui(\theta)}^{[m]}(\psi) = \int_{\Gui(\theta)} |\partial_r\psi(r,z)|^2 + 
   |\partial_z\psi(r,z)|^2 + \frac{m^2}{r^2}|\psi(r,z)|^2\,r \dd r \dd z\,.
\]
The form domains, denoted by $\Dom(\cQ_{\Gui(\theta)}^{[m]})$ depend on $m$, namely, cf.\ \cite[\S\,II.3.a]{BDM99},
\[
\Dom(\cQ_{\Gui(\theta)}^{[m]}) =
   \begin{cases}
   \{\psi: \ \psi,\ \partial_r\psi,\ \partial_z\psi\in L^2(\Gui(\theta),r \dd r \dd z),\ 
   \psi\big|_{\partial_1\Gui(\theta)}=0\}, 
   & m=0,\\
   \{\psi: \ \psi,\ \partial_r\psi,\ \partial_z\psi,\ r^{-1}\psi\in L^2(\Gui(\theta),r \dd r \dd z),\ 
   \psi\big|_{\partial_1\Gui(\theta)} =0\},
   & m\neq0,
\end{cases}
\]
with $\partial_1\Gui(\theta)$ the part of $\partial\Gui(\theta)$ that does not meet the axis $r=0$. The domain of the operator $\cH_{\Gui(\theta)}^{[m]}$ is deduced from the form domain in the standard way.

\subsection{Main results}

Let us denote by $\mathfrak{S}_{\mathsf{disc}}(\cL)$ and $\mathfrak{S}_{\mathsf{ess}}(\cL)$ the discrete and essential spectrum of the selfadjoint operator $\cL$, respectively. For our study we start from the following results on the essential and discrete spectrum of $\cH^{[m]}_{\Gui(\theta)}$ that we deduce from \cite[\S\,3]{ET11}.

\begin{theorem}[Exner and Tater \cite{ET11}]
\label{prop:exdiscspec}
Let $\theta\in(0,\frac\pi2)$. There holds, 
\[
   \mathfrak{S}_{\mathsf{ess}}(\cH^{[m]}_{\Gui(\theta)}) = [1,\infty), \quad \forall m\in\Z
\]
and
\[
   \#\mathfrak{S}_{\mathsf{disc}}(\cH^{[0]}_{\Gui(\theta)}) = \infty
   \quad\mbox{and}\quad
   \mathfrak{S}_{\mathsf{disc}}(\cH_{\Gui(\theta)}^{[m]}) = \emptyset\ \mbox{ if }\ m\neq0.
\]
\end{theorem}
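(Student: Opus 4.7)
The plan is to address the three statements in turn, using variational techniques and a tubular coordinate system on the strip-like end of $\Gui(\theta)$. Introduce coordinates $(s,t)$ with $s\in(0,\pi)$ the distance to the inner conical boundary and $t$ the arc-length along the inner ray, so that $r(s,t)=t\sin\theta+s\cos\theta$ for $t$ large.

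\textbf{Essential spectrum.} By Persson's theorem, $\inf\mathfrak{S}_{\mathsf{ess}}(\cH^{[m]}_{\Gui(\theta)})=\lim_{R\to\infty}\Sigma(R)$ with $\Sigma(R)$ the infimum of $\cQ^{[m]}_{\Gui(\theta)}(\psi)/\|\psi\|^2$ over $\psi\in C_c^\infty(\Gui(\theta)\setminus B_R)$. For such $\psi$ supported in the strip-like region, the weighted Sturm--Liouville transverse problem on $(0,\pi)$ has ground-state eigenvalue $\mu(t)\to 1$ as $t\to\infty$ (since the weight $r(s,t)$ becomes asymptotically constant in $s$), so $\Sigma(R)\geq \inf_{t\geq R}\mu(t)\to 1$. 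Conversely, Weyl sequences $\psi_n(s,t)=\sin(s)\,e^{i\sqrt{\lambda-1}\,t}\chi_n(t)$ with cutoffs $\chi_n$ translating to infinity cover every $\lambda\geq 1$, yielding $\mathfrak{S}_{\mathsf{ess}}(\cH^{[m]}_{\Gui(\theta)})=[1,\infty)$ for all $m$.

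\textbf{Absence of discrete spectrum for $m\neq 0$.} Apply the unitary transform $u=\sqrt{r}\,\psi$, which identifies $L^2(\Gui(\theta),r\,\dd r\,\dd z)$ with $L^2(\Gui(\theta),\dd r\,\dd z)$ and conjugates $\cH^{[m]}_{\Gui(\theta)}$ to the Schr\"odinger operator $-\partial_r^2-\partial_z^2+(m^2-1/4)/r^2$ with Dirichlet condition on the whole boundary of $\Gui(\theta)$ (the $\sqrt{r}$ factor makes $u$ vanish on the axis too). For $m\neq 0$ the constant $m^2-1/4\geq 3/4$ is positive, and combining the perpendicular Dirichlet inequality on transverse slices of length $\pi$ in the strip with the positive Hardy-type term $(m^2-1/4)/r^2$ and the 2D Hardy inequality in the cap gives $\cQ(u)\geq\|u\|^2$ on the form domain. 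Together with Step~1, this yields $\mathfrak{S}(\cH^{[m]}_{\Gui(\theta)})=[1,\infty)$ and $\mathfrak{S}_{\mathsf{disc}}=\emptyset$.

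\textbf{Infinite discrete spectrum for $m=0$.} The same unitary transform sends $\cH^{[0]}_{\Gui(\theta)}$ to $-\Delta_{r,z}-1/(4r^2)$ with an attractive Hardy potential; in tubular coordinates this is $-\partial_s^2-\partial_t^2-(4r(s,t)^2)^{-1}$. Test functions $u_n(s,t)=\sin(s)f_n(t)$, with $f_n$ supported in disjoint dyadic intervals $[T_n,2T_n]$, $T_n\to\infty$, automatically satisfy the Dirichlet condition on the two conical boundaries. The cancellation $\int_0^\pi(\cos^2-\sin^2)\,\dd s=0$ eliminates the leading transverse contribution, leaving
\[
   \cQ(u_n)-\|u_n\|^2 \,\sim\, \frac{\pi}{2}\int f_n'(t)^2\,\dd t - \frac{\pi}{8\sin^2\theta}\int\frac{f_n(t)^2}{t^2}\,\dd t.
\]
Since $\sin\theta<1$, the effective Hardy coefficient $1/(4\sin^2\theta)$ strictly exceeds the critical value $1/4$; the effective $1\mathrm{D}$ operator $-\partial_t^2-(4t^2\sin^2\theta)^{-1}$ is supercritical, and its oscillatory zero-energy solutions $\sqrt{t}\sin(\gamma\log t)$ with $\gamma=\cot\theta/2$ provide an infinite family of mutually disjointly supported $f_n$ with strictly negative Rayleigh excess. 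The min-max principle then yields infinitely many eigenvalues of $\cH^{[0]}_{\Gui(\theta)}$ below $1$.

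\textbf{Main obstacle.} The heart of the argument is Step~3: the geometric inequality $\sin\theta<1$ turns the borderline Hardy potential into a supercritical one, and one must exploit the logarithmic oscillation of the associated zero-energy kernel to produce infinitely many linearly independent trial functions. Handling the Dirichlet boundary conditions after the $\sqrt{r}$-transform and verifying the asymptotic reduction to the effective $1\mathrm{D}$ operator uniformly in each dyadic scale are the technical hurdles. The structural dichotomy with $m\neq 0$, where the axisymmetric reduction produces the repulsive effective potential $(m^2-1/4)/r^2$, is the conceptual heart of the theorem.
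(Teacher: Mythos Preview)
The paper does not prove this theorem; it is quoted from Exner--Tater \cite{ET11} as a starting point, with only the brief remark that ``the absence of discrete spectrum when $m\neq0$ is due to the cancellation of functions in the domain of the operator on the axis $r=0$.'' So there is no in-paper proof to compare against. That said, your Step~3 is exactly the mechanism the paper later makes quantitative in Section~\ref{sec:counteig}: the conjugation by $\sqrt{r}$ produces the effective potential $-1/(4r^2)$, which in tubular coordinates becomes $-1/(4t^2\sin^2\theta)$ with supercritical constant $\frac{1}{4\sin^2\theta}>\frac14$; the paper then feeds this into the Kirsch--Simon result (Theorem~\ref{th:KS88}) to obtain the logarithmic counting law. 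Your Weyl-sequence argument for the essential spectrum is standard and fine.

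There is, however, a genuine gap in Step~2. Your claim that the ``2D Hardy inequality in the cap'' yields $\cQ(u)\geq\|u\|^2$ there is not an argument: in two dimensions the Hardy constant is zero, so no useful lower bound comes from $(m^2-\tfrac14)/r^2$ alone in the triangular region. The correct (and simpler) observation is geometric. After rotation to the coordinates $(s,u)$ of Section~\ref{section:moneig}, the transverse slices $\{s=\mathrm{const}\}$ of $\Omega(\theta)$ have length exactly $\pi$ for $s>0$ and length $\pi+s\tan\theta<\pi$ for $s\in(-\pi\cot\theta,0)$, the short end of the latter lying on the axis $r=0$. For $m\neq0$ the form domain imposes Dirichlet there as well, so on \emph{every} slice the one-dimensional Dirichlet inequality already gives $-\partial_u^2\geq1$; adding the nonnegative terms $-\partial_s^2$ and $(m^2-\tfrac14)/r^2$ finishes the bound. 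No Hardy inequality is needed in the cap --- the cap is \emph{narrower}, not wider, once the axis carries a Dirichlet condition. With this correction your sketch is sound.
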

Note that the minimum at $1$ of the essential spectrum is a consequence of the normalization of the meridian guides $\Gui(\theta)$ and that the absence of discrete spectrum when $m\neq0$ is due to the cancellation of functions in the domain of the operator on the axis $r=0$.

Relying on this theorem, we see that the investigation of the discrete spectrum of $-\Delta_{\Lay(\theta)}$ reduces to its axisymmetric part $\cH^{[0]}_{\Gui(\theta)}$.
To simplify the notation, we drop the exponent $0$, thus denoting this operator by $\cH_{\Gui(\theta)}$. We denote its eigenvalues by $(\mu_n(\theta))_{n\geq1}$, ordering them in non-decreasing order and repeating them according to their multiplicity.

We first state a monotonicity result about the dependence on $\theta$ of $\mu_n(\theta)$.
\begin{prop}
For all $n\in\mathbb{N}^*$, the functions $\theta \mapsto \mu_{n}(\theta)$ are non-decreasing on $(0,\frac\pi2)$ into $\R_+$.
\label{prop:nondecvp}
\end{prop}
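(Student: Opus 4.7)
The plan is to apply the min-max principle. For $0 < \theta_1 < \theta_2 < \frac{\pi}{2}$ I would construct a linear injection $\Psi : \Dom(\cQ^{[0]}_{\Gui(\theta_2)}) \to \Dom(\cQ^{[0]}_{\Gui(\theta_1)})$ that decreases the Rayleigh quotient, i.e. such that
$$
\frac{\cQ^{[0]}_{\Gui(\theta_1)}(\Psi\phi)}{\|\Psi\phi\|^2} \ \le\ \frac{\cQ^{[0]}_{\Gui(\theta_2)}(\phi)}{\|\phi\|^2}
$$
for every nonzero $\phi$. Applied to any $n$-dimensional subspace $V \subset \Dom(\cQ^{[0]}_{\Gui(\theta_2)})$ that approximately realizes $\mu_n(\theta_2)$ via min-max, its image $\Psi(V)$ is then a valid $n$-dimensional competitor in the min-max formula for $\mu_n(\theta_1)$, giving $\mu_n(\theta_1) \le \mu_n(\theta_2)$.

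To build $\Psi$ I would look for an affine bijection $\tilde\Phi : \Gui(\theta_1) \to \Gui(\theta_2)$ of the form $\tilde\Phi(r,z) = (ar,\, dr + ez)$, and set $\Psi\phi := \phi \circ \tilde\Phi$. The three boundary-matching conditions (inner cone, outer cone, and axis segment of $\Gui(\theta_1)$ sent onto the corresponding pieces of $\Gui(\theta_2)$) determine $e = \sin\theta_1/\sin\theta_2$ and $a = d\tan\theta_2 + \cos\theta_1/\cos\theta_2$, leaving $d \in \R$ as the only free parameter; this matching automatically places $\Psi\phi$ in $\Dom(\cQ^{[0]}_{\Gui(\theta_1)})$, and $\det\tilde\Phi = ae > 0$ makes $\tilde\Phi$ a true bijection between the meridian domains.

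A change of variables in the quadratic form produces a common Jacobian factor $1/(a^2 e)$ in both the numerator and the denominator of the Rayleigh quotient, so the displayed inequality reduces to the matrix inequality $M M^T \le I_2$ (in the positive-semidefinite sense) with $M = \bigl(\begin{smallmatrix} a & 0 \\ d & e\end{smallmatrix}\bigr)$. The main algebraic step is to notice that $MM^T \le I_2$ is equivalent to $d^2 \le (1-a^2)(1-e^2)$, and that the choice
$$
d = d^* := \frac{\sin^2\theta_1 - \sin^2\theta_2}{\cos\theta_1 \sin\theta_2}
$$
saturates this bound: it yields $a = \cos\theta_2/\cos\theta_1$ and makes the eigenvalues of $M M^T$ equal exactly to $1$ and $(\tan\theta_1/\tan\theta_2)^2 < 1$, hence $M M^T \le I_2$ and the Rayleigh-quotient inequality holds.

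The main obstacle is guessing this correct $d^*$. A natural way to discover it is to write the Rayleigh-quotient difference as a quadratic form in $(\partial_r\phi,\partial_z\phi)$ with $d$-dependent coefficients, and then to use the Cauchy--Schwarz constraint linking $\int(\partial_r\phi)^2 r\,\dd r\,\dd z$, $\int(\partial_z\phi)^2 r\,\dd r\,\dd z$ and $\int \partial_r\phi\, \partial_z\phi\, r\,\dd r\,\dd z$: the worst case forces a unique value of $d$ that makes the quadratic form non-positive. With $d^*$ in hand, the rest is a routine combination of the boundary-matching affine computation, the change-of-variables calculation, and the Courant--Fischer min-max principle.
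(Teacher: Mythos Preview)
Your argument is correct, and in fact the affine map you construct (with the optimal $d=d^*$) coincides exactly with the composite change of variables that the paper uses. The paper, however, arrives at it by a much shorter route: it first performs the rotation
\[
   s = z\cos\theta + r\sin\theta,\qquad u = -z\sin\theta + r\cos\theta,
\]
which sends $\Gui(\theta)$ onto a half-strip $\Omega(\theta)$, and then the scaling $\hat s = s\tan\theta$ which sends all $\Omega(\theta)$ onto a single domain $\Omega$. On $\Omega$ the Rayleigh quotient becomes
\[
   \frac{\int_{\Omega}(\tan^2\theta\,|\partial_{\hat s}\hat\psi|^2 + |\partial_{\hat u}\hat\psi|^2)(\hat s+\hat u)\,\dd\hat s\,\dd\hat u}{\int_{\Omega}|\hat\psi|^2(\hat s+\hat u)\,\dd\hat s\,\dd\hat u},
\]
which is \emph{manifestly} nondecreasing in $\theta$ because the only $\theta$-dependence is the diagonal coefficient $\tan^2\theta$. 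No free parameter has to be optimized and no matrix inequality $MM^T\le I_2$ has to be checked; the rotation already diagonalizes the problem. Your optimization over $d$ is precisely what recovers this rotation in the original $(r,z)$ coordinates: with $d=d^*$ the eigenvectors of $MM^T$ are the rotated axes $(s,u)$, and the eigenvalues $1$ and $(\tan\theta_1/\tan\theta_2)^2$ are just the ratios of the diagonal coefficients above. So the two proofs are the same transformation seen from two sides; the paper's presentation buys brevity and transparency, while yours shows that no other affine map of the form $(r,z)\mapsto(ar,\,dr+ez)$ could do better.
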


Before stating our result on the accumulation of these eigenvalues towards the minimum of the essential spectrum, we need to introduce:
\begin{defin} 
\label{defin:nbvpsousseuil}
Let $\nu\in\R$ and $\cL$ be a self-adjoint operator semi-bounded from below and associated with a quadratic form $\cQ$. We define the counting function $\mathcal{N}_\nu(\cL)$ by:
\begin{equation*}
\mathcal{N}_{\nu}(\cL) = \# \{\mu \in \mathfrak{S}_{\mathsf{disc}}(\cL) : \mu < \nu\}.
\end{equation*}
When working with the quadratic form $\cQ$ we use the notation $\mathcal{N}_\nu(\cQ)$ instead of $\mathcal{N}_{\nu}(\cL)$.
\end{defin}

In this paper we prove:
\begin{theorem}
Let us choose $\theta\in(0,\frac\pi2)$. We have:
\begin{equation*}
\mathcal{N}_{1-E}(\cH_{\Gui(\theta)}) \underset{\underset{E>0}{E \rightarrow 0}}{\sim} \frac{\cot\theta}{4\pi}|\ln E|.
\end{equation*}
\label{th:countvp}
\end{theorem}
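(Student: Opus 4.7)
My plan is to reduce the two-dimensional spectral problem to a one-dimensional Schrödinger operator with an attractive inverse-square tail, whose accumulation of eigenvalues at $0$ is governed by a classical formula. First I will use a Dirichlet--Neumann bracketing along a curve $z = z_0$, for $z_0 \gg 1$, to split $\Gui(\theta)$ into a bounded ``conical cap'' (which contributes only $O(1)$ eigenvalues and so does not affect the leading-order counting) and an unbounded straight strip. In this strip I introduce rotated coordinates
\[
r = s\sin\theta + t\cos\theta, \qquad z = s\cos\theta - t\sin\theta, \qquad (s,t) \in (s_0,\infty)\times(0,\pi),
\]
which are volume-preserving and align the Dirichlet boundaries with $t \in \{0,\pi\}$. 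The unitary conjugation $\psi = u/\sqrt{r(s,t)}$ from $L^2(r\,\dd s\,\dd t)$ onto $L^2(\dd s\,\dd t)$, followed by one integration by parts using $\Delta_{s,t}r = 0$ and $|\nabla_{s,t} r| = 1$, converts $\cQ_{\Gui(\theta)}^{[0]}$ into the quadratic form of
\[
\widetilde{\cH} = -\partial_s^2 - \partial_t^2 - \frac{1}{4\,r(s,t)^2}.
\]
The attractive term $-1/(4r^2)$ arises entirely from the weight $r$ in the cylindrical measure and is the mechanism responsible for infinitely many bound states.

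Next I will perform a Born--Oppenheimer reduction in the transverse variable. The transverse Dirichlet Laplacian on $(0,\pi)$ has ground state $\chi(t) = \sqrt{2/\pi}\sin t$ with eigenvalue $1$ and spectral gap $\geq 3$ above. Writing any admissible $u$ as $u = f(s)\chi(t) + u^\perp(s,t)$, the contribution of $u^\perp$ to the shifted form is bounded below by $3\|u^\perp\|^2$, while the potential $-1/(4r^2)$ is $o(1)$ uniformly in $t$ for large $s$. A standard Born--Oppenheimer / IMS-type argument (in the spirit of \cite{DEK01}) will then show that the counting of eigenvalues of $\widetilde{\cH}$ below $1-E$ is asymptotically equivalent, as $E \to 0^+$, to that of the effective one-dimensional operator
\[
\mathfrak{h}_{\mathrm{eff}} = -\partial_s^2 + 1 + W(s), \qquad W(s) = -\frac{1}{4}\int_0^\pi \frac{\chi(t)^2}{r(s,t)^2}\,\dd t,
\]
on $L^2(s_0,\infty)$ with Dirichlet condition at $s_0$. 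A direct expansion gives $W(s) = -\frac{1}{4\sin^2\theta}\,s^{-2} + O(s^{-3})$.

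Finally I will count eigenvalues of $\mathfrak{h}_{\mathrm{eff}}$ below $1-E$. The classical asymptotics for a one-dimensional Schrödinger operator with an inverse-square tail $V(s) \sim -\gamma/s^2$, $\gamma > 1/4$, states that
\[
\mathcal{N}_{-E}\bigl(-\partial_s^2 + V\bigr) \underset{E \to 0^+}{\sim} \frac{\sqrt{\gamma-1/4}}{2\pi}\,|\ln E|,
\]
and can be established via explicit Bessel-type solutions and a Sturm oscillation count. With $\gamma = \csc^2\theta/4$, one gets $\sqrt{\gamma-1/4}=\tfrac{1}{2}\cot\theta$, producing the announced prefactor $\cot\theta/(4\pi)$. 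The lower bound for $\mathcal{N}_{1-E}(\cH_{\Gui(\theta)})$ is obtained by lifting one-dimensional trial functions into the min--max via $\psi_j(s,t) = f_j(s)\chi(t)/\sqrt{r}$, while the upper bound uses the Born--Oppenheimer projection. The hardest step will be to justify this reduction uniformly on the slowly-decaying tail $s \to \infty$: the coupling between longitudinal and transverse modes induced by $-1/(4r^2)$ sits at the same scale $s^{-2}$ as the effective potential itself, so the $O(s^{-3})$ and cross-coupling remainders must be controlled sharply enough not to contaminate the leading logarithmic order.
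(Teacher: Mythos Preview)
Your plan is correct and shares the same skeleton as the paper's proof: rotate to align the strip, conjugate by $\sqrt{r}$ to convert the measure weight into the attractive potential $-1/(4r^2)$, reduce to a one-dimensional Schr\"odinger operator on a half-line with inverse-square tail $-c/s^2$, $c=(4\sin^2\theta)^{-1}$, and invoke the Kirsch--Simon asymptotics (quoted in the paper as Theorem~\ref{th:KS88}) to obtain the prefactor $\frac{1}{2\pi}\sqrt{c-\tfrac14}=\frac{\cot\theta}{4\pi}$.

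The one substantive difference lies in the reduction to one dimension, and the paper's choice removes precisely the difficulty you flag as the ``hardest step''. Instead of a Born--Oppenheimer projection onto the transverse ground state followed by a control of the cross-channel coupling, the paper uses two elementary pointwise bounds: on the half-strip $(1,\infty)\times(0,\pi)$ one has $s\sin\theta \le r(s,u)=s\sin\theta+u\cos\theta \le s\sin\theta+\pi\cos\theta$, hence
\[
   -\frac{1}{4(s\sin\theta)^2}\ \le\ -\frac{1}{4\,r(s,u)^2}\ \le\ -\frac{1}{4(s\sin\theta+\pi\cos\theta)^2}\,.
\]
After either bound the potential depends on $s$ alone, so the two-dimensional form separates \emph{exactly} into the Dirichlet Laplacian on $(0,\pi)$ and a one-dimensional Schr\"odinger operator; higher transverse channels then contribute at most a finite, $E$-independent number of states and are absorbed into the constant $C(\theta)$. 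The lower bound on $\mathcal{N}_{1-E}$ comes from the right inequality (plus an affine change of variable restoring the pure $-1/(4\sigma^2\sin^2\theta)$ form), the upper bound from the left inequality together with an IMS localization that plays the role of your Dirichlet--Neumann bracketing for the bounded cap. In short, the paper trades your projection-and-remainder argument for a monotone sandwich, which buys exact separation of variables and eliminates any need to analyse the off-diagonal coupling.
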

Theorem \ref{th:countvp} exhibits a logarithmic accumulation of the number of eigenvalues near the threshold of the essential spectrum.

In order to state our result on the spectral asymptotics of the first eigenvalues as $\theta\to0$ we need to introduce some notations on zeros of Bessel and Airy functions \cite{AS66}.

\begin{nota} We denote by $J_0$ and $Y_0$ the $0$-th Bessel functions of first and second kind, respectively. For all $n\geq1$, the $n$-th zero of $J_0$ is denoted by $j_{0,n}$. For all $n\geq1$, we also denote by $z_{\mathsf{A}}(n)$ the $n$-th zero of the reversed Airy function of first kind. The sequence $(z_{\mathsf{A}}(n))_{n\geq1}$ is increasing. 
\label{nota:specfunc}
\end{nota}

The following theorem describes the behavior of the first eigenvalues of $\cH_{\Gui(\theta)}$ in the limit $\theta\rightarrow0$. This result confirms that $\mu_{n}(\theta)$ goes to $\frac{j_{0,1}^2}{\pi^2}$ as $\theta$ goes to $0$ which is conjectured in \cite[Figure 2]{ET11}.

\begin{theorem} 
\label{th:expavp}
For all $n\in\mathbb{N}^*$, we have the asymptotic expansion:
\begin{equation*}
\mu_{n}(\theta) \underset{\theta\rightarrow0}{=} 
\frac{j_{0,1}^2}{\pi^2} + 
\frac{(2j_{0,1})^{2/3}\!\!}{\pi^2}\ z_{\mathsf{A}}(n) \theta^{2/3} + 
\cO(\theta |\ln \theta|^{3/2}),\quad n = 1,\dots,N_{0}.
\end{equation*}
\end{theorem}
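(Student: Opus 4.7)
The plan is to perform a Feshbach--Born-Oppenheimer reduction to an effective one-dimensional problem in the longitudinal variable $z$, identify an Airy equation as the leading-order problem, and make the reduction rigorous via Agmon-type localization estimates.

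First, freezing $z$, I consider the transverse eigenvalue problem at each level. In the conical cap ($z<0$) this is the operator $-\tfrac{1}{r}\partial_r(r\partial_r)$ on $(0,R(z))$ with $R(z)=z\tan\theta+\pi/\cos\theta$, regular at the axis $r=0$ and Dirichlet at $r=R(z)$, whose ground eigenpair is $\bigl(j_{0,1}^2/R(z)^2,\ J_0(j_{0,1}r/R(z))\bigr)$. Projecting the quadratic form $\cQ_{\Gui(\theta)}$ onto the tensor ansatz $\psi(r,z)=\varphi(z)\,f_0(r;z)$, with $f_0(\cdot;z)$ the normalized transverse ground state, one obtains an effective one-dimensional operator $-\partial_z^2+V_{\mathrm{eff}}(z)$ with $V_{\mathrm{eff}}(z)=j_{0,1}^2/R(z)^2$ plus a diagonal Born--Huang term of size $(R'/R)^2=\cO(\theta^2)$. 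This $V_{\mathrm{eff}}$ attains its minimum $V_{\mathrm{eff}}(0)=j_{0,1}^2\cos^2\theta/\pi^2$ at the cap-to-tube interface $z=0$, with one-sided linear behaviour $V_{\mathrm{eff}}(z)-V_{\mathrm{eff}}(0)\sim|V'_{\mathrm{eff}}(0)|\,|z|$ of slope $|V'_{\mathrm{eff}}(0)|\sim C\theta$ inside the cap; on the tube side a parallel analysis using the mixed-Dirichlet Bessel combination of $J_0$ and $Y_0$ shows that $V_{\mathrm{eff}}$ rises continuously above the minimum, but only on the logarithmic scale $|\ln\theta|$.

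The leading reduced problem is therefore the Airy eigenvalue equation
\[
-\varphi''+|V'_{\mathrm{eff}}(0)|\,|z|\,\varphi=(\mu-V_{\mathrm{eff}}(0))\,\varphi,\quad z<0,\qquad\varphi(0)=0,\qquad\varphi(-\infty)=0,
\]
whose $n$th eigenvalue is $|V'_{\mathrm{eff}}(0)|^{2/3}z_{\mathsf{A}}(n)$; substituting $|V'_{\mathrm{eff}}(0)|$ and Taylor-expanding the trigonometric factors in $\theta$ yields the claimed $\theta^{2/3}$ correction. The upper bound on $\mu_n(\theta)$ then follows from the min-max principle by inserting a truncated Airy eigenfunction multiplied by $f_0(\cdot;z)$ as a trial function in the Rayleigh quotient, with errors coming from the nonlinear remainder of $V_{\mathrm{eff}}$ in the cap, the Born--Huang correction, and the slow climb of $V_{\mathrm{eff}}$ into the tube, all controlled by $\cO(\theta|\ln\theta|^{3/2})$.

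The hard part is the matching lower bound. I would adapt Agmon's semiclassical method by constructing a weight $\Phi_\theta$ that reproduces the standard Airy exponential decay $\Phi_\theta(z)\sim\theta^{1/3}|z|^{3/2}$ on the cap side and a slow, logarithmic-scale decay on the tube side---this is exactly the \emph{a priori} localization advertised in the introduction. The weighted Agmon identity, combined with an IMS-type quadratic partition of unity separating the Airy window near $z=0$ from the rest of $\Gui(\theta)$, then converts the variational upper bound into a matching eigenvalue lower bound. The $\cO(\theta|\ln\theta|^{3/2})$ remainder originates from the weighted tube contribution: the logarithmic factor is the signature of the slow rise of $V_{\mathrm{eff}}$ above its minimum in the tube and reflects that the eigenfunctions leak into this region over the scale $|\ln\theta|$.
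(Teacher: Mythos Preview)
Your proposal shares the paper's overall architecture---Born--Oppenheimer effective potential, Agmon localization, identification of the logarithmic leakage scale into the tube---but takes a structurally different route. You reduce directly to a one-dimensional Airy model on the half-line $z<0$ with Dirichlet at $z=0$. The paper instead first rescales to a $\theta$-independent domain $\Gui$ with semiclassical parameter $h=\tan\theta$, and then compares the full two-dimensional operator $\cL_{\Gui}(h)$ with the two-dimensional operator $\cL_{\Tri}(h)$ on the triangular cap $\Tri=\{x<0\}$ carrying Dirichlet at the interface $x=0$. The Airy asymptotics for the latter are not derived here but imported wholesale from the cone result of \cite{O14} (Theorem~\ref{th:O14}). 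With this choice the upper bound is a one-line Dirichlet bracketing ($\Tri\subset\Gui$), and the lower bound is obtained by cutting off the true eigenfunctions at scale $h|\ln h|^{3/2}$ (the scale dictated by the Agmon weight on the tube side) and using them as test functions on a slightly dilated triangle $\Tri(h)$, whose eigenvalues differ from those of $\Tri$ by the claimed $\cO(h|\ln h|^{3/2})$. The paper's route is shorter because it never needs to control the transverse projection error or the Born--Huang term: those are absorbed into the comparison between two 2D problems.

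Your lower-bound sketch has a step that is underspecified rather than merely terse. Agmon estimates plus an IMS partition give you localization of the true eigenfunctions, but they do not by themselves ``convert the variational upper bound into a matching eigenvalue lower bound.'' You still need a model operator with \emph{known} eigenvalues into which the localized 2D eigenfunctions can be fed via min-max. If that model is your 1D Airy operator, you must first project onto the transverse ground state $f_0(\cdot;z)$ and bound the contribution of the excited transverse modes---the standard Born--Oppenheimer lower bound, requiring a transverse spectral-gap argument that your outline does not mention. The paper sidesteps this entirely by keeping the comparison two-dimensional and quoting \cite{O14}; if you want to carry out the direct 1D reduction, that projection step is where the real work lies.
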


In Section \ref{section:moneig} we prove Proposition \ref{prop:nondecvp} by reformulating the problem in another domain.
Section \ref{sec:counteig} deals with Theorem \ref{th:countvp}. We prove that counting the eigenvalues on the meridian waveguide $\Gui(\theta)$ reduces to consider operators on a half-strip, leading to count the eigenvalues of one-dimensional operators.
Section \ref{sec:conicalsemiclass} concerns the small aperture regime $\theta\rightarrow0$. First, we show that the problem admits a semiclassical formulation. Then, we use Agmon localization estimates to obtain Theorem \ref{th:expavp}.
Finally in Appendix \ref{sec:numresult} we illustrate the main results of this paper by numerical computations by finite element method.

\section{Monotonicity of the eigenvalues}
\label{section:moneig}
We denote by $\cQ_{\Gui(\theta)}$ the quadratic form associated with $\cH_{\Gui(\theta)}$ and by $\Dom(\cQ_{\Gui(\theta)})$ its domain. For the proof of Proposition \ref{prop:nondecvp} and further use, we consider the change of variables (rotation):
\begin{equation}
s = z \cos\theta + rÊ\sin \theta,\quad u = -z\sin\theta + r\cos\theta,
\label{eqn:rotlayer}
\end{equation}
that transforms the meridian guide $\Gui(\theta)$ into the strip with corner $\Omega(\theta)$ (see Figure \ref{fig:figureomega}), defined by:
\begin{equation*}
\Omega(\theta) = \big\{(s,u)\in\R^2 : \ s \geq -\pi\cot\theta,\quad 
\max(0,-s \tan\theta) < u < \pi\big\}.
\end{equation*}

\begin{figure}[b]
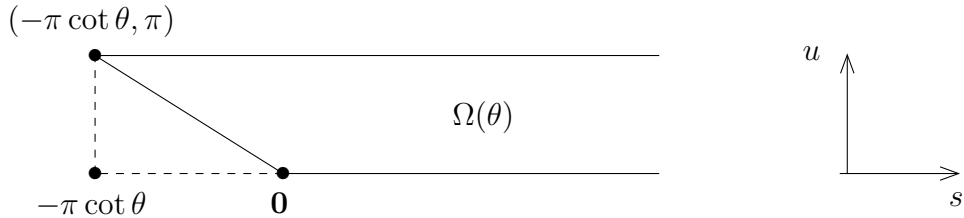

\figinit{0.5cm}
\figpt 0:(0,0)
\figpt 1:(-5,0)
\figpt 2:(10,0)

\figpt 3:(-5,3.14)
\figpt 4:(10,3.14)

\figpt 5:(4,1.5)

\figpt 10:(15,0)
\figpt 11:(15,3.2)

\figpt 12:(14.8,0)
\figpt 13:(18,0)
\figdrawbegin{}
\figdrawline [0,2]
\figdrawline [3,4]
\figdrawline [0,3]
\figdrawarrow[10,11]
\figdrawarrow[12,13]
\figset (dash=8)
\figdrawline [0,1]
\figdrawline [1,3]
\figdrawend

\figvisu{\figBoxA}{}{
\figwrites 13:{$s$} (0.5)
\figwritew 11:{$u$} (0.5)
\figwritee 5:{$\Omega(\theta)$}(0.5)
\figset write(mark=$\bullet$)
\figwrites 1:{$-\pi\cot\theta$} (0.5)
\figwrites 0:{$\mathbf{0}$} (0.5)
\figwriten 3:{$(-\pi\cot\theta,\pi)$} (0.5)
}

\centerline{\box\figBoxA}
\caption{The domain $\Omega(\theta)$.}
\label{fig:figureomega}
\end{figure}

For $\psi\in\Dom(\cQ_{\Gui(\theta)})$, we set $\tilde\psi(s,u) = \psi(r,z)$ and we have the identity $\cQ_{\Gui(\theta)}(\psi)=\cQ_{\Omega(\theta)}(\tilde{\psi})$ with the new quadratic form
\begin{equation}
\cQ_{\Omega(\theta)}(\tilde{\psi}) = \int_{\Omega(\theta)} (|\partial_{s}\tilde\psi|^2 + |\partial_{u}\tilde{\psi}|^2)\,(s\sin\theta + u\cos\theta)\dd u\dd s.
\label{eqn:defomega}
\end{equation}
The proof of the monotonicity of the eigenvalues $\mu_n(\theta)$ now follows the same track that the analogous proof in \cite[\S\,3]{DLR12} for the case of broken waveguides.
To avoid the dependence on $\theta$ of the domain $\cQ_{\Omega(\theta)}$ we perform the change of variables $(s,u)\mapsto(\hat s,\hat u) = ( s\tan\theta,u)$ that transforms 
the domain $\Omega(\theta)$ into $\Omega := \Omega(\frac\pi4)$. Setting 
$\hat{\psi}(\hat{s},\hat{u}) = \tilde{\psi}(s,u)$ we get for the Rayleigh quotients:
\begin{align*}
   \frac{\cQ_{\Gui(\theta)}(\psi)}{\|\psi\|^2} &= 
   \frac{\int_{\Omega}(\tan^2\theta|\partial_{\hat{s}}\hat\psi|^2 + |\partial_{\hat{u}}\hat\psi|^2)(\hat{s} + \hat{u})\cos\theta\cot\theta\dd \hat{s}\dd\hat{u}}{\int_{\Omega}|\hat\psi|^2(\hat{s} + \hat{u})\cos\theta\cot\theta\dd\hat{s}\dd\hat{u}} \\[0.5ex]
   &=  \frac{\int_{\Omega}(\tan^2\theta|\partial_{\hat{s}}\hat\psi|^2 + |\partial_{\hat{u}}\hat\psi|^2)(\hat{s} + \hat{u})\dd \hat{s}\dd\hat{u}}{\int_{\Omega}|\hat\psi|^2(\hat{s} + \hat{u})\dd\hat{s}\dd\hat{u}},
\end{align*}
that appear to be nondecreasing functions of $\theta$. The result follows from the min-max formulas for the eigenvalues.

\section{Counting the eigenvalues}
\label{sec:counteig}
In this section we prove Theorem \ref{th:countvp}. The idea is to reduce to a one-dimensional operator.

We will need a result adapted from Kirsch and Simon \cite{KS88}, later extended by Hassel and Marshall in \cite{HM08}. Let $c>0$, we are interested in the Friedrichs extension of the following quadratic form,
\begin{equation*}
\mathfrak{q}(\varphi)=\int_1^{+\infty}|\partial_x\varphi|^2 - \frac{c}{x^2}|\varphi|^2\;\dd x,\quad
\varphi\in\sC^\infty_0(1,+\infty).
\end{equation*}
and we denote by $\mathfrak{h}$ the corresponding operator.
\begin{theorem}[Kirsch and Simon \cite{KS88}] 
\label{th:KS88}
Let $V_0\in\sC^\infty_0(1,+\infty)$. If $c>\frac14$, there holds:
\begin{equation*}
   \mathcal{N}_{-E}(\mathfrak{h} +V_0) \underset{\underset{E>0}{E\rightarrow0}}{\sim} 
   \frac1{2\pi}\, \sqrt{c - \frac14}\ |\ln E|.
\end{equation*}
\end{theorem}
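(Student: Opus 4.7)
The plan is to compute $\mathcal{N}_{-E}(\mathfrak{h})$ exactly by reducing the eigenvalue equation to a modified Bessel equation of imaginary index, then to absorb the compactly supported perturbation $V_0$ into the $\cO(1)$ remainder via Dirichlet-Neumann bracketing.

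The ansatz $\varphi(x) = x^{1/2} g(x)$ turns $\mathfrak{h}\varphi = -\tau^{2}\varphi$ into
\begin{equation*}
x^{2} g''(x) + xg'(x) - \bigl(\tau^{2} x^{2} - (c-\tfrac14)\bigr)g(x) = 0,
\end{equation*}
which, since $c > 1/4$, is the modified Bessel equation of imaginary index $i\mu$ with $\mu := \sqrt{c-1/4}$, in the variable $\tau x$. Its $L^{2}$-solution at infinity is $K_{i\mu}(\tau x)$, so the Dirichlet condition $\varphi(1) = 0$ yields the quantization $K_{i\mu}(\tau) = 0$. The classical small-argument asymptotics
\begin{equation*}
K_{i\mu}(y) = -\sqrt{\tfrac{\pi}{\mu\sinh(\pi\mu)}}\;\sin\!\bigl(\mu\ln(y/2) - \arg\Gamma(1+i\mu)\bigr) + \cO(y^{2})
\end{equation*}
place the small zeros of $K_{i\mu}$ on the geometric sequence $\tau_n = 2\exp\bigl((n\pi + \arg\Gamma(1+i\mu))/\mu\bigr)$, $n \in \Z$, $n \ll 0$, modulo $\cO(\tau_n^{2})$ perturbations controllable by Rouch\'e applied to the oscillatory sine factor. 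Counting integers $n$ with $\tau_n^{2} > E$ then yields $\mathcal{N}_{-E}(\mathfrak{h}) = \tfrac{\mu}{2\pi}|\ln E| + \cO(1)$.

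To pass from $\mathfrak{h}$ to $\mathfrak{h} + V_0$, I fix $R > 1$ with $R > \max\mathrm{supp}(V_0)$ and apply Dirichlet-Neumann bracketing at $x = R$. The piece on $(1,R)$ has purely discrete spectrum bounded below and contributes $\cO(1)$ to $\mathcal{N}_{-E}$ as $E \to 0$. The piece on $(R,\infty)$ coincides with $\mathfrak{h}$ there and is unitarily equivalent via $x = R\xi$ to $R^{-2}\mathfrak{h}$ on $(1,\infty)$, whose counting function at $-E$ equals $\mathcal{N}_{-ER^{2}}(\mathfrak{h}) = \tfrac{\mu}{2\pi}|\ln E| + \cO(1)$, since $|\ln(ER^{2})| = |\ln E| + \cO(1)$. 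Both brackets yield the same leading term.

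The most delicate point is that the $\cO(y^{2})$ remainder in the small-argument expansion of $K_{i\mu}$---together with the finitely many zeros outside the validity of this asymptotic---contribute only to $\cO(1)$; this requires a careful Rouch\'e argument on the oscillatory sine factor but no new ideas. The conceptual heart of the theorem is that the \emph{imaginary} index $i\sqrt{c-1/4}$ of the Bessel function is the algebraic signature of the supercritical regime $c > 1/4$: the log-periodic oscillations of $K_{i\mu}$ near the origin are precisely what encode the logarithmic density of eigenvalues.
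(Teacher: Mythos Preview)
The paper does not prove this theorem: it is quoted from Kirsch and Simon \cite{KS88} (with a mention of the extension by Hassell and Marshall \cite{HM08}) and used as a black box in \S\ref{subsec:proofthcount}. There is therefore no in-paper proof to compare against.

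Your argument is essentially correct and is the standard explicit route for this model operator. The substitution $\varphi = x^{1/2}g$ does produce the modified Bessel equation of purely imaginary order $i\mu$, the $L^2$-at-infinity solution is $K_{i\mu}(\tau x)$, and the log-periodic small-argument asymptotics of $K_{i\mu}$ give the geometric sequence of Dirichlet zeros and hence $\mathcal{N}_{-E}(\mathfrak{h}) = \tfrac{\mu}{2\pi}|\ln E| + \cO(1)$. The bracketing step to absorb $V_0$ is also correct in spirit. One point you pass over too quickly: for the \emph{Neumann} half of the bracket on $(R,\infty)$, the scaled operator is not $R^{-2}\mathfrak{h}$ (which carries a Dirichlet condition at $\xi=1$) but its Neumann counterpart; the quantization condition then reads $K_{i\mu}'(\tau)=0$ rather than $K_{i\mu}(\tau)=0$. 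The derivative $K_{i\mu}'$ has the same log-periodic oscillation near $0$ (differentiate the asymptotic formula), so the zero count is again $\tfrac{\mu}{2\pi}|\ln E| + \cO(1)$, but this should be said explicitly for the two-sided bracket to close.
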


In \S\,\ref{subsec:lowbound} we find a lower bound for $\mathcal{N}_{1-E}(\cQ_{\Gui(\theta)})$. An upper bound is obtained in \S\,\ref{subsec:uppbound} and these bounds, together with Theorem \ref{th:KS88}, yield Theorem \ref{th:countvp} in \S\,\ref{subsec:proofthcount}.

For the next two subsections, instead of working with the quadratic form $\cQ_{\Gui(\theta)}$, it will be more convenient to use the quadratic form $\cQ_{\Omega(\theta)}$ introduced in \eqref{eqn:defomega}.

\subsection{A lower bound on the number of bound states}
\label{subsec:lowbound}
Let us denote by $\Sigma$ the half-strip $(1,+\infty)\times(0,\pi)$. We consider the quadratic form $\cQ_{\Sigma}(\theta)$, Friedrichs extension of the form defined for all functions $\psi\in\sC_0^\infty(\Sigma)$ by:
\begin{equation}
   \cQ_{\Sigma}(\theta)(\psi) = \int_{\Sigma}(|\partial_s\psi|^2 + |\partial_u\psi|^2)\,
   (s\sin\theta + u \cos\theta)\;\dd s\dd u,
\label{eqn:fqlower}
\end{equation}
where the $(s, u)$ variables are related to the physical domain through the change of variables \eqref{eqn:rotlayer}.
We also define the one-dimensional quadratic form $\hat{\mathfrak{q}}(\theta)$, Friedrichs extension of the form defined, for all $\hat{\varphi}\in\sC_0^\infty(1,+\infty)$, by:
\begin{equation*}
   \hat{\mathfrak{q}}(\theta)(\hat\varphi) = 
   \int_1^{+\infty}|\partial_\sigma\hat\varphi|^2 -\frac{1}{4\sigma^2\sin^2\theta}|\hat\varphi|^2
   \;\dd \sigma.
\end{equation*}

\begin{prop}
\label{prop:lowboundnbvp}
Let us fix $\theta\in(0,\frac{\pi}2)$. Let $E>0$, and set $\hat E =(1+\pi\cot\theta)^2E$. We have:
\begin{equation*}
   \mathcal{N}_{-\hat E}(\hat{\mathfrak{q}}(\theta)) \leq 
   \mathcal{N}_{1-E}(\cQ_{\Omega(\theta)}).
\end{equation*}
\end{prop}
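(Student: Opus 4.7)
The plan is to apply the min-max principle by constructing an explicit linear injection $\varphi\mapsto\psi$ from the span of the first $\mathcal{N}_{-\hat E}(\hat{\mathfrak{q}}(\theta))$ eigenfunctions of $\hat{\mathfrak{q}}(\theta)$ into the form domain of $\cQ_{\Omega(\theta)}$, under which the Rayleigh quotient falls strictly below $1-E$. Given $\varphi$ in the form domain of $\hat{\mathfrak{q}}(\theta)$, I would test $\cQ_{\Omega(\theta)}$ with
\[
\psi(s,u) := r(s,u)^{-1/2}\,\varphi_{*}(s)\,\sin u,\quad \varphi_{*}(s):=\varphi\!\bigl((s+\pi\cot\theta)/\alpha\bigr),\quad \alpha := 1+\pi\cot\theta,
\]
supported in the half-strip $\Sigma:=(1,\infty)\times(0,\pi)\subset\Omega(\theta)$ and extended by zero. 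Since $\varphi$ vanishes outside $(1,\infty)$, so does $\varphi_{*}$; together with $\sin 0=\sin\pi=0$ and the Dirichlet condition at $s=1$, this places $\psi$ in the form domain of $\cQ_{\Omega(\theta)}$. The Liouville-type factor $r^{-1/2}$ is chosen to absorb the weight $r=s\sin\theta+u\cos\theta$ in the quadratic form.

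The heart of the argument is the Rayleigh-quotient computation. Setting $\phi:=r^{1/2}\psi=\varphi_{*}(s)\sin u$, integration by parts in $s$ and $u$ (all boundary terms vanishing by Dirichlet) converts the weighted form into the flat one with a Hardy-type remainder $-1/(4r^{2})$:
\[
\cQ_{\Omega(\theta)}(\psi)=\int_{\Sigma}\bigl(|\partial_{s}\phi|^{2}+|\partial_{u}\phi|^{2}-\tfrac{1}{4r^{2}}|\phi|^{2}\bigr)\,\dd s\,\dd u,\qquad \|\psi\|^{2}=\int_{\Sigma}|\phi|^{2}\,\dd s\,\dd u.
\]
Carrying out the $u$-integrals, using $\int_{0}^{\pi}\sin^{2}u\,\dd u=\int_{0}^{\pi}\cos^{2}u\,\dd u=\pi/2$, yields
\[
\frac{\cQ_{\Omega(\theta)}(\psi)}{\|\psi\|^{2}}=1+\frac{\int_{1}^{\infty}|\varphi_{*}'|^{2}\,\dd s-\tfrac{1}{2\pi}\int_{1}^{\infty}|\varphi_{*}|^{2}I(s)\,\dd s}{\int_{1}^{\infty}|\varphi_{*}|^{2}\,\dd s},\qquad I(s):=\int_{0}^{\pi}\frac{\sin^{2}u}{r(s,u)^{2}}\,\dd u.
\]

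The key geometric estimate is the pointwise upper bound $r(s,u)\leq s\sin\theta+\pi\cos\theta=\sin\theta\,(s+\pi\cot\theta)$ on $\Sigma$, which delivers $\tfrac{1}{2\pi}I(s)\geq \tfrac{1}{4\sin^{2}\theta\,(s+\pi\cot\theta)^{2}}$. Plugging this in and performing the shift--dilation $\sigma=(s+\pi\cot\theta)/\alpha$ — a bijection of $(1,\infty)$ satisfying $\int|\varphi_{*}'|^{2}\,\dd s=\alpha^{-1}\int|\varphi'|^{2}\,\dd\sigma$, $\int|\varphi_{*}|^{2}/(s+\pi\cot\theta)^{2}\,\dd s=\alpha^{-1}\int|\varphi|^{2}/\sigma^{2}\,\dd\sigma$, and $\int|\varphi_{*}|^{2}\,\dd s=\alpha\int|\varphi|^{2}\,\dd\sigma$ — the Rayleigh-quotient estimate collapses to
\[
\frac{\cQ_{\Omega(\theta)}(\psi)}{\|\psi\|^{2}}-1\;\leq\;\alpha^{-2}\,\frac{\hat{\mathfrak{q}}(\theta)(\varphi)}{\|\varphi\|^{2}}.
\]
Since $\hat E=\alpha^{2}E$, the hypothesis $\hat{\mathfrak{q}}(\theta)(\varphi)<-\hat E\|\varphi\|^{2}$ forces this quantity to lie strictly below $-E$, hence $\mathrm{RQ}(\psi)<1-E$. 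The map $\varphi\mapsto\psi$ is linear and injective, so min-max yields the desired inequality. The main technical obstacle is the integration-by-parts bookkeeping that produces the Hardy remainder with the correct $1/4$ constant; once that is secured, the elementary geometric bound on $I(s)$ and the shift--dilation $\sigma=(s+\pi\cot\theta)/\alpha$ are precisely tuned so that the eigenvalue scaling factor $(1+\pi\cot\theta)^{2}$ emerges naturally.
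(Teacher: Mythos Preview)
Your proof is correct and follows essentially the same route as the paper's: extension by zero from the half-strip $\Sigma=(1,\infty)\times(0,\pi)$ into $\Omega(\theta)$, the Liouville substitution $\phi=r^{1/2}\psi$ producing the flat form with Hardy remainder $-1/(4r^{2})$, the geometric bound $r\le s\sin\theta+\pi\cos\theta$, and the shift--dilation $\sigma=(s+\pi\cot\theta)/(1+\pi\cot\theta)$. The only cosmetic difference is that the paper phrases the comparison as a chain of counting-function inequalities (via an intermediate form $\mathfrak{q}(\theta)$ and abstract separation of variables in $u$), whereas you pick the lowest transverse mode $\sin u$ explicitly and build the test functions in one shot; the content is identical.
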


\begin{proof}
Any $\psi\in\Dom(\cQ_{\Sigma}(\theta))$ can be extended by zero, defining $\psi_0\in\Dom(\cQ_{\Omega(\theta)})$ such that $\cQ_{\Sigma}(\theta)(\psi) = \cQ_{\Omega(\theta)}(\psi_0)$.
The min-max principle then yields
\begin{equation}
\mathcal{N}_{1-E}(\cQ_{\Sigma}(\theta)) \leq \mathcal{N}_{1-E}(\cQ_{\Omega(\theta)}).
\label{eqn:lowbound1}
\end{equation}
For $\psi\in\Dom(\cQ_{\Sigma}(\theta))$, let $\widehat\psi(s,u) = \sqrt{s\sin\theta + u\cos\theta}\,\,\psi(s,u)$. We have:
\begin{equation}\label{eqn:psihat}
\cQ_{\Sigma}(\theta)(\psi) = \displaystyle\int_{\Sigma}|\partial_{s}\widehat\psi|^2 + |\partial_{u}\widehat\psi|^2 - \frac{1}{4(s\sin\theta + u\cos\theta)^2}|\widehat\psi|^2\; \dd s \dd u
\end{equation}
and we bound $(s\sin\theta + u\cos\theta)^2$ from above by $(s\sin\theta + \pi\cos\theta)^2$, obtaining
\begin{equation}
\cQ_{\Sigma}(\theta)(\psi) 
\leq\displaystyle\int_{\Sigma}|\partial_{s}\widehat\psi|^2 + |\partial_{u}\widehat\psi|^2 - \frac{1}{4(s\sin\theta + \pi\cos\theta)^2}|\widehat\psi|^2 \;\dd s \dd u.
\label{eqn:red1Dlow}
\end{equation}
Now, we denote by ${\mathfrak{q}}(\theta)$ the quadratic form, Friedrichs extension of the form defined for $\varphi\in\sC_0^\infty(1,+\infty)$, by:
\begin{equation*}
   \mathfrak{q}(\theta)(\varphi) =
   \int_1^{+\infty} |\partial_s\varphi|^2 - \frac{1}{4(s\sin\theta + 
   \pi \cos\theta)^2}\,|\varphi|^2 \;\dd s.
\end{equation*}
So the right hand side of \eqref{eqn:red1Dlow} has two blocs with separated variables: The first one is ${\mathfrak{q}}(\theta)$ in $s$ variable and the second one is the Dirichlet Laplacian quadratic form on $H^1_0(0,\pi)$ whose eigenvalues are $k^2$, for $k\ge1$ integer. We deduce
\begin{equation}
\mathcal{N}_{-E}({\mathfrak{q}}(\theta))\leq\mathcal{N}_{1-E}(\cQ_{\Sigma}(\theta)).
\label{eqn:lowinte}
\end{equation}
Let us perform the change of variables $\sigma = \frac{s + \pi\cot\theta}{1 + \pi\cot\theta}$. For all function $\varphi$ in the domain $\Dom({\mathfrak{q}}(\theta))$, we denote $\hat{\varphi}(\sigma) = {\varphi}(s)$. We get:
\begin{equation*}
\frac{{\mathfrak{q}}(\theta)(\varphi)}{\int_1^{+\infty}|\varphi|^2\;\dd s} = 
(1 + \pi\cot\theta)^{-2}\frac{\hat{\mathfrak{q}}(\theta)(\hat\varphi)}{\int_1^{+\infty}|\hat\varphi|^2\;\dd s}\ .
\end{equation*}
Using \eqref{eqn:lowbound1}, \eqref{eqn:lowinte} and the min-max principle, this achieves the proof of PropositionÊ \ref{prop:lowboundnbvp}.
\end{proof}

\subsection{An upper bound on the number of bound states}
\label{subsec:uppbound}
To obtain an upper bound, we follow the strategy of \cite{MT05} and \cite[\S\,5]{DLR12}. Let $(\chi_0,\chi_1)$ be a $\sC^\infty$ partition of unity such that:
\begin{equation*}
\chi_0(s)^2 + \chi_1(s)^2 = 1,
\end{equation*}
with $\chi_0(s) = 1$ for $s < 1$ and $\chi_0(s) = 0$ for $s > 2$. We set $W(s) = |\chi'_0(s)|^2 + |\chi'_1(s)|^2$. Now, we consider the quadratic form $\check{q}(\theta)$, Friedrichs extension of the form defined for all $\check{\varphi}\in\sC_0^\infty(1,+\infty)$:
\begin{equation*}
   \check{q}(\theta)(\check{\varphi}) = 
   \int_1^{+\infty}|\partial_s\check{\varphi}|^2 - \Big(\frac{1}{4s^2\sin^2\theta} + W(s)\Big)|
   \check{\varphi}|^2\;\dd s.
\end{equation*}

\begin{prop}
\label{prop:uppboundnbvp}
Let us choose $\theta\in(0,\frac{\pi}{2})$. There exists a constant $C(\theta)$ (depending only on $\theta$), such that for all $E>0$, we have:
\begin{equation*}
\mathcal{N}_{1-E}(\cQ_{\Omega(\theta)}) \leq C(\theta) + \mathcal{N}_{-E}(\check{q}(\theta)).
\end{equation*}
\end{prop}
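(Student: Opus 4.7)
The proof follows the classical IMS (Ismagilov--Morgan--Simon) bracketing strategy, here adapted to the weighted form $\cQ_{\Omega(\theta)}$, in order to separate the conical tip from the half-strip. The starting point is the weighted IMS identity: since $\chi_0,\chi_1$ depend only on $s$ and $\chi_0^2+\chi_1^2=1$,
\begin{equation*}
\cQ_{\Omega(\theta)}(\psi) = \cQ_{\Omega(\theta)}(\chi_0\psi)+\cQ_{\Omega(\theta)}(\chi_1\psi) - \int_{\Omega(\theta)}W(s)|\psi|^2(s\sin\theta+u\cos\theta)\,\dd s\,\dd u.
\end{equation*}
Defining $\cQ^{(i)}(\varphi):=\cQ_{\Omega(\theta)}(\varphi)-\int W|\varphi|^2(s\sin\theta+u\cos\theta)\,\dd s\,\dd u$ on functions supported in $\mathrm{supp}(\chi_i)$ (with Dirichlet condition on the artificial interface $s\in\{1,2\}$), a standard Glazman-type counting argument (as in \cite[\S\,5]{DLR12}), based on the injectivity of $\psi\mapsto(\chi_0\psi,\chi_1\psi)$ together with projection onto the spectral subspaces of $\cQ^{(0)}$, $\cQ^{(1)}$ below $1-E$, yields
\begin{equation*}
\mathcal{N}_{1-E}(\cQ_{\Omega(\theta)})\leq\mathcal{N}_{1-E}(\cQ^{(0)})+\mathcal{N}_{1-E}(\cQ^{(1)}).
\end{equation*}

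The form $\cQ^{(0)}$ lives on the bounded region $\Omega(\theta)\cap\{s<2\}$; its associated operator has discrete spectrum, so $\mathcal{N}_{1-E}(\cQ^{(0)})\leq\mathcal{N}_1(\cQ^{(0)})=:C_0(\theta)<\infty$ uniformly in $E>0$. For $\cQ^{(1)}$, which acts on the half-strip $\Sigma=(1,\infty)\times(0,\pi)$, I reuse the gauge change of the lower bound: setting $\widehat\psi=\sqrt{s\sin\theta+u\cos\theta}\,\psi$, the identity \eqref{eqn:psihat} combined with $W(s)|\psi|^2(s\sin\theta+u\cos\theta)=W(s)|\widehat\psi|^2$ gives
\begin{equation*}
\cQ^{(1)}(\psi)=\int_\Sigma|\partial_s\widehat\psi|^2+|\partial_u\widehat\psi|^2-\frac{|\widehat\psi|^2}{4(s\sin\theta+u\cos\theta)^2}-W(s)|\widehat\psi|^2\,\dd s\,\dd u.
\end{equation*}
Since $u\cos\theta\geq 0$ on $\Sigma$, the pointwise bound $(s\sin\theta+u\cos\theta)^2\geq s^2\sin^2\theta$ produces a lower bound in which the negative potential depends only on $s$ (in contrast to the lower-bound proof, which kept the full weight); the resulting form fully separates variables.

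Expanding $\widehat\psi=\sum_{k\geq 1}g_k(s)e_k(u)$ in the Dirichlet orthonormal basis $e_k(u)=\sqrt{2/\pi}\sin(ku)$ of $(0,\pi)$, with eigenvalues $k^2$, the min-max principle then yields
\begin{equation*}
\mathcal{N}_{1-E}(\cQ^{(1)})\leq\sum_{k\geq 1}\mathcal{N}_{-(k^2-1+E)}(\check q(\theta)).
\end{equation*}
The $k=1$ term is exactly $\mathcal{N}_{-E}(\check q(\theta))$. For $k\geq 2$ the threshold $-(k^2-1+E)\leq -3$, and because the negative potential $\tfrac{1}{4s^2\sin^2\theta}+W(s)$ of $\check q(\theta)$ is bounded on $(1,\infty)$ (as $s\geq 1$ and $W$ is compactly supported), only finitely many modes $k$ contribute and each of them contributes a number of eigenvalues bounded independently of $E$. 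Calling this finite sum $C_1(\theta)$ and setting $C(\theta):=C_0(\theta)+C_1(\theta)$ completes the proof. The main technical obstacle is the Glazman bracketing step, which one must set up carefully so that the Dirichlet trace on the artificial interface inside $\Omega(\theta)$ is correctly handled; once this is done, the gauge change, the pointwise comparison of potentials, and the Fourier separation are routine.
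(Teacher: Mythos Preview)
Your proof is correct and follows essentially the same route as the paper: IMS localization with the partition $(\chi_0,\chi_1)$, Glazman-type bracketing into the bounded piece $\{s<2\}$ and the half-strip $\Sigma$, a compact-resolvent argument on the bounded piece, then on $\Sigma$ the gauge change $\widehat\psi=\sqrt{s\sin\theta+u\cos\theta}\,\psi$, the pointwise comparison $(s\sin\theta+u\cos\theta)^2\ge s^2\sin^2\theta$, and separation of variables. The only noteworthy difference is that you treat the higher transverse modes $k\ge2$ explicitly, bounding their total contribution by a finite $E$-independent constant $C_1(\theta)$, whereas the paper writes directly $\mathcal{N}_{1-E}(\cQ_{\Sigma_1})\le\mathcal{N}_{-E}(\check{\mathfrak{q}}(\theta))$ after separating variables; your version is slightly more careful on this point, and in any case the extra constant is absorbed into $C(\theta)$.
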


\begin{proof}
For any function ${\psi}\in\Dom(\cQ_{\Omega(\theta)})$, we deduce from the definition of $W(s)$ and from the \enquote{IMS} formula (see \cite{CFKS87}):
\begin{multline}
   \cQ_{\Omega(\theta)}(\psi) = 
   \cQ_{\Omega(\theta)}(\chi_0\psi) + \cQ_{\Omega(\theta)}(\chi_1\psi) \\ -
   \int_{\Omega(\theta)}W(s)(|\chi_{0}\tilde\psi|^2 + |\chi_{1}\tilde\psi|^2)\,
   (s\sin\theta + u\cos\theta)\;\dd u \dd s.
\label{eqn:uppboundIMS}
\end{multline}
We introduce the subdomains $\Sigma_0 = \{(s,u)\in\Omega(\theta) : s < 2\}$ and $\Sigma_1=\Sigma$ of $\Omega(\theta)$.
Then we consider the quadratic forms $\cQ_{\Sigma_0}$ and $\cQ_{\Sigma_1}$ defined for $\phi\in\Dom(\cQ_{\Sigma_j})$ by:
\begin{equation*}
   \cQ_{\Sigma_j}(\phi) = 
   \int_{\Sigma_{j}}(\vabs{\partial_{s}\phi}^2 + \vabs{\partial_{u}\phi}^2 - W(s)\vabs{\phi}^2)\,
   (s\sin\theta+u\cos\theta)\;\dd u \dd s,\quad j=0,1,
\end{equation*}
where the form domains are defined by
\begin{align*}
  \Dom(\cQ_{\Sigma_0}) &= 
  \big\{\phi\in\Dom(\cQ_{\Omega(\theta)}):\quad \phi = 0\ \text{on }[2,\infty)\times(0,\pi)\big\},\\
  \Dom(\cQ_{\Sigma_1}) &= 
  \big\{\phi\in\Dom(\cQ_{\Omega(\theta)}):\quad \phi = 0\ \text{on }(-\infty,1]\times(0,\pi)\big\}.
\end{align*}
Of course, those quantities also depend on $\theta$. As we are looking for a result for a chosen $\theta$, we have dropped the mention of $\theta$ in these notations.
Thanks to \eqref{eqn:uppboundIMS}, we get:
\begin{equation*}
   \cQ_{\Omega(\theta)}(\psi) = \cQ_{\Sigma_{0}}(\chi_{0}\psi) + \cQ_{\Sigma_{1}}(\chi_{1}\psi).
\end{equation*}
Using \cite[Lemma 5.2]{DLR12} we find
\begin{equation}
\label{eqn:uppboundNE}
   \mathcal{N}_{1-E}(\cQ_{\Omega(\theta)}) \leq 
   \mathcal{N}_{1-E}(\cQ_{\Sigma_0}) + \mathcal{N}_{1-E}(\cQ_{\Sigma_1}).
\end{equation}
We now provide upper bounds for both terms of the right hand side of inequality \eqref{eqn:uppboundNE}.

1) We have, obviously: $\mathcal{N}_{1-E}(\cQ_{\Sigma_0})\le\mathcal{N}_{1}(\cQ_{\Sigma_0})$ for all $E>0$.
Then we note that
\begin{equation*}
   \int_{\Sigma_0}(|\partial_s\psi|^2 + |\partial_u\psi|^2)\,(s\sin\theta + u\cos\theta)\;\dd s \dd u 
   - \|W\|_\infty\|\psi\|_{\Sigma_0}^2 \leq\cQ_{\Sigma_0}(\psi)\,.
\end{equation*}
The quadratic form on the left hand side is associated with the axisymmetric Dirichlet Laplacian in a bounded domain. This operator has a compact resolvent and its eigenvalue sequence tends to infinity. We deduce
\begin{equation}
\label{eqn:NS0}
   \mathcal{N}_{1-E}(\cQ_{\Sigma_0})\le\mathcal{N}_{1}(\cQ_{\Sigma_0}) = C(\theta)<\infty
\end{equation}

2) For ${\psi}\in\Dom(\cQ_{\Sigma_{1}})$, we set $\hat{\psi}(s,u) = \sqrt{s\sin\theta+u\cos\theta}\,\psi(s,u)$ and we find (see \eqref{eqn:psihat}):
\begin{equation*}
\int_{\Sigma_{1}}|\partial_s\widehat\psi|^2 + |\partial_u\widehat\psi|^2 - \frac{1}{4s^2\sin^2\theta}|\widehat\psi|^2- W(s)|\widehat\psi|^2\dd s \dd u \le \cQ_{\Sigma_1}(\psi) .
\end{equation*}
Separating the variables, we obtain:
\begin{equation}
\label{eqn:NS1}
 \mathcal{N}_{1-E}(\cQ_{\Sigma_{1}(\theta)}) \leq \mathcal{N}_{-E}(\check{\mathfrak{q}}(\theta)).
\end{equation}

Now, we can end the proof of Proposition \ref{prop:uppboundnbvp}:
With estimates \eqref{eqn:NS0} and \eqref{eqn:NS1}, we obtain an upper bound for the left hand side of inequality \eqref{eqn:uppboundNE}. This yields Proposition \ref{prop:uppboundnbvp}.
\end{proof}

\subsection{Proof of TheoremÊ \ref{th:countvp}}
\label{subsec:proofthcount}
Thanks to Propositions \ref{prop:lowboundnbvp} and \ref{prop:uppboundnbvp} we have the following inequality:
\begin{equation}
\mathcal{N}_{-(1 + \pi\cot\theta)^{2}E}(\hat{\mathfrak{q}}(\theta)) \leq \mathcal{N}_{1-E}(\cQ_{\Omega(\theta)}) \leq C(\theta) + \mathcal{N}_{-E}(\check{\mathfrak{q}}(\theta)).
\label{eqn:encKSfin}
\end{equation}
Now we use Theorem \ref{th:KS88} on each side of \eqref{eqn:encKSfin}. For the left hand side, we obtain:
\begin{equation*}
\mathcal{N}_{-(1 + \pi\cot\theta)^2E}(\hat{\mathfrak{q}}(\theta)) \underset{E\rightarrow0}{\sim} \frac{\cot\theta}{4\pi}|\ln \big((1 + \pi\cot\theta)^2 E\big)| \underset{E\rightarrow0}{\sim} \frac{\cot\theta}{4\pi}|\ln E|.
\end{equation*}
On the right hand side, because $W\in\sC_0^\infty(1,+\infty)$ we get:
\begin{equation*}
\mathcal{N}_{-E}(\check{\mathfrak{q}}(\theta)) \underset{E\rightarrow0}{\sim} \frac{\cot\theta}{4\pi}|\ln (E)|.
\end{equation*}
Together with \eqref{eqn:encKSfin}, it gives Theorem \ref{th:countvp}.
\hfill$\square$\par\medskip

\section{Conical layers in the small aperture limit}
\label{sec:conicalsemiclass}
This section is devoted to the proof of Theorem \ref{th:expavp}. We work with the meridian guide $\Gui(\theta)$ and the operator $\cH_{\Gui(\theta)}$ introduced in SubsectionÊ \ref{subsec:dirlapl}. $\Gui(\theta)$ depends on $\theta$ and it is more convenient to transfer its dependence into the coefficients of the operator. Therefore, we perform the change of variable:
\begin{equation*}
x = z\sqrt{2}\sin\theta,\quad y=r\sqrt{2}\cos\theta.
\end{equation*}
The meridian guide $\Gui(\theta)$ becomes $\Gui = \Gui(\frac{\pi}4)$ and $\cH_{\Gui(\theta)}$ is unitary equivalent to
\begin{equation}
\mathcal{D}_{\Gui}(\theta) = -2\sin^2\theta\partial_x^2 - 2\cos^2\theta\frac1y\partial_y(y\partial_y).
\label{eqn:defopD}
\end{equation}
Let $h = \tan\theta$, dividing by $2\cos^2\theta$ we obtain the partially semiclassical operator in the $x$-variable:
\begin{equation*}
\cL_{\Gui}(h) = -h^2\partial_x^2 - \frac1y\partial_y(y\partial_y).
\end{equation*}
This operator acts on $L^2(\Gui,y\dd x \dd y)$ and its eigenvalues, denoted by $(\lambda_n(h))_{n\geq1}$ satisfy:
\begin{equation*}
\lambda_n(\tan\theta) = (2\cos^2\theta)^{-1}\mu_n(\theta).
\end{equation*}
By definition, the regime $\theta\rightarrow0$ corresponds to the semiclassical regime $h\rightarrow0$. We will prove Theorem \ref{th:expavp} using the scaled operator $\cL_{\Gui}(h)$.

Besides, we introduce some notations and results that are useful in this section. We define $\Tri$, the triangular end of $\Gui$, by:
\begin{equation*}
\Tri = \{(x,y)\in\Gui : x <0\}.
\end{equation*}
We consider the operator $\cL_\Tri(h)$ defined by:
\begin{equation*}
\cL_\Tri(h)= -h^2\partial_x^2 - \frac1y\partial_y(y\partial_y),
\end{equation*}
with domain $\Dom(\cL_\Tri(h)) =\{\psi \in \Dom(\cL_{\Gui}(h)) : \psi(0,\cdot) = 0, \mathsf{supp}\ \psi\subset\Tri\}$. In fact, $\cL_\Tri(h)$ is associated with the $0$-th fiber of the Dirichlet Laplacian in a cone. In \cite[Proposition 9]{O14}, one of the authors studies the eigenpairs of cones of small aperture:

\begin{theorem}
\label{th:O14}
The eigenvalues of $\cL_\Tri(h)$, denoted by $\lambda_{n}^\Delta(h)$, admit the expansions:
\begin{equation*}
\lambda_n^\Delta(h) \underset{h\rightarrow0}{\sim}\sum_{k\geq0}\beta_{k,n}h^{k/3},\quad\text{with } \beta_{0,n} = \frac{j_{0,1}^2}{2\pi^2}, \beta_{1,n} = 0, \beta_{2,n} = \frac{(2j_{0,1}^2)^{2/3}}{2\pi^2}z_{\mathsf{A}}(n).
\end{equation*}
Here $\underset{h\rightarrow0}{\sim}$ means that for all $K\in\mathbb{N}$, there exist $C_K>0$ and $h_0>0$ such that for all $h\in(0,h_0)$ we have:
$
|\lambda_n^\Delta(h) - \sum_{k=0}^K\beta_{k,n}h^{k/3}|\leq C_K h^{(K+1)/3}.
$
\end{theorem}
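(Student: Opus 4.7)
My approach is a Born--Oppenheimer reduction leading to a one-dimensional effective operator whose minimum sits at a Dirichlet boundary, followed by an Airy rescaling and a quasi-mode construction justified by Agmon estimates. First, straighten the triangle $\Tri=\{(x,y):-\pi\sqrt 2<x<0,\ 0<y<x+\pi\sqrt 2\}$ by setting $y=L(x)t$ with $L(x)=x+\pi\sqrt 2$, so that $\Tri$ becomes the fixed rectangle $(-\pi\sqrt 2,0)\times(0,1)$. At each fixed $x$, the transverse operator $L(x)^{-2}\bigl(-\tfrac{1}{t}\partial_t(t\partial_t)\bigr)$ with regularity at $t=0$ and Dirichlet at $t=1$ has first eigenpair $\bigl(j_{0,1}^2/L(x)^2,\ J_0(j_{0,1}t)\bigr)$, so projecting onto this mode yields the one-dimensional effective operator $-h^2\partial_x^2+V(x)$ with $V(x)=j_{0,1}^2/L(x)^2$, Dirichlet at $x=0$ and confining ($V\to+\infty$) at the apex $x=-\pi\sqrt 2$. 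The function $V$ is strictly decreasing, so its minimum is attained at the Dirichlet endpoint $x=0$, with $V(0)=j_{0,1}^2/(2\pi^2)=\beta_{0,n}$ and $V'(0)=-j_{0,1}^2/(\sqrt 2\pi^3)$.

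Because the minimum is reached at a Dirichlet boundary with non-vanishing slope, the natural semiclassical scale is Airy. Rescale $x=-h^{2/3}X$, $X\in(0,\infty)$, expand $V(x)-V(0)=h^{2/3}|V'(0)|X+\cO(h^{4/3}X^2)$, and combine with $-h^2\partial_x^2=-h^{2/3}\partial_X^2$ to see that the leading rescaled operator is
\[
V(0)+h^{2/3}\bigl(-\partial_X^2+|V'(0)|X\bigr),
\]
acting on $L^2(0,\infty)$ with Dirichlet at $X=0$. The $n$-th Dirichlet eigenvalue of $-\partial_X^2+X$ on the half-line is exactly $z_{\mathsf A}(n)$ by the very definition of the reversed Airy zeros, hence after rescaling we obtain a correction $h^{2/3}|V'(0)|^{2/3}z_{\mathsf A}(n)$; a direct computation yields $|V'(0)|^{2/3}=(2j_{0,1}^2)^{2/3}/(2\pi^2)$, matching $\beta_{2,n}$. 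The coefficient $\beta_{1,n}$ vanishes because the next term of $V(x)-V(0)$ is of order $h^{4/3}$ and the straightening of the triangle contributes only integer powers of $h^{2/3}$ at leading order.

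To obtain the full asymptotic expansion, I construct quasi-modes of the form
\[
\Psi_n^{[K]}(x,t;h)=\chi(x)\sum_{k=0}^K h^{k/3}\Phi_{k,n}(X,t),\qquad \Lambda_n^{[K]}(h)=\sum_{k=0}^K\beta_{k,n}h^{k/3},
\]
with $\chi$ a smooth cutoff supported near $x=0$ and $\Phi_{0,n}(X,t)=J_0(j_{0,1}t)\,\mathrm{Ai}(X-z_{\mathsf A}(n))$. Plugging into the rescaled eigenvalue equation and identifying powers of $h^{1/3}$ produces a recursive hierarchy whose $k$-th equation is solved, via a Fredholm alternative against $\Phi_{0,n}$ in the variable $t$, by a finite combination of transverse Bessel modes $J_0(j_{0,\ell}t)$ multiplied by polynomials in $X$ times Airy functions and their derivatives; the solvability condition simultaneously fixes $\beta_{k,n}$. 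Super-exponential decay of $\mathrm{Ai}$ secures $L^2$-integrability, and the spectral theorem converts the quasi-mode residue into the upper bound $\lambda_n^\Delta(h)\leq\sum_{k=0}^K\beta_{k,n}h^{k/3}+\cO(h^{(K+1)/3})$. The main obstacle is the matching lower bound: I would prove Agmon-type weighted estimates with weight $\exp\bigl(h^{-1/3}\Phi(X)\bigr)$, where $\Phi(X)\sim\tfrac{2}{3}|V'(0)|^{1/2}X^{3/2}$ is the Agmon distance attached to $V-V(0)$. This establishes $h^{\infty}$-concentration of eigenfunctions on the Airy scale $|x|\lesssim h^{2/3}$, which legitimises the cutoff $\chi$, tames the Born--Oppenheimer cross-terms $L'/L$ (regular as long as one stays away from the apex), and, combined with the min-max principle applied to the linear span of the quasi-modes $\Psi_1^{[K]},\ldots,\Psi_n^{[K]}$, closes the lower bound and yields the announced expansion to arbitrary order.
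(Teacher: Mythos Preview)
The paper does not prove this theorem: it is quoted verbatim from \cite[Proposition~9]{O14} and used as an input. Your sketch is therefore not competing with anything in the present paper, but it does follow the standard route that \cite{O14} takes, namely a Born--Oppenheimer reduction to the one-dimensional operator $-h^2\partial_x^2+j_{0,1}^2(x+\pi\sqrt2)^{-2}$ on $(-\pi\sqrt2,0)$ with Dirichlet at $x=0$, followed by an Airy rescaling at the boundary minimum and a quasi-mode/Agmon argument. Your identification of the coefficients is correct: $V(0)=j_{0,1}^2/(2\pi^2)$, $|V'(0)|=j_{0,1}^2/(\sqrt2\,\pi^3)$, and $|V'(0)|^{2/3}=(2j_{0,1}^2)^{2/3}/(2\pi^2)$.

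Two points you pass over quickly deserve more care if you want a self-contained proof. First, the change of variables $y=L(x)t$ does not merely produce the transverse Bessel operator: the measure becomes $L(x)^2\,t\,\dd x\,\dd t$ and the $x$-derivative picks up terms in $L'(x)/L(x)\,t\partial_t$, so the full operator on the rectangle is not simply $-h^2\partial_x^2+L(x)^{-2}(-t^{-1}\partial_t t\partial_t)$ but contains first-order cross terms of size $\cO(h^2)$ after the scaling. These do not affect $\beta_{0,n},\beta_{1,n},\beta_{2,n}$ but must be tracked in the recursion if you want all $\beta_{k,n}$. Second, your argument for $\beta_{1,n}=0$ (``the straightening contributes only integer powers of $h^{2/3}$'') is correct in outcome but not in reasoning: the absence of an $h^{1/3}$ term comes from the fact that, after the Airy rescaling, the hierarchy proceeds in powers of $h^{1/3}$ with the very first correction arising from the $\cO(h^{4/3}X^2)$ Taylor term of $V$, and the solvability condition at order $h^{1/3}$ is trivially satisfied with $\beta_{1,n}=0$. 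With these refinements your outline is a faithful reconstruction of the cited result.
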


This section is divided into three parts. In Section \ref{subsec:effpot} we study a one dimensional operator with electric potential $v$, the so-called Born-Oppenheimer approximation of $\cL_{\Gui}(h)$. Since this reduced operator is a lower bound of $\cL_{\Gui}(h)$ and thanks to the confining properties of $v$, we deduce, in Section \ref{subsec:Agmon}, that  the eigenfunctions of $\cL_{\Gui}(h)$ are essentially localized in $\Tri$. In Section \ref{subsec:proofexp}, we infer that the two-terms semiclassical expansions of the eigenvalues of $\cL_{\Gui}(h)$ and $\cL_\Tri(h)$ coincide.

\subsection{Effective one dimensional operator}
\label{subsec:effpot}
As the operator $\cL_{\Gui}(h)$ is partially semiclassical, we may use the strategy of the famous Born-Oppenheimer approximation (see \cite{BO27,CDS81, J14, KMSW92,M89, M07}). 
Let us denote by $v(x)$ the first eigenvalue of the operator, acting on $L^2\left(\left(\max(0,x), x+\pi\sqrt{2}\right), y\dd y\right)$ and defined by $\mathfrak{h}_{x}=- \frac1y\partial_y(y\partial_y)$ with Dirichlet conditions in $y=x+\pi\sqrt{2}$ and in $y=x$ if $x>0$. Since we are concerned with the expansion of the low lying eigenvalues, this approximation consists in replacing $- \frac1y\partial_y(y\partial_y)$ in the expression of $\cL_{\Gui}(h)$ by $v(x)$ on each slide of $\Gui$ at fixed $x$. Thus we consider the following one dimensional Schr\"odinger operator, acting on $L^2(-\pi\sqrt{2},+\infty)$,
\begin{equation}
-h^2\partial_x^2 +v(x).
\label{eqref:defBO}
\end{equation}
Let us now describe the function $v$. For each $x>-\pi\sqrt{2}$, we consider the quadratic form $\mathfrak{q}_x^\trans$ associated with the transverse operator $- \frac1y\partial_y(y\partial_y)$
\begin{equation*}
   \mathfrak{q}_x^\trans(\varphi) = 
   \int_{\max(0,x)}^{x+\pi\sqrt{2}}|\partial_y\varphi|^2\;y\dd y.
\end{equation*}
with Dirichlet boundary conditions in $y=x+\pi\sqrt{2}$ and in $y=x$ if $x>0$. Let $\Dom(\mathfrak{q}_x^\trans)$ denote its form domain. Then
\[
   v(x) = \min_{\varphi\in\Dom(\mathfrak{q}_x^\trans)}
   \frac{\mathfrak{q}_{x}^\trans(\varphi)}{\int_{\max(0,x)}^{x+\pi\sqrt{2}}|\varphi|^2y\dd y}\;.
\]
For $x\in(-\pi\sqrt{2},0)$, we have the explicit expression (first eigenvalue of the Dirichlet problem on the disk of radius $x+\pi\sqrt{2}$, cf.\ \cite[\S\,3.3]{O14}):
\begin{equation}
\label{eqn:vneg}
   v(x) = \frac{j_{0,1}^2}{(x+\pi\sqrt{2})^2}\,.
\end{equation}

\begin{prop}
\label{prop:effpot}
The effective potential $v$ has the following properties:
\begin{enumerate}[(i)]
\item $v$ is continuous and decreasing on $(-\pi\sqrt{2},0]$, continuous and non-decreasing on $(0,\infty)$.
\item The infimum of $v$ on $(-\pi\sqrt{2},\infty)$ is positive.
\item For all $x>0$, we have:
\begin{equation*}
   \frac12-\frac{1}{4x^2}\le v(x)\le \frac12\,.
\end{equation*}
\item The effective potential $v$ is continuous at $0$:
\begin{equation*}
   v(0) = \frac{j_{0,1}^2}{2\pi^2} \quad\mbox{and}\quad
   v(x) - \frac{j_{0,1}^2}{2\pi^2} \underset{x\rightarrow0^+}{\sim}
   \frac{1}{\ln x}\,\frac{j_{0,1}}{2\pi}\,\frac{Y_0(j_{0,1})}{J_0'(j_{0,1})}
   = \frac{1}{|\ln x|}\,\frac{j_{0,1}}{2\pi}\,\frac{|Y_0(j_{0,1})|}{|J_0'(j_{0,1})|}\,,
\end{equation*}
where $J_0$ and $Y_0$ are the Bessel functions of first and second kind, respectively.
\end{enumerate}
So $v$ is continuous on $(-\pi\sqrt{2},\infty)$ and attains its (unique) minimum at $x=0$, see Figure \ref{fig:allpot}.
\end{prop}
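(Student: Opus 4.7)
The plan is to handle the four items by exploiting two complementary representations of the transverse problem: the explicit Bessel-function solution of the ODE $-y^{-1}(y\varphi')'=\lambda\varphi$, and the rescaled form $\hat\varphi=\sqrt{y}\,\varphi$ which removes the weight at the cost of producing a Hardy-type potential $-1/(4y^2)$.

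For item (i) on $(-\pi\sqrt{2},0]$ I would simply invoke the explicit formula \eqref{eqn:vneg}: the transverse problem is the Dirichlet Laplacian on the disk of radius $x+\pi\sqrt{2}$, whose first axisymmetric eigenvalue is $j_{0,1}^2/(x+\pi\sqrt2)^2$, clearly continuous and strictly decreasing. For the second half of (i), I would fix the form domain by the affine change of variable $t=y-x$ (acceptable because both endpoints now carry Dirichlet conditions) and apply the $\hat\psi=\sqrt{y}\,\psi$ transformation, writing
\[
 v(x)=\min_{\tilde\psi\in H^1_0(0,\pi\sqrt2)}
 \frac{\displaystyle\int_0^{\pi\sqrt2}|\partial_t\tilde\psi|^2\,\dd t
 -\tfrac14\!\int_0^{\pi\sqrt2}\!\frac{|\tilde\psi|^2}{(t+x)^2}\,\dd t}
 {\displaystyle\int_0^{\pi\sqrt2}|\tilde\psi|^2\,\dd t}.
\]
Since, for each fixed $t$, $x\mapsto -(t+x)^{-2}$ is strictly increasing on $(0,+\infty)$, the Rayleigh quotient is pointwise non-decreasing in $x$, hence so is its infimum; continuity follows from uniform convergence of the coefficients on compact subsets of $(0,+\infty)$.

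Item (iii) drops out of the same rescaled Rayleigh quotient on $(x,x+\pi\sqrt2)$. The upper bound $v(x)\le\tfrac12$ follows from the test function $\hat\psi(y)=\sin\bigl((y-x)/\sqrt2\bigr)$, which yields the first Dirichlet eigenvalue $\pi^2/(\pi\sqrt2)^2=\tfrac12$ for $-\partial_y^2$ and whose Hardy correction $-\tfrac14\int|\hat\psi|^2/y^2\,\dd y$ is non-positive. The lower bound is obtained by the pointwise estimate $1/y^2\le 1/x^2$ on $(x,x+\pi\sqrt2)$ combined again with the Dirichlet constant $\tfrac12$.

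For item (iv), which I expect to be the main technical step, I would work with the exact solutions of the Bessel equation of order zero: with $k=\sqrt{v(x)}$, write $\varphi(y)=A\,J_0(ky)+B\,Y_0(ky)$ and impose Dirichlet conditions at $y=x$ and $y=x+\pi\sqrt2$, obtaining the implicit eigenvalue equation
\[
 J_0(kx)\,Y_0\!\bigl(k(x+\pi\sqrt2)\bigr)
 -J_0\!\bigl(k(x+\pi\sqrt2)\bigr)\,Y_0(kx)=0.
\]
Setting $k=j_{0,1}/(\pi\sqrt2)+\delta$ and using $J_0(kx)\to 1$, $Y_0(kx)=\tfrac{2}{\pi}\ln x+O(1)$ as $x\to 0^+$, together with the first-order expansion $J_0\!\bigl(k(x+\pi\sqrt2)\bigr)\simeq J_0'(j_{0,1})\bigl(\pi\sqrt2\,\delta+kx\bigr)$ and the evaluation $Y_0\!\bigl(k(x+\pi\sqrt2)\bigr)\to Y_0(j_{0,1})$, I would extract the leading balance
\[
 Y_0(j_{0,1})\;\simeq\;J_0'(j_{0,1})\bigl(\pi\sqrt2\,\delta+kx\bigr)\cdot\tfrac{2}{\pi}\ln x,
\]
which gives $\delta\sim\dfrac{\pi Y_0(j_{0,1})}{2\sqrt2\,j_{0,1}\,J_0'(j_{0,1})}\cdot\dfrac{1}{\ln x}$ (the $kx$ term is of lower order). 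Squaring $k$ and multiplying through then produces the announced equivalent for $v(x)-j_{0,1}^2/(2\pi^2)$, and in particular continuity at $0$. The delicate point will be to control the remainders uniformly in $x$ near $0$; I would make the perturbation argument rigorous either by an implicit function theorem applied to the renormalized equation, or by using the standard small-argument asymptotics of $J_0,Y_0$ with explicit error bounds.

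Finally, item (ii) is a direct consequence of (i) together with continuity at $0$ established in (iv): $v$ is decreasing on $(-\pi\sqrt2,0]$ and non-decreasing on $[0,+\infty)$, so it attains its infimum at $x=0$ with value $j_{0,1}^2/(2\pi^2)>0$. The overall strategy is thus: explicit ODE on the disk side, Hardy-type rescaling for monotonicity and the two-sided bound, and Bessel matching asymptotics for the logarithmic behavior at the corner point.
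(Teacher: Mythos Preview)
Your approach for items (i), (iii) and (iv) coincides with the paper's: the explicit disk formula on $(-\pi\sqrt{2},0]$, the rescaling $\hat\varphi=\sqrt{y}\,\varphi$ producing the Hardy potential $-1/(4(\hat y+x)^2)$ on the fixed domain $H^1_0(0,\pi\sqrt{2})$ for monotonicity and the two-sided bound, and the Bessel cross-product equation for the logarithmic asymptotics.

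The one genuine gap is the logical ordering of (ii) and (iv). You propose to deduce (ii) from (i) together with the continuity at $0$ obtained in (iv), but your sketch of (iv) already presupposes a positive lower bound on $v$ near $0^+$: writing $Y_0(kx)=\tfrac{2}{\pi}\ln x+O(1)$ requires $k=\sqrt{v(x)}$ to stay bounded away from $0$ (otherwise the $\ln k$ contribution is not $O(1)$), and Taylor-expanding $J_0\bigl(k(x+\pi\sqrt{2})\bigr)$ around $j_{0,1}$ assumes $\delta\to 0$, which is precisely the continuity you are trying to prove. The paper breaks this circularity by establishing (ii) independently \emph{before} (iv), via what amounts to Dirichlet domain monotonicity: for $x>0$ the annulus $(x,x+\pi\sqrt{2})$ sits inside the disk of radius $x+\pi\sqrt{2}$, hence $v(x)\ge j_{0,1}^2/(x+\pi\sqrt{2})^2$. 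This a priori bound, together with $v\le\tfrac12$ from (iii), confines the accumulation points of $\sqrt{v(x)}\,(x+\pi\sqrt{2})$ to $[j_{0,1},\pi]$, where $j_{0,1}$ is the only zero of $J_0$; continuity at $0$ then follows cleanly, and only afterwards is the logarithmic equivalent extracted. The repair to your plan is thus a one-line insertion, but as written the argument is circular. (Incidentally, your displayed formula for $\delta$ carries a spurious factor $\pi/j_{0,1}$; solving your balance equation gives $\delta\sim Y_0(j_{0,1})\big/\bigl(2\sqrt{2}\,J_0'(j_{0,1})\ln x\bigr)$, which after squaring $k$ reproduces the stated coefficient.)
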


\begin{figure}
\includegraphics[scale=0.66]{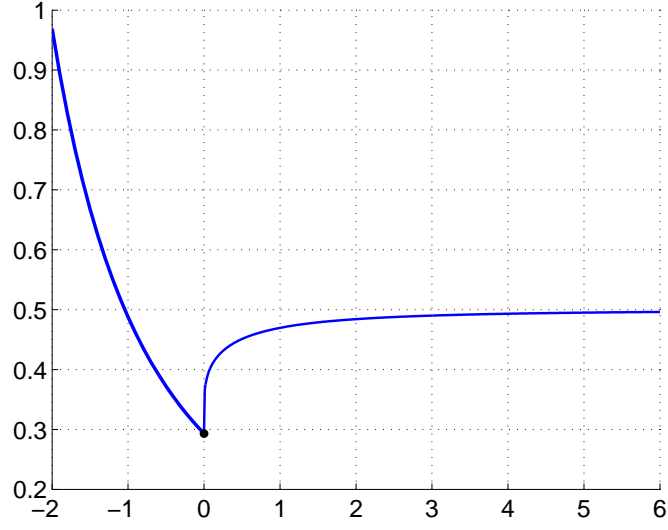}
\caption{The effective potential $v$}
\label{fig:allpot}
\end{figure}

\begin{proof}
\textit{(i)} The statement for $x\le0$ is an obvious consequence of the expression \eqref{eqn:vneg} of $v$. To tackle the case $x>0$, we perform the change of variables $\hat{y} = y - x$ that transforms the quadratic form $\mathfrak{q}_x^\trans$ into
\[
   \int_{0}^{\pi\sqrt{2}}|\partial_{\hat y}\varphi(\hat y+x)|^2\;(\hat y+x)\;\dd \hat y.
\]
To get rid of the metrics, we use the change of function defined by
\[
   \hat\varphi(\hat y) = \sqrt{\hat y+x}\;\varphi(\hat y+x).
\]
It transforms the quadratic form $\mathfrak{q}_x^\trans$ into $\hat{\mathfrak{q}}_x^\trans$ defined by:
\begin{equation*}
   \hat{\mathfrak{q}}_x^\trans(\hat\varphi) = 
   \int_{0}^{\pi\sqrt{2}} |\partial_{\hat{y}}\hat\varphi|^2
   - \frac{1}{4(\hat y+x)^2}\, |\hat\varphi|^2\; \dd \hat{y}\,,
\end{equation*}
so that we have the identities
\[
   \mathfrak{q}_x^\trans(\varphi)=\hat{\mathfrak{q}}_x^\trans(\hat\varphi)
   \quad\mbox{and}\quad
   \int_{x}^{x+\pi\sqrt{2}}|\varphi|^2\,y\,\dd y = \int_{0}^{\pi\sqrt{2}}|\hat\varphi|^2\;\dd y\,.
\]
For any positive $x$, the domain of the form $\hat{\mathfrak{q}}_x^\trans$ is $H^1_0(0,\pi\sqrt{2})$. At this point it becomes obvious that for any $\hat\varphi\in H^1_0(0,\pi\sqrt{2})$ the Rayleigh quotients $\hat{\mathfrak{q}}_x^\trans(\hat\varphi) \|\hat\varphi\|^{-2}$ are non-decreasing functions of $x$, hence the corresponding monotonicity of $v$ on $(0,\infty)$.

\textit{(ii)} For $a\in[0,\pi\sqrt{2})$, let us introduce the operator $\tilde{\mathfrak{h}}_{a}=-y^{-1}\partial_{y}y\partial_{y}$, acting on $L^2\left(\left(a,\pi\sqrt{2}\right), y\dd y\right)$ with Dirichlet conditions at $a$ and $\pi\sqrt{2}$, and denote by $\tilde v(a)$ its lowest eigenvalue, which is a non decreasing function of $a$. We note that $\tilde v(0)$ is larger than the first eigenvalue ${j_{0,1}^2}/{2\pi^2}$ of the Dirichlet Laplacian on the disc of radius $\pi\sqrt{2}$. By dilation, we get that
\[
   v(x) = 
   \frac{2\pi^2}{(x+\pi\sqrt{2})^2}\,\tilde v\Big(\frac{x\,\pi\sqrt{2}}{x+\pi\sqrt{2}}\Big)\geq
   \frac{2\pi^2}{(x+\pi\sqrt{2})^2}\,\frac{j_{0,1}^2}{2\pi^2}\,.
\]
Combining this with {\em (i)}, we obtain {\em (ii)}.

\textit{(iii)} For $x>0$, we bound $\hat{\mathfrak{q}}_x^\trans(\hat\varphi)$ from below
\[
   \hat{\mathfrak{q}}_x^\trans(\hat\varphi) \ge 
   \int_{0}^{\pi\sqrt{2}} |\partial_{\hat{y}}\hat\varphi|^2
   - \frac{1}{4x^2}\, |\hat\varphi|^2\; \dd \hat{y}
\]
Hence, since the first eigenvalue of the right hand side is $\frac12-\frac{1}{4x^2}$, we find that 
$v(x) \ge \frac12-\frac{1}{4x^2}$.

\textit{(iv)} Let $x>0$. The equation on eigenpairs $(\lambda,\varphi)$ of $\mathfrak{h}_{x}$ can be written as
\begin{gather}
\label{eqn:Bessel}
   y^2\partial^2_y\varphi + y\partial_y\varphi + \lambda y^2\varphi = 0, \\
\label{eqn:Dir}
   \varphi(x)=0 \quad\mbox{and}\quad \varphi(x+\pi\sqrt{2})=0\,.
\end{gather}
The first equation is a Bessel type equation whose general solution can be written as
\[
   A J_{0}({\lambda}^{1/2}{y}) + B Y_{0}({\lambda}^{1/2}{y}),\quad A,B\in\R.
\]
Finding a non-zero solution  to \eqref{eqn:Bessel}-\eqref{eqn:Dir} is equivalent to find a non-zero couple $(A,B)$ such that the above function satisfies both Dirichlet boundary conditions at $x$ and $x+\sqrt{2}$. An equivalent condition is that the following determinant is zero:
\[
   \begin{vmatrix}
   J_{0}({\lambda}^{1/2}{x}) & Y_{0}({\lambda}^{1/2}{x}) \\
   J_{0}({\lambda}^{1/2}(x+\pi\sqrt{2})) & Y_{0}({\lambda}^{1/2}(x+\pi\sqrt{2}))
\end{vmatrix} = 0\,,
\]
So, the effective potential $v$ satisfies the implicit equation:
\begin{equation}
   J_{0}\big(v(x)^{1/2}x\big)\,Y_{0}\big(v(x)^{1/2}(x+\pi\sqrt{2})\big) = 
   J_{0}\big(v(x)^{1/2}(x+\pi\sqrt{2})\big)\,Y_{0}\big(v(x)^{1/2}x\big).
\label{eqn:impliciteqpoteff}
\end{equation}
We know that the effective potential $v$ is bounded from above on $(0,\infty)$ by {\em(iii)} and is bounded from below by ${j_{0,1}^2}/{2\pi^2}$ by {\em(ii)}. Hence, we deduce:
\begin{equation}
   v(x)^{1/2}x \underset{ x \rightarrow 0^+}{\longrightarrow} 0
   \quad\mbox{and}\quad
   v(x)^{1/2}(x+\pi\sqrt{2)} \underset{ x \rightarrow 0^+}{\longrightarrow} \alpha\ge j_{0,1}\,.
\end{equation}
Since the left hand side of \eqref{eqn:impliciteqpoteff} is bounded and $Y_0(x)\underset{x\rightarrow0^+}{\longrightarrow}-\infty$ we get:
\begin{equation}
   J_{0}\big(v(x)^{1/2}(x+\pi\sqrt{2})\big) \underset{ x \rightarrow 0^+}{\longrightarrow} 0.
\label{eqn:limpt}
\end{equation}
Taking the limit in the left hand side of \eqref{eqn:limpt}, the only possible accumulation points of $\{v(x)\}$ as $x\to0_+$ are $\frac{j_{0,p}^2}{2\pi^2}$ ($p\in\mathbb{N}^*$). Because for all $p\geq2$, $\frac{j_{0,p}^2}{2\pi^2}>\frac12$, thanks to {\em(ii)} we get the continuity of the effective potential in $x=0$.
The $J_0$ zeros are simple, thus $J_0'(j_{0,1})\neq0$. We obtain:
\begin{equation}
\begin{array}{rcl}
   J_{0}\big(v(x)^{1/2}(x+\pi\sqrt{2})\big) &\underset{x \rightarrow0^+}{\sim}&
   \big(v(x)^{1/2}(x+\pi\sqrt{2}) - j_{0,1}\big)\,J'(j_{0,1}),\\
   &\underset{x \rightarrow0^+}{\sim}&
   \big(\sqrt{2\pi^2v(x)} - j_{0,1}\big)\,J'(j_{0,1}),\\[1.5ex]
   Y_{0}\big(v(x)^{1/2}x\big)&\underset{x \rightarrow0^+}{\sim}&
   \displaystyle\frac2\pi\, \ln\Big(\frac12v(x)^{1/2}x\Big)
   \underset{x \rightarrow0^+}{\sim}
   \displaystyle\frac2\pi \,\ln x\,.
\end{array}
\label{eqn:equivBess}
\end{equation}
Moreover, we have:
\begin{equation}
   J_{0}\big(v(x)^{1/2}x\big)\underset{x \rightarrow0^+}{\longrightarrow} 1
   \quad \mbox{and}\quad
   Y_{0}\big(v(x)^{1/2}(x+\pi\sqrt{2})\big)\underset{x \rightarrow0^+}{\longrightarrow}Y_{0}(j_{0,1}).
\label{eqn:limitBess}
\end{equation}
Equations \eqref{eqn:impliciteqpoteff}, \eqref{eqn:equivBess} and \eqref{eqn:limitBess} yield:
\begin{equation*}
   \frac2\pi \,\ln x \underset{x\rightarrow0^+}{\sim}
   \frac{Y_{0}(j_{0,1})}{\big(\sqrt{2\pi^2v(x)} - j_{0,1}\big)\,J_{0}'(j_{0,1})}\,.
\end{equation*}
This provides the proof of the asymptotic equivalence of $v-\frac{j^2_{0,1}}{2\pi^2}$ with $\frac{1}{\ln x}\,\frac{j_{0,1}}{2\pi}\,\frac{Y_0(j_{0,1})}{J_0'(j_{0,1})}$. The positivity of the quotient $\frac{1}{\ln x}\,\frac{Y_0(j_{0,1})}{J_0'(j_{0,1})}$ is then a consequence of {\em (i)} ($v$ non-decreasing on $(0,\infty)$).
\end{proof}

Here, we are not going to investigate the spectrum of the one dimensional operator defined in \eqref{eqref:defBO}, nevertheless one can see that the logarithmic behavior of $v$ near $x=0^+$ prevents us from using the classical harmonic approximation. 

\subsection{Agmon localization estimates}
\label{subsec:Agmon}
Before stating and proving Agmon localization estimates, we remark that for $N_0\in\mathbb{N}^*$, there exists $\Gamma_0>0$ and $h_0>0$ such that for all $n\in\{1,\dots,N_0\}$ and all $h\in(0,h_0]$:
\begin{equation}
\Big|\lambda_n(h) - \frac{j_{0,1}^2}{2\pi^2}\Big|\leq\Gamma_0 h^{2/3}.
\label{eqn:infirsvp}
\end{equation}
To obtain inequality \eqref{eqn:infirsvp}, we first observe that $\lambda_n(h) - \frac{j_{0,1}^2}{2\pi^2}\geq0$. This comes from the following fact: If we denote by $\cQ(h)$ the quadratic form associated with the operator $\cL_{\Gui}(h)$, for all $\psi$ in the form domain $\Dom(\cQ(h))$ we have:
\begin{equation}\label{lb-BO}
\cQ(h)(\psi) = \int_{\Gui} h^2|\partial_{x}\psi|^2 + |\partial_{y}\psi|^2 y \dd x \dd y \geq \int_{\Gui} h^{2} |\partial_{x}\psi|^2 + v(x) |\psi|^2 y \dd x \dd y\geq\frac{j_{0,1}^2}{2\pi^2}\|\psi\|^2.
\end{equation}
The min-max principle yields $\lambda_n(h) - \frac{j_{0,1}^2}{2\pi^2}\geq0$. Since $\Tri\subset\Gui$, by Dirichlet bracketing we get:
\begin{equation}
\lambda_n(h)\leq \lambda_n^\Delta(h).
\label{eqn:majvp}
\end{equation}
Now, by Theorem \ref{th:O14}, we know that $|\lambda_n^\Delta(h) -\frac{j_{0,1}^2}{2\pi^2}|\leq\Gamma_0 h^{2/3}$, which gives inequality \eqref{eqn:infirsvp}.

As a consequence of Proposition \ref{prop:effpot} {\em (iv)}, there exists $x_1>0$ such that
\begin{equation}
\label{eqn:minBO}
   \forall x\in[0,x_1],\quad
   v(x) \geq \frac{j_{0,1}^2}{2\pi^2} +\frac{c_{0}}{|\ln x|}
   \quad\mbox{with}\quad
   c_0 = \frac{j_{0,1}}{4\pi}\,\frac{|Y_0(j_{0,1})|}{|J_0'(j_{0,1})|}\;.
\end{equation}
We set
\[
   \G_1 = \Gui\cap \{(x,y) : \ x \in (0,x_1)\}
   \quad\mbox{and}\quad
   \G_2 = \Gui\cap \{(x,y) : \ x \in (x_1,\infty)\}.
\]


Our estimates of Agmon type are as follows.
\begin{prop} 
\label{prop:agmlay}
Let $\Gamma_0>0$, there exist $h_0, C_0>0$ and $\eta_1,\eta_2,\eta_3>0$ such that for all $h\in(0,h_0)$ and all eigenpair $(\lambda,\psi)$ of $\cL_{\Gui}(h)$ satisfying $|\lambda - \frac{j_{0,1}^2}{2\pi^2}|\leq\Gamma_0 h^{2/3}$, we have:
\begin{equation}
\label{eqn:agmlay}
   \int_{\Gui} e^{2\Phi/h}\big(|\psi|^2 + |h\partial_{x}\psi|^2\big)\,y\,\dd x\dd y 
   \leq C_{0}\|\psi\|^2,
\end{equation}
with the Lipschitz weight function $\Phi$ defined on $\Gui$ by
\begin{equation}
\label{eqn:agmweight}
\Phi(x,y) = \begin{cases}
   \Phi_{0}(x) = \eta_{0}|x|^{3/2} & 
   \mbox{if }\ \pi\sqrt{2}<x<0, \ \mbox{ i.e. }\ (x,y)\in\Tri \\[0.5ex]
   \Phi_{1}(x) = \eta_{1}\displaystyle\int_{0}^{x}\frac{1}{\sqrt{|\ln t|}}\dd t, & 
   \mbox{if }\ 0<x<x_1 \ \mbox{ i.e. }\ (x,y)\in\G_1\\
   \Phi_{2}(x) = \eta_{2}\big(x-x_1\big) + \Phi_{1}(x_1)
   & \mbox{if }\ x_{1}<x \ \mbox{ i.e. }\ (x,y)\in\G_2.
\end{cases}
\end{equation}
\end{prop}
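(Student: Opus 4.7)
The plan is the standard Agmon weighted-energy method, adapted to the partially semiclassical operator $\cL_{\Gui}(h)$ and to the three-piece Lipschitz weight $\Phi$. Since $\Phi$ depends only on $x$, setting $w=e^{\Phi/h}\psi$ and computing $\cQ(h)(w)$ directly from the quadratic form (rather than integrating by parts in the curved domain) yields the Agmon identity $\cQ(h)(w)=\lambda\|w\|^2+\int_{\Gui}|\Phi'|^2|w|^2 y\,\dd x\dd y$. Combining this with the Born--Oppenheimer-type lower bound already used in \eqref{lb-BO}, namely $\cQ(h)(w)\geq\int_{\Gui}(h^2|\partial_x w|^2+v(x)|w|^2)y\,\dd x\dd y$, gives the master inequality
\[
\int_{\Gui}h^2|\partial_x w|^2 y\,\dd x\dd y + \int_{\Gui}\bigl(v(x)-\lambda-|\Phi'(x)|^2\bigr)|w|^2 y\,\dd x\dd y \leq 0.
\]
To justify this despite the unboundedness of $e^{\Phi/h}$ on $\G_2$, I would first replace $\Phi$ by the truncation $\min(\Phi,N)$ (which is Lipschitz with the same constant and satisfies $|\Phi_N'|^2\leq|\Phi'|^2$), derive the bound uniformly in $N$, and conclude by monotone convergence.

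The next step is to choose $\eta_0,\eta_1,\eta_2>0$ small enough so that $v(x)-\lambda-|\Phi'(x)|^2\geq\delta>0$ outside a small \enquote{allowed} region $A_h$. Three local ingredients are used, one per piece: on $\Tri$, the explicit formula \eqref{eqn:vneg} gives $v(x)-v(0)\geq c_-|x|$ for small $|x|$, and since $|\Phi_0'|^2=\tfrac{9}{4}\eta_0^2|x|$ and $\lambda-v(0)\leq\Gamma_0 h^{2/3}$, choosing $\eta_0^2<\tfrac{4c_-}{9}$ yields $v-\lambda-|\Phi_0'|^2\geq\delta$ as soon as $|x|\gtrsim h^{2/3}$. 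On $\G_1$, the bound \eqref{eqn:minBO} delivers $v(x)-v(0)\geq c_0/|\ln x|$, and since $|\Phi_1'|^2=\eta_1^2/|\ln x|$, the choice $\eta_1^2<c_0$ gives $v-\lambda-|\Phi_1'|^2\geq\delta$ as soon as $x\gtrsim e^{-c/h^{2/3}}$. On $\G_2$, the monotonicity in Proposition \ref{prop:effpot}\,\textit{(i)} together with \eqref{eqn:minBO} at $x=x_1$ give $v(x)\geq v(x_1)>v(0)$, so any $\eta_2$ with $\eta_2^2<v(x_1)-v(0)$ works. Therefore $A_h\subset\{|x|\lesssim h^{2/3}\}\cup\{0<x\leq e^{-c/h^{2/3}}\}$.

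The crucial observation is that $\Phi/h$ stays \emph{uniformly} bounded on $A_h$. On the $\Tri$ part, $\Phi_0(x)/h\leq\eta_0(Ch^{2/3})^{3/2}/h=O(1)$. On the $\G_1$ part, since $t\mapsto 1/\sqrt{|\ln t|}$ is increasing on $(0,1)$, one has $\Phi_1(\epsilon)\leq\eta_1\epsilon/\sqrt{|\ln\epsilon|}$, so $\Phi_1(e^{-c/h^{2/3}})/h\lesssim h^{1/3}e^{-c/h^{2/3}}\to 0$. Returning to the master inequality and writing it as
\[
\delta\int_{\Gui\setminus A_h}|w|^2 y\,\dd x\dd y\leq\int_{A_h}\bigl(|\Phi'|^2+\lambda-v\bigr)|w|^2 y\,\dd x\dd y\leq C\sup_{A_h}e^{2\Phi/h}\|\psi\|^2\leq C'\|\psi\|^2,
\]
and adding the trivial $A_h$-contribution (where again $e^{2\Phi/h}=O(1)$), one obtains the $L^2$-weighted bound in \eqref{eqn:agmlay}. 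The $h\partial_x$-part is recovered from the leftover $\int h^2|\partial_x w|^2 y\,\dd x\dd y\leq C\|\psi\|^2$ via the triangle inequality $|h\partial_x\psi|^2 e^{2\Phi/h}\leq 2|h\partial_x w|^2+2|\Phi'|^2|w|^2$ together with the global bound $\|\Phi'\|_\infty<\infty$.

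The main technical obstacle, in my view, is controlling $\Phi_1/h$ on the exponentially small allowed set inside $\G_1$: the logarithmic antiderivative appearing in $\Phi_1$ is precisely tailored to the logarithmic singularity of $v-v(0)$ at $0^+$ given in Proposition \ref{prop:effpot}\,\textit{(iv)}, and both the constraint $\eta_1^2<c_0$ and the Laplace-type estimate on $\int_0^\epsilon dt/\sqrt{|\ln t|}$ must combine carefully to prevent $\Phi_1/h$ from blowing up as $h\to 0$. The remaining issues (continuity of $\Phi$ at the junction points $x=0$ and $x=x_1$, and the truncation used to handle the unbounded weight on $\G_2$) are routine.
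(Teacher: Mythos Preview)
Your proof is correct and follows essentially the same route as the paper: the same Agmon/IMS identity with $w=e^{\Phi/h}\psi$, the same Born--Oppenheimer lower bound \eqref{lb-BO}, the same three-region split using the linear (convexity) lower bound on $\Tri$, the logarithmic bound \eqref{eqn:minBO} on $\G_1$, and the constant gap on $\G_2$, and the same verification that $\Phi/h$ stays bounded on the ``near'' set $A_h$. Your treatment is in fact slightly more careful than the paper's in two respects---the truncation $\min(\Phi,N)$ to justify finiteness, and the explicit recovery of the $h\partial_x$ term---while the only slip is a harmless power of $h$ in the $\G_1^{\near}$ estimate (one gets $h^{-2/3}e^{-c/h^{2/3}}$ rather than $h^{1/3}e^{-c/h^{2/3}}$, which still tends to $0$).
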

Thanks to these estimates we understand the localization scales of the eigenfunctions of $\cL_{\Gui}(h)$. In the triangular end $\Tri$, they are localized at a scale $h^{2/3}$ near $x=0$ whereas they get into the layer at a scale $h\sqrt{|\ln h|}$. This is different from the two dimensional waveguides with corner where the eigenfunctions visit the guiding part at a scale $h$. In \cite{DR12}, the effective potential obtained has a jump at $x=0$, then is constant. 
Here it is different: The logarithmic behavior of $v$ at $x=0^+$ allows the eigenfunctions to leak a little more outside the triangular end.

\begin{proof} For $\Phi$ a Lipschitz function only depending on the variable $x\in(-\pi\sqrt{2},+\infty)$, we have the formula of "IMS" type:
\begin{equation*}
   \int_{\Gui} \Big(h^2|\partial_x(e^{\Phi/h}\psi)|^2 + |e^{\Phi/h}\partial_y\psi|^2 
   - (\lambda + \Phi'(x)^2)|e^{\Phi/h}\psi|^2\Big) \,y\,\dd x\dd y = 0.
\end{equation*}
Now, thanks to the first inequality in \eqref{lb-BO} we deduce:
\begin{equation*}
   \int_{\Gui} \Big(h^2|\partial_x(e^{\Phi/h}\psi)|^2 + 
   (v(x) - \lambda - \Phi'(x)^2)|e^{\Phi/h}\psi|^2\Big) \,y\,\dd x\dd y \leq 0.
\end{equation*}
By convexity of $v$ in $(-\pi\sqrt{2},0)$ and inequality \eqref{eqn:minBO} we get:
\begin{equation*}
   \int_\Gui h^2|\partial_x(e^{\Phi/h}\psi)|^2 \,y\,\dd x\dd y + I_{\Tri} + I_{\G_1} + I_{G_2}\leq 0,
\end{equation*}
with
\begin{equation*}
   I_{\Tri} = \int_{\Tri}\Big(\frac{j_{0,1}^2}{2\pi^2} + 
   \frac{j_{0,1}^2}{(\pi\sqrt{2})^3}|x| - \lambda - \Phi'(x)^2\Big)|e^{\Phi/h}\psi|^2 \,y\,\dd x\dd y,
\end{equation*}
\begin{equation*}
   I_{\G_1} = \int_{\G_1}\Big(\frac{j_{0,1}^2}{2\pi^2} +\frac{c_{0}}{|\ln x|} 
   - \lambda - \Phi'(x)^2 \Big)|e^{\Phi/h}\psi|^2 \,y\,\dd x\dd y,
\end{equation*}
\begin{equation*}
   I_{\G_2} = \int_{\G_2} \big(v(x_1) - \lambda -\Phi'(x)^2\big)|e^{\Phi/h}\psi|^2 \,y\,\dd x\dd y.
\end{equation*}
Then, using $\lambda - \frac{j_{0,1}^2}{2\pi^2}\leq\Gamma_0 h^{2/3}$ , this becomes:
\begin{equation}\label{ineq:Agmon}
   \int_\Gui h^2|\partial_x(e^{\Phi/h}\psi)|^2 \,y\,\dd x\dd y 
   + \hat{I}_{\Tri} + \hat{I}_{\G_1} + \hat{I}_{\G_2}\leq 0,
\end{equation}
with
\begin{equation*}
   \hat{I}_{\Tri} = \int_{\Tri}\Big(\frac{j_{0,1}^2}{(\pi\sqrt{2})^3}|x| 
   - \Gamma_{0}h^{2/3} - \Phi'(x)^2\Big)|e^{\Phi/h}\psi|^2 \,y\,\dd x\dd y,
\end{equation*}
\begin{equation*}
   \hat{I}_{\G_1} = \int_{\G_1}\Big(\frac{c_{0}}{|\ln x|}  
   - \Gamma_{0}h^{2/3} - \Phi'(x)^2\Big)|e^{\Phi/h}\psi|^2 \,y\,\dd x\dd y,
\end{equation*}
\begin{equation*}
   \hat{I}_{\G_2} = \int_{\G_2} \big(v(x_1) - \frac{j_{0,1}^2}{2\pi^2}
   - \Gamma_{0}h^{2/3} -\Phi'(x)^2\big)|e^{\Phi/h}\psi|^2\,y\,\dd x\dd y,
\end{equation*}
We are led to take:
\begin{equation*}
   \Phi(x,y) = \Phi_{0}(x)\mathds{1}_{(-\pi\sqrt{2},0)}(x) + 
   \Phi_{1}(x)\mathds{1}_{(0,x_1)}(x) + \Phi_{2}(x)\mathds{1}_{(x_1,+\infty)}(x),
\end{equation*}
where $\Phi_{0}$, $\Phi_1$, and $\Phi_2$ are defined by \eqref{eqn:agmweight} for some positive constants $\eta_0$, $\eta_1$, and $\eta_2$.
For $\eta_{0}$, $\eta_{1}$, and $\eta_{2}$ small enough, there exist positive $\hat\eta_{0}$, $\hat\eta_{1}$, and $\hat\eta_{2}$ such that:
\begin{equation*}
   \hat{I}_{\Tri} =\int_{\Tri}(\hat\eta_{0}|x| 
   - \Gamma_{0}h^{2/3})|e^{\Phi/h}\psi|^2 \,y\,\dd x\dd y,
\end{equation*}
\begin{equation*}
   \hat{I}_{\G_1} = \int_{\G_1}(\hat\eta_{1}|\ln x|^{-1} 
   - \Gamma_{0}h^{2/3})|e^{\Phi/h}\psi|^2 \,y\,\dd x\dd y,
\end{equation*}
\begin{equation*}
   \hat{I}_{\G_2} = \int_{\G_2} \big(\hat\eta_{2} 
   - \Gamma_{0}h^{2/3}\big)|e^{\Phi/h}\psi|^2 \,y\,\dd x\dd y.
\end{equation*}
Note that, if $h$ is small, we may find a positive constant $\gamma_{2}$ such that $\hat{I}_{\G_2}\geq \gamma_{2}\int_{\G_{2}}|e^{\Phi/h}\psi|^2 \,y\,\dd x\dd y$.
Let $\varepsilon>0$ be a chosen positive number. We define the $h$-dependent subdomains:
\begin{equation*}
   \Tri^\far = \{(x,y) \in \!\Tri, \ \hat\eta_{0}|x| - \Gamma_{0}h^{2/3} \geq \varepsilon h^{2/3}\},
   \quad\G_1^\far = \{(x,y)\in\G_1, \ 
   \frac{\hat\eta_{1}}{|\ln x|} - \Gamma_{0}h^{2/3} \geq \varepsilon h^{2/3}\},
\end{equation*}
\begin{equation*}
   \Tri^\near \!\!= \{(x,y) \in \!\Tri, \ \hat\eta_{0}|x| - \Gamma_{0}h^{2/3} \leq \varepsilon h^{2/3}\},
   \quad\G_1^\near \!\!= \{(x,y)\in\G_1, \ 
   \frac{\hat\eta_{1}}{|\ln x|} - \Gamma_{0}h^{2/3} \leq \varepsilon h^{2/3}\},
\end{equation*}
Then, we split the integrals in \eqref{ineq:Agmon} and we obtain:
\begin{multline}
\label{eqn:preagmon}
   \Gamma_{0}h^{2/3}\Big(
   \int_{\Tri^\near} |e^{\Phi_{1}/h}\psi|^2 \,y\,\dd x\dd y 
   + \int_{\G_1^\near} |e^{\Phi_{1}/h}\psi|^2 \,y\,\dd x\dd y\Big) \geq  
   \int_{\Gui} h^2|\partial_{x}(e^{\Phi/h}\psi)|^2 y \dd x \dd y\\
   + \varepsilon h^{2/3} \int_{\Tri^\far} |e^{\Phi_{1}/h}\psi|^2 \,y\,\dd x\dd y
   + \varepsilon h^{2/3} \int_{\G_1^\far}|e^{\Phi_{1}/h}\psi|^2 \,y\,\dd x\dd y 
   + \hat{I}_{\G_2}.
\end{multline}
Set $\gamma_1:=(\varepsilon+\Gamma_0)/\hat\eta_1$. In $\Tri^\near$ there holds $-h^{2/3}\gamma_1 < x < 0$. Thus we find
\begin{equation*}
   \frac{\Phi_{0}(x)}{h} \leq \eta_{1}\gamma_1^{3/2}.
\end{equation*}
Set $\gamma_2:=(\varepsilon+\Gamma_0)/\hat\eta_2$. In $\G_1^\near$, we have $|\ln x|^{-1/2}\le h^{1/3}\sqrt{\gamma_2}$ and $0<x<e^{-h^{-2/3}\gamma_{2}^{-1}}$. Thus
\begin{equation*}
   \frac{\Phi_2(x)}{h}  \leq \eta_{2} \,x \,h^{1/3}\sqrt{\gamma_2}\,\frac{1}{h}
   \leq \eta_{2} \,e^{-h^{-2/3}\gamma_{2}^{-1}} \,h^{-2/3}\sqrt{\gamma_2}.
\end{equation*}
We deduce that $\Phi/h$ is bounded independently on $h$ on $\Tri^\near\cup\G_1^\near$. 
Now, the Agmon-type estimate \eqref{eqn:agmlay} is a consequence of inequality \eqref{eqn:preagmon}.
\end{proof}

\subsection{Proof of Theorem \ref{th:expavp}}
\label{subsec:proofexp}
Let $N_0\in\mathbb{N}^*$. We consider the $N_0$ first eigenvalues of $\cL_{\Gui}(h)$. In each eigenspace associated with $\lambda_{n}(h)$ we choose a normalized eigenfunction $\psi_{n,h}$ so that $\langle\psi_{n,h},\psi_{p,h}\rangle = 0$ if $n\neq p$. Moreover, if $\cB(h)$ (resp. $\cQ(h)$) denotes the bilinear (resp. quadratic) form associated with $\cL_{\Gui}(h)$, for $n\neq p$ we also have $\cB(h)(\psi_{n,h},\psi_{p,h}) = 0$ and $\cQ(h)(\psi_{n,h}) = \lambda_{n}(h)\|\psi_{n,h}\|^2$.

Let $a\ge2$, we introduce a smooth cut-off function $\chi$ defined for $x>0$ and satisfying:
\begin{equation*}
\chi(x) = 1, \text{ if } x\leq a\quad\mbox{and}\quad \chi(x) = 0, \text{ if } x\geq 2a.
\end{equation*}
We define the cut-off function $\chi_h$ at the scale $h|\ln h|^{3/2}$ by $\chi_h(x) = \chi(x h^{-1}|\ln h|^{-3/2})$ (for all $x>0$). The \enquote{IMS} formula yields
\begin{equation*}
   \cQ(h)(\chi_h\psi_{n,h}) =\lambda_{n}(h)\|\chi_{h}\psi_{n,h}\|^2 + R(h) \,,
\end{equation*}
where the commutator term $R(h)$ is given by
\begin{equation*}
   R(h) =  \frac{1}{|\ln h|^3}\int_{\Gui}|\chi'(xh^{-1}|\ln h|^{-3/2})|^2 \,|\psi_{n,h}|^2 
   \,y\, \dd x \dd y 
\end{equation*}
With the weight function $\Phi$ defined by \eqref{eqn:agmweight} we obtain, thanks to the estimate \eqref{eqn:agmlay}:
\begin{align*}
   R(h) &\leq  |\ln h|^{-3} \,\|\chi'\|_{L^{\infty}(-\pi\sqrt{2},+\infty)}
   \int_{\Gui\cap\{(x,y) : a \leq x h^{-1}|\ln h|^{-3/2}\leq 2a\}}
   e^{2\Phi/h}e^{-2\Phi/h}|\psi_{n,h}|^2 \,y\,\dd x\dd y \\
   &\leq C |\ln h|^{-3} \,e^{-2\Phi(a h|\ln h|^{3/2})/h} \|\psi_{n,h}\|^2.
\end{align*}
Thanks to the lower bound on the weight function, we get, for $h$ small enough,
\[
   2\Phi(ah|\ln h|^{3/2}) \ge 2\eta_{1}\int_{ah/2}^{ah|\ln h|^{3/2}} \frac{1}{\sqrt{|\ln \left(ah/2\right)|}}\dd t
   \ge \eta_1 ah|\ln h|^{3/2} |\ln \left(ah/2\right)|^{-1/2},
\]
and, using that $a\ge2$, we find
\[
   2\Phi(ah|\ln h|^{3/2})/h \ge \eta_1 a |\ln h| = -\eta_1 a \ln h.
\]
Now we choose $a = \max\{2,2/\eta_1\}$ and find
$e^{-2\Phi(a h|\ln h|^{3/2})/h} \le e^{2\ln h} = h^2$. Thus we deduce:
\begin{equation}
\label{eqn:R1}
   R(h) \le C h^2 . 
\end{equation}
Similarly the functions $\chi_h\psi_{n,h}$ are almost orthogonal for the bilinear form $\cB(h)$ in the sense that there holds, for $n\neq p$:
\begin{equation*}
  \big| {\cB}(h)(\chi_h\psi_{n,h},\chi_h\psi_{p,h}) \big| \le C h^2  .
\end{equation*}
We introduce
\begin{equation*}
\mathcal{S}_{N_{0}}(h) = \mathsf{vect}(\chi_h\psi_{1,h},\dots,\chi_h\psi_{N_{0},h}).
\end{equation*}
and we get
\begin{equation}\label{ineq:SN}
\forall \psi_h\in \mathcal{S}_{N_{0}}(h),\qquad   \cQ(h)(\psi_h) \leq (\lambda_{N_{0}}(h) + Ch^2)\|\psi_h\|^2.
\end{equation}
Now, we define the triangle $\Tri(h)$ by its vertices
\[
   (-\pi\sqrt{2},0), \quad(h|\ln(h)|^{3/2},0), \quad\mbox{and}\quad 
   (h|\ln(h)|^{3/2}, h|\ln(h)|^{3/2} + \pi\sqrt{2}),
\]
and consider the operator $\cL_{\Tri(h)}(h):=-h^2\partial^2_x-\frac1y\partial_y(y\partial_y)$ on $\Tri(h)$ with Dirichlet condition on the two sides of $\Tri(h)$ that are not contained in the axis $r=0$. Since $\Tri(h)$ can be obtained by a dilation of ratio $1+\frac{h|\ln(h)|^{3/2}}{\pi\sqrt{2}}$ from $\Tri$, we find that the eigenvalues of $\cL_{\Tri(h)}(h)$ are equal to
\begin{equation*}
   \bigg(1+\frac{h|\ln(h)|^{3/2}}{\pi\sqrt{2}}\bigg)^{-2}\lambda_n^\Delta(h) .
\end{equation*}
We can extend the elements of $\mathcal{S}_{N_{0}}(h)$ by zero so that $\cQ(h)(\psi_h) = \cQ_{\Tri(h)}(h)(\psi_h)$ for $\psi_h\in\mathcal{S}_{N_{0}}(h)$. Using \eqref{ineq:SN} and the min-max principle, we get, for all $n\in\{1,\dots N_0\}$,
\begin{equation*}
\bigg(1+\frac{h|\ln(h)|^{3/2}}{\pi\sqrt{2}}\bigg)^{-2}\lambda_n^\Delta(h) \leq \lambda_n(h) + Ch^2.
\end{equation*}
Together with \eqref{eqn:majvp} and Theorem \ref{th:O14} this implies Theorem \ref{th:expavp}.\hfill$\square$\par\medskip

\appendix
\section{Numerical results}
\label{sec:numresult}


\begin{figure}[h]
\includegraphics[scale=0.66]{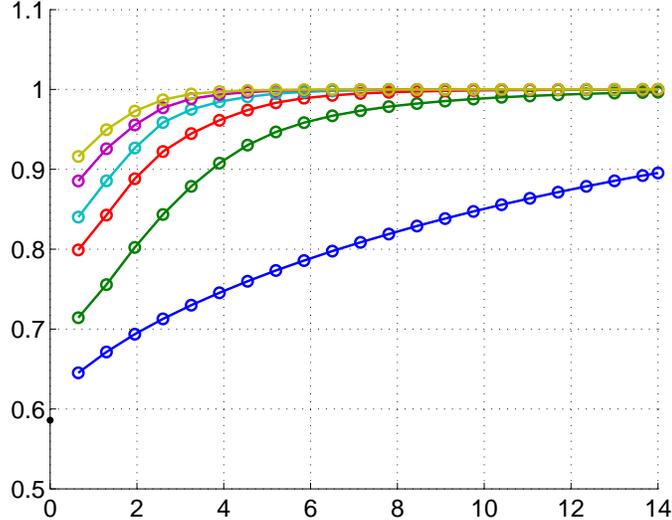}
\caption{First 6 eigenvalues $\mu_{n}(\theta)$ ($n = 1,\dots,6$) vs $\theta$ (in degrees). The black dot is the value ${j_{0,1}^2}/{\pi^2}$. Mesh with $1616$ triangles and \#DOF $=30325$.}
\label{fig:layvalpro}
\end{figure}

We illustrate properties satisfied by the eigenvalues and eigenfunctions through numerical simulations. The computations are performed with the operator $\mathcal{D}_\Gui(\theta)$ defined in \eqref{eqn:defopD}. The integration domain has to be finite: We truncate the domain $\Gui$ sufficiently far away from the origin. We use the finite element library M\'{e}lina++Ê \cite{Melina++} with interpolation degree $6$ and a quadrature rule of degree $13$. The elements are triangles and the different meshes and the number of DOF (degrees of freedom)  are mentioned in legends.

Figure \ref{fig:layvalpro} shows the six first eigenvalues as functions of $\theta$. The monotonicity of eigenvalues, their accumulation below the threshold of the essential spectrum, and their convergence to ${j_{0,1}^2}/{\pi^2}$ as $\theta\to0$ can be observed.

\begin{figure}[h]
\begin{center}
\begin{tabular}{cc}
\includegraphics[scale=0.4]{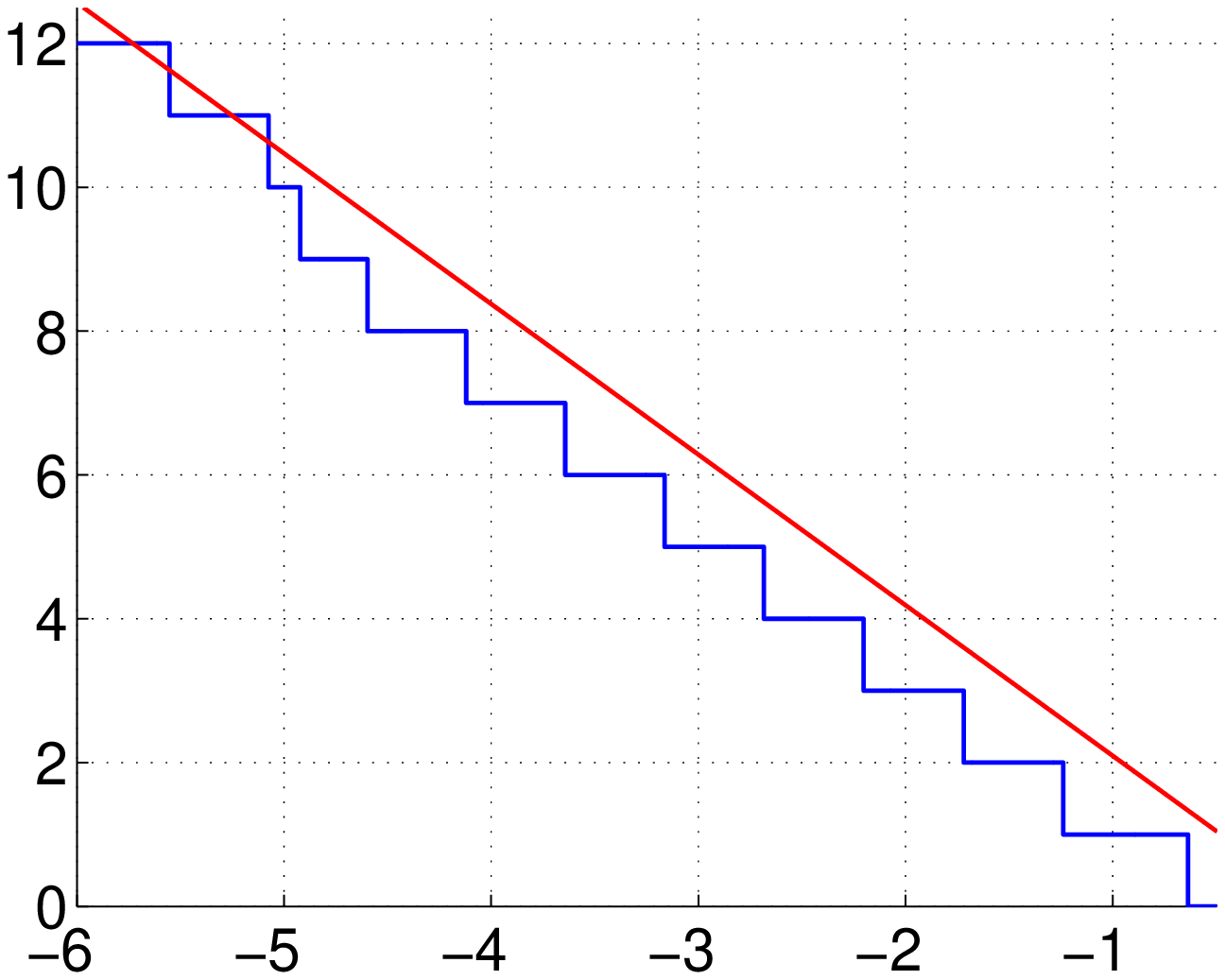} & \includegraphics[scale=0.4]{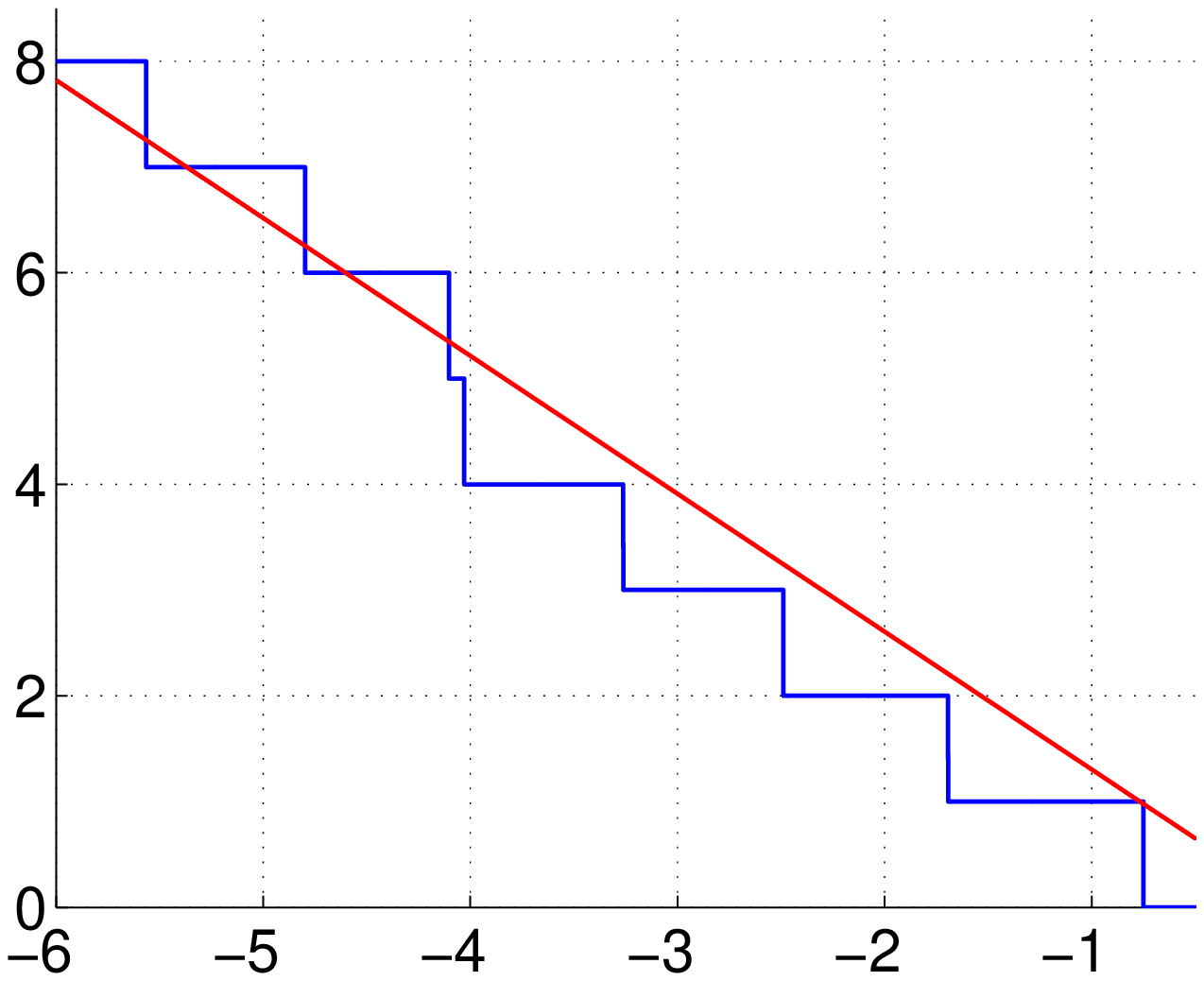}
\\
$\theta = 5^\circ$ & $\theta = 8^\circ$\\
\end{tabular}
\end{center}
\caption{The blue line is the numerical computation of $\mathcal{N}_{1-E}(\cH_{\Gui(\theta)})$ as function of $\ln_{10} E$ in abscissa. The red line is the asymptotics of Theorem \ref{th:countvp}. Mesh with $6416$ triangles and \#DOF $=120325$.}
\label{fig:curveNE}
\end{figure}
We computed the first 12 eigenvalues for $\theta=5^\circ$ and first 8 for $\theta = 8^\circ$ 
We are interested in $E_j(\theta) = 1 - \mu_j(\theta)$ because, by definition, for all $E\in[E_{j+1}(\theta), E_j(\theta))$ we have $\mathcal{N}_{1-E}(\cH_{\Gui(\theta)}) = \mathcal{N}_{1-E}(\mathcal{D}_{\Gui}(\theta)) = j$. 
The jumps occur at $E=E_j(\theta)$. In Figure \ref{fig:curveNE}, we plot the quantity $\mathcal{N}_{1-E}(\cH_{\Gui(\theta)})$ as function of $\ln_{10} E$, compared with the asymptotics of Theorem \ref{th:countvp}.

\begin{figure}[h!]
\begin{center}
\begin{tabular}{cc}
\includegraphics[scale=0.25]{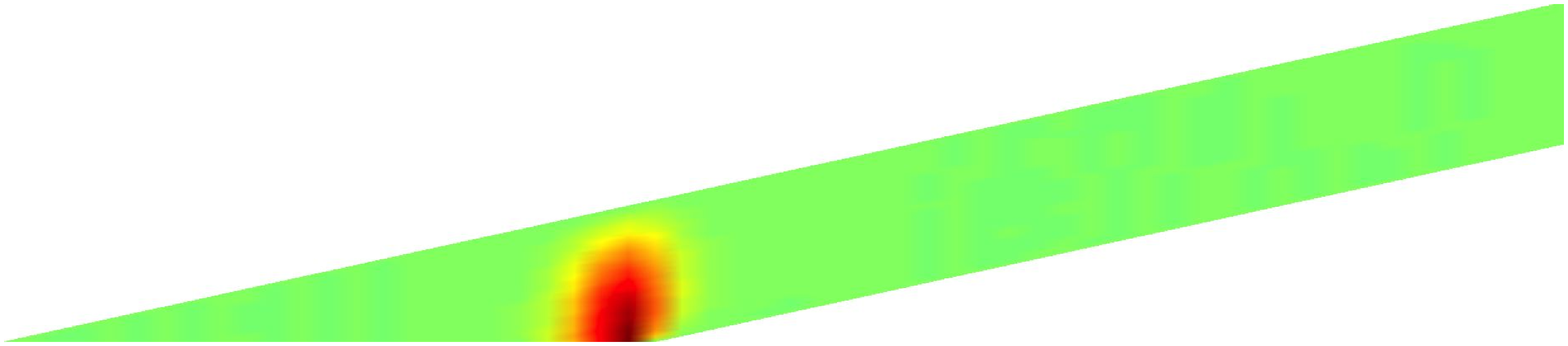} & \includegraphics[scale=0.25]{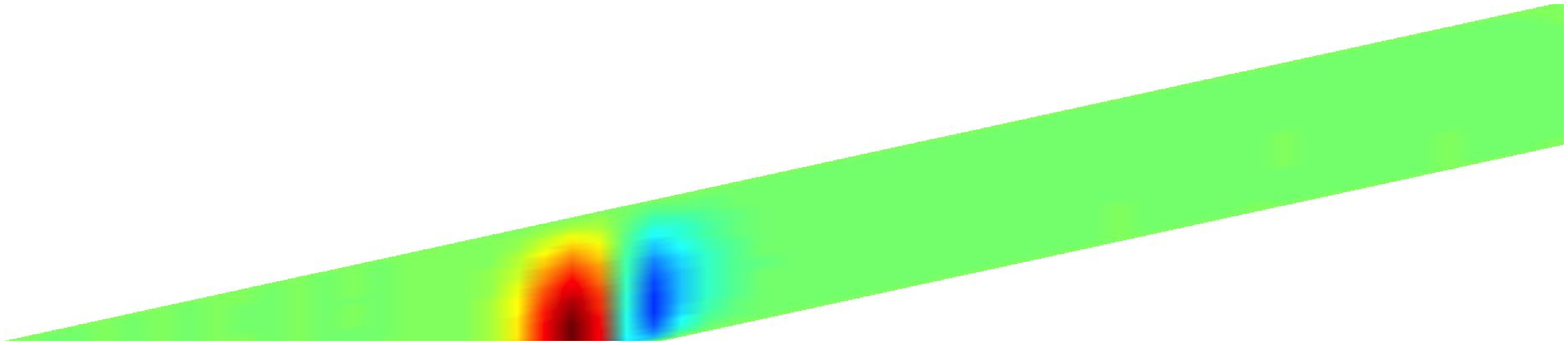}\\
$\mu_{1}(\theta) = 0.709909$ & $\mu_{2}(\theta) = 0.837417 $\\
\includegraphics[scale=0.25]{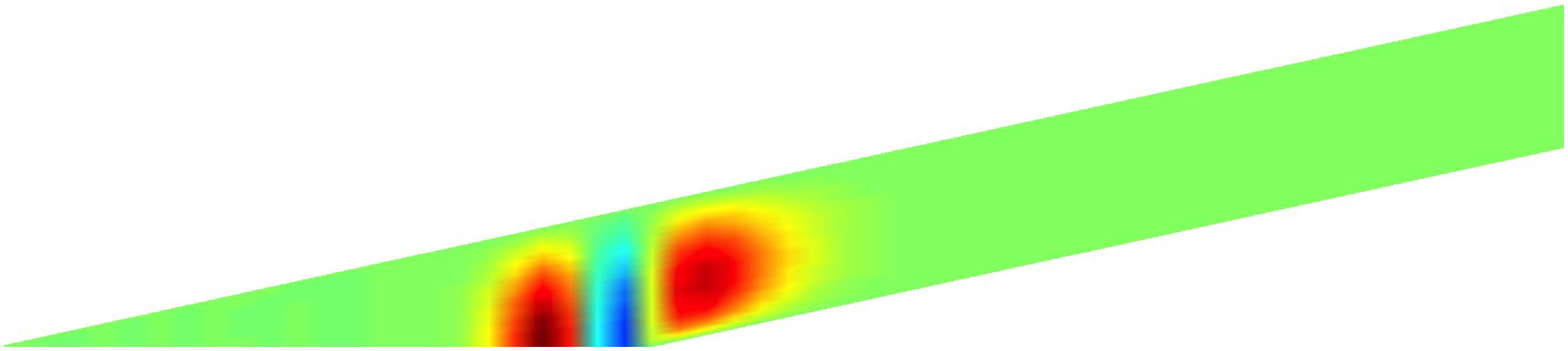} & \includegraphics[scale=0.25]{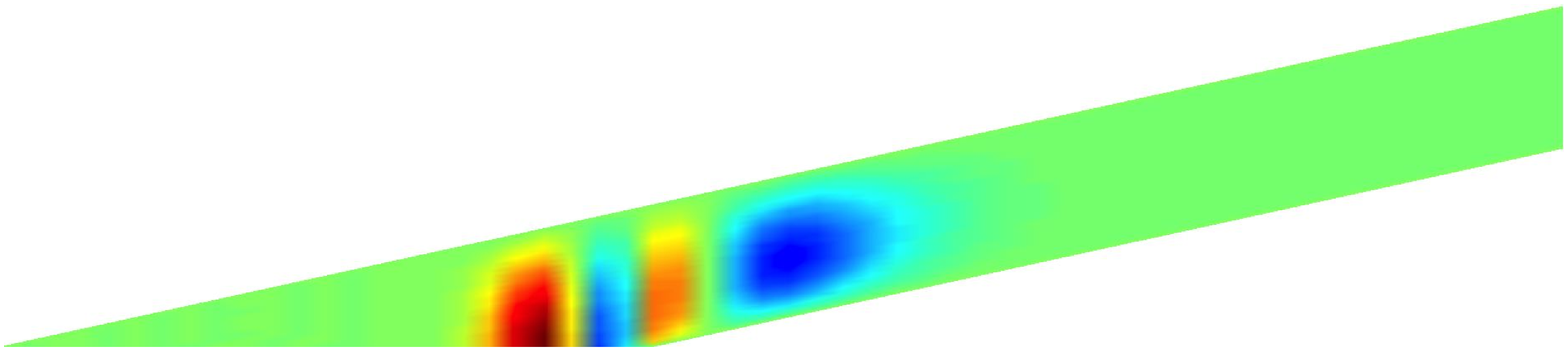}\\
$\mu_{3}(\theta) = 0.917956 $ & $\mu_{4}(\theta) = 0.954728 $\\
\includegraphics[scale=0.25]{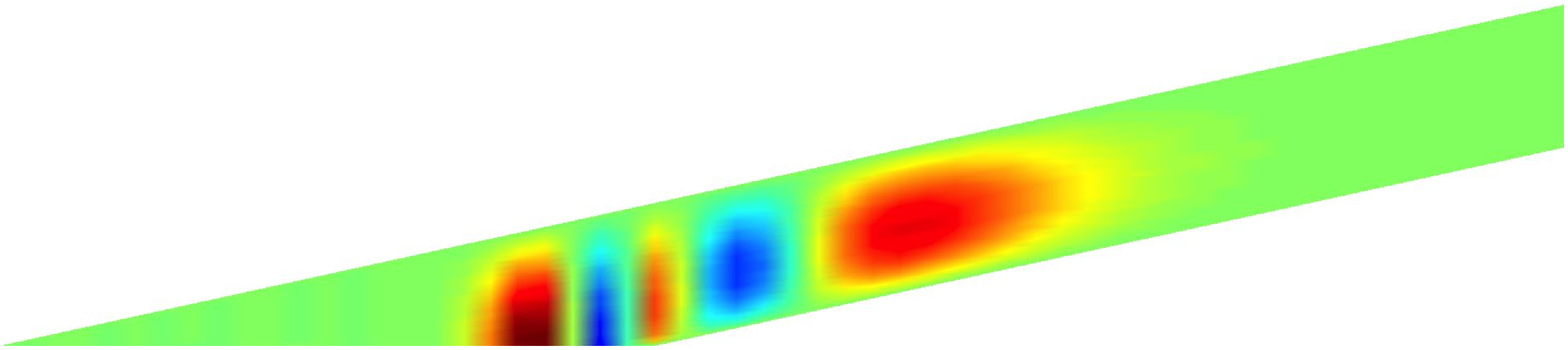} & \includegraphics[scale=0.25]{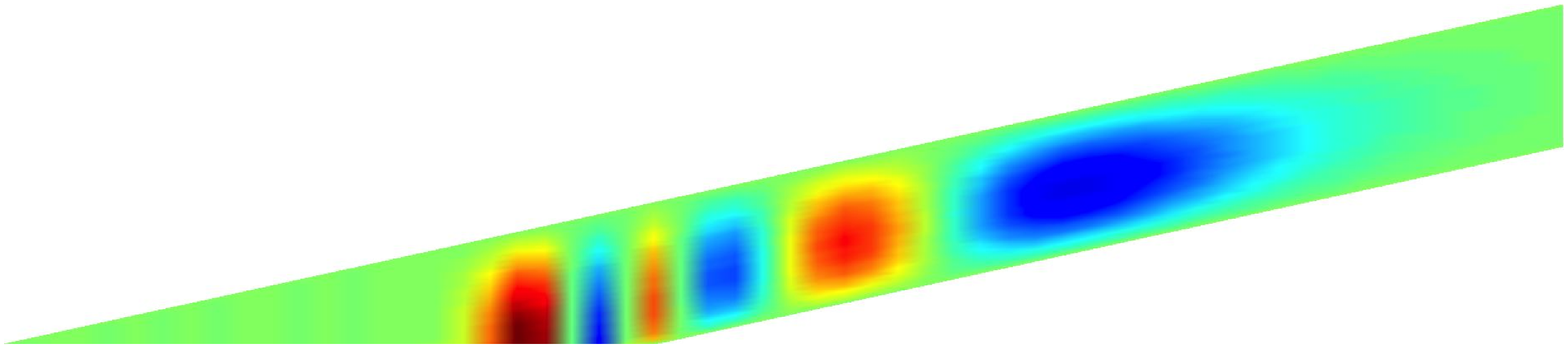}\\
$\mu_{5}(\theta) = 0.974223 $ & $\mu_{6}(\theta) = 0.985379 $\\
\end{tabular}
\end{center}
\caption{Computations for $\theta = 2.5^\circ$. The associated eigenfunctions are represented in the physical domain $\Gui(\theta)$ where the vertical axes is multiplied by $5$. Mesh with $6416$ triangles and \#DOF $=120325$.}
\label{fig:layvpsemicl}
\end{figure}

Finally, in Figure \ref{fig:layvpsemicl} we depict the first eigenfunctions for small $\theta$. This is the same numerical computations as in \cite[Figure 3]{ET11} and it enlightens the Agmon localization estimates of Proposition \ref{prop:agmlay}: The eigenfunctions penetrate into the meridian guide $\Gui(\theta)$ and, unlike the two dimensional broken waveguides, are not only localized in the triangular end of the meridian guide.


\subsection*{Acknowledgements}
Thomas Ourmi\`eres-Bonafos is supported by the Basque Government through the BERC 2014-2017 program and by Spanish Ministry of Economy and Competitiveness MINECO: BCAM Severo Ochoa excellence accreditation SEV-2013-0323.


\end{document}